\newcommand*{\mint}[1]{%
  \mint@l{#1}{}%
}
\newcommand*{\mint@l}[2]{%
  \@ifnextchar\limits{%
    \mint@l{#1}%
  }{%
    \@ifnextchar\nolimits{%
      \mint@l{#1}%
    }{%
      \@ifnextchar\displaylimits{%
        \mint@l{#1}%
      }{%
        \mint@s{#2}{#1}%
      }%
    }%
  }%
}
\newcommand*{\mint@s}[2]{%
  \@ifnextchar_{%
    \mint@sub{#1}{#2}%
  }{%
    \@ifnextchar^{%
      \mint@sup{#1}{#2}%
    }{%
      \mint@{#1}{#2}{}{}%
    }%
  }%
}
\def\mint@sub#1#2_#3{%
  \@ifnextchar^{%
    \mint@sub@sup{#1}{#2}{#3}%
  }{%
    \mint@{#1}{#2}{#3}{}%
  }%
}
\def\mint@sup#1#2^#3{%
  \@ifnextchar_{%
    \mint@sup@sub{#1}{#2}{#3}%
  }{%
    \mint@{#1}{#2}{}{#3}%
  }%
}
\def\mint@sub@sup#1#2#3^#4{%
  \mint@{#1}{#2}{#3}{#4}%
}
\def\mint@sup@sub#1#2#3_#4{%
  \mint@{#1}{#2}{#4}{#3}%
}
\newcommand*{\mint@}[4]{%
  \mathop{}%
  \mkern-\thinmuskip
  \mathchoice{%
    \mint@@{#1}{#2}{#3}{#4}%
        \displaystyle\textstyle\scriptstyle
  }{%
    \mint@@{#1}{#2}{#3}{#4}%
        \textstyle\scriptstyle\scriptstyle
  }{%
    \mint@@{#1}{#2}{#3}{#4}%
        \scriptstyle\scriptscriptstyle\scriptscriptstyle
  }{%
    \mint@@{#1}{#2}{#3}{#4}%
        \scriptscriptstyle\scriptscriptstyle\scriptscriptstyle
  }%
  \mkern-\thinmuskip
  \int#1%
  \ifx\\#3\\\else_{#3}\fi
  \ifx\\#4\\\else^{#4}\fi
}
\newcommand*{\mint@@}[7]{%
  \begingroup
    \sbox0{$#5\int\m@th$}%
    \sbox2{$#5\int_{}\m@th$}%
    \dimen2=\wd0 %
    \let\mint@limits=#1\relax
    \ifx\mint@limits\relax
      \sbox4{$#5\int_{\kern1sp}^{\kern1sp}\m@th$}%
      \ifdim\wd4>\wd2 %
        \let\mint@limits=\nolimits
      \else
        \let\mint@limits=\limits
      \fi
    \fi
    \ifx\mint@limits\displaylimits
      \ifx#5\displaystyle
        \let\mint@limits=\limits
      \fi
    \fi
    \ifx\mint@limits\limits
      \sbox0{$#7#3\m@th$}%
      \sbox2{$#7#4\m@th$}%
      \ifdim\wd0>\dimen2 %
        \dimen2=\wd0 %
      \fi
      \ifdim\wd2>\dimen2 %
        \dimen2=\wd2 %
      \fi
    \fi
    \rlap{%
      $#5%
        \vcenter{%
          \hbox to\dimen2{%
            \hss
            $#6{#2}\m@th$%
            \hss
          }%
        }%
      $%
    }%
  \endgroup
}
\def\rr{{\mathbb R}}
\def\rn{{{\rr}^n}}
\def\zz{{\mathbb Z}}
\def\nn{{\mathbb N}}
\def\fz{\infty}
\def\az{\alpha}
\def\dist{{\mathop\mathrm{\,dist\,}}}
\def\loc{{\mathop\mathrm{\,loc\,}}}
\def\dz{\delta}
\def\ez{\epsilon}
\def\kz{\kappa}
\def\bz{\beta}
\def\gz{{\gamma}}
\def\vz{\varphi}
\def\sz{\sigma}
\def\wz{\widetilde}
\def\bint{{\ifinner\rlap{\bf\kern.35em--}
\int\else\rlap{\bf\kern.45em--}\int\fi}\ignorespaces}
\def\bbint{{\ifinner\rlap{\bf\kern.35em--}
\hspace{0.078cm}\int\else\rlap{\bf\kern.45em--}\int\fi}\ignorespaces}
\def\esup{\mathop\mathrm{\,esssup\,}}
\def\usc{\mathop\mathrm{\,USC\,}}
\newtheorem{thm}{Theorem}[section]
\newtheorem{lem}[thm]{Lemma}
\newtheorem{rem}[thm]{Remark}
\newtheorem{cor}[thm]{Corollary}
\numberwithin{equation}{section}
\begin{document}

 \title[A quantative Sobolev regularity for
absolute minimizers ]{A quantative Sobolev regularity for
absolute minimizers involving Hamiltonian $H(p)\in C^0 (\mathbb{R}^2)$  in plane}

\author{ Peng Fa, Qianyun Miao and Yuan Zhou}

          \address{ Department of Mathematics, Beihang University, Beijing 100191, P.R. China}
                    \email{SY1609131@buaa.edu.cn }

                      \address{ School of Mathematical Sciences, Peking University, Beijing 100871, P. R. China}
                    \email{qianyunm@math.pku.edu.cn}

\address{ Department of Mathematics, Beihang University, Beijing 100191, P.R. China}
                    \email{yuanzhou@buaa.edu.cn}


\date{\today}
\arraycolsep=1pt
\allowdisplaybreaks
 \maketitle

\begin{center}
\begin{minipage}{13.5cm}\small
 \noindent{\bf Abstract.}\quad
 Suppose that    $H  \in C^0 (\mathbb{R}^2)$ satisfies
 \begin{enumerate}
  \item[(H1)]  $H$ is locally strongly convex and locally strongly concave in $\rr^2$,
    \item[(H2)]  $H(0)=\min_{p\in\rr^2}H(p)=0$.
   \end{enumerate}
  Let $\Omega\subset \rr^2$ be any domain. For
    any $u$ absolute minimizer for $H$ in $\Omega$, or  if $H\in C^1(\rr^2)$ additionally,
      for any viscosity solution to the Aronsson equation $$\mathscr A_H[u]=\sum_{i,j=1}^2 H_{p_i}(Du) H_{p_j}(Du)u_{x_ix_j}=0 \quad \mbox{  in $\Omega$,}$$
  the following are proven in this paper:
   \begin{enumerate}
 \item[(i)]  We have
    $[H(Du)]^\alpha\in W^{1,2}_\loc(\Omega)$ whenever   $\alpha>1/2-\tau_H(0)$;
    some quantative upper bounds  are also given.  Here $\tau_H(0)=1/2$ when  $H\in C^2(\rr^2)$,
    and $0< \tau_H(0)\le 1/2$ in general.

\item[(ii)] If $H\in C^1(\rr^2)$,   then  the distributional determinant
 $-{\rm det}D^2u\,dx$ is a nonnegative Radon
measure in $\Omega$ and enjoys some quantative lower/upper bounds.

\item[(iii)] If $H\in C^1(\rr^2)$, then for all $\alpha>\frac12-\tau_H(0)$, we have
  $$\mbox{$\langle D [H(Du )]^\alpha ,D_p H(Du  )\rangle=0  $ almost everywhere in $\Omega$}.
$$
  \end{enumerate}

 The idea of their proofs  is as follows.
  \begin{enumerate}
 \item[$\bullet$]
 When $H\in C^\fz(\rr^2)$   we observe a fundamental structural identify
 for  the Aronsson operator $\mathscr A_H$,
 and a divergence formula of determinant  matching with  $\mathscr A_H$  perfectly.
 The two identity allow  us to approximate absolute minimizers $u$ for $H$
 via  $e^{\frac1\ez H}$-harmonic functions $u^\ez$ in suitable sense,
 and also establish analogue properties of (i)\&(ii) for $u^\ez$ uniformly in $\ez>0$, from which we conclude (i)-(iii) for $u$.
  \item[$\bullet$]When $H\in C^0(\rr^2)$ (or $C^1(\rr^2)$), we approximate $H$ via smooth $H^\dz$ satisfying (H1)\&(H2) uniformly in $\dz\in(0,1]$, and then approximate absolute minimizers $u$ for $H$
 via   absolute minimizers $ u^\dz$ for $H^\dz$ in suitable sense, which allows us to conclude
 (i)-(iii) for $u$ from those for  $  u^\dz$.
 \end{enumerate}

%
%
\end{minipage}
\end{center}

 \tableofcontents

\section{Introduction}

Let    $H\in C^0(\rn)$ be a   Hamilton function which  is convex and coercive ($\liminf_{p\to\fz}H(p)=\fz$).
Aronsson   1960's initiated   the study of  minimization problems for   $L^\infty$-functional  $${\mathcal F}_{H}(u,\Omega)=\esup_{x\in\Omega}H(Du (x) ), \quad \Omega\subset\rn, u\in W^{1,\fz}_\loc(\Omega);$$
 see \cite{a1,a2,a3,a4}.
It  turns out that the absolute minimizer introduced by Aronsson  is
 the correct notion of minimizers for such  $L^\infty$-functionals.
A function $u\in W^{1,\infty}_{\loc}(\Omega)$ is an absolute minimizer  for $H$ in $\Omega$ (write $u\in AM_H(\Omega)$ for short) if
$${\mathcal F}_{H}(u,V)\le {\mathcal F}_{H}(v,V)\quad \mbox{whenever $V\Subset\Omega$, $v\in W^{1,\infty}_{\loc}(V)\cap C(\overline V)$ and $u=v$ on $\partial V$}.$$

If $H\in C^1(\rn)$ is convex and coercive,   Aronsson derived the   Euler-Lagrange equations  for  absolute minimizers:
 \begin{equation}\label{eq1.2}
\mathscr A_H[u]:=\langle D[H(Du)], D_p H (Du)\rangle= \sum_{i,j=1}^n
H_{p_i} (Du) H_{p_j} (Du) u_{x_ix_j}     =0\quad\mbox{\rm in}\;\Omega,
\end{equation}
which are highly degenerate nonlinear elliptic equations.
The equations \eqref{eq1.2} are called Aronsson equations in the literature;
in the special case $ \frac12|p|^2$,     \eqref{eq1.2} is  the $\infty$-Laplace equation
 \begin{equation}\label{eq1.3}\Delta_\infty u:=\frac12\langle D|Du|^2,Du\rangle =\sum_{i,j=1}^n
u_{x_i}   u_{x_j}   u_{x_ix_j} =0\quad{\rm in }\ \Omega.\end{equation}
 By Crandall-Lions' theory \cite{cil},
 viscosity solutions to \eqref{eq1.2} and \eqref{eq1.3} are defined;
viscosity solutions to \eqref{eq1.3} are called as $\infty$-harmonic functions.
In the seminar paper \cite{j93}, Jensen    identified  $\infty$-harmonic functions
  with absolute minimizers for $\frac12|p|^2$.
In general,
by Crandall et al \cite{cwy}   and  Yu  \cite{y06}
(see also \cite{bjw,c03,gwy06,bej,acjs}) we know that
 absolute minimizers for $H$  coincide  with viscosity solutions to  Aronsson's equation \eqref{eq1.2}.

The  existence and uniqueness  of
  absolute minimizers for  $H$  (or viscosity solutions to \eqref{eq1.2} when $H\in C^1(\rn)$)  have been well-studied in the literature.
Given any   bounded domain
and continuous boundary,
Jensen   \cite{j93}   obtained the  existence and uniqueness of  $\infty$-harmonic functions; see also \cite{bb,cgw,as,pssw}.
For general convex /coercive $H\in C^0(\rn)$,
we refer to \cite{bjw,acj} for  the existence
 of absolute minimizers. 
Assuming additionally that $H^{-1}(\min H )$  has empty interior,
Armstrong et al  \cite{acjs}  obtained their uniqueness;
see also Jensen et al \cite{jwy} when  $H\in C^2(\mathbb R^n)$, and \cite{acj,cgw} when
$H $ is a Banach norm.

The regularity of  absolute minimizers then becomes the main issue in this field.
Note that, by the definition, absolute minimizers for $H$   are always locally Lipschitz,
and hence differentiable almost everywhere.
The study of their possible  regularity beyond these
attracts a lot of attention in the literature for its theoretic difficulty and also potential applications in other fields.

In the special case $H(p)=\frac12|p|^2$,
 as indicated by Aronsson's infinity harmonic function $x_1^{4/3}-x_2^{4/3}$ in whole $\rn$ (see \cite{a84}),
the  best possible regularity of infinity harmonic functions  is $C^{1,1/3}\cap W^{2,t}_\loc$  with $1\le t<3/2$,
which is expected and also conjectured to be true.
Towards this conjecture, the following important progresses were made.
Crandall-Evans \cite{ce} obtain their linear approximation property.
For  planar $\infty$-harmonic functions $u$,
their interior $C^1$-regularity  was proved by Savin \cite{s05}, the interior $C^{1,\alpha}$-regularity  by Evans-Savin  \cite{es08} and the  boundary $C^1$-regularity by Wang-Yu \cite{wy12}; moreover,
Koch et al \cite{kzz} proved that $|Du|^\alpha\in W^{1,2}_\loc$ for   all $\alpha>0$ with  quantative upper bounds,
which is sharp as $\alpha\to0$,
and also proved that the distributional determinant $-\det D^2u\,dx$ is a nonnegative Radon measure enjoying some lower and upper bounds.
When $n\ge 3$, Evans-Smart \cite{es11a,es11b} obtained the everywhere differentiability of
  $\infty$-harmonic functions.

 Assuming that $H \in C^2(\rn)$ is locally strongly convex (see the end of Section 1.1), 
Wang-Yu \cite{wy}
obtained their linear approximation property,  and  when $n=2$, the interior $C^1$-regularity.
Moreover,
when $H \in C^0(\rn)$ is  strictly convex, 
Yu \cite{y07} (even where some stronger assumptions for $H$ are stated) essentially proved that
absolute minimizers enjoy
 the  linear approximation property.
On the other hand,  under $H\in C^1(\rn)$  Katzourakis \cite{k11} showed that,  to get $  C^1$-regularity of all absolute minimizers, it is necessary to assume that $H$ is not a constant in any line segment.

For general $H\in C^0(\rn)$, we  plan to understand
the regularity of absolute minimizers
in a series of papers.
Precisely, in \cite{fwz},  if $H\in C^0(\rn)$ is convex and coercive,
  a regularity criteria is established:   $H$ is not a constant in any line segment
if and only if   absolute minimizers for $H$ have linear approximation property at each point,
moreover  when $n=2$, if and only if   absolute minimizers for $H$ has interior $C^1$-regularity.
In  \cite{fmz},  if $n\ge3$ and
$H\in C^0(\rn)$  is   locally strongly convex/concave, we show that absolute minimizers for $H$ are differentiable everywhere.
In the current paper, if $n=2$ and
$H\in C^0(\rr^2)$  is   locally strongly convex/concave,
we  establish a Sobolev regularity
(involving second order derivatives) for absolute minimizers;   if
$H\in C^1(\rr^2)$ additionally, we prove their determinant are nonpositive Radon measure; see Section 1.1 for details. Moreover, in the forthcoming paper we are going to consider their possible interior $C^{1,\alpha}$-regularity.

 For reader's convenience, we recall that $H\in C^0(\rr^2)$ is locally strongly convex if for any convex set $ U\subset \rr^2$, there exists $\lambda>0$ such that $ H(p)-\frac12\lambda|p|^2 $ convex in $U$.
Similarly,  $H\in C^0(\rr^2)$ is locally strongly concave if for any convex set $ U\subset \rr^2$,
there exists $\Lambda>0$ such that $  \frac12\Lambda|p|^2-H(p) $ is convex in $U$.
Note that  $H(p)\in C^{1,1}(\rn)$ implies that $H$ is locally strongly concave.

 \subsection{Main results}
First we have the following quantative
Sobolev regularity of absolute minimizers.

\begin{thm}\label{THM1.1}
Suppose that $H\in C^0(\rr^2)$ satisfies  the assumptions:
\begin{enumerate}
\item[(H1)] $H$ is   locally strongly convex/concave;
\item[(H2)]  $H(0)=\min_{p\in \rr^2}  H(p)=0$.
\end{enumerate}
Let $\Omega\subset\mathbb R^2$ be any domain.   For any $u\in AM_H(\Omega)$
or,  if $H\in C^1(\rr^2)$ additionally,  for any viscosity solution $u\in C^0(\Omega)$ to  the equation \eqref{eq1.2}, we have   \begin{align*}&\mbox{$[H(Du)]^{\alpha}\in W^{1,2}_{\loc}(\Omega)$ whenever  $\alpha\ge 1/2$, and }\\
 &\mbox{$[H(Du)]^{\alpha}\in W^{1,2}_{\loc}( U)$ whenever  $U\Subset \Omega$ and  $ 1/2-\tau_H( \|H(Du)\|_{L^\fz( U)}) <\alpha<1/2$;}
 \end{align*}
  moreover,  for any  $U\Subset\Omega$ and any $ \alpha>1/2-\tau_H( \|H(Du) \|_{L^\fz(  U)})$, we have
\begin{align}\label{e1.x1}
\int_{V}  |  D[H(Du)]^{\alpha }|^2 \,dx\le&  \frac{C \alpha^2(\alpha+1)}{[\alpha+\tau_H( \|H(Du) \|_{L^\fz ( U)}) -\frac12 ]^2}\left[\frac{   \Lambda_H(  \|H(Du)\|_{L^\fz(U)} ) }{ \lambda_H( \|H(Du)\|_{L^\fz(U)})}\right]^2\nonumber\\
&\times
 \frac{1}{[\dist(V,\partial U)]^2}\int _{U}[H(Du)]^{2\alpha}\,dx
\end{align}
The constant $C$ here is absolute.
\end{thm}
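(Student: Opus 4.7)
My plan is to run the two-step approximation scheme sketched in the abstract. First, mollify $H$ to $H^\dz$ together with a correction that preserves uniform local strong convexity/concavity, and transfer $u\in AM_H(\Omega)$ to $u^\dz\in AM_{H^\dz}(U)$ on interior subdomains via the stability of absolute minimizers combined with uniform Lipschitz bounds on $u^\dz$. Second, for a fixed smooth $H^\dz$, regularize the $L^\fz$-variational problem at exponential scale: let $u^{\dz,\ez}$ minimize $\int_U e^{H^\dz(Du)/\ez}\,dx$ with $u^{\dz,\ez}=u^\dz$ on $\pa U$. Classical elliptic regularity gives $u^{\dz,\ez}\in C^\fz$, and it satisfies $\operatorname{div}(e^{H^\dz(Du)/\ez}D_pH^\dz(Du))=0$, equivalently
\[ \ez\sum_{i,j}H^\dz_{p_ip_j}(Du)u_{x_ix_j}+\mathscr A_{H^\dz}[u]=0. \]
As $\ez\downarrow 0$ the minimizers converge locally uniformly to $u^\dz$, so the proof reduces to establishing \eqref{e1.x1} for $u^{\dz,\ez}$ with constants uniform in both $\dz$ and $\ez$.

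\textbf{Core pointwise identity and Caccioppoli estimate.} Work now with smooth $u=u^{\dz,\ez}$ and $H=H^\dz$, and write $w=[H(Du)]^\alpha$. I would combine two identities: the structural reformulation $\mathscr A_H[u]=\la D[H(Du)],D_pH(Du)\ra$ promised in the abstract, which reads off the tangential component of $Dw$ along $D_pH(Du)$; and a planar divergence formula for the determinant matched to $\mathscr A_H$, exploiting that $\det D^2 u$ is a null-Lagrangian in two dimensions so that $\det D^2 u=\pa_1(u_{x_1}u_{x_2x_2})-\pa_2(u_{x_1}u_{x_1x_2})$. Contracting $D_pH(Du)$ against the second identity and combining with $\mathscr A_H[u]=0$ (up to $\ez$-errors from the regularization) produces a pointwise Bochner-type identity that expresses $|Dw|^2$, modulo the divergence of a vector field with null-Lagrangian content, as the sum of $O(\ez)$ terms and a term of shape $[\alpha+\tau_H-\tfrac12]\,[H(Du)]^{2\alpha-2}|D[H(Du)]|^2$, the algebraic gap $\alpha+\tau_H-\tfrac12$ arising when one pairs the local convex bound $\lambda_H$ against the local concave bound $\Lambda_H$ on $D^2H(Du)$. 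Testing this identity against a cutoff $\phi^2$ with $\phi\in C_c^\fz(U)$ and $\phi\equiv 1$ on $V$, integrating by parts, and absorbing the cross term $\int \phi|Dw||D\phi|[H(Du)]^\alpha\,dx$ into the left side via Young's inequality, yields precisely \eqref{e1.x1} for $u^{\dz,\ez}$: the prefactor $\alpha^2(\alpha+1)/[\alpha+\tau_H-\tfrac12]^2$ is the exact cost of this absorption, forcing the restriction $\alpha>\tfrac12-\tau_H$, while $(\Lambda_H/\lambda_H)^2$ appears because both one-sided bounds on $D^2H(Du)$ are needed to control the null-Lagrangian divergence terms.

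\textbf{Passing to the limit and main obstacle.} Letting $\ez\downarrow 0$ recovers \eqref{e1.x1} for $u^\dz$ by lower semicontinuity of the $L^2$ Dirichlet form applied to $[H^\dz(Du^{\dz,\ez})]^\alpha$ together with locally uniform convergence $Du^{\dz,\ez}\to Du^\dz$, which follows from the planar $C^1$-regularity results of \cite{fwz} quoted above. Letting $\dz\downarrow 0$ then uses continuous dependence of $\lambda_{H^\dz},\Lambda_{H^\dz}$ and $\tau_{H^\dz}(\|H^\dz(Du^\dz)\|_\fz)$ on the smoothing parameter together with a diagonal extraction. The viscosity-solution case under $H\in C^1(\rr^2)$ is subsumed because \cite{cwy,y06} identify viscosity solutions of \eqref{eq1.2} with elements of $AM_H(\Omega)$. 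The hard part will be isolating the correct pointwise Bochner-type identity in Step 2 so that (i) the coefficient of the absorbed term is exactly $\alpha+\tau_H-\tfrac12$ with the sharp sign, and (ii) every remaining non-$\ez$ term can be written as the divergence of a field handled by the two-dimensional null-Lagrangian structure. The shape of \eqref{e1.x1} --- including the sharp threshold $\alpha>\tfrac12-\tau_H$ and the explicit ratio $\Lambda_H/\lambda_H$ --- is driven entirely by this identity, which is special to dimension two and to the convex/concave matching; all other steps are quantitative bookkeeping and standard approximation once this algebraic backbone is in place.
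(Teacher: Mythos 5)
Your two-stage approximation scheme (mollify $H$ to $H^\dz$, then exponentially regularize to $u^{\dz,\ez}$) and the emphasis on a structural identity for $\mathscr A_H$ paired with a divergence formula for the determinant do match the architecture of the paper's proof. However, there are two concrete gaps that would cause your argument to fail as written.

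First, the divergence formula you invoke, $\det D^2u=\pa_1(u_{x_1}u_{x_2x_2})-\pa_2(u_{x_1}u_{x_1x_2})$, is the classical null-Lagrangian identity, and the paper explicitly demonstrates (Section 2.4, Remark 2.4) that this ``naive'' formula, together with the identity $|D[H(Dv)]|^2-\Delta v\,\mathscr A_H[v]=(-\det D^2v)|D_pH(Dv)|^2$, is \emph{not} sufficient for general $H$: after integration by parts the term $D^2_{pp}H(Dv)D^2v\,Dv$ appears and cannot be closed. What the proof actually needs is the matched divergence formula of Lemma~\ref{LEM2.2},
$2(-\det D^2v)\det D^2_{pp}H(Dv)=\operatorname{div}\{D^2_{pp}H(Dv)D[H(Dv)]-\operatorname{div}[D_pH(Dv)]\,D_pH(Dv)\}$,
together with the structural identity of Lemma~\ref{LEM2.1}, in which $D^2_{pp}H(Dv)$ and its adjoint $[D^2_{pp}H(Dv)]^\ast$ appear throughout. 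Your description of ``contracting $D_pH(Du)$ against the second identity'' is vague precisely at the point where the correct algebraic form is indispensable; with the formula you wrote, the absorption in the Caccioppoli step would not close.

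Second, you omit the uniform $L^\fz_\loc$ bound $\limsup_{\ez\to0}\|H^\dz(Du^{\dz,\ez})\|_{L^\fz(V)}\le\|H^\dz(Du^\dz)\|_{L^\fz(U)}$ (Theorem~\ref{THM3.2}), which the paper proves by a Moser-type iteration on the normalized density $\sz^\ez$ (Lemmas~\ref{LEM3.3}--\ref{LEM3.4}), and Remark~\ref{REM3.5} explains that the maximum-principle route of Evans--Smart does not yield it under (H1')\&(H2). Without this, the constants $\lambda_H,\Lambda_H,\tau_H$ in Lemma~\ref{LEM2.7} are evaluated at $\|H(Du^\ez)\|_{L^\fz}$ rather than at $\|H(Du)\|_{L^\fz(U)}$, so you cannot obtain the stated range $\alpha>\frac12-\tau_H(\|H(Du)\|_{L^\fz(U)})$ or the explicit constant in \eqref{e1.x1}. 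Relatedly, your claim that $Du^{\dz,\ez}\to Du^\dz$ locally uniformly ``follows from the planar $C^1$-regularity results of \cite{fwz}'' is not justified: interior $C^1$-regularity of the limit does not transfer modulus-of-continuity control to the approximating family. The paper instead derives $W^{1,t}_\loc$-convergence of $u^\ez$ from the integral flatness estimate of Lemma~\ref{LEM2.8} combined with the strong convexity of $H$ (Theorem~\ref{THM4.2}), and uses the $\sigma$-regularization $[H(Du^\ez)+\sigma]^\alpha$ with the double limit $\ez\to0$, $\sigma\to0$ for the regime $\alpha<1/2$; neither of these steps appears in your outline.
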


We refer to Section 1.2 for definitions of the  auxiliary functions  $\lambda_H$,  $\Lambda_H$ and $\tau_H$ used above.
Note that $\tau_H:[0,\fz)\to(0,1/2]$ is decreasing and right continuous,  and when $H\in C^2(\rr^2)$, $\tau_H(0)=1/2$.

Next, as a consequence of Theorems \ref{THM1.1} and the $C^{1}$-regularity of absolute minimizers  in Theorem \ref{LEM7.10x},
we   have the following result.

\begin{cor}\label{COR1.3}   Suppose  that $H\in C^0 (\mathbb R^2)$
satisfies  (H1){\rm\&}(H2).
Let $\Omega\subset\mathbb R^2$ be any domain. For any  $u\in AM_H(\Omega)$, or  if $H\in C^1(\rr^2)$ additionally,
 for any viscosity solution $u\in C^0(\Omega)$ to \eqref{eq1.2},
we have  $[H(Du)]^\alpha\in W^{1,2}_\loc(\Omega)$ whenever $\alpha>1/2-\tau_H(0)$.

In particular,   if $H\in C^2 (\mathbb R^2)$, then
$[H(Du)]^\alpha\in W^{1,2}_\loc(\Omega)$ whenever $\alpha>0$;
  if $H\in C^2 (\mathbb R^2)$ and $H^{1/2}$ is convex, then  \eqref{e1.x1} holds  with  $\tau_H\equiv1/2$ whenever $\alpha>0$.
\end{cor}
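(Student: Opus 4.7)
The plan is to reduce to Theorem \ref{THM1.1} by localizing in the value of $H(Du)$, exploiting that Theorem \ref{LEM7.10x} gives $u\in C^1(\Omega)$ and hence that $g:=H(Du)$ is a continuous function on $\Omega$. Fix $\alpha>1/2-\tau_H(0)$ and $V\Subset\Omega$. The case $\alpha\ge 1/2$ is immediate from the first half of Theorem \ref{THM1.1}, so the content lies in the regime $1/2-\tau_H(0)<\alpha<1/2$.

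The key device is the right continuity of $\tau_H$ at $0$: one can pick a threshold $t_0>0$ with $\alpha>1/2-\tau_H(t_0)$. Choose smooth $\phi^\pm:\mathbb R\to[0,1]$ with $\phi^-+\phi^+\equiv 1$, $\mathrm{supp}\,\phi^-\subset(-\infty,2t_0/3]$ and $\mathrm{supp}\,\phi^+\subset[t_0/3,\infty)$. Writing $g^\alpha=\phi^-(g)g^\alpha+\phi^+(g)g^\alpha$ splits the problem into a small-$g$ piece and a $g$-bounded-below piece, each handled by a different consequence of Theorem \ref{THM1.1}.

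For the small-$g$ piece, fix $V\Subset V_0\Subset\Omega$ and put $U:=\{g<t_0\}\cap V_0$, which is open and satisfies $U\Subset\Omega$ with $\|H(Du)\|_{L^\infty(U)}\le t_0$. Since $\tau_H$ is decreasing, $\alpha>1/2-\tau_H(t_0)\ge 1/2-\tau_H(\|H(Du)\|_{L^\infty(U)})$, so Theorem \ref{THM1.1} gives $g^\alpha\in W^{1,2}_\loc(U)$. Continuity of $g$ and compactness of $V$ ensure that $\mathrm{supp}\,\phi^-(g)\cap V\Subset U$, whence $\phi^-(g)g^\alpha\in W^{1,2}(V)$ by the product rule (extended by zero outside its support). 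For the $g$-bounded-below piece, on the open set $\{g>t_0/3\}$ I write $g^\alpha=(g^{1/2})^{2\alpha}$ and apply the chain rule, using that Theorem \ref{THM1.1} with $\alpha=1/2$ already gives $g^{1/2}\in W^{1,2}_\loc(\Omega)$; since $s\mapsto s^{2\alpha}$ is $C^1$ on $[(t_0/3)^{1/2},\infty)$, this yields $\phi^+(g)g^\alpha\in W^{1,2}(V)$. Summing the two pieces produces $g^\alpha\in W^{1,2}(V)$, and since $V\Subset\Omega$ was arbitrary, $g^\alpha\in W^{1,2}_\loc(\Omega)$.

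The two ``in particular'' assertions are then immediate. When $H\in C^2(\mathbb R^2)$ one has $\tau_H(0)=1/2$ by the definition of $\tau_H$, and the main statement reduces to $\alpha>0$; when $H^{1/2}$ is convex one has $\tau_H\equiv 1/2$, so the factor $[\alpha+\tau_H(\|H(Du)\|_{L^\infty(U)})-1/2]^2$ in \eqref{e1.x1} collapses to $\alpha^2$ for every admissible $U$, and the quantitative bound \eqref{e1.x1} survives unconditionally for all $\alpha>0$. The only real obstacle is the localization step in the small-$g$ region, where one must realize the sub-level set $\{g<t_0\}$ as an admissible open set $U\Subset\Omega$ with $\|H(Du)\|_{L^\infty(U)}$ sitting below the right-continuity threshold $t_0$; this is precisely where the continuity of $Du$ furnished by Theorem \ref{LEM7.10x} is indispensable.
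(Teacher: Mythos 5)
Your proof is correct, and the strategy is the same as the paper's: both exploit the $C^1$-regularity from Theorem~\ref{LEM7.10x} and the right-continuity of $\tau_H$ at $0$ to split the domain into a region where $H(Du)$ is small (handled directly by Theorem~\ref{THM1.1} since $\|H(Du)\|_{L^\infty}$ sits below the threshold $R_\alpha$) and a region where $H(Du)$ is bounded away from zero (handled by a chain-rule argument, using that higher-power Sobolev regularity is already known from Theorem~\ref{THM1.1} with $\alpha\ge 1/2$). The difference is purely in packaging: the paper covers the compact zero set $U_0=\{z\in\overline U:Du(z)=0\}$ by finitely many small balls $B(x_j,r_{x_j,\alpha})$ and then works on $U\setminus\bigcup\overline{B(x_j,r_{x_j,\alpha}/2)}\subset\Omega_{>r_0}$, while you cut off with a smooth partition of unity $\phi^-+\phi^+\equiv 1$ in the \emph{range} of $g=H(Du)$, letting the sublevel set $U=\{g<t_0\}\cap V_0$ absorb the small-$g$ region in one stroke. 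Your version is arguably tidier since the gluing is automatic and one never has to discuss the finite cover, and the chain rule applied to $\psi(s)=\phi^+(s^2)s^{2\alpha}$ acting on $g^{1/2}\in W^{1,2}_\loc$ is equivalent to the paper's direct bound $|D[g^\alpha]|\le\alpha r_0^{\alpha-1}|Dg|$ on $\Omega_{>r_0}$; the ``in particular'' assertions are handled identically in both, via $\tau_H(0)=1/2$ for $H\in C^2$ (Lemma~\ref{LEM7.5}) and $\tau_H\equiv 1/2$ when $H^{1/2}$ is convex (Lemma~\ref{LEM7.2}(ii)).
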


 Moreover, if $H\in C^1(\rr^2)$ satisfies  (H1){\rm\&}(H2),
 we obtain the following properties of the distributional determinant and the Aronsson equation.
Recall that  for any $v\in C^\fz(U)$, its distributional determinant is given by
$$-\int_U\det D^2v\phi\,dx= \frac12\int_U[- v_{x_i}v_{x_j}\phi_{x_ix_j}+|Dv|^2\phi_{x_ix_i}]\,dx\quad\forall \phi\in C^2_c(U).$$
\begin{thm}\label{THM1.2}
Suppose that $H\in C^1(\rr^2)$ satisfies (H1){\rm\&}(H2).
Let $\Omega\subset\mathbb R^2$ be any domain.
 For any  $u\in AM_H(\Omega)$, equivalently,
  any viscosity solution $u\in C^0(\Omega)$ to \eqref{eq1.2},
   we have the following:
  \begin{enumerate}
  \item[(i)] The distributional determinant $-{\rm det}D^2u\,dx$ is a nonnegative Radon measure satisfying that
  \begin{align*} -\det D^2u \,dx   \ge
  4 \frac{\tau_H( \|H(Du)\|_{L^\fz(U)} )}
 {\Lambda_H( \|H(Du)\|_{L^\fz(U)} )} |D[H(Du)]^{1/2}|^2 \,dx
 \end{align*}
and
 $$\int_{V} -{\rm det}D^2u\,dx \le C\frac1{[\dist(V,\partial U)]^2} \int_U|Du|^2\,dx \quad \mbox{$\forall$  $V\Subset U\Subset\Omega$}.$$
 The constant $C$ here is absolute.

 \item[(ii)]If  $U=\Omega$ and $\alpha\ge 1/2 -\tau_H(0)$, then
 \begin{equation}\label{eq1.9}\langle D[H(Du)]^{\alpha}, D_p H (Du)\rangle=0\quad{\rm   \ almost\ everywere\ in}\   \Omega.
 \end{equation}
  \end{enumerate}
\end{thm}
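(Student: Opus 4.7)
The plan is to deduce Theorem \ref{THM1.2} from Theorem \ref{THM1.1} combined with the double approximation scheme outlined in the abstract: smooth $H\in C^1(\rr^2)$ by $H^\dz$ satisfying (H1)\&(H2) uniformly in $\dz$, approximate $u\in AM_H(\Omega)$ by $u^\dz\in AM_{H^\dz}(\Omega)$, and within the smooth-$H^\dz$ layer approximate $u^\dz$ further by smooth $e^{H^\dz/\ez}$-harmonic functions $u^\ez$. The three essential ingredients are the quantitative bound \eqref{e1.x1} (available uniformly along the entire approximation), the integration-by-parts definition of $-\det D^2 v\,dx$ stated just before Theorem \ref{THM1.2}, and the fundamental structural identity announced in the abstract that, for smooth $v$, couples $\mathscr A_H[v]$ with $-\det D^2 v$ and $|D[H(Dv)]^{1/2}|^2$.

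For part (ii) I would first derive the identity at the smooth $u^\ez$-layer. Since $u^\ez$ is smooth and satisfies its Euler--Lagrange equation, a direct chain-rule computation gives, on $\{H^\dz(Du^\ez)>0\}$,
\begin{equation*}
\la D[H^\dz(Du^\ez)]^{\alpha}, D_pH^\dz(Du^\ez)\ra = \alpha[H^\dz(Du^\ez)]^{\alpha-1}\mathscr A_{H^\dz}[u^\ez] + o_{\ez\to 0}(1),
\end{equation*}
the remainder coming from the $\ez$-perturbation of the Aronsson operator. Testing against any $\phi\in C_c^0(\Omega)$ and passing first $\ez\downarrow 0$ and then $\dz\downarrow 0$ uses: Theorem \ref{THM1.1} to extract weak $W^{1,2}_\loc$-convergence of $[H^\dz(Du^\ez)]^{\alpha}$ to $[H(Du)]^{\alpha}$ along a subsequence; the local $C^1$-regularity of $u$ from Theorem \ref{LEM7.10x} to conclude uniform convergence $D_pH^\dz(Du^\ez)\to D_pH(Du)$ on compacts; and Stampacchia's theorem on the set $\{H(Du)=0\}$, where $D[H(Du)]^{\alpha}=0$ almost everywhere. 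Since the integrand $\la D[H(Du)]^\alpha,D_pH(Du)\ra$ lies in $L^1_\loc$ and integrates to zero against every test function, \eqref{eq1.9} follows.

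For part (i) the upper bound is immediate: choose $\phi\in C_c^2(U)$ with $\phi\equiv 1$ on $V$ and $\|D^2\phi\|_{L^\fz}\le C[\dist(V,\partial U)]^{-2}$ in the integration-by-parts formula; since $Du\in L^\fz_\loc$, this yields the stated upper bound with an absolute constant. For nonnegativity together with the pointwise-a.e.\ lower bound, the structural identity at the smooth $u^\ez$-layer produces the pointwise inequality
\begin{equation*}
-\det D^2 u^\ez \ge \frac{4\,\tau_{H^\dz}(\|H^\dz(Du^\ez)\|_{L^\fz(U)})}{\Lambda_{H^\dz}(\|H^\dz(Du^\ez)\|_{L^\fz(U)})}\bigl|D[H^\dz(Du^\ez)]^{1/2}\bigr|^2 + o_{\ez\to 0}(1).
\end{equation*}
Integrating against a nonnegative cutoff $\phi$ and passing to the limit exploits weak lower semicontinuity of $\int \phi|\cdot|^2$ on the right, while convergence of the distributional determinant on the left reduces via the integration-by-parts formula to the uniform convergence $Du^\ez\to Du$ on compacts. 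A second passage $\dz\downarrow 0$ transfers the measure inequality to $u$.

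The main obstacle will be the clean verification of the fundamental structural identity at the smooth $e^{H^\dz/\ez}$-harmonic layer, namely the precise decomposition of $\mathscr A_H$ into a determinant piece and a $|D[H(Dv)]^{1/2}|^2$ piece, together with careful accounting of the $\ez$-perturbation of the Aronsson operator; this is what forces the sharp constant $4\tau_H/\Lambda_H$ to appear and guarantees compatibility with the uniform quantitative bound of Theorem \ref{THM1.1}. Once this identity is in hand, the remaining work is a careful but routine chase through the two successive limits $\ez\downarrow 0$ and $\dz\downarrow 0$.
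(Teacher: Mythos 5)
Your overall blueprint matches the paper's: first establish Theorems \ref{THM1.1}\&\ref{THM1.2} when $H\in C^\fz(\rr^2)$ using the $e^{\frac1\ez H}$-harmonic approximation and the structural identity \eqref{eq2.y1}, then pass to general $H\in C^1(\rr^2)$ via the mollified $H^\dz$. The upper bound in part (i) via integration by parts is also correct as stated. However, there are two real gaps.

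First, you invoke \emph{uniform} convergence $D_pH^\dz(Du^\ez)\to D_pH(Du)$ on compacts, citing the $C^1$-regularity of $u$ in Theorem \ref{LEM7.10x}. This does not follow: $C^1$-regularity of the limit does not give uniform gradient convergence of the approximants (consider $v^\ez(x)=\ez\sin(x_1/\ez)\to0$). What the paper actually proves, and what suffices here, is the weaker statement $Du^\ez\to Du$ in $L^t_\loc(U)$ (Theorem \ref{THM4.2}(ii)) and $Du^{\dz_j}\to Du$ in $L^t_\loc(U)$ (Theorem \ref{THM5.3}(iii)). These are not free: they rest on the integral flatness estimate Lemma \ref{LEM2.8}/Lemma \ref{LEM4.3} plus the identification $H(Du^\ez)\to H(Du)$ in $L^t_\loc$, both of which require the quantitative Sobolev bounds. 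Your argument for the limit passage in both parts of Theorem \ref{THM1.2} should therefore run through $L^2_\loc$-convergence of the gradients rather than uniform convergence.

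Second, your remainder bookkeeping is too loose at exactly the points where the quantitative estimates are indispensable. In part (ii), the chain rule together with $\mathscr A_{H^\dz}[u^\ez]=-\ez\,\mathrm{div}[D_pH^\dz(Du^\ez)]$ gives
$$\langle D[H^\dz(Du^\ez)+\sz]^\alpha,\,D_pH^\dz(Du^\ez)\rangle = -\ez\alpha[H^\dz(Du^\ez)+\sz]^{\alpha-1}\,\mathrm{div}[D_pH^\dz(Du^\ez)],$$
and to see that the right-hand side vanishes in the limit you cannot simply call it $o_{\ez\to0}(1)$: you must use that $\ez^{1/2}\,[H^\dz(Du^\ez)+\sz]^{\alpha-1}\,\mathrm{div}[D_pH^\dz(Du^\ez)]$ is bounded in $L^2_\loc$ uniformly in $\ez$, which is the content of \eqref{eq3.6}/\eqref{eq3.8} (Lemma \ref{LEM2.7} and Theorem \ref{THM3.5}); the extra factor $\ez^{1/2}$ then sends the whole thing to zero. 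Similarly, in part (i) your ``$+\,o_{\ez\to 0}(1)$'' in the pointwise lower bound for $-\det D^2u^\ez$ is not a vanishing remainder but the genuinely nonnegative term $\wz\tau_{H^\dz}(Du^\ez)\,\ez(\mathrm{div}[D_pH^\dz(Du^\ez)])^2/H^\dz(Du^\ez)$ from Theorem \ref{LEM2.3}, which you may simply discard by positivity; no asymptotic estimate is needed there. With these two corrections — $L^t_\loc$ rather than uniform gradient convergence, and the explicit $\ez^{1/2}$-weighted $L^2$ bound from the quantitative lemmas in place of the $o(1)$ handwave — your proposal becomes the paper's argument.
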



\subsection{Auxiliary functions $\lambda_H$,  $\Lambda_H$,  $\wz \tau_H$ and
 $\tau_H$  and some remarks}

To characterize the convexity/concavity of $H$  quantatively,  we introduce  two interesting auxiliary functions  $\lambda_H$ and  $\Lambda_H$.  Moreover, we   introduce  auxiliary functions $\tau_H$ used above  and     $\wz \tau_H$ used later, which arise naturally from
 the core identities \eqref{fund1}, \eqref{fund3}\&\eqref{fund5x}
 (see also Lemmas \ref{LEM2.1}\&\ref{LEM2.2} and Theorem \ref{LEM2.3}) below.
For their properties needed in this paper  we refer to Appendix  A.

For $H\in C^0(\rr^2)$  satisfying (H1)\&(H2), we set
 $$\lambda_H(R):=\sup_{\ez>0}\inf\left\{\lambda>0:   H(p )-\frac\lambda2  |p |^2 \mbox{  is  convex  in $H^{-1}([0,R+\ez))$}\right\}\quad \forall R\ge0,$$
  and
  $$\Lambda_H(R):=\inf_{\ez>0}\sup\left\{\Lambda>0:  \frac \Lambda2  |p |^2-H(p ) \mbox{ is  convex  in $H^{-1}([0,R+\ez))$}\right\}\quad \forall R\ge0.$$
By definitions,  $\lambda_H$ is a decreasing right-continuous function in $[0,\fz)$,
   and $\Lambda_H$ is an  increasing and right-continuous function
in $[0,\fz)$.
We always have   $\mbox{$0<\lambda_H\le \Lambda_H<\fz$ in $[0,\fz)$}.$

For $H\in C^2(\rr^2)$ satisfying (H1)\&(H2), we set
\begin{equation} \label{eq1.6}
\wz \tau_H(0):=\frac12  \quad{\rm and}\quad \wz \tau_H (p):= \frac{ H (p)}{\langle [D^2_{pp} H (p)]^{-1} D_p H (p),D_p H (p)\rangle}\quad  \forall p\in \rr^2\setminus\{0\}.\end{equation}
By Lemmas \ref{LEM7.2}, one always has $\wz \tau_H\in C^0(\rr^2)$. We also set
 \begin{equation} \label{eq1.7}\tau_{H }(R) =\inf_{ H(p)\le R}\wz \tau_H (p) \quad\forall R\ge 0.
 \end{equation}
By  Lemma \ref{LEM7.5}, we know    $\tau_H(0)=1/2$ and  $\tau_H\in C^0([0,\fz))$ is decreasing.

For $H\in C^0(\rr^2)$  satisfying (H1)\&(H2), we  define
\begin{equation}\label{eq1.8}\tau_{H }(R): =\sup_{\ez>0}
 \limsup_{\dz\to0}   \tau_{H^\dz} (R+\ez)=\sup_{\ez>0}
 \limsup_{\dz\to0} \inf_{ H^\dz(p)\le R+\ez}\wz \tau_{H^\dz} (p)
  \quad\forall R\ge 0,\end{equation}
where $\{H^\dz\}_{\dz\in(0,1]}$ is a standard smooth approximation of $H$ as given by \eqref{eq7.1} in  Appendix A.
 Since $H^\dz\in C^\fz(\rr^2)$ satisfies (H1)\&(H2) (see Lemma \ref{LEM7.3}),
   both of  $\tau_{H^\dz}$ and $\wz \tau_{H^\dz}$ appeared in \eqref{eq1.8} are given by \eqref{eq1.6}.
 By Remark \ref{REM7.4},  the definitions of
  $\tau_H$  given in \eqref{eq1.7} and \eqref {eq1.8} coincide whenever    $H\in C^2(\rr^2)$.
 By Lemma \ref{LEM7.5}, $\tau_H$ is a decreasing and right-continuous function in $[0,\fz)$, and moreover,
  $$   \frac12\left[\frac {\lambda_H  }  {  \Lambda_H  }\right]^2 \le \tau_H \le \frac12\quad \mbox{ in $[0,\fz)$}.$$

Finally, we give three remarks.

\begin{rem}\rm
In the special case $H(p)=\frac12|p|^2$, 
Theorems \ref{THM1.1}\&\ref{THM1.2} and   Corollary \ref{COR1.3} are exactly the same as
\cite[Theorems 1.1\&1.2]{kzz}. Moreover, Koch et al \cite{kzz} observe that
$|Du|^\alpha\in W^{1,2}_\loc(\Omega)$ is sharp as $\alpha\to0$
since $\log |Dw|\notin W^{1,2}_\loc(\rr^2)$ for the $\fz$-harmonic function $w(x)=x_1^{4/3}-x_2^{4/3}$ in   $\rr^2$.
This also shows that, for general $H$, we can not expect that  $\log [H(Du)+c] \in W^{1,2}_\loc(\Omega)$ where $c>0$ is any constant.
In this sense, $[H(Du)]^\alpha\in W^{1,2}_\loc(\rr^2)$ for $\alpha>0$ given in Corollary \ref{COR1.3} is asymptotic sharp when $\alpha\to0$.
\end{rem}

\begin{rem}\rm\label{REM1.5}  By some necessary modifications of the statements,
we may remove the assumption (H2) from the assumptions in Theorems \ref{THM1.1}\&\ref{THM1.2} and also Corollary \ref{COR1.3}.
Indeed, suppose that $\wz H\in C^k(\rr^2)$ with $k\ge 0$ satisfies (H1).  Thanks to
the locally strongly convexity,   $\wz H$ reaches its minimal $c_{\wz H}$
at a unique point $p^{\wz H}\in\rr^2$.
Write  $  H(p)=\wz H(p+p^{\wz H})-c_{\wz H}\quad\forall p\in\rr^2.$
 Then $ H\in C^k(\rr^2)$ satisfies  (H1)\&(H2).  Set
 $\lambda_{\wz H} =\lambda_H   $, $\Lambda_{\wz H} =\Lambda_H $ and  $\tau_{\wz H}=\tau_H  $,
  where   $ \lambda_H   $, $ \Lambda_H $ and  $ \tau_H  $ are defined in Section 1.2.

  Note that $\wz u\in AM_{\wz H}(\Omega)$ if and only if
  $ u=\wz u-\langle p^{\wz H}, \cdot\rangle \in AM_{  H}(\Omega)$; moreover,
  $ \wz H(D\wz u)-c_{\wz H} =   H(Du) $ and $-\det D^2u=-\det D^2\wz u$.
 In the case $k=0$,     Theorem \ref{THM1.1} holds for $ H$ and $u$ if and only if
it holds for $\wz H$ and $\wz u$ with  all  $H(D  u )$ replaced by $[\wz H(D\wz u)-c_{\wz H}]$.
 In the case $k=1$,    Theorem \ref{THM1.2} holds for $ H$ and $u$ if and only if
   it  holds for $\wz H$ and $\wz u$ with all    $H(D  u )$ replaced by $[\wz H(D\wz u)-c_{\wz H}]$ and  $|Du|^2$ by $|D\wz u-p^{\wz H}|^2$.
 In the case $k=2$,   Corollary \ref{COR1.3} holds for $ H$ and $u$ if and only if
 it holds for $\wz H$ and $\wz u$ with  all   $H(D  u )$ replaced by $[\wz H(D\wz u)-c_{\wz H}]$,
 and $H^{1/2}$ by $[\wz H-c_{\wz H}]^{1/2}$.
\end{rem}

 \begin{rem}\label{REM1.8}\rm
 We only need to prove Theorem  \ref{THM1.1}    when
   $H\in C^0(\rr^2)$ satisfies (H2) and
 \begin{enumerate}
\item[(H1') ]$H$ is strongly convex  and strongly  concave, that is, $0<\lambda_H(\fz) \le \Lambda_\fz(\fz) <\fz,$
    where $\lambda_H(\fz):=\lim_{R\to\fz}\lambda_H(R)$ and $\Lambda_\fz(\fz):=\lim_{R\to\fz}\Lambda_H(R)$.
\end{enumerate}
Indeed, suppose that $H\in C^0(\rr^2)$ satisfies (H1)\&(H2).
Let $\Omega\subset\rr^2$ be any domain and $u\in AM_H(\Omega)$.
To obtain Theorem \ref{THM1.1}, we only need to prove that $[H(Du)]^\alpha\in W^{1,2}_\loc(U)$ and \eqref{e1.x1} hold for any $U\Subset\Omega$ and $\alpha>1/2-\tau_{H}(\|H(Du)\|_{L^\fz(U)})$.
To this end, fix arbitrary $U\Subset   \Omega$.
Let $U\Subset \wz U\Subset \Omega$ and
 note
 $\|H(Du)\|_{L^\fz(\wz U)}<\fz$.
Letting $R:= \|H(Du)\|_{L^\fz(\wz U)}+1 $, by Lemma \ref{LEM7.8}
there exists $\wz H\in C^0(\rr^2)$   satisfying    (H1')\&(H2) such that $\wz H=H$ in $H^{-1}([0, R+1])$,
 $\tau_{H}=\tau_{\wz H} $, $\lambda_{H}=\lambda_{\wz H}$ and
 $\Lambda_{H}=\Lambda_{\wz H}$ in $[0, R+1)$.
Since $u\in AM_{ \wz H} (\wz U)$,
we only need to prove  that $[\wz H(Du)]^\alpha\in W^{1,2}_\loc(U)$ and \eqref{e1.x1} holds with $\wz H$ replaced by $H$ whenever  $\alpha>1/2-\tau_{\wz H}(\|\wz H(Du)\|_{L^\fz(U)})$.

By  Lemma \ref{LEM7.8} and similar reason as above, we only need to prove  Theorem  \ref{THM1.2}  when
   $H\in C^1(\rn)$ satisfies (H1')\&(H2), and Corollary \ref{COR1.3}
 when
   $H\in C^0(\rn)$ satisfies (H1')\&(H2).
 \end{rem}

\subsection{Organization  and ideas of  proofs }

This paper is organized as follows.
In Appendix A  we give several properties of auxiliary functions which we need,
 and in Appendix B we collect some properties of absolute minimizers used in this paper.
Sections 2-4 are devoted to the proofs of  Theorems \ref{THM1.1}\&\ref{THM1.2} when $H\in C^\fz(\rr^2)$
 satisfies  (H1')\&(H2), but some tedious proofs/calculations in Sections 2\&3 are postponed to   Section 7.
 In Sections 5-6, we prove   Theorem  \ref{THM1.1} (resp. \ref{THM1.2})  and
  Corollary \ref{COR1.3} when $H\in C^0(\rr^2)$ (resp. $H\in C^1(\rr^2)$) satisfies  (H1')\&(H2).
Note that from this and Remark \ref{REM1.8}, we conclude
 Theorem     \ref{THM1.1} (resp. \ref{THM1.2}) and Corollary \ref{COR1.3} for general $H\in C^0(\rr^2)$ (resp. $H\in C^1(\rr^2)$) satisfying (H1)\&(H2)

 In the sequel of this paper (except in  the Appendix)
we always assume that $H\in C^0(\rr^2)$ satisfies (H1')\&(H2).
The ideas of the proofs for Theorems \ref{THM1.1}\&\ref{THM1.2} and Corollary \ref{COR1.3} are sketched as below.

\medskip

  {\it Step 1.
 Assuming that $H\in C^\fz(\rr^2)$ satisfies  (H1'){\rm\&}(H2), we prove Theorems \ref{THM1.1}{\rm \&}\ref{THM1.2}}.
 The proofs  are partially
motivated by the approach developed by  \cite{kzz} in the special case $\frac12|p|^2$,
which is based on a structural identity of $\infty$-Laplace operator $\Delta_\fz$ and the well-known divergence formula for determinant; see \eqref{fund1}\&\eqref{fund2} in Section 2.4.
Since $H$ does not have Hilbert structure in general,
  the algebraic and geometric structures of Aronsson operator $\mathscr A_H$ are
much more complicated than those of  $\Delta_\fz$,  
 there are several  essential and also technical difficulties to prove Theorem \ref{THM1.1}\&\ref{THM1.2}. 

The first difficulty  is to understand the   fundamental  structure
of the Aronsson operator  $\mathscr A_H$ and its connection with determinant.
Fortunately,  in Section 2.1 we overcome  this difficulty by building up
a structural identity for $\mathscr A_H$  in plane:
	\begin{align}\label{fund3}
& (-{
		\rm det}D^2v )\langle [D^2_{pp} H (Dv)]^\ast  {D_pH}(Dv) , {D_pH}(Dv) \rangle \\
&\quad\quad =\langle  {D^2_{pp}H}(Dv)  D[H(Dv)] ,D[H(Dv)] \rangle-{\rm div}[ {D_pH} (Dv)] \mathscr A_{H }[v]
\quad {\rm }\forall v\in C^\fz,\nonumber
\end{align}
where  $(D^2_{pp} H)^\ast$   is the adjoint matrix of $D^2_{pp}H$; see Lemma \ref{LEM2.1}. Moreover,
  we derive in Lemma \ref{LEM2.2} a   divergence formula of the determinant:
	\begin{align}\label{fund4}&2(-\det D^2v )[\det D^2_{pp} H (Dv) ]\\
&\quad\quad={\rm div}\Big\{ {D^2_{pp}H}(Dv)   D[H(Dv)]  -{\rm div}[ {D_pH}(Dv)  ]    {D_pH}(Dv)   \Big\} \quad {\rm }\forall v\in C^\fz \nonumber
\end{align}
which turns out to match  with $\mathscr A_H$ in a perfect way. See Section 2.4 for some motivations.

The above identities  \eqref{fund3}\&\eqref{fund4}
  lead  us to consider the  $e^{\frac1{ \ez} H}$-harmonic equations in Sections 2.2\&2.3:
$$
{\rm div}\left[e^{\frac1{ \ez} H(Dv)}D_p H(Dv)\right] =\frac1{\ez}e^{\frac1{ \ez} H(Dv)}\left\{\mathscr A_H[v]+\ez\, {\rm div}[ {D_pH} (Dv)]\right\}=0\ {\rm in}  \  U,
$$
which was originally suggested by Evans \cite{e03}. If $u^\ez\in C^\fz(U)$ is an $e^{\frac1{ \ez} H}$-harmonic function,
from the key \eqref{fund3} and  another  key fact  that $\wz\tau_H(0)=1/2$ and $\wz\tau_H\in C^0(\rr^2)$
we derive  the following useful identity in Theorem \ref{LEM2.3}:
\begin{align}\label{fund5x}
 [-\det D^2u^\epsilon] [\det D ^2_{pp}H(Du^\ez)]
&= 4\wz \tau_H(Du^\ez)  \langle  {D^2_{pp}H}(Du^\ez)  D[H(Du^\ez)]^{1/2} ,D[H(Du^\ez)]^{1/2} \rangle  \\
 &\quad\quad+   \wz \tau_H(Du^\ez)
\frac{  \epsilon( {\rm div}[D_pH (Du^\ez)])^2}{ H(Du^\ez) }\quad\mbox{a.\ e. in $U$.}\nonumber
\end{align}
Note that to get this,  a careful/tedious analysis of the vanishing set of $Du^\ez$ is required.
By this and applying
  \eqref{fund4} for $-\det D^2u^\ez$, we  obtain the following
   quantative estimates in Lemmas \ref{LEM2.6}--\ref{LEM2.8} (whose proofs are postponed to Section 7):
 \begin{enumerate}
 \item[$\bullet$] 
 a quantative
$W^{1,2}(V)$-estimate for $[H(Du^\ez)+\sz]^\alpha$ via $\|H(Du)\|_{L^2(W)}$ and error terms
for any $V\Subset W$ and $\alpha>1/2-\tau_H(\|H(Du)\|_{L^\fz(W)})$, where $\sz=0$ when
$\alpha\ge1/2$ and $\sz>0$ when $\alpha<1/2$;
\item[$\bullet$] a similar $W^{1,2}(V)$-estimate for $e^{\frac1\ez H(Du^\ez)}$;
\item[$\bullet$] 
  an integral flatness estimate for $u^\ez$, that is, for any linear function $F$,
 the  $L^2(\frac12B)$-norm of $\langle D_p H(Du^\ez),Du^\ez-DF\rangle$
 is bounded via $(\frac{u^\ez-F}{r})^2$ for all balls $B\Subset U$.
\end{enumerate}

In Section 3, let $u$ be a viscosity solution to \eqref{eq1.2} in $ \Omega\Subset\rr^2$.
Let  $U\Subset\Omega$ be a smooth domain, for $\ez\in(0,1]$,  consider  the Dirichlet problem
$$
{\rm div}[e^{\frac1{ \ez} H(Dv)}D_pH(Dv)] =0\ {\rm in}  \  U; v|_{\partial U}=u|_{\partial U}.
$$
By the arguments of Evans \cite{e03} and \cite{ey04},     there is a unique solution
  $u^\epsilon\in C^{\fz}(U) \cap C(\overline U)$ to the above Dirichlet problem %
and, $u^\ez\to u$ in   $C(\overline U)$ as $\ez\to0$; see Theorem \ref{THM3.1}.
We next show in Theorem \ref{THM3.2}   that
 \begin{equation}\label{eq1.w1}  \limsup_{\ez\to0}\|H(Du^\ez)\|_{L^\fz(V)}\le  \|H(Du )\|_{L^\fz(U)} \quad \forall   \ V\Subset U, \end{equation}
  which is crucial for us to obtain the range of $\alpha$ in Theorem \ref{THM1.1}\&\ref{THM1.2}.
     To prove this, denote by   $\sz^\ez$ as the normalization of $e^{\frac1\ez H(Du^\ez)}$ and by $\phi$   a cut-off function for $V$.
Using the $W^{1,2}_\loc$-estimates of $e^{\frac1\ez H(Du^\ez)}$ in Lemma \ref{LEM2.6},
we derive some quantative estimates for $ \| \sz^\ez\phi ^{2^{k+1}}\|_{L^{2^{k+1}}(U)} $ in Lemma \ref{LEM3.3}\&\ref{LEM3.4}, whose proofs are postponed to Section 7.
This allows us to prove \eqref{eq1.w1} via a Moser type iteration similar to  that of Evans.
Note that the approach based on maximal principle in \cite{es11b}
  is not enough to get \eqref{eq1.w1}; see Remark \ref{REM3.5}.

  Moreover,  from $W^{1,2}(V)$-estimates of $[H(Du^\ez)]^\alpha$ in Lemma \ref{LEM2.7} and
  the integral flatness of $u^\ez$ in Lemma \ref{LEM2.8},
  we deduce the following uniform estimates and convergence in  Theorems \ref{THM3.5}\&\ref{THM4.2}:
 \begin{enumerate}
\item[$\bullet$]
 when $\alpha\ge 1/2$,  $[H(Du^\ez)]^\alpha\in W^{1,2}(V)$ uniformly in $\ez\in(0,1]$;    when $1/2-\tau_H(\|H(Du)\|_{L^\fz(U)})<\alpha<1/2$,
  $\liminf_{\ez\to0}\|[H(Du^\ez)+\sz]^\alpha\|_{L^2(V)}$ is bounded uniformly in $\sz\in(0,1]$.
\item[$\bullet$] 
when $\alpha\ge 1/2$, 
   $[H(Du^\ez)]^\alpha\to [H(Du)]^\alpha$ in $L^t _\loc(U) $ for all $t\ge1$ and weakly in $W^{1,2}_\loc(U)$ as $\ez\to0$;
    while when $1/2-\tau_H(\|H(Du)\|_{L^\fz(U)})<\alpha<1/2$,  
  $[H(Du^\ez)+\sz]^\alpha\to [H(Du)]^\alpha$ in $L^t _\loc(U) $ for all $t\ge1$ and weakly in $W^{1,2}_\loc(U)$ as $\ez\to0$ and $\sz\to0$ in order.  Moreover,  $u^\ez\to u$ in $W^{1,t}_\loc (U)$ for all $t\ge1$ as $\ez\to0$. 
 \end{enumerate}

From these uniform estimates and  convergence, in Section 4 we conclude Theorems \ref{THM1.1}\&  \ref{THM1.2}.
   We also deduce in Lemma \ref{LEM4.3} an integral flatness of $u$ from the integral flatness of $u^\ez$ in Lemma \ref{LEM2.8}
   and  the local Sobolev convergence  of $u^\ez$ above. 

\medskip
  {\it Step 2. Assuming that $H\in C^0(\rr^2)$ (resp. $H\in C^1(\rr^2)$) satisfies (H1'){\rm\&}(H2), we prove
  Theorem   \ref{THM1.1} (resp. \ref{THM1.2}) and Corollary \ref{COR1.3}.}
    Since
      \eqref{fund3}, \eqref{fund4} and     $e^{\frac1\ez H}$-harmonic functions  are not well-understood in this generality, the approach in Step 1  is not enough to prove Theorems \ref{THM1.1}\&\ref{THM1.2}      New ideas are needed.
    Instead of Evans' approximation, we consider another approximation approach.
   Precisely,
    let $\{H^\dz\}_{\dz\in(0,1]}$ be the smooth approximation of $H$ given in Appendix  A.
 Then $H^\dz\in C^\fz(\rr^2)$ satisfies (H1')\&(H2) uniformly.
 Given any $u\in AM_H(\Omega)$ and $U\Subset\Omega$,  let
 $$\mbox{$u^\dz\in C^0(\overline U)\cap AM_{H^\dz}(U)$ with $u^\dz=u$ on $\partial U$.}$$
In Theorem \ref{THM5.1}
 we show that $u^\dz\to u$ in $C^0(\overline U)$   and moreover,
\begin{equation} \label{eyy} \lim_{\dz\to0}\|H^\dz(Du^\dz)\|_{L^\fz(U)}\le \|H (Du )\|_{L^\fz(U)}.
\end{equation}

 Since Theorem \ref{THM1.1}\&\ref{THM1.2}  and the flatness estimate in Lemma \ref{LEM4.3} hold for $H^\dz$ and $u^\dz$,
using \eqref{eyy} we obtain the following uniform estimates  and convergence in
 Theorems \ref{THM5.2}\& \ref{THM5.3}:
 \begin{enumerate}
\item[$\bullet$]  there is a sequence $\{\dz_j\}_{ j\in\nn}$ which converges to $0$ such that
for  $\alpha>1/2-\tau_H(\|H(Du)\|_{L^\fz(U)})$,
  $[H^{\dz_j}(Du^{\dz_j})]^\alpha\in W^{1,2}_\loc(U)$ uniformly in $j\ge j_\alpha$.

\item[$\bullet$] For any $\alpha>1/2-\tau_H(\|H(Du)\|_{L^\fz(U)})$,  $[H^{\dz_j}(Du^{\dz_j})]^\alpha\to [H(Du)]^\alpha$ in $L^t _\loc(U) $ for all $t\ge1$ and weakly in $W^{1,2}_\loc(U)$ as $j\to \fz$.
    If $H\in C^1(\rr^2)$ additionally,
   $u^{\dz_j}\to u$ in $W^{1,t}_\loc (U)$  in $L^t_\loc(U)$ for all $t\ge1$ as $j\to\fz$.
\end{enumerate}

From these uniform estimates and convergence, and Lemma \ref{LEM7.8}, in Section 6 we conclude  Theorems  \ref{THM1.1}\&\ref{THM1.2}. We also conclude
Corollary \ref{COR1.3}   from Theorem \ref{LEM7.10x} and Theorem \ref{THM1.1}.
%

\subsection{Some conventions}
In this paper, let $C^0(E)$ be the set of continuous functions in a set $E\subset\rr^2$.
For $\alpha\in(0,1]$,  denote by $C^ {0,\alpha}(K)$  the set of all
 $\alpha$-order H\"older functions in a compact set $ K\subset\rr^2$,  and by
 $C^ {0,\alpha}(U)$  all functions in an open set $U\subset\rn$ which  belong  to
 $C^ {0,\alpha}(K)$ for any compact set $K\subset U$.
 For $k\ge1$, $C^k(U)$ is the set of all functions whose $k$-order derivative in $C^0(U)$, and $C^\fz(U)=\cap_{k\ge1} C^k(U)$. For $0\le k\le\fz$, $C^k_c(U)$ consists of all $C^ {k}(U)$-functions with compact supports.
  For $t\in[1,\fz]$ denote by $L^t(U)$ the $t$-th integrable Lebesgue space,
and by $L^t_\loc(U)$  the class of functions which belong to $L^t(V)$ for any $V\Subset U$.
Denote by $W^{1,t}(U)$ (resp. $W^{1,t}_\loc(U)$) the set of all
  functions whose $1$-order distributional derivatives are  in $L^t(U)$ (resp. $L^t_\loc(U)$).
Note that $C^{0,1}(U)=  W^{1,\fz}_\loc(U)$.

In this paper, we write $Dv: =(v_{x_i })_{i=1}^n$  with $ v_{x_i }=\frac{\partial  v}{\partial x_i } $
when $v\in C^1(U)$, and $D^2v: =(v_{x_i x_j})_{i,j=1}^n$   with $ v_{x_ix_j}=\frac{\partial^2 v}{\partial x_i\partial x_j} $   when $v\in C^2(U)$.  When $v\in L^1_\loc(U)$,
 we explain these notions  in  distributional sense.
 We also write $ D_pH=(H_{p_i})_{i=1}^n$ with $ H_{p_i} =
\frac{\partial H }{\partial p_i}  $   when $H\in C^1(\rn)$, and $ D^2_{pp}H=(H_{p_ip_j})_{i,j=1}^n$  and  $H_{p_ip_j} =\frac{\partial^2 H }{\partial p_i\partial p_j}  $     when $H\in C^2(\rn)$.

For two vectors $a=(a_i)_{i=1}^n$ and $b=(b_i)_{i=1}^n$,
write $\langle a, b\rangle=\sum_{i=1}^na_ib_i$,
and by Einstein summation convention we also write $\sum_{i=1}^n a_ib_i$ as $a_ib_i$.
 The notion $V\Subset U$ means that both of $V,U$ are open set, $V$ is bounded and $\overline V \subset U$.
 Moreover, we write $C$ as an absolute constant,
 or an constant  independent of the main parameters when there is no confusion,
 moreover, write  $C(a,b,..)$ as a constant depending on the parameters $a, b,\cdots$. For a measurable set $E\subset $ with $|E|>0$ and we write $\mint-_Ef\,dx=\frac1{|E|}\int_Ef(x)\,dx$.

\section{Structural identities and  apriori estimates   when $H\in C^\fz(\rr^2)$}
Suppose that $H \in C^\fz(\rr^2)$ satisfies  (H1')\&(H2) in this section.

In Section 2.1,  we build up a fundamental structural identity for $\mathscr A_H$ (see Lemma \ref{LEM2.1}),
and  a  divergence formula of $-\det D^2v$ matching with $\mathscr A_H$ perfectly (see Lemma \ref{LEM2.2}).
See Section 2.4 for some ideas/motivations to find the  two identities.

In Sections 2.2\&2.3,
applying Lemma \ref{LEM2.1}, we conclude   a   key identity connecting $-\det D^2u^\ez$, ${\,\rm div\,}[D_pH(Du^\ez)]$ and $D[H(Du^\ez)]^{1/2}$ via  $\wz\tau_H$  for any $e^{\frac1\ez H}$-harmonic function  $u^\ez\in C^\fz(U)$; see Theorem \ref{LEM2.3}.
Via this and Lemma \ref{LEM2.2}  we derive  $W^{1,2}_\loc$-estimates of
$[H(Du^\ez)+\sz]^\alpha$  and $e^{\frac1\ez H(Du^\ez)}$ (see Lemmas \ref{LEM2.6}\&\ref{LEM2.7}),
and also an integral flatness estimates for $u^\ez$ (see Lemma \ref{LEM2.8}).

\subsection{Two structural identities for Aronsson's operator}
Suppose that  $H\in C^\fz(\rn)$ is convex.   We have the following fundamental structural identity for $\mathscr A_H$.
For any matrix $A=[a_{ij}]_{i,j=1}^2$, denote by $A^\ast$  its adjoint matrix, that is,
	$$A^\ast=\left[
	\begin{array}{cc}
	a_{22}&\ -a_{21} \\
	-a_{12}&\ a_{11}\\
	\end{array}
	\right].$$
\begin{lem}\label{LEM2.1} Let $U\subset \rr^2$ be a domain. For any $v \in C^\fz(U)$,  we have
	\begin{align*}
&\langle  {D^2_{pp}H}(Dv)  D[H(Dv)] ,D[H(Dv)] \rangle-{\rm div}[ {D_pH} (Dv)] \mathscr A_{H }[v]\\
&\quad\quad\quad\quad\quad= (-{
		\rm det}D^2v )\langle [D^2_{pp} H(Dv)]^\ast  {D_pH}(Dv) , {D_pH}(Dv) \rangle\quad {\rm in}\ U.
\end{align*}
\end{lem}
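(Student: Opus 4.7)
The plan is to reduce the identity, via the chain rule, to a purely algebraic statement about symmetric $2\times 2$ matrices, and then verify that statement through a Cayley--Hamilton type manipulation special to dimension two; no PDE or viscosity ingredient enters.

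First I would fix a point in $U$ and abbreviate (all quantities evaluated there)
$$a := D^2_{pp} H(Dv), \qquad b := D_p H(Dv), \qquad c := D^2 v,$$
so that $a,c$ are symmetric $2\times 2$ matrices and $b\in\rr^2$. The chain rule yields the three identifications
$$D[H(Dv)] = c\,b, \qquad {\rm div}[D_p H(Dv)] = \mathrm{tr}(ac), \qquad \mathscr A_H[v] = \langle c b, b \rangle.$$
Substituting and using the symmetry of $c$, the left hand side of the claimed identity rewrites as
$$\langle a(cb), cb \rangle - \mathrm{tr}(ac)\,\langle c b, b \rangle \;=\; \langle cac\,b, b \rangle - \mathrm{tr}(ac)\,\langle cb, b \rangle,$$
while the right hand side equals $(-\det c)\,\langle a^\ast b, b\rangle$. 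Thus the lemma reduces to the scalar assertion
$$\bigl\langle \bigl[\,cac \,-\, \mathrm{tr}(ac)\, c \,+\, \det(c)\, a^\ast\,\bigr]\,b,\;b \bigr\rangle \;=\; 0 \qquad \forall\, b\in\rr^2.$$

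The crux is therefore the purely algebraic identity
\begin{equation*}
  c\,a\,c \;=\; \mathrm{tr}(ac)\, c \,-\, \det(c)\, a^\ast,
\end{equation*}
valid for arbitrary symmetric $2\times 2$ matrices $a,c$. Sandwiching by $b$ yields the desired scalar equality at once. I would derive the matrix identity itself from the Cayley--Hamilton relation $c^2 = \mathrm{tr}(c)\, c - \det(c)\, I$ (equivalently $c^\ast = \mathrm{tr}(c)\, I - c$), combined with the standard $2 \times 2$ polarization $AB + BA = \mathrm{tr}(A)\, B + \mathrm{tr}(B)\, A + (\mathrm{tr}(AB) - \mathrm{tr}(A)\,\mathrm{tr}(B))\, I$ applied with $A = c$ and $B = ac$; alternatively one can just verify the identity in coordinates by inspecting the three independent entries of $cac - \mathrm{tr}(ac)\, c + \det(c)\, a^\ast$ and checking that each vanishes.

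The only real obstacle is careful bookkeeping with symmetries; there is no analytic subtlety in the statement, and the work is entirely pointwise and linear-algebraic. It is worth stressing that the underlying matrix identity is a genuinely two-dimensional phenomenon and has no direct analogue in dimensions $n \ge 3$, which is consistent with the planar scope of the main theorems of the paper.
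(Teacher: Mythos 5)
Your reduction to the matrix identity
\begin{equation*}
  c\,a\,c \;=\; \operatorname{tr}(ac)\, c \,-\, \det(c)\, a^{\ast}, \qquad a,c \text{ arbitrary } 2\times 2,
\end{equation*}
is correct, and the chain-rule translations $D[H(Dv)]=cb$, $\operatorname{div}[D_pH(Dv)]=\operatorname{tr}(ac)$, $\mathscr A_H[v]=\langle cb,b\rangle$ are all right, so the argument closes. The derivation of the matrix identity itself also checks out: polarizing Cayley--Hamilton with $A=c$, $B=ac$ gives $cac+ac^2=\operatorname{tr}(c)ac+\operatorname{tr}(ac)c+(\operatorname{tr}(cac)-\operatorname{tr}(c)\operatorname{tr}(ac))I$; eliminating $ac^2$ by $c^2=\operatorname{tr}(c)c-\det(c)I$ and using $\operatorname{tr}(cac)=\operatorname{tr}(c^2a)=\operatorname{tr}(c)\operatorname{tr}(ac)-\det(c)\operatorname{tr}(a)$ yields $cac=\operatorname{tr}(ac)c+\det(c)(a-\operatorname{tr}(a)I)=\operatorname{tr}(ac)c-\det(c)a^\ast$. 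This is a genuinely different route from the paper's: the paper verifies the identity by expanding the quadratic form $v_{x_lx_j}H_{p_j}H_{p_lp_i}v_{x_ix_k}H_{p_k}$ in coordinates and regrouping terms to exhibit $\mathscr A_H[v]$ and $-\det D^2v$ by hand, which is opaque bookkeeping; your version isolates exactly where the planarity is used (Cayley--Hamilton / the fact that $a^\ast=\operatorname{tr}(a)I-a$) and makes clear that the identity is a pointwise statement about a pair of $2\times2$ matrices rather than anything specific to Hessians. The price is that the matrix-level identity is slightly stronger than what is needed (symmetry of $a,c$ is not used), but that only makes the argument cleaner. One small remark: you do not need to pass through ``the symmetric part vanishes'' since $cac$, $\operatorname{tr}(ac)c$, and $a^\ast$ are each already symmetric when $a,c$ are, so the quadratic-form equality is equivalent to the matrix equality outright.
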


\begin{proof}
At each $x\in U$, write
\begin{align*}
			 \langle D^2_{pp}H(Dv)  D[H(Dv)] ,D[H(Dv)] \rangle&=v_{x_lx_j}H _{p_j}(Dv) H _{p_lp_i}(Dv) v_{x_ix_k}H _{p_k}(Dv) \\
			&=H _{p_1p_1}(Dv) v_{x_1x_j} H _{p_j}(Dv) v_{x_1x_k}H _{p_k}(Dv)\\
 &\quad+H _{p_2p_2}(Dv) v_{x_2x_j}H _{p_j}(Dv) v_{x_2x_k}H _{p_k}(Dv)\\
&\quad +2H _{p_1p_2}(Dv) v_{x_1x_j}
			H _{p_j}(Dv) v_{x_2x_k}H _{p_k}(Dv).
\end{align*}
Noting that  $(v_{x_1x_2})^2-v_{x_1x_1}v_{x_2x_2}=-\det D^2v$ and
$$\mathscr A_H[v]= v_{x_1x_1}[H _{p_1}(Dv) ]^2+2H _{p_1}(Dv)H _{p_2}(Dv)  v_{x_1x_2}+v_{x_2x_2}[H _{p_2}(Dv) ]^2,$$
we have
\begin{align*}
&  v_{x_1x_j} H _{p_j}(Dv) v_{x_1x_k}H _{p_k}(Dv)\\
&\quad=  [v_{x_1x_1}H _{p_1}(Dv) ]^2+[v_{x_1x_2}H _{p_2}(Dv) ]^2+2v_{x_1x_1}v_{x_1x_2}H _{p_1}(Dv) H _{p_2}(Dv) \\
&\quad=  v_{x_1x_1} \Big\{v_{x_1x_1}[H _{p_1}(Dv) ]^2+2H _{p_1}(Dv)H _{p_2}(Dv)  v_{x_1x_2}+v_{x_2x_2}[H _{p_2}(Dv) ]^2\Big\}\\
	&\quad\quad-v_{x_1x_1}v_{x_2x_2}
			[H _{p_2}(Dv) ]^2
+   [v_{x_1x_2}H _{p_2}(Dv) ]^2\\
&\quad=    v_{x_1x_1}\mathscr A_H[v]  +(-\det D^2v)
			[H _{p_2}(Dv) ]^2.
\end{align*}
Similarly, we have
\begin{align*}
&  v_{x_2x_j} H _{p_j}(Dv) v_{x_2x_k}H _{p_k}(Dv) =   v_{x_2x_2}\mathscr A_H[v]  +(-\det D^2v)
			[H _{p_1}(Dv)]^2.
\end{align*}
Moreover,
\begin{align*}
 &  v_{x_1x_j}
			H _{p_j}(Dv) v_{x_2x_k}H _{p_k}(Dv)\\
&\quad=  [v_{x_1x_1}H _{p_1}(Dv) +v_{x_1x_2}H _{p_2}(Dv) ][v_{x_2x_1}H _{p_1}(Dv) +v_{x_2x_2}H _{p_2}(Dv) ]\\
& \quad= v_{x_1x_2}\Big\{v_{x_1x_1}[H _{p_1}(Dv)]^2+v_{x_2x_2}[H _{p_2}(Dv)]^2+2v_{x_1x_2}H _{p_1}(Dv) H _{p_2}(Dv)  \Big\}\\
			&\quad\quad -  [v_{x_1x_2}]^2H _{p_1}(Dv) H _{p_2}(Dv) +  v_{x_1x_1}v_{x_2x_2}H _{p_1}(Dv) H _{p_2}(Dv) \\
& \quad= v_{x_1x_2}\mathscr A_H[v]-(-\det D^2v) H _{p_1}(Dv) H _{p_2}(Dv).
\end{align*}
Combining them together, we obtain
\begin{align*}
			&\langle D^2_{pp}H(Dv)  D[H(Dv)] ,D[H(Dv)] \rangle\\
&\quad=H _{p_1p_1}(Dv) v_{x_1x_1}\mathscr A_H[v]  +(-\det D^2v)H _{p_1p_1}(Dv)
			[H _{p_2}(Dv) ]^2\\
 &\quad\quad+H _{p_2p_2}(Dv) v_{x_2x_2}\mathscr A_H[v]  +(-\det D^2v)H _{p_2p_2}(Dv)
			[H _{p_2}(Dv) ]^2\\
&\quad\quad +2H _{p_1p_2}(Dv) v_{x_1x_2}\mathscr A_H[v]-(-\det D^2v) 2H _{p_1p_2}(Dv)H _{p_1}(Dv) H _{p_2}(Dv).
	\end{align*}
Noting that
	$${\rm div}[D_p H (Dv)]=H _{p_1p_1}(Dv)v_{x_1x_1}+H _{p_2p_2}(Dv)v_{x_2x_2}+2H _{p_1p_1}(Dv)v_{x_1x_2},$$
   and 	\begin{align*}
 &\langle [D^2_{pp} H (Dv)]^\ast  {D_pH}(Dv) , {D_pH}(Dv) \rangle\\
 &\quad=H _{p_1p_1}(Dv) [H _{p_2}(Dv) ]^2 +H _{p_2p_2}(Dv) [H _{p_1}(Dv) ]^2 - 2H _{p_1}(Dv) H _{p_2}(Dv) H _{p_1p_1}(Dv),
\end{align*}
we conclude
 \begin{align*}
			&\langle D^2_{pp}H(Dv)  D[H(Dv)] ,D[H(Dv)] \rangle\\
			   			&\quad={\rm div}[D_p H (Dv)]{\mathscr A}_{H }[v]+(-\det D^2v)\langle [D^2_{pp} H (Dv)]^\ast  {D_pH}(Dv) , {D_pH}(Dv) \rangle
	\end{align*}
as  desired.  This completes the proof of Lemma\ref{LEM2.1}.
\end{proof}

To get a divergence formula of $-\det D^2v $  matching with  $\mathscr A_H$, we
write the left hand side of the identity in Lemma \ref{LEM2.1} as
\begin{align*}
&\langle  {D^2_{pp}H}(Dv)   D[H(Dv)]  ,D[H(Dv) ] \rangle-{\rm div}[ {D_pH}(Dv)  ] \mathscr A_{H }[v]\\
&\quad=
 \langle  {D^2_{pp}H}(Dv)   D[H(Dv)]  -{\rm div}[ {D_pH}(Dv)  ]    {D_pH}(Dv)  , D[H (Dv)] \rangle.
 \end{align*}
 Then the divergence of  the vector fields $$ {D^2_{pp}H}(Dv)   D[H(Dv)]  -{\rm div}[ {D_pH}(Dv)  ]    {D_pH}(Dv)$$ gives a divergence formula of $-\det D^2v $ as below.
\begin{lem}\label{LEM2.2} Let $U\subset \rr^2$ be a domain. For any $v \in C^\fz(U)$,  we have
\begin{align*}&2(-\det D^2v ) [\det  D^2_{pp}  H(Dv) ]
 ={\rm div}\Big\{ {D^2_{pp}H}(Dv)   D[H(Dv)]  - {\rm div}[ {D_pH}(Dv)  ]    {D_pH}(Dv)   \Big\}
 \quad{\rm in}\ U.\end{align*}
\end{lem}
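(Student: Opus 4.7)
The plan is to rewrite the bracketed vector field in a cofactor form that makes Piola's identity applicable in both $x$ and $p$ variables, then compute its divergence. Write $W := D^2_{pp}H(Dv)\, D[H(Dv)] - \operatorname{div}[D_pH(Dv)]\, D_pH(Dv)$. Since $D[H(Dv)] = (D^2v)\, D_pH(Dv)$ and $\operatorname{div}[D_pH(Dv)] = \operatorname{tr}(D^2_{pp}H(Dv) \cdot D^2v)$, each component of $W$ has the shape $(MV)_i - \operatorname{tr}(M)\, V_i$ with $M := D^2_{pp}H(Dv) \cdot D^2v$ and $V := D_pH(Dv)$. The two-dimensional identity $M - \operatorname{tr}(M) I = -M^*$, together with $(AB)^* = B^* A^*$, then compresses this into $W = -(D^2v)^* (D^2_{pp}H)^* D_pH(Dv)$. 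This matrix-level rewriting is the setup that makes the computation clean.

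With $W$ in this cofactor-of-cofactor form, the divergence splits via the product rule into three kinds of terms, and I would handle them in turn. First, the contribution where $\partial_{x_i}$ lands on $(D^2v)^*_{ij}$ vanishes by the Piola identity $\partial_{x_i}(D^2v)^*_{ij} = 0$, which is an immediate consequence of equality of mixed partials of $v$. Second, the contribution in which the chain rule pushes a $p$-derivative onto $(D^2_{pp}H)^*_{jk}$ can be rewritten, using the identity $(D^2v)^*_{ij}\, v_{x_i x_l} = (\det D^2v)\,\delta_{jl}$ (from $(D^2v)^* D^2v = (\det D^2v) I$), as a constant multiple of $(\det D^2v)\, \partial_{p_j}(D^2_{pp}H)^*_{jk}\, H_{p_k}(Dv)$; this vanishes by the Piola identity applied to the Hessian of $H$ in the $p$-variables.

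Only the contribution where the chain rule hits $H_{p_k}$ survives, and it reduces by two applications of $A^* A = (\det A) I$: first $(D^2_{pp}H)^*_{jk}\, H_{p_k p_l} = (\det D^2_{pp}H)\,\delta_{jl}$ collapses the $p$-contraction, and then $(D^2v)^*_{ij}\, v_{x_i x_j} = \operatorname{tr}((D^2v)^* D^2v) = 2\det D^2v$ collapses the $x$-contraction, producing exactly $2(\det D^2_{pp}H)(\det D^2v)$. Combined with the overall minus sign from the rewriting $W = -M^* V$, this yields $\operatorname{div} W = 2(-\det D^2v)(\det D^2_{pp}H)$ as claimed. The main obstacle is bookkeeping the multi-index contractions; organizing everything at the matrix level from the outset, rather than component by component, is what makes both Piola cancellations visible and avoids a tedious direct expansion in which the apparent third-order-derivative terms in $v$ and in $H$ would have to be cancelled by hand via symmetry.
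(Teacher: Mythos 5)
Your proof is correct, and it takes a genuinely different and more structural route than the paper's. The paper proves Lemma \ref{LEM2.2} by a direct expansion of the divergence in index notation: it computes all the terms coming from the product rule, cancels the contributions involving third derivatives of $H$ by a symmetry observation, and then recombines the surviving second-order pieces into $2(-\det D^2v)\det D^2_{pp}H$. You instead compress the vector field to begin with: writing $M=D^2_{pp}H(Dv)\,D^2v$ and $V=D_pH(Dv)$, the $2\times 2$ Cayley--Hamilton identity $\operatorname{adj}(M)=\operatorname{tr}(M)I-M$ together with $\operatorname{adj}(AB)=\operatorname{adj}(B)\operatorname{adj}(A)$ gives
\[
D^2_{pp}H(Dv)\,D[H(Dv)]-\operatorname{div}[D_pH(Dv)]\,D_pH(Dv)= -\operatorname{adj}(D^2v)\,\operatorname{adj}(D^2_{pp}H(Dv))\,D_pH(Dv),
\]
after which the divergence decomposes into three pieces, two of which vanish by the Piola (null-Lagrangian) identity for $\operatorname{adj}(D^2v)$ in the $x$-variables and for $\operatorname{adj}(D^2_{pp}H)$ in the $p$-variables, and the surviving piece collapses via $\operatorname{adj}(A)A=(\det A)I$ in each variable to $2(\det D^2_{pp}H)(\det D^2v)$. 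What your approach buys is conceptual clarity: the two third-order cancellations that the paper handles by hand are both subsumed by the single well-known fact that cofactor matrices of Jacobians (here, Hessians) are divergence-free; one sees in advance that those terms must drop, rather than discovering it after an index battle. The paper's computation is more self-contained but obscures why the divergence formula works. One small bookkeeping remark: the paper's symbol $A^\ast$ denotes $\operatorname{adj}(A)^T$ (equivalently $\operatorname{adj}(A^T)$), not the ordinary adjugate, so $(AB)^\ast=A^\ast B^\ast$ rather than $B^\ast A^\ast$ under the paper's convention; your identities are the standard adjugate ones. This has no effect on the argument, since $D^2v$ and $D^2_{pp}H$ are symmetric and the two conventions coincide on symmetric matrices, and the only non-symmetric matrix appearing, $M$, never survives into the final contractions --- but it is worth aligning the notation with the paper's if this is inserted there.
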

\begin{proof} By a direct calculation, at each $x\in U$  we have
\begin{align*}
J&:={\rm div}\Big\{ {D^2_{pp}H}(Dv)  D[H (Dv)]-{\rm div}[ {D_pH}(Dv) ] {D_pH} (Dv)   \Big \}\\
& =  [H _{p_ip_m}(Dv)v_{x_mx_j}H_{p_j}(Dv)]_{x_i}-[H_{p_i}(Dv)H _{p_mp_j}(Dv)v_{x_mx_j}]_{x_i} \\
& =  [H _{p_ip_m}(Dv) H_{p_j}(Dv)]_{x_i}v_{x_mx_j}-[H_{p_i}(Dv)H _{p_mp_j}(Dv)]_{x_i}v_{x_mx_j} \\
& =  H _{p_ip_m}(Dv)[H_{p_j}(Dv)]_{x_i} v_{x_mx_j}-[H_{p_i}(Dv)]_{x_i}H _{p_mp_j}(Dv)v_{x_mx_j}  \\
&\quad +H_{p_j}(Dv)[H _{p_ip_m}(Dv)]_{x_i}v_{x_mx_j} -  H_{p_i}(Dv)[H _{p_mp_j}(Dv)]_{x_i}v_{x_mx_j} .
\end{align*}
Note that
 \begin{align*}
H_{p_i}(Dv)[H _{p_mp_j}(Dv)]_{x_i}v_{x_mx_j} &= H_{p_i}(Dv) v_{x_lx_i}H _{p_mp_jp_l}(Dv) v_{x_mx_j}=H_{p_j}(Dv)[H _{p_ip_m}(Dv)]_{x_i}v_{x_mx_j}.
\end{align*}
Write  \begin{align*}
H _{ p_ip_m}(Dv)[H_{p_j}(Dv)]_{x_i}v_{x_jx_m}
 &= H _{ p_ip_m}(Dv)v_{x_ix_l}H _{p_lp_j}(Dv)v_{x_jx_m} \\
&=  H _{p_1p_i}(Dv)v_{x_ix_1}H _{p_1p_j}(Dv)v_{x_jx_1}
+2H _{p_1p_i}(Dv)v_{x_ix_2}H _{p_2p_j}(Dv)v_{x_jx_1}\\
&\quad+
H _{p_2p_i}(Dv)v_{x_ix_2}H _{p_2p_j}(Dv)v_{x_jx_2}
\end{align*}
and similarly,
\begin{align*}-  [H_{p_i}(Dv)]_{x_i}H _{p_mp_j}(Dv)v_{x_mx_j}
&=- H _{p_1p_s}(Dv)v_{x_sx_1}H _{p_1p_j}(Dv)v_{x_j x_1}-2H _{p_1p_s}(Dv)v_{x_sx_1}H _{p_2p_j}(Dv)v_{x_j x_2}\\
&\quad - H _{p_2p_s}(Dv)v_{x_sx_2}H _{p_2p_j}(Dv)v_{x_j x_2}.\end{align*}
One gets
 \begin{align*}J
= 2H _{p_1p_i}(Dv)v_{x_ix_2}H _{p_2p_j}(Dv)v_{x_jx_1}-2H _{p_1p_s}(Dv)v_{x_sx_1}H _{p_2p_j}(Dv)v_{x_j x_2}.
\end{align*}
Since
\begin{align*}& 2H _{p_1p_i}(Dv)v_{x_ix_2}H _{p_2p_j}(Dv)v_{x_jx_1}\\
&\quad=  2H _{p_1p_1}(Dv)v_{x_1x_2}H _{p_2p_1}(Dv)v_{x_1x_1}+ 2H _{p_1p_2}(Dv)v_{x_2x_2}H _{p_2p_1}(Dv)v_{x_1x_1}\\
&\quad\quad +2H _{p_1p_1}(Dv)v_{x_1x_2}H _{p_2p_2}(Dv)v_{x_2x_1}+  2H _{p_1p_2}(Dv)v_{x_2x_2}H _{p_2p_2}(Dv)v_{x_2x_1}
\end{align*}
and \begin{align*}
&-2H _{p_1p_s}(Dv)v_{x_sx_1}H _{p_2p_j}(Dv)v_{x_j x_2}\\
&\quad= -2H _{p_1p_1}(Dv)v_{x_1x_1}H _{p_2p_1}(Dv)v_{x_1 x_2}- 2  H _{p_1p_1}(Dv)v_{x_1x_1}H _{p_2p_2}(Dv)v_{x_2 x_2}\\
&\quad\quad- 2H _{p_1p_2}(Dv)v_{x_2x_1}H _{p_2p_1}(Dv)v_{x_1 x_2}- 2H _{p_1p_2}(Dv)v_{x_2x_1}H _{p_2p_2}(Dv)v_{x_2 x_2} , \end{align*}
we conclude that
\begin{align*}
J&=     2H _{p_1p_2}(Dv)v_{x_2x_2}H _{p_2p_1}(Dv)v_{x_1x_1}  + 2H _{p_1p_1}(Dv)v_{x_1x_2}H _{p_2p_2}(Dv)v_{x_2x_1}\\
&\quad-2  H _{p_1p_1}(Dv)v_{x_1x_1}H _{p_2p_2}(Dv)v_{x_2 x_2} - 2H _{p_1p_2}(Dv)v_{x_2x_1}H _{p_2p_1}(Dv)v_{x_1 x_2}\\
&=2[(v_{x_1x_2})^2-v_{x_1x_1}v_{x_2x_2} ]\Big\{ H _{p_1p_1}(Dv)H _{p_2p_2}(Dv)-[H _{p_1p_2}(Dv)]^2\Big\}\\
& =2(-\det D^2v) \det D^2_{pp} H(Dv)
\end{align*}
as desired.  This completes the proof of Lemma\ref{LEM2.2}.
\end{proof}

\subsection{An identity  for $-\det D^2u^\ez$ 
}

Suppose   $H\in C^\fz(\rn)$ satisfies  (H1')\&(H2).   Let $U\subset\rn$ be a domain.
For any $\epsilon \in(0,1]$  let $u^\epsilon\in C^\infty(U)$ be a  solution to the equation
\begin{equation}\label{eq2.1}
 {\mathscr A}_{H }[u^\epsilon ]+\epsilon \,{\rm{div}}[D_p H (Du^\epsilon) ] =0\quad{\rm in}\ U,\end{equation}
or equivalently,    the $e^{\frac1\ez H}$-harmonic equation
\begin{equation}\label{eq2.1x}
{\rm div}\left[e^{\frac1{ \ez} H(Du^\ez)}D_p H(Du^\ez)\right]= \frac1\ez e^{\frac1{ \ez} H(Du^\ez)}\left\{{\mathscr A}_{H }[u^\epsilon ]+\epsilon \,{\rm{div}}[D_p H (Du^\epsilon) ]\right\}=0\quad{\rm in}\ U.\end{equation}

By Lemma \ref{LEM2.1} we have the following results.
Recall that $\wz \tau_H$ is defined in \eqref{eq1.7}. The fact that $\wz\tau_H(0)=1/2$ and  $\wz\tau_H\in C^0([0,\fz))$ given by Lemma \ref{LEM7.2} plays a important role here.

\begin{thm}\label{LEM2.3} We have $[H(Du^\ez)]^{1/2}\in C^{0,1}(U)$, and
\begin{align}\label{eq2.y1}
[-\det D^2u^\epsilon] [\det D ^2_{pp}H(Du^\ez)]
&= 4\wz \tau_H(Du^\ez)  \langle  {D^2_{pp}H}(Du^\ez)  D[H(Du^\ez)]^{1/2} ,D[H(Du^\ez)]^{1/2} \rangle  \\
&\quad+   \wz \tau_H(Du^\ez)
\frac{  \epsilon( {\rm div}[D_pH (Du^\ez)])^2}{ H(Du^\ez) } \nonumber
\end{align}
for all $  x\in U$  at which  $  [H  (Du^\ez )]^{1/2} $  is differentiable.
 In particular,  $-\det D^2u^\epsilon\ge0$ in $ U $.

\end{thm}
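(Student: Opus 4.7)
The plan is to combine the structural identity of Lemma \ref{LEM2.1} with the PDE \eqref{eq2.1}, then use the $2\times 2$ adjoint identity $A^\ast=(\det A)A^{-1}$ together with the definition \eqref{eq1.6} of $\wz\tau_H$ to derive \eqref{eq2.y1} on the open set $\{Du^\ez\neq 0\}$. First I would establish $[H(Du^\ez)]^{1/2}\in C^{0,1}(U)$. Under (H1')\&(H2), there exist $0<\lambda\le\Lambda<\fz$ with $\tfrac{\lambda}{2}|p|^2\le H(p)\le\tfrac{\Lambda}{2}|p|^2$ on $\rr^2$ (since $H(0)=\min H=0$ forces $D_pH(0)=0$). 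Hence $|D_pH(p)|\le\Lambda|p|\le\Lambda\sqrt{2/\lambda}\,[H(p)]^{1/2}$, so $p\mapsto[H(p)]^{1/2}$ is globally Lipschitz on $\rr^2$; composing with $Du^\ez\in C^\fz(U)$ yields the desired regularity.

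Next, at any $x\in U$ with $Du^\ez(x)\neq 0$ (equivalently $H(Du^\ez)(x)>0$), I would substitute $\mathscr A_H[u^\ez]=-\ez\,{\rm div}[D_pH(Du^\ez)]$ into Lemma \ref{LEM2.1} to obtain
\begin{align*}
&\langle D^2_{pp}H(Du^\ez)D[H(Du^\ez)],D[H(Du^\ez)]\rangle+\ez\bigl({\rm div}[D_pH(Du^\ez)]\bigr)^2\\
&\quad=(-\det D^2u^\ez)\,\langle [D^2_{pp}H(Du^\ez)]^\ast D_pH(Du^\ez),D_pH(Du^\ez)\rangle.
\end{align*}
Since $A:=D^2_{pp}H(Du^\ez)$ is symmetric positive definite, $A^\ast=(\det A)A^{-1}$ yields $\langle A^\ast D_pH,D_pH\rangle=(\det A)\langle A^{-1}D_pH,D_pH\rangle$, and definition \eqref{eq1.6} gives $\langle A^{-1}D_pH(Du^\ez),D_pH(Du^\ez)\rangle=H(Du^\ez)/\wz\tau_H(Du^\ez)$. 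Using $D[H(Du^\ez)]=2[H(Du^\ez)]^{1/2}D[H(Du^\ez)]^{1/2}$, dividing by $H(Du^\ez)$ and multiplying by $\wz\tau_H(Du^\ez)$ then produces \eqref{eq2.y1} at $x$.

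On the interior of $\{Du^\ez=0\}$, $u^\ez$ is locally affine and both sides of \eqref{eq2.y1} vanish trivially; thus the identity holds at every differentiability point of $[H(Du^\ez)]^{1/2}$. For the non-negativity $-\det D^2u^\ez\ge 0$: on $\{Du^\ez\neq 0\}$ the right-hand side of \eqref{eq2.y1} is a sum of non-negative quantities (using $\wz\tau_H>0$ from Lemma \ref{LEM7.2} and the positive definiteness of $D^2_{pp}H$), while $\det D^2_{pp}H(Du^\ez)>0$ by strong convexity; smoothness of $u^\ez$ then extends the bound to all of $U$ by continuity. The main technical care will go into keeping track of the zero set $\{Du^\ez=0\}$ when passing from pointwise to everywhere statements, but this is handled by Rademacher's theorem applied to the Lipschitz function $[H(Du^\ez)]^{1/2}$ combined with the continuity of $\det D^2u^\ez\in C^\fz(U)$.
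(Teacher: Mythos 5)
Your argument covers the open set $\{Du^\ez\neq 0\}$ (via Lemma \ref{LEM2.1}, the PDE, and $A^\ast=(\det A)A^{-1}$) and the interior of $\{Du^\ez=0\}$ (where both sides vanish because $u^\ez$ is locally affine). But the theorem asserts \eqref{eq2.y1} at \emph{every} point $\bar x\in U$ where $[H(Du^\ez)]^{1/2}$ is differentiable, and this includes points of $\partial\{Du^\ez=0\}\cap U$. At such a boundary point $u^\ez$ is not locally affine, $D^2u^\ez(\bar x)$ need not be zero, and $D[H(Du^\ez)]^{1/2}(\bar x)$ need not vanish either. Your closing remark that ``this is handled by Rademacher's theorem ... combined with the continuity of $\det D^2u^\ez$'' does not close the gap: continuity of $\det D^2u^\ez$ lets you pass $-\det D^2u^\ez\ge 0$ from the dense open set to all of $U$, but the right-hand side of \eqref{eq2.y1} involves $D[H(Du^\ez)]^{1/2}$, which is only an a.e.-defined gradient of a Lipschitz function and has no continuity you can lean on, so equality on a dense subset does not transfer.

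The paper devotes the bulk of its proof to exactly this missing case. When $Du^\ez(\bar x)=0$ and $D[H(Du^\ez)]^{1/2}(\bar x)\neq 0$, one must verify directly that
\begin{equation*}
[-\det D^2u^\ez(\bar x)]\det D^2_{pp}H(0)=2\langle D^2_{pp}H(0)\,D[H(Du^\ez)]^{1/2}(\bar x),\,D[H(Du^\ez)]^{1/2}(\bar x)\rangle,
\end{equation*}
which matches \eqref{eq2.y1} using $\wz\tau_H(0)=1/2$ and the convention that the $\ez$-term vanishes there. This is done via a second-order Taylor expansion: from $[H(Du^\ez(x))]^{1/2}=\langle D[H(Du^\ez)]^{1/2}(\bar x),x-\bar x\rangle+o(|x-\bar x|)$ together with $Du^\ez(x)=D^2u^\ez(\bar x)(x-\bar x)+o(|x-\bar x|)$ and $H(p)=\tfrac12\langle D^2_{pp}H(0)p,p\rangle+o(|p|^2)$, evaluated along a well-chosen direction $\vec b=D^2_{pp}H(0)D[H(Du^\ez)]^{1/2}(\bar x)$; then a purely algebraic computation, crucially using the trace identity $\operatorname{div}[D_pH(Du^\ez)](\bar x)=\mathscr A_H[u^\ez](\bar x)/\ez=0$ that the PDE forces at $\bar x$, reduces the quotient to $-\det D^2u^\ez(\bar x)$. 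You also need the easy sub-case $D[H(Du^\ez)]^{1/2}(\bar x)=0$ (where one shows $D^2u^\ez(\bar x)=0$ from $H(Du^\ez(x))=o(|x-\bar x|^2)$ and the lower bound $\lambda_H|p|^2\le 2H(p)$). Without this analysis the identity is only established off a possibly nonempty exceptional set, and the theorem as stated is not proved.
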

\begin{rem}\rm
Note that
$$\epsilon |\,{\rm{div}}[D_pH(Du^\epsilon) ]|= |{\mathscr A}_{H }[u^\epsilon ]|
\le |D^2u^\ez|| {D_pH}(Du^\ez)|^2\le \frac{[\Lambda_H(\fz)]^2}{2\lambda_H(\fz)}|D^2u^\ez| H(Du^\ez),$$
where we use Lemma \ref{LEM7.1} in the last inequality.
If $ {D_pH}(Du(\bar x))=0$ for some $\bar x\in U$,
  for $\beta\in(0,4)$  we always let
$$\frac{({\rm{div}}[D_pH(Du^\epsilon) ])^2}{[H(Du^\ez)]^{\beta/2}}=\frac{({\rm{div}}[D_pH(Du^\epsilon) ])^2}{| {D_pH}(Du^\ez)|^\beta}=0\quad\mbox{at $ \bar x$.}$$
\end{rem}
\begin{proof} Since $ H  ^{1/2}\in C^{0,1}(U)$ (see Lemma \ref{LEM7.1} (iv)) and $u^\ez\in C^\fz(U)$,
we have
$[H(Du^\ez)]^{1/2}\in C^{0,1}(U)$.
   By Rademacher's Theorem, $[H(Du^\ez)]^{1/2}$ is differentiable almost everywhere in $U$.

    Let $\bar x\in U$ and   assume that   $[H(Du^\ez)]^{1/2}$ is differentiable at $\bar x$.
  If $D_pH(Du^\epsilon(\bar x))\neq0$, by considering the positivity of the matrice $D^2_{pp} H $ and   $(D^2_{pp} H)^\ast$,
 we have
 $$-\det D^2u^\epsilon = \frac{\langle  {D^2_{pp}H}(Du^\ez)  D[H(Du^\ez)] ,D[H(Du^\ez)] \rangle+ \epsilon[ {\rm div}(D_p (Du^\ez))]^2}{\langle [D^2_{pp} H (Du^\epsilon)]^\ast  {D_pH} (Du^\ez), {D_pH}(Du^\ez) \rangle }.$$
 at $\bar x$, which, together with
 $$\mbox{$(D^2_{pp} H) ^\ast =(\det D^2_{pp} H) (D^2_{pp} H )^{-1}$
 and $D[H(Du^\ez)]= 2 [H(Du^\ez)]^{1/2}D[H(Du^\ez)]^{1/2}$} $$
 and the definition of $\wz\tau_H$,   gives the desired identity \eqref{eq2.y1}.

  Assume that $D_p H (Du^\epsilon(\bar x))=0$ below.  By
the convexity of $H$, we have
$$0=
H (0)\ge H (Du^\epsilon(\bar x))+ \langle D_p H (Du^\epsilon(\bar x)),0-Du^\epsilon(\bar x)\rangle =H (Du^\epsilon(\bar x))\ge0,$$
which implies $H (Du^\epsilon(\bar x))=0$ and hence $ Du^\epsilon(\bar x) =0$.

If $D[H(Du^\ez)]^{1/2}(\bar x) =0$, then
    $[H(Du^\ez(   x))]^{1/2}= o(|x-\bar x|)$
    and hence $$|Du^\ez(x)|^2 \le  \frac2{\lambda_H(\fz)}   H(Du^\ez(  x))  = o(|x-\bar x|^2 ).$$
    This implies that   $Du^\ez(x)= o(|x-\bar x| )$ and hence  $D^2u^\ez(\bar x)=0$.
    So, $-\det D^2u^\ez(\bar x)=0$ as desired.

      If $D[H(Du^\ez)]^{1/2}(\bar x)\ne 0$, then
      $$[H(Du^\ez(  x))]^{1/2}=\langle D[H(Du^\ez)]^{1/2}(\bar x),x-\bar x\rangle+  o(|x-\bar x|).$$
          Write $$ \vec b :  =D^2_{pp}H(0) D[H(Du^\ez)]^{1/2}(\bar x).$$
For $x=\bar x+t\vec b  $, we have
 \begin{align*}  H(Du^\ez(  \bar x+t \vec b))
  &=\left\{t\langle D[H(Du^\ez)]^{1/2}(\bar x),\vec b\rangle+  o(t)\right\}^2
  =t^2\langle D[H(Du^\ez)]^{1/2}(\bar x),\vec b\rangle^2+  o(t^2).
   \end{align*}

  On the other hand, since
 $$ Du^\ez(  x)= D^2u^\ez(\bar x) (x-\bar x)+o(|x-\bar x|)$$
 and
 $$H(p)=\frac12\langle D^2_{pp}H(0)p,p\rangle+o(|p|^2),$$
 we have
 \begin{align*}
  H(Du^\ez( \bar x+t \vec b))
 =
 H(tD^2u^\ez(\bar x) \vec b+o(t))
 &= \frac{t^2}2\langle D^2_{pp}H(0)D^2u^\ez(\bar x) \vec b, D^2u^\ez(\bar x) \vec b\rangle  +o(t^2).
 \end{align*}
 Thus,
 \begin{align*}
   2\langle D[H(Du^\ez)]^{1/2}(\bar x),\vec b\rangle^2
 &=  \langle D^2_{pp}H(0)D^2u^\ez(\bar x)\vec b, D^2u^\ez(\bar x) \vec b\rangle
 \end{align*}
 that is,
 \begin{align*}
 2\langle D[H(Du^\ez)]^{1/2}(\bar x),\vec b\rangle
 &=  \frac{\langle D^2_{pp}H(0)D^2u^\ez(\bar x) \vec b, D^2u^\ez(\bar x)\vec b\rangle} {  \langle[ D^2_{pp}H(0)] ^{-1} \vec b,\vec b\rangle}.
 \end{align*}
 Since $(D^2_{pp}H(0))^{-1}=[D^2_{pp}H(0)]^\ast(\det D ^2_{pp}H)^{-1}$, we obtain
  \begin{align*}
    2\langle D[H(Du^\ez)]^{1/2}(\bar x),\vec b\rangle
  &=\frac{\langle D^2_{pp}H(0)D^2u^\ez(\bar x)  \vec b , D^2u^\ez(\bar x)  \vec b\rangle} {  \langle [D^2_{pp}H(0)]^\ast \vec b,\vec b\rangle}\det D^2_{pp}H(0).
   \end{align*}
Writing $h_{ij}=H_{p_ip_j}(0)$, $ a_{ij}=u^\ez_{x_ix_j} (\bar x)$ and $ \vec b^T=(b_1,b_2)$, we have
   \begin{align*} \frac{\langle D^2_{pp}H(0)D^2u^\ez(\bar x)  \vec b, D^2u^\ez(\bar x)  \vec b\rangle} {  \langle [D^2_{pp}H(0)]^\ast \vec b,\vec b\rangle}  &=\frac{b_ia_{ik}h_{ks}a_{sj}b_{j}} {b_1b_1h_{22}-2b_1b_2h_{12}+b_2b_2h_{11}} . \end{align*}
   Note that
  $$a_{ij}h_{ij}={\,\rm div\,}[D_p H(Du^\ez)](\bar x)=\mathscr A_H[u^\ez](\bar x)=0$$
  that is,
  $$ 2a_{12}h_{12}+ a_{11}h_{11}+a_{22}h_{22}=0.$$
  Therefore, a direct calculation leads to that
      \begin{align*}
   &(b_1b_1h_{22}-2b_1b_2h_{12}+b_2b_2h_{11})(a_{12}a_{21}-a_{11}a_{22}) \\
   &=b_1b_1h_{22}a_{12}a_{21}-b_1b_1h_{22}a_{11}a_{22}\\
   &\quad- 2b_1b_2h_{12}a_{12}a_{21}+2b_1b_2h_{12}a_{11}a_{22}\\
   &\quad+b_2b_2h_{11}a_{12}a_{21}-b_2b_2h_{11}a_{11}a_{22}\\
   &= b_1b_1h_{22}a_{12}a_{21}+ 2b_1b_2h_{12}a_{11}a_{22}+ b_2b_2h_{11}a_{12}a_{21}\\
   &\quad+b_1b_1 a_{11} (2a_{12}h_{12}+a_{11}h_{11}) + b_1b_2 a_{21} (a_{11}h_{11}+a_{22}h_{22})
   +b_2b_2 a_{22} (2a_{12}h_{12}+a_{22}h_{22})\\
   &\quad +b_1b_2a_{12}[a_{11}h_{11}+a_{22}h_{22}+2h_{12}a_{12}]\\
   &= h_{11}(a_{1i}b_ia_{1s}b_s) +h_{22}(a_{2i}b_ia_{2s}b_s) +2h_{12}(a_{1i}b_ia_{2s}b_s)\\
   &= b_ia_{ik}h_{ks}a_{sj}b_{j}.
      \end{align*}
      Thus
         \begin{align*} \frac{\langle D^2_{pp}H(0)D^2u^\ez(\bar x)  \vec b, D^2u^\ez(\bar x)  \vec b\rangle} {  \langle [D^2_{pp}H(0)]^\ast b,\vec b\rangle}  &= -\det D^2u^\ez(\bar x) , \end{align*}
         that is,
          \begin{align*}
   2\langle D[H(Du^\ez)]^{1/2}(\bar x),D^2_{pp}H(0)  D[H(Du^\ez)]^{1/2}(\bar x)\rangle
 &=  [-\det D^2u^\ez(\bar x)] \det D^2_{pp}H(0) .
 \end{align*}
 Noting   $\wz \tau_H(0)= 1/2$, we arrive at the desired identity.
 This completes the proof of Theorem \ref{LEM2.3}.
\end{proof}

\subsection{$W^{1,2}_\loc$-estimates of $e^{\frac1\ez H(Du^\ez)}$ and
$[H(Du^\ez)]^\alpha$  and a  flatness of $u^\ez$
}

Suppose that  $H\in C^\fz(\rn)$ satisfies  (H1')\&(H2).   Let $U\subset\rn$ be any domain.
For any $\ez\in(0,1]$ let $u^\ez\in C^\fz(U)$ be a solution to the equation \eqref{eq2.1}.

Define the functional
\begin{align}\label{FUN2.2} I(u^\ez,\phi)=-2\int_U  \det D^2u^\ez   \det[D^2_{pp}H(Du^\ez) ]\vz\,d x
\quad\forall \vz \in C^0_c(U).
\end{align}
By Theorem \ref{LEM2.3}, write
\begin{align}\label{FUN2.3}I(u^\ez,\phi)
 &=8\int_U \wz \tau_H(Du^\ez)  \langle  {D^2_{pp}H}(Du^\ez)  D[H(Du^\ez)]^{1/2} ,D[H(Du^\ez)]^{1/2} \rangle\vz \,d x\nonumber\\
 &\quad\quad+2\epsilon\int_U \wz \tau_H(Du^\ez)[ {\rm div}(D_pH (Du^\ez))]^2  [ H(Du^\ez)]^{-1} \vz \,d x \quad\forall \vz \in C^0_c(U)
\end{align}
Via integration by parts and Lemma \ref{LEM2.2}, we also have
\begin{equation}\label{FUN2.4}
I(v,\phi)=-\int_U \langle  {D^2_{pp}H}(Dv)   D[H(Dv)]  -{\rm div}[ {D_pH}(Dv)  ]    {D_pH}(Dv)  ,  D\vz\rangle \,d x
\quad\forall \vz \in W^{1,2}_c(U).
\end{equation}

Taking $\varphi  = e^{\frac t\ez  H (Du^\epsilon )} \phi^{2 mt }$ in the functional  $I(u^\ez,\vz)$, using \eqref{FUN2.3} and \eqref{FUN2.4} we obtain the following result, whose proof is postponed to Section 6.

\begin{lem}\label{LEM2.6}For any $W\subset U$, $m\ge1$, $t\ge1$, and $\phi\in C^\infty_c(W)$, we have
\begin{align*}
&\frac1\ez\int_U | D[H(Du^\ez)]|^2   \phi^{2 mt } e^{\frac t\ez H(Du^\ez)} \,d x+
 \frac1{\Lambda_H(\fz)}\int_U({\rm div} [{D_pH}(Du^\ez)] )^2 \phi^{2 mt } e^{\frac t\ez H(Du^\ez)} \,d x\\
&\quad\le 8m^2 \left[\frac{ \Lambda_{H}(\fz)}{\lambda_H(\fz) }\right]^2\int_U  H(Du^\ez) |D\phi|^2\phi^{2 mt -2} e^{\frac t\ez H(Du^\ez)}\,d x.
\end{align*}
\end{lem}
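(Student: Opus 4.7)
The plan is to test the functional $I(u^\ez,\vz)$ defined in \eqref{FUN2.2} against $\vz = \phi^{2mt}e^{\frac{t}{\ez}H(Du^\ez)}$ and combine the two expressions \eqref{FUN2.3} and \eqref{FUN2.4} for $I$. From \eqref{FUN2.3}, since $\wz\tau_H\ge 0$, $\vz\ge 0$, and $D^2_{pp}H\ge\lambda_H(\fz)I$, we have $I(u^\ez,\vz)\ge 0$. Setting $V := D^2_{pp}H(Du^\ez)D[H(Du^\ez)] - {\rm div}[D_pH(Du^\ez)]D_pH(Du^\ez)$ and invoking \eqref{eq2.1} in the form $\mathscr A_H[u^\ez]=-\ez\,{\rm div}[D_pH(Du^\ez)]$, a short calculation yields the pointwise identity
$$\langle V, D[H(Du^\ez)]\rangle = \langle D^2_{pp}H D[H(Du^\ez)], D[H(Du^\ez)]\rangle + \ez({\rm div}[D_pH(Du^\ez)])^2.$$
Expanding $D\vz = \frac{t}{\ez}e^{\frac{t}{\ez}H}\phi^{2mt}D[H(Du^\ez)] + 2mt\phi^{2mt-1}e^{\frac{t}{\ez}H}D\phi$ in \eqref{FUN2.4} and using $I(u^\ez,\vz)\ge 0$, this identity gives
$$\frac{t}{\ez}\int_U[\langle D^2_{pp}H D[H], D[H]\rangle + \ez({\rm div}[D_pH])^2]\phi^{2mt}e^{\frac{t}{\ez}H}\,dx \le 2mt\int_U\phi^{2mt-1}e^{\frac{t}{\ez}H}|V||D\phi|\,dx.$$

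For the right-hand side, I would use $|V|\le \Lambda_H(\fz)|D[H(Du^\ez)]| + |{\rm div}[D_pH(Du^\ez)]||D_pH(Du^\ez)|$ together with the bound $|D_pH(p)|^2\le 2[\Lambda_H(\fz)]^2 H(p)/\lambda_H(\fz)$, which follows directly from (H1'){\rm\&}(H2) via $\frac{\lambda_H(\fz)}{2}|p|^2\le H(p)$ and $|D_pH(p)|\le \Lambda_H(\fz)|p|$. This last bound is the key step producing the factor $H(Du^\ez)$ on the right-hand side of the claim. One then applies Young's inequality $ab\le \alpha a^2 + b^2/(4\alpha)$ termwise to the two contributions $2mt\Lambda_H(\fz)|D[H(Du^\ez)]||D\phi|\phi^{2mt-1}e^{\frac{t}{\ez}H}$ and $2mt|{\rm div}[D_pH(Du^\ez)]||D_pH(Du^\ez)||D\phi|\phi^{2mt-1}e^{\frac{t}{\ez}H}$. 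Using $\langle D^2_{pp}H D[H], D[H]\rangle\ge \lambda_H(\fz)|D[H]|^2$ on the LHS, the absorbing constants are calibrated so that half of $\frac{t\lambda_H(\fz)}{\ez}\int|D[H]|^2\phi^{2mt}e^{\frac{t}{\ez}H}\,dx$ and half of $t\int({\rm div}[D_pH])^2\phi^{2mt}e^{\frac{t}{\ez}H}\,dx$ migrate to the LHS. Finally dividing by $t\lambda_H(\fz)/2$ and using $1/\Lambda_H(\fz)\le 1/\lambda_H(\fz)$ on the $({\rm div}[D_pH])^2$ term yields the claim.

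The main technical obstacle is the careful selection of the two Young's parameters so that both absorptions hold simultaneously and the residual right-hand side consolidates into exactly $8m^2[\Lambda_H(\fz)/\lambda_H(\fz)]^2 H(Du^\ez)|D\phi|^2\phi^{2mt-2}e^{\frac{t}{\ez}H}$; the factor $H(Du^\ez)$ arises from applying $|D_pH|^2\le 2\Lambda_H^2 H/\lambda_H$ on the divergence term, while the factor $[\Lambda_H/\lambda_H]^2$ accumulates from combining the upper bound on $D^2_{pp}H$ (used to estimate $|V|$) with the lower bound on $D^2_{pp}H$ (used to control $\langle D^2_{pp}H D[H], D[H]\rangle$ from below).
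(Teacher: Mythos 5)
Your approach is essentially the paper's: test the functional $I(u^\ez,\cdot)$ from \eqref{FUN2.2} against $\vz=e^{\frac{t}{\ez}H(Du^\ez)}\phi^{2mt}$, conclude $I(u^\ez,\vz)\ge 0$ from the representation \eqref{FUN2.3}, rewrite $I$ via the divergence form \eqref{FUN2.4}, simplify using $\mathscr A_H[u^\ez]=-\ez\,{\rm div}[D_pH(Du^\ez)]$, and then absorb by Young's inequality. The pointwise identity for $\langle V, D[H(Du^\ez)]\rangle$ that you write is correct and is exactly what organizes the two ``good'' terms $J_1$ and $J_3$ in the paper's computation. The use of $|D_pH|^2\le 2\Lambda_H^2 H/\lambda_H$ (Lemma \ref{LEM7.1}(iv)) to produce the $H(Du^\ez)$ factor is also the paper's move.

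There is, however, one gap in the final calibration step. You claim that after Young and absorption the residual ``consolidates into exactly $8m^2[\Lambda_H/\lambda_H]^2 H(Du^\ez)|D\phi|^2\phi^{2mt-2}e^{\frac{t}{\ez}H}$.'' Track the two pieces of $|V|$ separately: the piece $|{\rm div}[D_pH]|\,|D_pH|$ does acquire an $H(Du^\ez)$ factor via Lemma \ref{LEM7.1}(iv), but the piece $\Lambda_H(\fz)|D[H(Du^\ez)]|$ produces, after Young with absorbing parameter $\alpha\sim t\lambda_H(\fz)/\ez$, a residual of size $\sim m^2 t\ez\,\Lambda_H^2/\lambda_H \cdot |D\phi|^2\phi^{2mt-2}e^{\frac{t}{\ez}H}$ with \emph{no} $H(Du^\ez)$ factor. (Attempting to smuggle one in by writing $D[H]=2H^{1/2}D[H^{1/2}]$ does not help; the $H$ cancels when you rebalance to preserve absorption.) This residual cannot be bounded by $CH(Du^\ez)|D\phi|^2$ pointwise, since $H(Du^\ez)$ vanishes on the critical set of $u^\ez$. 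So the inequality one actually obtains has $[1+H(Du^\ez)]$ rather than $H(Du^\ez)$ on the right-hand side. Note that the paper's own proof of Lemma \ref{LEM2.6} ends precisely with the $[1+H(Du^\ez)]$ form, and the subsequent applications in the proofs of Lemmas \ref{LEM3.3} and \ref{LEM3.4} quote it in that form — the statement of Lemma \ref{LEM2.6} as printed appears to have a typo omitting the ``$1+$''. With that correction your proof closes, and is the same as the paper's.
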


Taking $\vz=[H(Du^\ez)+\sz]^{2\alpha-1}\phi^2$ in \eqref{FUN2.3} and \eqref{FUN2.4} for some cut-off function $\phi\in C^\fz(U)$  and $\sz>0$, using \eqref{FUN2.3} and \eqref{FUN2.4} we obtain the following result, whose proof is postponed to Section 6.
We write $\tau_H(\fz):=\lim_{R\to\fz}\tau_H(R)$ and note that
$$\frac12\ge \tau_H(\fz)\ge \limsup_{R\to\fz} \frac{[\lambda_H(R)]^2}{2[\Lambda_H(R)]^2}\ge   \frac{[\lambda_H(\fz)]^2}{2[\Lambda_H(\fz)]^2}.$$

\begin{lem}\label{LEM2.7}
(i)  Let $\alpha\ge 1/2$. For any $W\Subset U$ and $\phi\in C_c^\fz(W)$ we have
 \begin{align*}
     &  \int_{U} |D[H(Du^\ez) ]^\alpha|^ 2\phi^2 \,dx
  +\frac{\ez\alpha^2}{\Lambda_H(\fz)} \int_U
  \frac{({\rm div}[ {D_pH}(Du^\ez) ])^2  }{   [H(Du^\ez)]^{2-2\alpha}}\phi^2 \,dx\\
    &\quad\le  \frac{ C\alpha^2(\alpha+1)}{[\alpha+\tau_H(\|H(Du^\ez)\|_{L^\fz( W)})-\frac12]^2} \left[\frac{ \Lambda_{H}(\|H(Du^\ez)\|_{L^\fz(W)}) }{\lambda_H(\|H(Du^\ez)\|_{L^\fz(W)}) }\right]^2
 \int_U       [ H(Du^\ez) ]^{2\alpha  }[|D\phi|^2+|D^2\phi||\phi|]  \,dx\\
    &\quad\quad+\frac{C\ez\alpha^2|2\alpha-1|^2}{ [\alpha+\tau_H(\fz) -\frac12]^2}
     \left[\frac{ \Lambda_{H}(\fz) }{\lambda_H(\fz) }\right]^2
 \int_U     [ H(Du^\ez) ]^{2\alpha-1 } |D\phi|^2   \,dx.\end{align*}

 (ii)  For any  $W \Subset U$ and $\sz\in(0,1]$,
 if  $\frac12-\tau( \|H(Du^\ez)\|_{L^\fz( W)})   < \alpha< \frac12$, then for any $\phi\in C_c^\fz(W)$ we have
  \begin{align*}
&\int_{U} | D[H(Du^\ez)+\sigma]^\alpha|^2 \phi^2
     \,dx +\frac{\epsilon\alpha^2}{\Lambda_H(\fz)}\int_{U}
     \frac{( {\rm div}[D_pH (Du^\ez)])^2}
     {[H (Du^\epsilon)+\sigma]^{2-2\alpha }} \phi^2 \,dx\\
   &\quad\le   \frac{ C\alpha^2(\alpha+1)}{[\alpha+\tau_H(\|H(Du^\ez)\|_{L^\fz( W)})-\frac12]^2}
  \left[\frac{ \Lambda_{H}(\|H(Du^\ez)\|_{L^\fz(W)}) }{\lambda_H(\|H(Du^\ez)\|_{L^\fz(W)}) }\right]^2
  \\
    &\quad\quad\quad\times
   \int_{U}        [ H(Du^\ez) +\sigma]^{2\alpha  } [|D\phi|^2+|D^2\phi||\phi|] \,dx\\
&\quad\quad+    \frac{C\ez\alpha^2|2\alpha-1|^2 }{ [\alpha+\tau_H(\|H(Du^\ez)\|_{L^\fz( W)}) -\frac12]^2}
 \left[\frac{ \Lambda_{H}(\fz) }{\lambda_H(\fz) }\right]^2
 \int_{U}       [ H(Du^\ez) +\sigma]^{2\alpha-1 } |D\phi|^2  \,dx.    \end{align*}

\end{lem}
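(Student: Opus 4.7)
The plan is to apply the two equivalent expressions \eqref{FUN2.3} and \eqref{FUN2.4} for the functional $I(u^\ez,\vz)$ to the test function $\vz = [H(Du^\ez)+\sigma]^{2\alpha-1}\phi^2$, with $\sigma = 0$ in case (i) and $\sigma \in (0,1]$ in case (ii). Writing $H := H(Du^\ez)$ throughout, one expands
$$D\vz = (2\alpha-1)(H+\sigma)^{2\alpha-2}DH\,\phi^2 + 2(H+\sigma)^{2\alpha-1}\phi D\phi,$$
inserts this into \eqref{FUN2.4}, and uses the identity $\langle D_pH(Du^\ez),DH(Du^\ez)\rangle = \mathscr A_H[u^\ez] = -\ez\,\mathrm{div}[D_pH(Du^\ez)]$ from \eqref{eq2.1} to convert the cross term into $-\ez(2\alpha-1)\int(\mathrm{div}[D_pH])^2(H+\sigma)^{2\alpha-2}\phi^2$. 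On the \eqref{FUN2.3} side, the identity $D[H]^{1/2} = (2H^{1/2})^{-1}DH$ (valid a.e.\ on $\{H>0\}$) rewrites the principal quadratic term as $\tfrac{1}{4H}\langle D^2_{pp}H\,DH,DH\rangle$.

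Equating the two expressions and collecting bulk $\phi^2$-terms, both the $\langle D^2_{pp}H DH, DH\rangle$ and the $\ez(\mathrm{div}[D_pH])^2$ contributions acquire a common coefficient
$$\tfrac{2\wz\tau_H(Du^\ez)(H+\sigma)}{H} + (2\alpha-1) \ge 2\wz\tau_H(Du^\ez) + (2\alpha-1) \ge 2\bigl[\alpha + \tau_H(\|H(Du^\ez)\|_{L^\fz(W)}) - \tfrac12\bigr],$$
strictly positive by the assumed range on $\alpha$, where I use $\wz\tau_H \ge \tau_H(\|H(Du^\ez)\|_{L^\fz(W)})$ pointwise. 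Combined with the ellipticity $\lambda_H|\xi|^2 \le \langle D^2_{pp}H\xi,\xi\rangle \le \Lambda_H|\xi|^2$ and $|D[(H+\sigma)^\alpha]|^2 = \alpha^2(H+\sigma)^{2\alpha-2}|DH|^2$, this yields the quantities on the left of the claimed estimate, up to an overall factor proportional to $[\alpha+\tau_H-\tfrac12]/\alpha^2$.

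The residual contributions are the two boundary terms from the $\phi D\phi$ piece of $D\vz$, namely $(A) = -2\int \langle D^2_{pp}H\,DH, D\phi\rangle(H+\sigma)^{2\alpha-1}\phi\,dx$ and $(B) = 2\int \mathrm{div}[D_pH]\,\langle D_pH, D\phi\rangle(H+\sigma)^{2\alpha-1}\phi\,dx$. Term $(A)$ succumbs to Cauchy--Schwarz, half-absorbed into the bulk $\langle D^2_{pp}H DH, DH\rangle$-term, yielding the principal error $[\alpha+\tau_H-\tfrac12]^{-2}[\Lambda_H/\lambda_H]^2\int H^{2\alpha}|D\phi|^2$. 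Term $(B)$ is the main technical obstacle: direct Cauchy--Schwarz against the $\ez(\mathrm{div})^2$-bulk term produces a non-uniform $1/\ez$ factor. The key trick is to integrate by parts, rewriting $(B) = -2\int\langle D_pH, Df\rangle$ with $f = \langle D_pH, D\phi\rangle(H+\sigma)^{2\alpha-1}\phi$. Expanding $Df$ and again invoking $\langle D_pH, DH\rangle = -\ez\,\mathrm{div}[D_pH]$ splits $(B)$ into four pieces: (a) an exact copy of $(A)$ (re-absorbed as above), (b) $-2\int\langle D^2\phi\,D_pH, D_pH\rangle(H+\sigma)^{2\alpha-1}\phi$, (c) $-2\int\langle D_pH, D\phi\rangle^2(H+\sigma)^{2\alpha-1}$, and (d) $+2\ez(2\alpha-1)\int\mathrm{div}[D_pH]\,\langle D_pH, D\phi\rangle(H+\sigma)^{2\alpha-2}\phi$. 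Invoking the pointwise bound $|D_pH(p)|^2 \lesssim [\Lambda_H^2/\lambda_H]\,H(p)$ from Lemma~\ref{LEM7.1}, piece (b) becomes the $\int H^{2\alpha}|D^2\phi|\phi$ contribution --- this is precisely the origin of the unusual $|D^2\phi||\phi|$ factor in the statement --- while piece (c) becomes $\int H^{2\alpha}|D\phi|^2$; an $\ez$-weighted Cauchy--Schwarz absorbs half of piece (d) into the bulk $\ez(\mathrm{div})^2$-term and produces the subleading $\ez|2\alpha-1|^2[\Lambda_H(\fz)/\lambda_H(\fz)]^2\int H^{2\alpha-1}|D\phi|^2$ term in the statement.

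The main obstacle is thus the treatment of $(B)$: naive Cauchy--Schwarz destroys uniformity in $\ez$, the integration-by-parts manipulation is forced, and this both re-generates $(A)$ (requiring two rounds of absorption into the bulk gradient term) and naturally yields the $|D^2\phi||\phi|$ factor. A secondary care is admissibility of $\vz$ in \eqref{FUN2.4}: in case (ii), $\sigma>0$ makes $\vz \in C^\fz_c(W)$ automatic; in case (i), the exponent $2\alpha-1\ge 0$ keeps $(H+\sigma)^{2\alpha-1} = H^{2\alpha-1}$ bounded, and smoothness of $u^\ez$ together with $H(0)=0$ as a minimum guarantees $DH\to 0$ on $\{H=0\}$ so that no singular integrals arise. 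Finally, careful bookkeeping of the constants --- using $\tau_H(\|H(Du^\ez)\|_{L^\fz(W)})$ for the principal bulk absorption and the uniform bounds $\lambda_H(\fz), \Lambda_H(\fz), \tau_H(\fz)$ only for the $\ez$-weighted subleading term --- produces the sharp dependence claimed.
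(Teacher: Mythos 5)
Your proposal matches the paper's proof in all essential respects: plugging $\varphi=[H(Du^\ez)+\sigma]^{2\alpha-1}\phi^2$ into \eqref{FUN2.3} and \eqref{FUN2.4}, using $\mathscr A_H[u^\ez]=-\ez\,{\rm div}[D_pH(Du^\ez)]$ to convert the $(2\alpha-1)$-cross term, extracting the coefficient $2\wz\tau_H+(2\alpha-1)$ in front of both bulk terms (reducing it to $2\alpha+2\tau_H(\|H(Du^\ez)\|_{L^\fz(W)})-1$), and -- the crucial step -- integrating by parts a second time on the $\phi D\phi$-term involving ${\rm div}[D_pH]$ (your term $(B)$, the paper's $J_2$) to avoid a fatal $1/\ez$-loss, which is exactly how the $|D^2\phi||\phi|$ factor and the subleading $\ez H^{2\alpha-1}|D\phi|^2$ term arise. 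Your four pieces $(a)$--$(d)$ of $(B)$ are the paper's $J_{21},J_{23},J_{22},J_{24}$ and you handle each the same way (with the minor cosmetic difference that the paper simply drops the nonpositive $J_{22}$ while you fold it into the error).

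One small but real gap: you take $\sigma=0$ for the entire range $\alpha\ge\tfrac12$ of case (i), arguing that $DH\to 0$ on $\{H=0\}$ so ``no singular integrals arise.'' This is not sufficient as stated. When $\tfrac12\le\alpha<\tfrac32$ the test function $\varphi=H^{2\alpha-1}\phi^2$ satisfies $D\varphi\sim(2\alpha-1)H^{2\alpha-2}DH\,\phi^2+\cdots$, and using $|DH|\lesssim |D^2u^\ez|\,H^{1/2}$ one only gets $|D\varphi|\lesssim H^{2\alpha-3/2}|D^2u^\ez|$, which need not be locally square-integrable near $\{H=0\}$ for $\alpha<\tfrac34$; likewise the intermediate term $K_2=\int({\rm div}[D_pH])^2H^{2\alpha-2}\phi^2$ is not a priori finite since ${\rm div}[D_pH(Du^\ez)]$ does not vanish on $\{H=0\}$. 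The paper sidesteps this by keeping $\sigma>0$ whenever $\tfrac12\le\alpha<\tfrac32$, establishing the estimate for each fixed $\sigma>0$ (with the inequality $[H+\sigma]^{2\alpha-1}\ge[H+\sigma]^{2\alpha-2}H$ in place of $[H]^{2\alpha-1}=[H]^{2\alpha-2}H$), and then passing $\sigma\to 0$ by weak compactness in $W^{1,2}_\loc$. You should adopt this two-step passage; the rest of your argument then goes through unchanged.
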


 Taking $\varphi  = \frac12(u^\ez-F) \phi^{4}$ in the functional  $I(u^\ez,\vz)$ for any linear function $F$, using \eqref{FUN2.3} and \eqref{FUN2.4} we obtain the following result, whose proof is postponed to Section 6.

\begin{lem}\label{LEM2.8}
For any $B=B(\bar x,r)\Subset U$ and linear function $F$, we have
\begin{align*}
  &\mint-_{\frac12B }\langle D_pH(Du^\ez),Du^\epsilon-DF\rangle ^2 \,d x\\
  &\quad\le C \Lambda_H(\fz)
  \left[\int_{\frac34B }|D[H(Du^\ez)]|^2  \,d x\right]^{1/2}\\
&\quad\quad\quad\times\left[\mint-_{B } \left((|Du^\epsilon|+|DF|)^2\frac{(u^\epsilon-F)^2}{r^2} + \frac{(u^\epsilon-F)^4}{r^4} \right)\,d x\right]^{1/2}\\
&\quad\quad + C\frac{ [\Lambda_{H}(\fz)]^2 }{\lambda_H(\fz) }\left[\mint-_{\frac34B  }[ H (Du^\ez) ]^2 \,d x\right]^{1/2}\left[\mint-_{\frac34B  }  \frac{(u^\epsilon-F)^4}{r^4}     \,d x\right]^{1/2}\\
&\quad\quad+C\ez
\left[\int_{\frac34B  } ({\rm div}[ {D_pH}(Du^\ez)])^2 \,d x\right]^{1/2} \left[\mint-_{\frac34B  }\frac{(u^\epsilon-F)^2}{r^2}  \,d x\right]^{1/2}.
  \end{align*}
\end{lem}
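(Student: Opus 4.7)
The plan is to substitute $\vz = \tfrac12(u^\ez - F)\phi^4$ into the functional identity $I(u^\ez,\vz) = \eqref{FUN2.3} = \eqref{FUN2.4}$, where $\phi\in C_c^\infty(\tfrac34 B)$ is a standard cutoff satisfying $\phi\equiv 1$ on $\tfrac12 B$ and $|D\phi|\le C/r$. Set $w := \langle D_pH(Du^\ez), Du^\ez - DF\rangle$. The target $\int w^2\phi^4$ is produced by integrating $\int\langle w\phi^4 D_pH(Du^\ez), D(u^\ez - F)\rangle\,dx$ by parts; combined with the algebraic identity $\langle D_pH(Du^\ez), Dw\rangle = \langle D^2_{pp}H(Du^\ez)\,D[H(Du^\ez)], Du^\ez - DF\rangle + \mathscr A_H[u^\ez]$ and the Aronsson equation $\mathscr A_H[u^\ez] = -\ez\,{\rm div}[D_pH(Du^\ez)]$, this yields the decomposition $\int_U w^2\phi^4\,dx = T_1 + T_2 + T_3 + T_4$, where
\begin{align*}
T_1 &= -\int_U (u^\ez - F)\phi^4\,\langle D^2_{pp}H(Du^\ez)\,D[H(Du^\ez)], Du^\ez - DF\rangle\,dx,\\
T_2 &= \ez\int_U (u^\ez - F)\phi^4\,{\rm div}[D_pH(Du^\ez)]\,dx,\\
T_3 &= -\int_U (u^\ez - F)\,w\,\phi^4\,{\rm div}[D_pH(Du^\ez)]\,dx,\\
T_4 &= -4\int_U (u^\ez - F)\,w\,\phi^3\,\langle D_pH(Du^\ez), D\phi\rangle\,dx.
\end{align*}

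Next I would bound each piece by Cauchy--Schwarz. Using $|D^2_{pp}H(Du^\ez)|\le\Lambda_H(\fz)$, the bound on $|T_1|$ produces the first term on the right-hand side of Lemma \ref{LEM2.8}. Using $|w|\le |D_pH(Du^\ez)|(|Du^\ez|+|DF|)$, the pointwise bound $|D_pH(Du^\ez)|^2 \le (2[\Lambda_H(\fz)]^2/\lambda_H(\fz))\,H(Du^\ez)$ from Lemma \ref{LEM7.1}, and $|D\phi|\le C/r$, a Cauchy--Schwarz split followed by Young's absorption of a small multiple of $\int w^2\phi^4$ bounds $|T_4|$ by the second (the term involving $[H(Du^\ez)]^2$ and $(u^\ez - F)^4/r^4$). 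Direct Cauchy--Schwarz on $|T_2|$, keeping the explicit prefactor $\ez$, yields the third term. Finally, dividing by $|\tfrac12 B|$ converts absolute integrals over $\tfrac34 B$ into the averaged integrals of the statement with the correct $1/r^k$ factors.

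The principal obstacle is controlling $T_3$: a direct integration by parts on ${\rm div}[D_pH(Du^\ez)]$ inside $T_3$ merely reproduces the tautology $T_3 = \int_U w^2\phi^4\,dx - T_1 - T_2 - T_4$. To break this circularity, I apply the same functional identity $I(u^\ez,\vz) = \eqref{FUN2.3} = \eqref{FUN2.4}$ a second time, this time with $\vz = \tfrac12(u^\ez - F)^2\phi^4$. The resulting identity expresses $T_3$ in terms of $T_1$, a boundary-like integral involving $D\phi$, and two pointwise nonnegative quadratic terms (coming from $\wz\tau_H(Du^\ez)\langle D^2_{pp}H(Du^\ez)\,D[H(Du^\ez)]^{1/2}, D[H(Du^\ez)]^{1/2}\rangle(u^\ez - F)^2\phi^4$ and $\ez\,\wz\tau_H(Du^\ez)({\rm div}[D_pH(Du^\ez)])^2(u^\ez - F)^2\phi^4/H(Du^\ez)$), the latter of which may be dropped when bounding from above. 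The boundary-like integral is then estimated by the same Cauchy--Schwarz splits used for $T_1$ and $T_4$, so the net contribution of $T_3$ is absorbed into the first two right-hand side terms of Lemma \ref{LEM2.8}. Careful tracking of the $\ez$-dependence through these bounds, so that they match exactly the three target pieces in the statement without reintroducing $\ez^{-1}$ singularities, is the most technical point of the argument.
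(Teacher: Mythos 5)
Your overall strategy ultimately coincides with the paper's: the essential move is testing the functional $I(u^\ez,\cdot)$ against the quadratic $\vz=\tfrac12(u^\ez-F)^2\phi^4$ and invoking the non-negativity $I(u^\ez,\vz)\ge0$ supplied by Theorem~\ref{LEM2.3} and \eqref{FUN2.3}. Two remarks are in order.

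First, the opening framing is off: plugging the \emph{linear} test function $\tfrac12(u^\ez-F)\phi^4$ into $I(u^\ez,\cdot)$ does not produce the identity $\int w^2\phi^4=T_1+T_2+T_3+T_4$; the linear test function loses one factor of $u^\ez-F$ in every term. What you actually do is a direct integration by parts of $\int w^2\phi^4=\int\langle w\phi^4 D_pH(Du^\ez),Du^\ez-DF\rangle\,dx$ together with the chain-rule expansion of $\langle D_pH(Du^\ez),Dw\rangle$ and the approximating equation. That is a valid computation, but it is not an application of the structural identity; the functional $I$ only enters when you pass to the quadratic test function.

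Second, and this is a genuine gap: when you decompose $I(u^\ez,\tfrac12(u^\ez-F)^2\phi^4)$ via \eqref{FUN2.4} to control $T_3$, the ``boundary-like integral'' you mention is really $J_2+J_4$, where
\[
J_4 = 2\int_U {\rm div}[D_pH(Du^\ez)]\,\langle D_pH(Du^\ez),D\phi\rangle\,(u^\ez-F)^2\,\phi^3\,dx,
\]
and this still carries ${\rm div}[D_pH(Du^\ez)]$ with no compensating factor of $\ez$. A direct Cauchy--Schwarz split ``as for $T_1$ and $T_4$'' would produce $\big(\int({\rm div}[D_pH(Du^\ez)])^2\phi^4\,dx\big)^{1/2}$ with no $\ez$ in front, which is exactly the $\ez^{-1/2}$ singularity the statement is designed to avoid. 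Nor can you absorb $J_4$ against the dropped non-negative term $\ez\,\wz\tau_H(Du^\ez)({\rm div}[D_pH(Du^\ez)])^2 H(Du^\ez)^{-1}(u^\ez-F)^2\phi^4$ from \eqref{FUN2.3}: the Young split that matches the weight $H(Du^\ez)^{-1}(u^\ez-F)^2\phi^4$ forces the complementary coefficient to scale like $\ez^{-1}$. The fix, carried out in the paper's proof, is to integrate by parts once more \emph{on $J_4$ itself}, moving the divergence off $D_pH$. This regenerates $J_2$, regenerates a copy of $T_4$ (then absorbed into $\tfrac14\int w^2\phi^4$ by Young exactly as you do for your $T_4$), produces a manifestly non-positive term that is dropped, and produces one genuinely new term carrying $D^2\phi$, which is estimated by the same H\"older pattern and lands in the second displayed term of the lemma. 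Without this extra integration by parts the step as you state it fails.
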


\subsection{Some remarks}

In this section, we will give some motivations to find Lemmas \ref{LEM2.1}\&\ref{LEM2.2}.
Recall that in the case   $H(p)=\frac12|p|^2$,
  Theorems \ref{THM1.1}\&\ref{THM1.2} were proved by Koch et al \cite{kzz}. 
The key step therein is to prove Lemmas \ref{LEM2.7}\&\ref{LEM2.8} for smooth $e^{\frac1{2\ez}|\cdot|^2}$-harmonic function $u^\ez$---smooth solution to the $e^{\frac1{2\ez}|\cdot|^2}$-harmonic equation
$${\,\rm div\,}(e^{\frac1{2\ez}|Dv|^2})=\frac1{2\ez}e^{\frac1{2\ez}|Dv|^2}( \Delta_\fz v+\ez\Delta v)=0\mbox{ in $U$,} $$
  equivalently,
 $\Delta_\fz v+\ez\Delta v=0\mbox{ in $U$}.$
Indeed, from  Lemmas \ref{LEM2.7}\&\ref{LEM2.8} they derived
 the  convergence of $u^\ez$ in $ W^{1,t}_\loc(U)$ for all $t\ge1$ to
 the given planar $\fz$-harmonic function $u$, and hence conclude the $|Du|^\alpha\in W^{1,2}_\loc(U)$.

To prove Lemmas \ref{LEM2.7}\&\ref{LEM2.8},  Koch et al \cite{kzz} built up   the following structural identity    for $\infty$-Laplace operator $\Delta_\fz$,
\begin{equation}\label{fund1}
|D^2vDv|^2-
  \Delta v \Delta_\fz v  =(-2\det D^2v)  |Dv|^2   \quad \mbox{   $\forall v\in C^\fz $}
  \end{equation}
and also use   the well-known
divergence formula of determinant
\begin{equation}\label{fund2}
-\det D^2v={\,\rm div\,}( D^2vDv-\Delta v Dv)  \quad \mbox{   $\forall v\in C^\fz$}.
\end{equation}

%

Viewing  \eqref{fund1} and \eqref{fund2} and their proofs in \cite{kzz}, with some modifications one does get
the following  identity for $\mathscr A_H$  and divergence formula of determinant involving $H$:
\begin{equation}
\label{fund5}| D[H(Dv)]|^2-(\Delta v){\mathscr A}_{H }[v]=(-\det D^2v)|D_pH(Dv)|^2 \quad \forall\,v\in C^\fz\end{equation}
and
\begin{equation}
\label{fund6}{\rm div}\{ D[H(Dv)]-\Delta v D_pH(Dv)\}=(-\det D^2v)[H_{p_ip_i}(Dv)] \quad \forall\,v\in C^\fz.\end{equation}
However, as clarified below  the two identities are
 not enough to prove Theorems \ref{THM1.1}\&\ref{THM1.2}.

\begin{rem}\rm
Note that \eqref{fund5} and \eqref{fund6} suggest  us to consider approximating equation
\begin{equation}
\label{fund8}{\mathscr A}_{H }[v ]+\ez \Delta v =0\quad {\rm in}\quad  U.\end{equation}
Let $v^\ez$ be any smooth solution.
The key point to obtain Theorems \ref{THM1.1}\&\ref{THM1.2} is to obtain
$W^{1,2}_\loc$-estimates of  $[H(Dv^\ez)+\sz]^\alpha$, and some flatness estimate  of $v^\ez$,
that is, some analogues of  Lemma \ref{LEM2.7}\&\ref{LEM2.8}.
But  \eqref{fund5} and \eqref{fund6}   are not enough to prove these  estimates.
Indeed, to obtain $W^{1,2}_\loc$-estimates of  $[H(Dv^\ez)+\sz]^\alpha$,
thanks to  the divergence formula \eqref{fund6}
it is natural to  consider the functional
 $$F(v^\ez,\varphi)=-\int_U\langle D^2v^\ez D_pH(Dv^\ez)-\Delta v^\ez D_pH(Dv^\ez), D\varphi\rangle\,dx,\quad \forall \varphi
\in W^{1,2}_\loc (U).$$
Letting $\varphi= H(Dv^\ez) \phi^2$ with $\phi\in C^\fz_c(U)$ in this functional,
 via integration by parts and Young's inequality, from
$\int_U\Delta v^\ez  \langle  D_pH(Dv^\ez), D[H(Dv^\ez)\phi^2]\rangle   \,dx
$
 the following   arises
$$ \int_U   \langle D^2_{pp}H(Dv^\ez)D^2v^\ez Dv^\ez, D\phi^2\rangle H(Dv^\ez) \,dx+\int_U   \langle D^2_{pp}H(Dv^\ez)D^2v^\ez Dv^\ez, D [H(Dv^\ez)]\rangle \phi^2 \,dx. $$
Note that, here $D^2_{pp}H(Dv^\ez)D^2v^\ez Dv^\ez$ appears.
 On there other hand, via \eqref{fund5}, \eqref{fund6} and ${\mathscr A}_{H }[v^\ez ]=-\ez \Delta v^\ez$, one has
 \begin{align*}F(v^\ez,\varphi)& = \int_U(-\det D^2v^\ez) [H_{p_ip_i}(Dv^\ez) ] H(Dv^\ez)\phi^2\,dx\\
 & = \int_U [| D[H(Dv^\ez)]|^2+\ez (\Delta v^\ez)^2] \frac{[H_{p_ip_i}(Dv^\ez) ]}{|D_pH(Dv^\ez)|^2} H(Dv^\ez)\phi^2\,dx.
 \end{align*}
It is impossible for us to   estimate the above integrations involving $D^2_{pp}H(Dv^\ez)D^2v^\ez Dv^\ez$
 via this quantatity and some  integration over $H(Dv^\ez)|D\phi|^2$.
 Therefore, we can not get $W^{1,2}_\loc$-estimates of  $H(Dv^\ez)$.
  Similarly, we can not prove  analogues of Lemmas \ref{LEM2.7}\&\ref{LEM2.8} for $v^\ez$.
\end{rem}

For the above reasons,   some essential new ideas/observations are   needed
 to find some fundamental  structural identity for $\mathscr A_H$ and
 divergence formula of determinant matching with $\mathscr A_H$ perfectly.
There are several hints helping us.
Firstly,  in the above calculation, once we integrate by parts,
$D^2_{pp}H(Dv )$  and also $D^2_{pp} D[H(Dv )]$ appears naturally, so they should be involved.  Secondly,
 by Evans' approximation, it is natural consider  $e^{\frac1\ez H}$-harmonic equation \eqref{eq2.1x},
which suggests to consider,  instead of $\Delta$, the elliptic operator $\,{\rm{div}}[D_p H (Du^\epsilon) ]$.
 Finally,  for smooth viscosity solution $u$ to \eqref{eq1.2}, consider  the linearization   of $\mathscr A_H[u]=0$ as Evans-Smart \cite{es11a,es11b} did:
\begin{equation*}
\mathscr L^u_H  (\phi): =-\langle D^2\phi D_p H(Du ),D_p H(Du )\rangle -\langle D^2_{pp}H(Du)D[H(Du)], D\phi\rangle.
\end{equation*}
We see that   $D^2_{pp}H(Du)D[H(Du)]$ arises therein naturally.
The above observations lead  to focus on the relations among
$\mbox{$\mathscr A_H[v], D^2_{pp}H(Dv), D[H(Dv)], \,{\rm{div}}[D_p H (Dv) ]$ and  $-\det D^2v$, }$
and also consider the nonnegativity of the
integral $\int_U\mathscr L^u_H(\phi)\phi\,dx $ for  $\phi\in C^\fz_c(U)$.
 With tedious calculations we find  Lemmas \ref{LEM2.1}\&\ref{LEM2.2} finally,
which allow us to establish Theorem \ref{LEM2.3} and Lemmas \ref{LEM2.6} \ref{LEM2.7}\&\ref{LEM2.8}.

The final remark is about the approximating equations \eqref{fund8}.
\begin{rem}\rm
When $n\ge3$, to get the  everywhere differentiability of absolute minimizers for $H\in C^0(\rn)$ satisfying (H1)\&(H2), the approximation via $e^{\frac1\ez H}$-harmonic functions is not enough.
But using
the approximation given by equations \eqref{fund8} and modifying the adjoint approach by \cite{es11a} we do obtain
everywhere differentiability of absolute minimizer
 when $n\ge2$.
See  \cite{fmz} for details.
\end{rem}

\section{ Sobolev  approximation via $e^{\frac1\ez H}$-harmonic functions $u^\ez$
when    $H\in C^\fz(\rr^2)$}

In this section, we assume that $H\in C^\fz(\rn)$ satisfies  (H1')\&(H2).
Let $\Omega\subset \mathbb R^2$ be any   domain, and
 $u\in  AM_H(\Omega)$, equivalently, $u\in C^0(\Omega)$ be a   viscosity solution to \eqref{eq1.2}.
Let  $U\Subset \Omega$ be any smooth domain.
For  ${\epsilon}\in(0,1]$, we consider  the Dirichlet problem
\begin{equation}\label{eq3.1}
{\rm{div}}\left[e^{\frac1\epsilon H (Dv)} {D_pH}(Dv) \right]
=0\quad{\rm in}\ U;\quad v|_{\partial U}=u|_{\partial U}.
\end{equation}

 Section 3.1 gives the existence/uniqueness of  solutions $u^\ez\in C^\fz(U)\cap C^0(\overline U)$
 to    \eqref{eq3.1}, and also  the convergence
 $u^\ez \to u$ in $C^{0,\gz}(\overline U)$  for any $\gz\in[0,1)$ as $\ez\to0$; see  Theorem \ref{THM3.1}.

In Section 3.2, by Lemma \ref{LEM2.6} and a Moser type iteration,
we  have $\lim_{\ez\to0}\|H(Du^\ez)\|_{L^\fz(V)}\le   \|H(Du )\|_{L^\fz(U)}$; see Theorem \ref{THM3.2}.

In Section 3.3, via  Theorem \ref{THM3.2} and Lemma \ref{LEM2.7}, for any $V\Subset U$, when $\alpha\ge 1/2$  we  show that  $[H(Du^\ez)]^\alpha\in W^{1,2} (V)$  uniformly in $\ez\in(0,1]$;   when $1/2-\tau_H(\|H(Du)\|_{L^\fz(U)})<\alpha<1/2$,
   $\liminf_{\ez\to0}\|[H(Du^\ez)+\sz]^\alpha\|_{L^2(V)}$ is bounded uniformly in $\sz\in(0,1]$.
  See Theorem \ref{THM3.5} for details.

 In Section 3.4,
using Lemma \ref{LEM2.8},
 we prove  that   $u^\ez \to u$ in $W^{1,t}_\loc(U)$ for all $t\ge1$ as $\ez\to 0$;
when $\alpha\ge1/2$, $[H (Du^\epsilon )]^{\alpha}\rightarrow [H(Du)]^{\alpha}$ in $L^t_{\loc}(U)$ for all $t\in [1,\infty)$ and weakly in $W^{1,2}_{\loc}(U)$ as $\ez\to 0$;
when $\frac12-\tau_H (\|H(Du)\|_{L^\fz( U)}) < \alpha<\frac12$,
for each $\sz\in(0,1]$ we have
$[H (Du^\epsilon )+\sz]^{\alpha}\rightarrow [H(Du) ]^{\alpha}$  as $\ez\to 0$ and $\sz\to0$ in order.
See Theorem \ref{THM4.2}.

\subsection{Existence and $C^{0,\gz}(\overline U)$-convergence of  $u^\ez\in C^\fz(U)$}

We have the following result.
\begin{thm}\label{THM3.1}
\begin{enumerate}
\item[(i)] For any $\epsilon>0 $,   there exists  a unique
solution $u^\epsilon \in C^\infty(U)\cap C^0(\overline  U)$ to the equation \eqref{eq3.1} with $u^\ez|_{\partial U}=u|_{\partial U}$.
\item[(ii)]   We have
$$\sup_{\ez\in(0,1]}
\|u^\ez\|_{C^0(\overline U)}\le   \|u \|_{C^0(\partial U)} .$$
\item[(iii)] For any $t\in(2,\fz)$, we have
\begin{equation}\label{eq3.2}
\sup_{0< \ez  \le 2/t  } \|D u^\epsilon\|_{L^t(U)}\le |U|^{1/t} \left[\frac{2}{\lambda_H(\fz)}\right]^{1/2}e^{\frac12\|H (Du)\|_{L^\fz(U)}} .
\end{equation}

\item[(iv)] For any $\gz\in(0,1)$ we have
$
u^\epsilon \rightarrow u
$ in $C^{0,\gz}(\overline U)$ as $\ez\to0$.
\end{enumerate}
\end{thm}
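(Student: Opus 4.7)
The plan is to realise \eqref{eq3.1} as the Euler-Lagrange equation of a strictly convex functional, extract uniform a priori estimates, and then pass to the limit along the lines of Evans \cite{e03} and Evans-Yu \cite{ey04}.

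For (i), observe that \eqref{eq3.1} is the Euler-Lagrange equation of the strictly convex functional
$$J_\ez(v):=\int_U e^{H(Dv)/\ez}\,dx.$$
Under (H1')\&(H2) the integrand has super-exponential growth in $|Dv|$ and is strictly convex in $Dv$, so the direct method in the Orlicz-Sobolev space generated by $e^{H(\cdot)/\ez}$ produces a unique minimizer $u^\ez$ over the affine class $\{v:\,v=u\text{ on }\pa U\}$. Since $D^2_{pp}H$ is uniformly positive definite by (H1'), the Euler-Lagrange equation linearises to a uniformly elliptic equation with smooth coefficients as soon as $u^\ez$ is locally Lipschitz, and De Giorgi-Nash-Moser plus Schauder theory then yield $u^\ez\in C^\fz(U)$; barrier arguments based on the smoothness of $\pa U$ and the Lipschitz boundary datum $u|_{\pa U}$ supply continuity up to the boundary. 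Statement (ii) is immediate from the weak maximum principle for divergence-form quasilinear elliptic equations with smooth coefficients.

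For (iii), I would test the minimising property against the competitor $u$ to obtain
$$\int_U e^{H(Du^\ez)/\ez}\,dx\le \int_U e^{H(Du)/\ez}\,dx\le |U|\,e^{M/\ez},\qquad M:=\|H(Du)\|_{L^\fz(U)}.$$
The elementary pointwise inequality $x^a\le a^a e^{-a}e^x$ (valid for $x,a\ge 0$), applied with $a=1/\ez$ and $x=H(Du^\ez)/\ez$, gives $H(Du^\ez)^{1/\ez}\le e^{-1/\ez}e^{H(Du^\ez)/\ez}$; integration then yields $\int_U H(Du^\ez)^{1/\ez}\,dx\le |U|\,e^{(M-1)/\ez}$. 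When $\ez\le 2/t$, H\"older's inequality with conjugate exponents $2/(t\ez)$ and $2/(2-t\ez)$ supplies $\int_U H(Du^\ez)^{t/2}\,dx\le |U|\,e^{(M-1)t/2}$, and combining with the strong-convexity bound $|p|^2\le 2H(p)/\lambda_H(\fz)$ from (H1') produces the claimed $L^t$-estimate after taking $t$-th roots. The constraint $\ez\le 2/t$ is precisely what makes the exponential factor $e^{M/\ez}$ collapse to the clean bound $e^{M/2}$.

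For (iv), the bound in (iii) makes $\{u^\ez\}_{\ez\in(0,1]}$ uniformly bounded in $W^{1,t}(U)$ for every $t\in(2,\fz)$, so by Morrey's embedding and Arzel\`a-Ascoli it is precompact in $C^{0,\gz}(\overline U)$ for every $\gz\in(0,1)$. Any subsequential limit $\wz u$ agrees with $u$ on $\pa U$ by uniform convergence, and, by the stability of viscosity solutions under vanishing viscosity (the extra term $\ez\,{\rm div}[D_pH(Du^\ez)]$ drops out in the limit), $\wz u$ is a viscosity solution of \eqref{eq1.2} in $U$. Under (H1')\&(H2) one has $H^{-1}(\min H)=\{0\}$ with empty interior, so the comparison principle of \cite{acjs} gives uniqueness of the Dirichlet problem; hence $\wz u=u$ and the whole family converges. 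The main obstacle I anticipate is the rigorous passage to the limit in the viscosity-solution sense: one must verify that the uniformly elliptic regularisation $\mathscr A_H[u^\ez]+\ez\,{\rm div}[D_pH(Du^\ez)]=0$ is stable enough, despite the degenerate structure of $\mathscr A_H$, to pass to $\mathscr A_H[\wz u]=0$ in the limit; this is handled by testing against $C^2$ functions touching $\wz u$ from one side and exploiting the uniform convergence $u^\ez\to \wz u$ together with the uniform $L^t$-control on $Du^\ez$ from (iii).
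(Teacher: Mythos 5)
Your proposal is correct and follows essentially the same route as the paper: direct method for existence/uniqueness and elliptic regularity for (i), maximum principle for (ii), the minimizing property $\int_U e^{H(Du^\ez)/\ez}\le\int_U e^{H(Du)/\ez}$ combined with an elementary pointwise inequality, H\"older, and strong convexity for (iii), and Morrey/Arzel\`a--Ascoli together with viscosity stability and uniqueness of the Dirichlet problem for (iv). The only cosmetic difference is in (iii), where you apply $x^a\le a^a e^{-a}e^x$ rather than the paper's $x\le e^x$, which in fact yields the slightly sharper factor $e^{(M-1)/2}$ in place of $e^{M/2}$.
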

\begin{proof}

(i) Consider the minimization problem of the functional of exponential growth
$$c_{\epsilon}:=\inf\left\{\Gamma_{\epsilon}(v):=\int_Ue^{\frac1\epsilon H (Dw)}\,d x\Big|v\in \rho_{\epsilon} \right\},$$
where $\rho_{\epsilon}$ is the set of admissible functions of the functional $\Gamma_{\epsilon}$ defined by
$$\rho_{\epsilon}:=\left\{w\in W^{1,1}(U)\Big|\int_Ue^{\frac1\epsilon H (Dw)}\,d x<\infty, w-u\in W^{1,1}_0(U)\right\},$$
Since $u\in \rho_\ez$,  we know  $\rho_{\epsilon}\neq\emptyset$. Let $\{u_m\}^\infty_{m=1}\subset \rho_{\epsilon}$ be a minimizing sequence.  Without loss of generality, we may assume that there exists $u^\epsilon \in\rho_{\epsilon}$
such that $u_m\rightarrow u^\epsilon $ uniformly on $U$, and $Du_m\rightharpoonup Du^\epsilon $ weakly in $L^t(U)$ for any $1\le t <\infty$. Since $H (p)$ is uniformly convex, by the lower semi-continuity we
have that
\begin{align*}
\Gamma_{\epsilon}(u^\epsilon )&=\int_Ue^{\frac1\epsilon H (Du^\epsilon )}\,d x=\sum^\infty_{s=1}\int_U\frac{[\frac1\epsilon H (Du^\epsilon )]^s}{s!}\,d x\\
&\le \liminf_{m\rightarrow\infty}\sum^\infty_{s=1}\int_U\frac{[\frac1\epsilon H (Du_m)]^s}{s!}\,d x=\liminf_{m\rightarrow\infty}\int_Ue^{\frac1\epsilon H (Du_m)}\,d x=\liminf_{m\rightarrow\infty}\Gamma_{\epsilon}(u_m)= c_{\epsilon}.
\end{align*}
Hence $c_{\epsilon}=\Gamma_{\epsilon}(u^\epsilon )$ and $u^\epsilon $ is a minimizer of $\Gamma_{\epsilon}$ over the set $\rho_{\epsilon}$. Direct calculations imply
that the Euler-Lagrange equation of $u^\epsilon $ is \eqref{eq3.1}. The uniqueness of $u^\epsilon $ follows from the
maximum principle that is applicable of \eqref{eq3.1}. The smoothness of $u^\epsilon $ follows from the theory of quasi-linear uniformly elliptic equations; see for example \cite{gt}.

(ii) Lemma 3.1 (ii) follows from the the maximum principle. 

(iii) Let $t\in(2,\fz)$. Noting   $  t<1/\ez$, by $x  \le e^{x  }   $ for
$x\ge 0$ and the H\"older inequality, we have
\begin{align*}
\left[\mint{-}_U [H (Du^\epsilon )]^t\,d x\right]^{1/t}&\le\left[\mint{-}_U e^{tH ( Du^\epsilon )}\,d x\right]^{1/t}\le\left[\mint{-}_U e^{\frac1\epsilon H ( Du^\epsilon )}\,d x\right]^{ \epsilon}.
\end{align*}
Since $u^\ez$  is the minimizer of  $\Gamma_{\epsilon}(v)$ with the same boundary as $u$, we have
\begin{align}\label{eq3.xx1}\mint{-}_U e^{\frac1\epsilon H (Du^\epsilon )}\,d x\le\mint{-}_U e^{\frac1\epsilon H (Du)}\,d x\le e^{\frac1\ez \|H (Du)\|_{L^\fz(U)}}.\end{align}

Considering $H(p)\ge \lambda_H(\fz)|p|^2/2$, we conclude that
\begin{equation}\label{eq3.3}
\left[\bint_U |Du^\epsilon |^{2t}\,d x\right]^{1/2t}\le \left[\frac{2}{\lambda_H(\fz)}\right]^{1/2}e^{\frac12\|H (Du)\|_{L^\fz(U)}},
\end{equation}
by multiplying $|U|^{1/2t}$ in both sides, which gives \eqref{eq3.2}.

(iv)   Let $t=2/(1-\gamma)$.
For   $0<\ez< 2/t=1-\gamma$, we have   $  u^\ez \in W^{1,t}(U)$ uniformly in
$\ez\in(0,1-\gz)$. The Sobolev imbedding yields that $  u^\ez \in C^{0,\gz}(\overline U)$ uniformly in
$\ez\in(0,1-\gz)$.
 By Arzela-Ascolli' theorem, we know that $u^\ez$, up to some subsequence,
 converges   to some function $\hat u$ in $C^0(\overline U)$ and hence in $C^{0,\gz}(\overline U)$.
  Thanks to the viscosity theory (see \cite{CIL}), $\hat u$ is also a viscosity solution to \eqref{eq1.2}.
  Noting   $\hat u=u$ in $\partial U$, by Lemmas \ref{LEM7.9}\&\ref{LEM7.10}, we have $\hat u=u$ in $U$.
  This completes the proof of Theorem \ref{THM3.1}.
\end{proof}

\subsection{Uniform   $L^{ \fz}_\loc(U)$-estimates of $H(Du^\ez)$}
Moreover, we establish an $L^\fz_\loc(U)$-estimates for $H(Du^\ez)$ which is uniformly in $\ez\in(0,1]$.

 \begin{thm}\label{THM3.2} For any smooth domain $V\Subset U$, we have
$$  \|H(Du^\epsilon) \|_{L^\fz(V)}\le C(H,U,V)\ez^{1/2} +[1+C(H,U,V)\ez]\|H(Du)\|_{L^\fz(U)} \quad \forall \ez\in(0,1] .$$
 \end{thm}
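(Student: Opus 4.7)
The plan is a Moser type iteration on
\[
\sigma^\ez := e^{\frac{1}{\ez}H(Du^\ez)},
\]
seeded by the $L^1$ bound $\mint-_U\sigma^\ez\,dx\le e^{\|H(Du)\|_{L^\fz(U)}/\ez}$ already established inside the proof of Theorem \ref{THM3.1} as \eqref{eq3.xx1}. The engine is Lemma \ref{LEM2.6}: writing $w=\sigma^\ez$ and using
\[
|Dw^{t/2}|^2 = \tfrac{t^2}{4\ez^2}\,w^t\,|DH(Du^\ez)|^2,\qquad H(Du^\ez)=\ez\log w,
\]
the explicit $\ez$ on both sides of Lemma \ref{LEM2.6} cancels and it collapses to
\[
\int_U |Dw^{t/2}|^2 \phi^{2mt}\,dx \le C(H)\,m^2 t^2\int_U (\log w)\,w^t\,|D\phi|^2\phi^{2mt-2}\,dx.
\]
Because $\Omega\subset\rr^2$, the Sobolev embedding $W^{1,2}\hookrightarrow L^q$ for every finite $q$ supplies the integrability gain that fuels a classical Moser iteration, doubling the integrability exponent at each step.

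Choose nested cut-offs $\phi_k\in C_c^\fz(V_{k-1})$ with $\phi_k\equiv 1$ on $V_k$, $V_0=U$, $V_k\searrow V$, $\|D\phi_k\|_{L^\fz}\le C(U,V)2^k$; set $t_k=2^k$, fix $m$, and bound $\log w\le \eta_k^{-1}w^{\eta_k}$, which is legitimate because (H2) forces $\sigma^\ez\ge 1$. A Caccioppoli-type estimate combined with the two-dimensional Sobolev embedding yields the recursion
\[
a_{k+1}\le \bigl(C(H,U,V)\,16^k/\eta_k\bigr)^{1/t_k}\,a_k^{\,1+\eta_k/t_k},\qquad a_k := \|w\|_{L^{t_k}(V_k)}.
\]
The decisive choice is $\eta_k := \ez\, t_k\, 2^{-k}$, for which $\prod_{k\ge 0}(1+\eta_k/t_k)\le 1+C\ez$ and $1/\eta_k=\ez^{-1}$, so that
\[
\sum_{k\ge 0}\frac{1}{t_k}\log\bigl(C(H,U,V)\,16^k/\eta_k\bigr) \le C(H,U,V)+C\log(1/\ez).
\]
Iterating $k\to\fz$ with seed $a_0\le |U|\,e^{\|H(Du)\|_{L^\fz(U)}/\ez}$ produces
\[
\|\sigma^\ez\|_{L^\fz(V)} \le C(H,U,V)\,\ez^{-C}\,e^{(1+C\ez)\|H(Du)\|_{L^\fz(U)}/\ez},
\]
and then applying $\ez\log(\cdot)$ gives
\[
\|H(Du^\ez)\|_{L^\fz(V)} \le C(H,U,V)\,\ez + C\,\ez\log(1/\ez) + (1+C\ez)\|H(Du)\|_{L^\fz(U)},
\]
which is stronger than the claimed bound because $\ez+\ez\log(1/\ez)\le C\ez^{1/2}$ on $(0,1]$.

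The main difficulty is the joint bookkeeping of two competing small parameters. If $\eta_k$ were taken fixed, $\prod_k(1+\eta/t_k)$ would be bounded below by some $1+c>1$, converting the $L^1$ seed into an exponent $(1+c)\|H(Du)\|_{L^\fz(U)}/\ez$ on the right hand side --- useless as $\ez\to0$. The $\ez$-dependent choice $\eta_k=\ez t_k 2^{-k}$ pins the asymptotic exponent at $(1+C\ez)/\ez$ but inflates each step's constant by $\ez^{-1}$; one must verify that the geometric decay $t_k^{-1}=2^{-k}$ converts this inflation into only an additive $C\ez\log(1/\ez)$ loss (hence the $\ez^{1/2}$ slack in the statement). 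Assumption (H2) plays a quiet but essential role: $w\ge 1$ is exactly what makes the elementary inequality $\log w\le \eta^{-1}w^\eta$ available and allows the recursion to close.
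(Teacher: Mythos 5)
Your proposal follows the same high-level route as the paper: a Moser-type iteration on an exponential of $H(Du^\ez)$ driven by Lemma~\ref{LEM2.6}, seeded by the $L^1$ bound \eqref{eq3.xx1}, with the key observation that the $\log$-nonlinearity must be traded off against an $\ez$-dependent parameter $\eta_k$ that is summable against $t_k^{-1}$. The observation that the explicit $\ez$'s cancel in Lemma~\ref{LEM2.6} after rewriting in terms of $w^{t/2}$, and the intuition that a fixed $\eta$ would wreck the exponent, are both correct. However, the stated recursion does not follow from the estimates you invoke. After $\log w\le\eta_k^{-1}w^{\eta_k}$, Caccioppoli plus Sobolev deliver
\[
\|w\|_{L^{t_{k+1}}(V_{k+1})}^{t_k}\lesssim\frac{m^2 t_k^2\,16^k}{\eta_k}\int_{V_k}w^{t_k+\eta_k}\,dx
 =\frac{m^2 t_k^2\,16^k}{\eta_k}\,\|w\|_{L^{t_k+\eta_k}(V_k)}^{t_k+\eta_k},
\]
and to obtain $a_{k+1}\le(C\,16^k/\eta_k)^{1/t_k}a_k^{1+\eta_k/t_k}$ with $a_k=\|w\|_{L^{t_k}(V_k)}$ you would need $\|w\|_{L^{t_k+\eta_k}(V_k)}\le\|w\|_{L^{t_k}(V_k)}$, which is the wrong direction on a bounded domain: the $L^{t_k+\eta_k}$ norm dominates the $L^{t_k}$ norm, not the reverse. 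The paper avoids this entirely by working with the \emph{normalized} density $\sigma^\ez=e^{\frac{1}{\ez}H(Du^\ez)}/\int_U e^{\frac{1}{\ez}H(Du^\ez)}\,dx$, so that $\int_U\sigma^\ez\,dx=1$; the H\"older split in the proof of Lemma~\ref{LEM3.4},
\[
\int_U(\sigma^\ez)^{t+\frac{\theta-1}{\theta}}\phi^{\cdots}\,dx
\le\Bigl(\int_U[\sigma^\ez\phi^m]^{t\theta}\,dx\Bigr)^{1/\theta}\Bigl(\int_U\sigma^\ez\phi^{\cdots}\,dx\Bigr)^{1-1/\theta}
\le\|[\sigma^\ez\phi^m]^t\|_{L^\theta(U)},
\]
eliminates the excess $\eta$-power at the cost of an $L^1$-factor which is exactly $1$. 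In your unnormalized setting the same H\"older inequality produces $\|w\|_{L^1(U)}^{\eta_k}=A_0^{\eta_k}$ rather than $1$, and since $A_0\sim|U|\,e^{\|H(Du)\|_{L^\fz(U)}/\ez}$ is exponentially large in $1/\ez$, the factors $A_0^{\eta_k}$ are the decisive contribution; their cumulative effect $\sum_k\eta_k/t_k\approx C\ez$ is what really produces the $(1+C\ez)$ in the final exponent. Your arithmetic at the end happens to reproduce the correct bound because to leading order $a_k$ is comparable to $A_0$ throughout the iteration, so writing $a_k^{\eta_k/t_k}$ in place of $A_0^{\eta_k/t_k}$ introduces only lower-order errors; but the recursion must be stated with the $A_0$-factor explicit (or the normalization adopted) for the step to be valid. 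A secondary point: the version of Lemma~\ref{LEM2.6} actually proved in Section~7.2 has $[1+H(Du^\ez)]$ on the right rather than $H(Du^\ez)$, so your reduced inequality should carry an additional $\ez^{-1}\int_U w^t|D\phi|^2\phi^{2mt-2}\,dx$ term; since $w\ge1$ this is absorbed by $1\le w^{\eta_k}$ into the same bound, with a slightly worse constant.
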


 Recall that
 when $H(x,p)$ is a periodic Hamiltonian in $x$, that is, $H(x +\zz,p)=H(p)$ for all $x\in\rn$,
for smooth $e^{\frac1\ez H(x,\cdot)}$-harmonic function $v^\ez$,
using a Moser type iteration argument for the normalization of $e^{\frac1\ez H(x,Dv^\ez)}$,
 Evans  \cite{e03} obtained the $W^{1,\fz}([0,1]^n)$-estimates of $ H(x,Du^\ez)$ uniformly in $\ez\in(0,1]$.

To prove  Theorem \ref{THM3.2}, for $\ez\in(0,1)$ we write $$\sz^\ez= \frac{e^{\frac1\ez H(Du^\ez)}}{\int_U e^{\frac1\ez H(Du^\ez)}\,dx}.$$
But note that the iteration of $\sz^\ez$ itself  does not give Theorem \ref{THM3.2} since we donot have some
nice  $L^t$-estimates of $\sz^\ez$.
Instead,  we consider  $\sz^\ez\phi^{k}$ with $k\ge1$. In this section, for any given $V\Subset U$,  we always fix a $\phi=\phi_V\in C^\infty_c(U)$ satisfying
\begin{align}\label{eq3.4} \phi=1\,{\rm in}\ V,\
 0\le\phi\le 1 \,{\rm in}\ U,  \ |D\phi|\le  \frac C{{\rm dist}(V,\partial U)}\
  {\rm and} \  |D^2\phi|\le  \frac C {[{\rm dist}(V,\partial U)]^2} \ {\rm in}\ U.
 \end{align}
The following auxiliary Lemmas \ref{LEM3.4}\&\ref{LEM3.3} allow us to prove
  Theorem \ref{THM3.2} by   borrowing  some ideas from Evans \cite{e03}---a Moser type iteration.

\begin{lem}\label{LEM3.3}
Let $V\Subset U$ and $\phi$ be as in \eqref{eq3.4}.
For any  $\beta>1$  we have
$$\|\sigma ^{\epsilon}\phi^4\|_{L^\beta(U)} \le C(H,U,V,\beta) \frac1 {\epsilon}[1+\|H(Du)\|_{L^\fz(U)}] .$$
\end{lem}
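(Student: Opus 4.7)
The plan is to combine the Caccioppoli-type inequality of Lemma~\ref{LEM2.6} with the two-dimensional Gagliardo--Nirenberg--Sobolev inequality, in the spirit of a single-step Moser argument. The starting observation is that dividing both sides of Lemma~\ref{LEM2.6} by $Z_\epsilon^t$, where $Z_\epsilon := \int_U e^{H(Du^\epsilon)/\epsilon}\,dx$, converts the weight $e^{tH(Du^\epsilon)/\epsilon}$ into $(\sigma^\epsilon)^t$: for every $m,t\ge 1$,
\[
\int_U |DH(Du^\epsilon)|^2\,\phi^{2mt}\,(\sigma^\epsilon)^t\,dx \le 8m^2K^2\epsilon\int_U H(Du^\epsilon)\,|D\phi|^2\,\phi^{2mt-2}\,(\sigma^\epsilon)^t\,dx,
\]
with $K=\Lambda_H(\infty)/\lambda_H(\infty)$. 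Together with the pointwise identity $|D\sigma^\epsilon|=(\sigma^\epsilon/\epsilon)|DH(Du^\epsilon)|$, immediate from the definition of $\sigma^\epsilon$, this is the main analytic input.

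Next, I would apply the Gagliardo--Nirenberg inequality $\|f\|_{L^{2\beta}(\mathbb{R}^2)}\le C(\beta)\|f\|_{L^1}^{1/(2\beta)}\|Df\|_{L^2}^{1-1/(2\beta)}$ to $f=\sqrt{\sigma^\epsilon}\phi^2$, so that $\|\sigma^\epsilon\phi^4\|_{L^\beta}=\|f\|_{L^{2\beta}}^2$ while $\|f\|_{L^1}\le|U|^{1/2}$ by Cauchy--Schwarz combined with $\int_U\sigma^\epsilon\,dx=1$. The computation
\[
|D(\sqrt{\sigma^\epsilon}\phi^2)|^2\le\tfrac{1}{2}\sigma^\epsilon\,|DH(Du^\epsilon)|^2\epsilon^{-2}\phi^4+8\sigma^\epsilon\phi^2|D\phi|^2,
\]
combined with the Caccioppoli estimate above at $t=1$, $m=2$, and Young's inequality (valid since $2-1/\beta<2$), reduces matters to controlling $\int_U H(Du^\epsilon)\sigma^\epsilon\,dx$ via a relation of the form
\[
\|\sigma^\epsilon\phi^4\|_{L^\beta(U)}\le C(\beta,|U|)\Bigl(1+\tfrac{K^2\|D\phi\|_\infty^2}{\epsilon}\int_U H(Du^\epsilon)\sigma^\epsilon\,dx\Bigr).
\]

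The main obstacle is estimating $\int_U H(Du^\epsilon)\sigma^\epsilon\,dx$ in terms of $\|H(Du)\|_{L^\infty(U)}$ alone, without any a~priori $L^\infty$ bound on $H(Du^\epsilon)$---which is exactly what Theorem~\ref{THM3.2} will later derive from the present lemma. My plan is to exploit the minimization inequality $Z_\epsilon\le|U|\,e^{\|H(Du)\|_{L^\infty(U)}/\epsilon}$ recorded in \eqref{eq3.xx1} together with the elementary pointwise bound $xe^{x/\epsilon}\le(\epsilon/e)e^{2x/\epsilon}$ (since $ye^{-y}\le 1/e$) applied to $x=H(Du^\epsilon)-\|H(Du)\|_{L^\infty(U)}$; integrating over $U$ and dividing by $Z_\epsilon$ yields
\[
\int_U H(Du^\epsilon)\sigma^\epsilon\,dx\le\|H(Du)\|_{L^\infty(U)}+\frac{\epsilon|U|}{e}\int_U(\sigma^\epsilon)^2\,dx.
\]
The residual $(\sigma^\epsilon)^2$ term is then absorbed by a short iteration using a shrinking chain of cut-offs $V=V_k\Subset\cdots\Subset V_1\Subset V_0\Subset U$ with associated $\phi_j$: starting from $\|\sigma^\epsilon\|_{L^1(U)}=1$ and repeatedly applying the Gagliardo--Nirenberg gain $p_j=\chi p_{j-1}$ with some $\chi>1$ available in dimension two, $O(\log\beta)$ steps suffice to close the estimate and yield the claimed bound with constant $C(H,U,V,\beta)\epsilon^{-1}(1+\|H(Du)\|_{L^\infty(U)})$. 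The detailed calculation is postponed to Section~7 as announced in the text.
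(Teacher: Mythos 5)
Your overall strategy coincides with the paper's: start from Lemma~\ref{LEM2.6} with $m=2$, $t=1$, divide by $\int_U e^{H(Du^\epsilon)/\epsilon}\,dx$ to pass to $\sigma^\epsilon$, use the pointwise identity $D\sigma^\epsilon=\frac{1}{\epsilon}\sigma^\epsilon D[H(Du^\epsilon)]$ together with a planar Sobolev embedding applied to $(\sigma^\epsilon)^{1/2}\phi^2$, and estimate the resulting entropy-type term $\int_U H(Du^\epsilon)\phi^2\sigma^\epsilon\,dx$ using \eqref{eq3.xx1}. Up to that point your argument is sound.

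The gap is in your treatment of the entropy term, and it is not fixable by the iteration you sketch. You apply $ye^{-y}\le 1/e$ with the \emph{fixed} exponent, i.e.\ $xe^{x/\epsilon}\le\frac{\epsilon}{e}e^{2x/\epsilon}$, pivot on $\|H(Du)\|_{L^\infty(U)}$, and drop the cutoff, arriving at
\[
\int_U H(Du^\epsilon)\sigma^\epsilon\,dx\le\|H(Du)\|_{L^\infty(U)}+\frac{\epsilon|U|}{e}\int_U(\sigma^\epsilon)^2\,dx.
\]
The obstacle is $\int_U(\sigma^\epsilon)^2\,dx$: the normalization gives only $\|\sigma^\epsilon\|_{L^1(U)}=1$, and since $\sigma^\epsilon$ concentrates as $\epsilon\to 0$ and there is no control of $H(Du^\epsilon)$ near $\partial U$, this global $L^2$ quantity has no a~priori bound whatsoever. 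Even if you retain a cutoff $\psi^2$, the power $(\sigma^\epsilon)^2$ corresponds to $1+\gamma$ with $\gamma=1$, which cannot be matched against $\|\sigma^\epsilon\phi^4\|_{L^\beta}^{1/2}$ via H\"older and $\|\sigma^\epsilon\|_{L^1}=1$: one checks that the only admissible split forces a negative power of $\phi$, which is useless. The ``short iteration'' you propose is circular: every step reproduces an $L^2$-type quantity $\int(\sigma^\epsilon)^2\psi^2\,dx$ on the right, so you never climb from $L^1$ to $L^2$, let alone to $L^\beta$; the Moser iteration that does climb (Lemma~\ref{LEM3.4}) \emph{uses} Lemma~\ref{LEM3.3} as its base case, so one cannot invoke it here without circularity.

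The paper's proof avoids this by two small but essential changes. First, it pivots on $\epsilon\ln\int_U e^{H(Du^\epsilon)/\epsilon}\,dz$ (controlled by $C(U)[1+\|H(Du)\|_{L^\infty(U)}]$ via \eqref{eq3.xx1}) and keeps the cutoff $\phi^2$, so the exponential bound yields $\int_U(\sigma^\epsilon)^{1+\gamma}\phi^2\,dx$ rather than a global integral. Second --- and this is the key point --- it uses the \emph{one-parameter} inequality $y\le\frac{\epsilon}{\gamma}e^{\gamma y/\epsilon}$ with the \emph{$\beta$-dependent} choice $\gamma=(\beta-1)/(2\beta)$, so that $1+\gamma=(3\beta-1)/(2\beta)$. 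With this exponent, H\"older with $\|\sigma^\epsilon\|_{L^1(U)}=1$ gives exactly
\[
\int_U(\sigma^\epsilon)^{(3\beta-1)/(2\beta)}\phi^2\,dx\le\left(\int_U\sigma^\epsilon\,dx\right)^{\frac{2\beta-1}{2\beta}}\left(\int_U\big[(\sigma^\epsilon)^{1/2}\phi^2\big]^{2\beta}\,dx\right)^{\frac{1}{2\beta}}=\|\sigma^\epsilon\phi^4\|_{L^\beta(U)}^{1/2},
\]
and Young's inequality absorbs this into the left-hand side in one step, with no iteration needed. Your $\gamma=1$ overshoots this exponent match, which is precisely why the argument fails to close.
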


\begin{lem} \label{LEM3.4}Let $V\Subset U$ and $\phi$ be as in \eqref{eq3.4}.
For any $t>1$, $m\ge 2$ and $\theta>1$, we have
\begin{equation}
\|\sigma ^{\epsilon}\phi^{2m}\|_{L^{t\theta^2}(U)}\le
\left[C(H,U,V,\theta) \frac1 {\epsilon}[1+\|H(Du)\|_{L^\fz(U)}] \right]^{1/t} (m t)^{2/t}
\|\sigma^\epsilon\phi^m\|_{L^{t\theta}(U)}.
\end{equation}
\end{lem}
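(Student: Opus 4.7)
The plan is to execute one step of a Moser-type iteration, with Lemma \ref{LEM2.6} as the underlying Caccioppoli energy estimate and the two-dimensional Sobolev embedding $W^{1,2}(U)\hookrightarrow L^{q}(U)$ (valid for every finite $q$, with constant depending on $q$ and $U$) as the integrability-boosting device. The natural test function to feed into Sobolev is $g:=(\sigma^\epsilon)^{t/2}\phi^{mt}$, because then $\|g\|_{L^{2\theta^2}(U)}^2=\|\sigma^\epsilon\phi^{2m}\|_{L^{t\theta^2}(U)}^{t}$ while $\|g\|_{L^2(U)}^2=\int_U(\sigma^\epsilon)^{t}\phi^{2mt}\,dx$, so Sobolev applied to $g$ reduces the claim (after taking $t$-th roots) to proving
$$\|Dg\|_{L^2(U)}^2+\|g\|_{L^2(U)}^2\le C(H,U,V,\theta)\,\frac{1+\|H(Du)\|_{L^\infty(U)}}{\epsilon}\,(mt)^2\,\|\sigma^\epsilon\phi^m\|_{L^{t\theta}(U)}^{t}.$$

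The $\|g\|_{L^2}^2$ term and the piece of $\|Dg\|_{L^2}^2$ coming from $D\phi$ will be estimated directly by H\"older with exponent $\theta$ and its conjugate (using $\phi\le 1$ and the uniform bound on $|D\phi|$ in \eqref{eq3.4}) against $\|\sigma^\epsilon\phi^m\|_{L^{t\theta}(U)}^{t}$. The principal piece of $\|Dg\|_{L^2}^2$, namely $\frac{t^2}{2}\int(\sigma^\epsilon)^{t-2}|D\sigma^\epsilon|^2\phi^{2mt}\,dx$, I rewrite via $D\sigma^\epsilon=(\sigma^\epsilon/\epsilon)D[H(Du^\epsilon)]$ and the normalization $\sigma^\epsilon=w^\epsilon/\|w^\epsilon\|_{L^1(U)}$ as
$$\frac{t^2}{2\epsilon^2\|w^\epsilon\|_{L^1(U)}^{t}}\int_U|D[H(Du^\epsilon)]|^2\,e^{tH(Du^\epsilon)/\epsilon}\phi^{2mt}\,dx,$$
which Lemma \ref{LEM2.6} bounds by $(Cm^2/\epsilon)\int_U H(Du^\epsilon)(\sigma^\epsilon)^{t}|D\phi|^2\phi^{2mt-2}\,dx$. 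To extract the final $(1+\|H(Du)\|_{L^\infty(U)})/\epsilon$ factor I would use the identity $H(Du^\epsilon)=\epsilon\log w^\epsilon=\epsilon\log\sigma^\epsilon+\epsilon\log\|w^\epsilon\|_{L^1(U)}$, the a priori bound $\|w^\epsilon\|_{L^1(U)}\le|U|\exp(\|H(Du)\|_{L^\infty(U)}/\epsilon)$ from the minimizing property established in Theorem \ref{THM3.1}, and the elementary inequality $\log x\le x^{\delta}/\delta$, giving $H(Du^\epsilon)\le(\epsilon/\delta)(\sigma^\epsilon)^{\delta}+\epsilon\log|U|+\|H(Du)\|_{L^\infty(U)}$ for any $\delta>0$.

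The main obstacle I expect is the absorption of the higher-power term $(\sigma^\epsilon)^{t+\delta}$ that this pointwise bound produces: a further H\"older estimate with exponent $\alpha=t\theta/(t+\delta)$ yields a factor $\|\sigma^\epsilon\phi^m\|_{L^{t\theta}(U)}^{t+\delta}$ instead of the linear $\|\sigma^\epsilon\phi^m\|_{L^{t\theta}(U)}^{t}$ required by the statement. The resolution is to fix $\delta$ strictly smaller than $\min\{t(\theta-1),\,t-2/m\}$, so that the H\"older exponent $\alpha>1$ and the cutoff exponent $m(t-\delta)-2\ge 0$ are both admissible; a Young-type inequality $a^{t+\delta}\le a^{t\theta}+a^{t}$ combined with $\phi^{2mt-2}\le\phi^{mt\theta}$ (the latter valid in the regime $\theta\le 2-2/(mt)$ which will be the one relevant to the iteration fed from Lemma \ref{LEM3.3}) then lets the extra power be absorbed into constants belonging to $C(H,U,V,\theta)$. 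The $(mt)^{2/t}$ growth displayed in the statement is then just the combined $m^2$ factor from Lemma \ref{LEM2.6} and the $t^2$ from the chain rule on $g$, both raised to the power $1/t$ at the final Sobolev $t$-th root.
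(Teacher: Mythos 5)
Your setup is essentially identical to the paper's: Sobolev embedding applied to $g=(\sigma^\epsilon)^{t/2}\phi^{mt}$, chain rule on $Dg$, Lemma \ref{LEM2.6} for the principal energy term, and then a pointwise bound on $H(Du^\epsilon)$ via the $\log/\exp$ comparison (your $\log x\le x^\delta/\delta$ is the logarithmic form of the paper's $y\le\frac{\epsilon\theta}{\theta-1}e^{\frac{\theta-1}{\theta\epsilon}y}$). Up to and including the appearance of the higher power $(\sigma^\epsilon)^{t+\delta}$, the two arguments coincide.

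The gap is in the absorption of $(\sigma^\epsilon)^{t+\delta}$, and it is a real one. The paper does not leave $\delta$ free; it \emph{must} take $\delta=(\theta-1)/\theta$. With that choice one decomposes $(\sigma^\epsilon)^{t+\frac{\theta-1}{\theta}}\phi^{2mt-2}=\bigl[(\sigma^\epsilon)^{t}\phi^{mt}\bigr]\cdot\bigl[(\sigma^\epsilon)^{\frac{\theta-1}{\theta}}\phi^{mt-2}\bigr]$ and applies H\"older with exponents $\theta$ and $\theta/(\theta-1)$; the conjugate factor becomes exactly $\int_U\sigma^\epsilon\phi^{(mt-2)\theta/(\theta-1)}\,dx\le\int_U\sigma^\epsilon\,dx=1$ by the normalization, so the whole term is bounded by $\|(\sigma^\epsilon\phi^m)^t\|_{L^\theta(U)}=\|\sigma^\epsilon\phi^m\|_{L^{t\theta}(U)}^{t}$ \emph{linearly}, as required. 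Your alternative — H\"older with exponent $t\theta/(t+\delta)$ against $\|\sigma^\epsilon\phi^m\|_{L^{t\theta}}^{t+\delta}$ followed by $a^{t+\delta}\le a^{t\theta}+a^{t}$ and $\phi^{2mt-2}\le\phi^{mt\theta}$ — requires $\theta\le 2-2/(mt)$, which \emph{never} holds in the regime the lemma is actually used ($\theta=2$, $t=2^{k-1}$, $m=2^k$ in the proof of Theorem \ref{THM3.2}), and moreover $a^{t+\delta}\le a^{t\theta}+a^{t}$ still leaves a factor $\|\sigma^\epsilon\phi^m\|_{L^{t\theta}}^{t\theta}$ in the estimate, which is not linear in the quantity being iterated. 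The missing idea is that one should not try to force every H\"older factor onto a power of $\sigma^\epsilon\phi^m$; the conjugate factor is deliberately engineered to carry a single power of $\sigma^\epsilon$ so that the normalization $\int_U\sigma^\epsilon\,dx=1$ kills it, and $\delta=(\theta-1)/\theta$ is precisely the value that achieves this.
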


The proofs of Lemmas \ref{LEM3.4}\&\ref{LEM3.3} rely on Lemma \ref{LEM2.6} and Sobolev's imbedding,
 and  are postponed  to Section 7.2.   Now we are ready to prove Theorem \ref{THM3.2}.

\begin{proof}[Proof of Theorem \ref{THM3.2}]
Let $V\Subset U$ and $\phi$ be as in \eqref{eq3.4}.
Let $\theta=2$,  $t=\theta^{k-1}$ and $m=\theta^{k}$  in Lemma \ref{LEM3.4}.
Write  $q_{k+1}:=\theta^k$, and noting $q_{k+1}=t\theta^2$,  $q_{k}:=t\theta$ and   $m^2t^2\le 2^{6k}$,  we obtain
\begin{equation*}
\begin{split}
\|  \sigma ^{\epsilon}\phi^{q_{k+1}}\|_{L^{q_{k+1}}(U)}
&\le
\left\{C(H,U,V) \frac1\ez  [1+\|H(Du)\|_{L^\fz(U)}] \right\}^{2^{- k+1}}  2^{6k2^{-k+1}}\|  \sigma ^{\epsilon}\phi^{q_{k }}\|_{L^{q_{k }}(U)}.
\end{split}
\end{equation*}
Therefore,
\begin{align*}
 \|  \sigma ^{\epsilon}\phi^{q_{k+1}}\|_{L^{q_{k+1}}(U)}
&\le\left \{C(H,U,V) \frac1{\epsilon}[1+  \|H(Du)\|_{L^\fz(U)}]  \right\}^{\sum_{j=2}^k 2^{-j+1}}   {2}^{\sum_{j=2}^k{ 6j }2^{-j+1} }
  \|\sigma^\epsilon\phi^{4}\|_{L^4(U)}\\
 &\le C(H,U,V) \frac1{\epsilon}[1+  \|H(Du)\|_{L^\fz(U)}]
 \|\sigma^\epsilon\phi^{4} \|_{L^4(U)}.
\end{align*}
 Letting  $k\to\infty$, noting $\phi=1$ in $V$ and applying Lemma \ref{LEM3.3} we have
\begin{align*}
 \|\sigma^\epsilon\|_{L^\infty(V)}
&\le C(H,U, V) \frac1{\epsilon}[1+  \|H(Du)\|_{L^\fz(U)}]
 \|\sigma^\epsilon\phi^{4} \|_{L^4(U)}\le C(H,U,V)\frac1{\epsilon^2}[1+  \|H(Du)\|_{L^\fz(U)}]^2 .
\end{align*}
This, together with \eqref{eq3.xx1},
 yields
\begin{align*}
 \|e^{\frac1\ez H(Du^\epsilon )} \|_{L^\fz(V)}
&\le C(H,U,V)\frac1{\epsilon^2}[1+  \|H(Du)\|_{L^\fz(U)}]^2 e^{\frac1 \ez\|H(Du)\|_{L^\infty(U)}}.
\end{align*}
Hence,
\begin{align*}
 \|H(Du^\epsilon ) \|_{L^\fz(V)}
 &\le    C(H,U,V) \ez\left\{ \ln\frac2{\epsilon }+\ln [1+  \|H(Du)\|_{L^\fz(U)}]\right \}+  \ez \ln e^{\frac1 \ez\|H(Du)\|_{L^\infty(U)}},
\end{align*}
from which we conclude
\begin{align*}  \|H(Du^\epsilon ) \|_{L^\fz(V)}
&\le  C(H,U,V)\{\ez^{1/2}+\ez \ln [1+  \|H(Du)\|_{L^\fz(U)]   }\}+  \|H(Du)\|_{L^\infty(U)}\\
&\le  C(H,U,V) \ez^{1/2} +[1+ C(H,U,V)\ez]  \|H(Du)\|_{L^\infty(U)}.
\end{align*}
 This completes the proof of Theorem \ref{THM3.2}.
\end{proof}
\begin{rem}\label{REM3.5}\rm
(i) Note that Theorem \ref{THM3.2} gives
   $\lim_{\ez\to0}\| H(Du^\ez) \|_{L^\fz(V)}\le  \|H(Du)\|_{L^\fz(U)}$ for all $V\Subset U$,
 which is necessary for us to get the range $\alpha>1/2-\tau_H(\|H(Du)\|_{L^\fz(U)} )$ in  Theorem \ref{THM1.1}
 and also to deduce   quantities $\lambda_H(\|H(Du)\|_{L^\fz(U)} )$,  $\Lambda_H(\|H(Du)\|_{L^\fz(U)} )$ and
 $\tau_H(\|H(Du)\|_{L^\fz(U)} )$ in
 \eqref{e1.x1} from Lemma \ref{LEM2.7}.   See   Section 4 for details.

(ii)
Recall that Evans-Smart \cite[Theorem 2.1]{es11b}
 established a uniform gradient estimate  for $e^{\frac1\ez|\cdot|}$-harmonic functions by
 via  an  approach based on the maximal principle, and linearization of
 the equation $\Delta_\fz u^\ez+\ez\Delta u^\ez=0$.
But, via this approach,
 it is impossible to prove Theorem \ref{THM3.2} or even some
 $L^\fz_\loc(U)$-estimates of $H(Du^\ez)$ depending on $\lambda_H$ and $\Lambda_H$
 under our assumptions (H1')\&(H2).
Indeed,  the 3-order derivative $D^3_{ppp}H (Du^\ez) $ of $H$  occurs naturally in
the linearization (coming from the differentiation) of
$\mathscr A_H[u^\ez]+\ez {\,\rm div\,}[D_p H(Du^\ez) ] =0$.
But we do not have any assumption on $D^3_{ppp}H$.
We also note that to prove Theorem \ref{THM1.1} for
 $H\in C^0(\rr^2)$ satisfying  (H1')\&(H2),  we will consider the smooth  approximation $\{H^\dz\}_{\dz\in (0,1]}$
to $H$ as  in \eqref{eq7.1}, which are only known  satisfying (H1')\&(H2) uniformly in $\dz\in(0,1]$.
\end{rem}

 \subsection{Uniform $W^{1,2}_\loc(U)$-estimates of $[H(Du^\ez)+\sz]^\alpha$ }

We have the following uniform Sobolev estimates.
  \begin{thm} \label{THM3.5} Let $V\Subset W\Subset U$.
\begin{enumerate}
\item[(i)] For  $\alpha\ge 1/2$,  we have
 \begin{equation}\label{eq3.6}
 \sup_{\ez\in(0,1]} \left\| |D[H (Du^\epsilon )]^{\alpha}|+\ez  \frac{|{\,\rm div\,}( {D_pH}(Du^\ez))|}{[H(Du^\ez)]^{1-\alpha } }\right \|_{L^2(V)}<\fz
 \end{equation}
  and
  \begin{align}\label{eq3.7}
\liminf_{\ez\to 0} \|  D[H (Du^\epsilon )]^{\alpha}  \|_{L^2(V)}
&\le  \frac{ C\alpha^2(\alpha+1)}{[\alpha+\tau_H(\|H(Du)\|_{L^\fz( W)})-\frac12]^2}
\left[\frac{ \Lambda_H( \|H(Du) \|_{L^\fz(W)})}{\lambda_H( \|H(Du) \|_{L^\fz(W)})}\right]^2  \nonumber\\
 &\quad\quad \times\frac1{[\dist(V,W^\complement)]^{ 2}}\liminf_{{\epsilon}\rightarrow0}  \int_{W}    [H(Du^\ez)]^{2\alpha }\,dx.
\end{align}

 \item[(ii)] Let  $ 1/2-\tau_H (\|H(Du)\|_{L^\fz( U)}) < \alpha< 1/2$.
  For each $\sz\in(0,1]$, there exists
 $\ez_0=\ez(\sz,H,u,U)\in(0,1]$ such that
 \begin{equation}\label{eq3.8}\sup_{\ez\in(0,\ez_0]}\left\||D[H (Du^\epsilon )+\sz]^{\alpha}|+\ez^{1/2}   \frac{|{\,\rm div\,}[ {D_pH}(Du^\ez)]|} {[H(Du^\ez)+\sz]^{1-\alpha} }\right\|_{L^2(V)}<\fz.\end{equation}
Moreover,
 \begin{equation}\label{eq3.9}\sup_{\sz\in(0,1]}\lim_{\ez\to0}\|D[H (Du^\epsilon )+\sz]^{\alpha}\|_{L^2(V)}<\fz, \end{equation}
and
\begin{align}\label{eq3.10}
&\liminf_{\sz\to0}\liminf_{\ez\to0} \|  D[H (Du^\epsilon )+\sz]^{\alpha}  \|_{L^2(V)}\nonumber\\
&\quad\le  \frac{ C\alpha^2(\alpha+1)}{[\alpha+\tau_H(\|H(Du)\|_{L^\fz( W)})-\frac12]^2}\left[\frac{ \Lambda_H( \|H(Du) \|_{L^\fz(W)})}{\lambda_H( \|H(Du) \|_{L^\fz(W)})}\right]^2 \nonumber\\
 &\quad\quad \times \frac1{[\dist(V,W^\complement)]^{ 2}} \liminf_{\sz\to0}\liminf_{{\epsilon}\rightarrow0}   \int_{W}   [H(Du^\ez)+\sz]^{2\alpha }\,dx.
\end{align}
\end{enumerate}
 \end{thm}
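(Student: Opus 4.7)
The strategy is to apply the Caccioppoli-type estimates of Lemma~\ref{LEM2.7} with a standard cutoff, control the resulting right-hand side via the uniform $L^\fz_\loc$-bound in Theorem~\ref{THM3.2}, and then extract the correct limits using the monotonicity and right-continuity of $\tau_H,\lambda_H,\Lambda_H$ from Section~1.2 and Appendix~A. Fix $V\Subset W\Subset U$ and choose $\phi\in C_c^\fz(W)$ with $\phi\equiv 1$ on $V$, $|D\phi|\le C[\dist(V,W^\complement)]^{-1}$ and $|D^2\phi|\le C[\dist(V,W^\complement)]^{-2}$.

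For (i), I plug this $\phi$ into Lemma~\ref{LEM2.7}(i). Since $\phi\equiv 1$ on $V$ and $\ez^2\le\ez$ for $\ez\in(0,1]$, the left-hand side bounds the square of the $L^2(V)$-norm appearing in \eqref{eq3.6}. On the right-hand side, Theorem~\ref{THM3.2} provides a uniform-in-$\ez$ bound for $\|H(Du^\ez)\|_{L^\fz(W)}$, so all prefactors involving $\tau_H,\lambda_H,\Lambda_H$ stay in a compact range, $\int_W[H(Du^\ez)]^{2\alpha}\,dx$ is uniformly bounded, and the remaining $\ez$-weighted term is uniformly bounded. This yields \eqref{eq3.6}. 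For \eqref{eq3.7} I pass to $\liminf_{\ez\to 0}$: the $\ez$-error disappears, and combining Theorem~\ref{THM3.2} with the right-continuity of $\tau_H,\lambda_H,\Lambda_H$ (Lemma~\ref{LEM7.5}), the prefactors can be replaced by those evaluated at $\|H(Du)\|_{L^\fz(W)}$.

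For (ii), the argument is analogous via Lemma~\ref{LEM2.7}(ii). The essential new feature is the extra error term carrying $[H(Du^\ez)+\sz]^{2\alpha-1}$ with negative exponent $2\alpha-1$; pointwise it is bounded by $\sz^{2\alpha-1}$, which is finite for each fixed $\sz\in(0,1]$ but diverges as $\sz\to 0$. Since this term enters multiplied by $\ez$, for fixed $\sz$ its contribution is $O(\ez\sz^{2\alpha-1})\to 0$, giving \eqref{eq3.8} once $\ez_0(\sz,H,u,U)$ is chosen small enough. For \eqref{eq3.9} I take $\ez\to 0$ first so that the singular error disappears, and then use $H(Du^\ez)+\sz\le\|H(Du^\ez)\|_{L^\fz(W)}+1$ to bound the main term uniformly in $\sz\in(0,1]$. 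For \eqref{eq3.10} I take $\liminf_{\ez\to 0}$ first (recovering the sharp coefficients as in (i)) and then $\liminf_{\sz\to 0}$, which only affects the remaining integral on the right.

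The main technical obstacle is upgrading Theorem~\ref{THM3.2} — which gives $\limsup_{\ez\to 0}\|H(Du^\ez)\|_{L^\fz(V)}\le\|H(Du)\|_{L^\fz(U)}$ — to the localized form $\limsup_{\ez\to 0}\|H(Du^\ez)\|_{L^\fz(V')}\le\|H(Du)\|_{L^\fz(W)}$ for $V'\Subset W\Subset U$, which is exactly what is needed to replace $\tau_H(\|H(Du^\ez)\|_{L^\fz(W)})$ by the sharper $\tau_H(\|H(Du)\|_{L^\fz(W)})$ in the final coefficients. I expect to handle this by inserting an intermediate smooth subdomain $V\Subset V'\Subset W_1\Subset W$, applying the Moser iteration from the proof of Theorem~\ref{THM3.2} on $W_1$ (on which $u^\ez$ is a classical solution of~\eqref{eq3.1}), and using the $C^0(\overline U)$-convergence $u^\ez\to u$ from Theorem~\ref{THM3.1}(iv) together with the comparison/uniqueness principle for absolute minimizers (Lemmas~\ref{LEM7.9}--\ref{LEM7.10}) to control the boundary data contribution on $\partial W_1$ by $\|H(Du)\|_{L^\fz(W)}$ in the limit. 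The right-continuity of $\tau_H,\lambda_H,\Lambda_H$ then closes the argument.
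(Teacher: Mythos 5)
Your core plan — plug a standard cutoff into Lemma~\ref{LEM2.7}, use Theorem~\ref{THM3.2} to get the uniform-in-$\ez$ bound (noting $\ez^2\le\ez$ for part (i) and the $\sz$-dependent choice of $\ez_0$ for part (ii)), let $\ez\to0$ so the error terms vanish, and invoke the right-continuity of $\tau_H,\lambda_H,\Lambda_H$ to pin down the sharp constants — is exactly what the paper does for both (i) and (ii).

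The final paragraph of your plan is, however, unnecessary and would not go through as sketched. On an interior subdomain $W_1\Subset U$, the function $u^\ez$ solves the Dirichlet problem with boundary data $u^\ez|_{\partial W_1}$, not $u|_{\partial W_1}$, so rerunning the Moser iteration of Theorem~\ref{THM3.2} on $W_1$ cannot naively trade $\|H(Du)\|_{L^\fz(U)}$ for $\|H(Du)\|_{L^\fz(W_1)}$; you would need a quantitative comparison between $u^\ez$ and $u$ along $\partial W_1$ that the paper does not establish. In fact, the paper's own proof of \eqref{eq3.7} and \eqref{eq3.10} only produces $\tau_H(\|H(Du)\|_{L^\fz(U)})$, $\lambda_H(\|H(Du)\|_{L^\fz(U)})$ and $\Lambda_H(\|H(Du)\|_{L^\fz(U)})$ in the constant — see \eqref{eq3.15}--\eqref{eq3.17} and the displays that follow them — not the $\|H(Du)\|_{L^\fz(W)}$ that appears in the statement. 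The $W$ in the statement of \eqref{eq3.7} and \eqref{eq3.10} is a slip in the paper: the downstream use (\eqref{eq4.6} in the proof of Theorem~\ref{THM1.1}, and \eqref{e1.x1} in the theorem statement itself) only requires the $U$-version, which is precisely what your main argument delivers. Drop the attempted localization and simply state the final constant with $\|H(Du)\|_{L^\fz(U)}$.
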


 \begin{proof}
(i) Let $V\Subset W\Subset U$. By Lemma \ref{LEM2.7} with a suitable cut-off function $\phi$  we have
 \begin{align*}
     &  \int_{V}  |D[H(Du^\ez) ]^\alpha|^2 \,dx  +\frac{\ez \alpha^2}{\lambda_H(\fz)} \int_{V}
  \frac{({\rm div}[ {D_pH}(Du^\ez) ])^2 }{    [H(Du^\ez)]^{2-2\alpha}} \,dx\\
    &\quad\le  \frac{ C\alpha^2(\alpha+1)}{[\alpha+\tau_H(\|H(Du^\ez)\|_{L^\fz( W)})-\frac12]^2} \left[\frac{ \Lambda_H( \|H(Du^\ez)\|_{L^\fz(W)})}{\lambda_H( \|H(Du^\ez) \|_{L^\fz(W)})}\right]^2 \frac1{[\dist(V,W^\complement)]^2}
 \int _W     [ H(Du^\ez) ]^{2\alpha  }  \,dx\\
   &\quad\quad + C\ez\frac{\alpha^2|2\alpha-1|^2}{ [\alpha+\tau_H(\fz) -\frac12]^2} \left[\frac{ \Lambda_H(\fz)}{\lambda_H(\fz)}\right]^2 \frac1{[\dist(V,W^\complement)]^2}
 \int _{ W}     [ H(Du^\ez) ]^{2\alpha-1 } \,dx\\
 &=J_1 +\ez J_2 .
   \end{align*}

 By Theorem \ref{THM3.2},
and $\alpha\ge1/2$ we know that
\begin{align*}&\sup_{\ez\in[0,1]}[ J_1+J_2]
 \le  \frac{C(H,  W,U)\alpha^4}{ [\alpha+\tau_H(\fz) -\frac12]^2} \left[\frac{ \Lambda_H(\fz)}{\lambda_H(\fz)}\right]^2   \frac{|W|}{[\dist(V,W^\complement)]^2} [ 1+\|H(Du) \|_{L^\fz(U)}]^{2\alpha}<\fz,
\end{align*}
which  together with $\lambda_H(\|H(Du^\ez)\|_{L^\fz(V)})\ge \lambda_H(\fz)>0$ gives \eqref{eq3.6}.

 Moreover,  the boundedness of $J_2$ implies that $\ez J_2\to0$  as $\ez\to0$.

Noting $\lim_{\ez\to0}\|H(Du^\ez)\|_{L^\fz(W)}\le \|H(Du)\|_{L^\fz(U)}$ as given in Theorem \ref{THM3.2},
and using the right-continuity of $\lambda_H, \Lambda_H$ and $\tau_H$, we have
\begin{equation}\label{eq3.15}
\liminf_{\ez\to0}\lambda_H(\|H(Du^\ez)\|_{L^\fz(W)})\ge \lambda_H(   \|H(Du) \|_{L^\fz(U)}   ),
\end{equation}
 \begin{equation}\label{eq3.16}\limsup_{\ez\to0}\Lambda_H(\|H(Du^\ez)\|_{L^\fz(W)})\le \Lambda_H(   \|H(Du) \|_{L^\fz(U)}   ),\end{equation}
   and
\begin{equation}\label{eq3.17}
\liminf_{\ez\to0}\tau_H(\|H(Du^\ez)\|_{L^\fz(W)})\ge \tau_H(   \|H(Du) \|_{L^\fz(U)}   ).\end{equation}
By these inequalities, we obtain
 \begin{align*}
   \liminf_{\ez\to0}\int_{V}  |D[H(Du^\ez) ]^\alpha|^2 \,dx
 &\le \liminf_{\ez\to0}J_1 \\
 &\le  \frac{ C\alpha^2(\alpha+1)}
 {[\alpha+\tau_H(\|H(Du )\|_{L^\fz( U)})-\frac12]^2} \left[\frac{ \Lambda_H( \|H(Du )\|_{L^\fz(U)})}{\lambda_H( \|H(Du ) \|_{L^\fz(U)})}\right]^2\\
 &\quad\times \frac1{[\dist(V,W^\complement)]^2}
\liminf_{\ez\to0} \int _W     [ H(Du^\ez) ]^{2\alpha  }  \,dx
   \end{align*}
 that is, \eqref{eq3.7} holds.

{\it Proof of (ii).}
Let $V\Subset W\Subset U$.
 For any $\sz\in(0,1]$, by Lemma \ref{LEM2.7} (ii)  we have
  \begin{align*}
     &  \int_{V}  |D[H(Du^\ez) ]^\alpha|^2 \,dx  +\frac{\ez \alpha^2}{\lambda_H(\fz)} \int_{V}
  ({\rm div}[ {D_pH}(Du^\ez) ])^2     [H(Du^\ez)]^{2\alpha-2} \,dx\\
    &\ \le\frac{ C\alpha^2(\alpha+1)}{[\alpha+\tau_H(\|H(Du^\ez)\|_{L^\fz( W)})-\frac12]^2}
 \left[\frac{ \Lambda_H( \|H(Du^\ez)\|_{L^\fz(W)})}{\lambda_H( \|H(Du^\ez) \|_{L^\fz(W)})}\right]^2 \frac1{[\dist(V,W^\complement)]^2}\int_W    [ H(Du^\ez) +\sz]^{2\alpha  }  \,dx\\
    &\quad\quad+\frac{C\ez\alpha^2|2\alpha-1|^2}{ [\alpha+\tau_H(\|H(Du^\ez)\|_{L^\fz(W)}) -\frac12]^2}\left[\frac{ \Lambda_H( \fz)}{\lambda_H(\fz)}\right]^2\frac1{[\dist(V,W^\complement)]^2}\int_W   [ H(Du^\ez) +\sz]^{2\alpha-1 } \,dx\\
 &\quad=J_1(\sz )+ \ez J_2(\sz).
   \end{align*}

    By Theorem \ref{THM3.2}, and by the decrease and right-continuity of $\tau_H$,
    there exists $\ez_0\in(0,1]$ depending on $\sz,W,U,H$ such that for all $\ez\in(0,\ez_0]$  we have
    \begin{align*}
    &\alpha+\tau_H(  \|H(Du^\ez) \|_{L^\fz(W)} )-\frac12\\
    & \quad
    \ge \alpha+\tau_H([1+C(H,W,U) \ez_0] \|H(Du) \|_{L^\fz(U)}+C(H,W,U) \ez_0^{1/2})-\frac12\\
    &\quad\ge   \frac12[
    \alpha+\tau_H(  \|H(Du) \|_{L^\fz(U)})-\frac12].
    \end{align*}
    By this and  Theorem \ref{THM3.2} again    we have
   \begin{align*}   \sup_{\ez\in(0,\ez_0]}[J_1(\sz)+J_2(\sz)] &\le C(H,V,W,U)
   \frac{ \alpha^4   }{ [\alpha+\tau_H(\|H(Du)\|_{L^\fz( U)}) -\frac12]^2 }
    \left[\frac{ \Lambda_H( \fz)}{\lambda_H( \fz)}\right]^2\\
    &\quad\times   \frac1{\sz} \frac{|W|}{[\dist(V,W^\complement)]^2} [ \|H(Du)\|_{L^\fz(U)}+1]^{2\alpha }<\fz,
  \end{align*}
   which   implies  \eqref{eq3.8}.

Moreover,   $\sup_{\ez\in(0,\ez_0]}J_2(\sz)<\fz$  implies that  $\ez J_2(\sz) \to0$ as $\ez\to0$.

By \eqref{eq3.15}, \eqref{eq3.16} and \eqref{eq3.17} we have
\begin{align*} \liminf_{\ez\to0} J_1(\sz) &\le
\frac{C \alpha^2 (\alpha+1) }{ [\alpha+\tau_H(\|H(Du)\|_{L^\fz( U)}) -\frac12]^2 }
    \left[\frac{ \Lambda_H( \|H(Du)\|_{L^\fz( U)})}{\lambda_H( \|H(Du)\|_{L^\fz( U)})}\right]^2\\
    &\quad\quad\times  \frac1{[\dist(V,W^\complement)]^2}
  \liminf_{\ez\to0}\int_W[H(Du^\ez)+\sz]^{2\alpha}\,dx.
  \end{align*}
 By Theorem \ref{THM3.2} again, we have
  \begin{align*}
    \sup_{\sz\in(0,1]}\liminf_{\ez\to0}\|D[H(Du^\ez)+\sz]^\alpha\|_{L^2(V)}^2
    &\le  \sup_{\sz\in(0,1]}\liminf_{\ez\to0} J_1(\sz)<\fz,
   \end{align*}
which gives  \eqref{eq3.9}, and moreover,
          \begin{align*}
    \liminf_{\sz\to0}\liminf_{\ez\to0}\|D[H(Du^\ez)+\sz]^\alpha\|_{L^2(V)}^2
    &\le  \liminf_{\sz\to0}\liminf_{\ez\to0} J_1(\sz),
   \end{align*}
 which gives \eqref{eq3.10}  as desired. This completes the proof of Theorem \ref{THM3.5}.
  \end{proof}

\subsection{$L^t_\loc(U)$-convergence of $[H(Du^\ez)+\sz]^\alpha$ and $W^{1,t}_\loc(U)$-convergence of $u^\ez$}

The following convergence are crucial to prove Theorem \ref{THM1.1}\&\ref{THM1.2}.

 \begin{thm} \label{THM4.2}
\begin{enumerate}
\item[(i)] For $\alpha\ge 1/2$, we have $[H (Du^\epsilon )]^{\alpha}\rightarrow [H(Du)]^{\alpha}$ in $L^t_{\loc}(U)$ for all $t\in [1,\infty)$ and weakly in $W^{1,2}_{\loc}(U)$ as $\ez\to0$.

\item[(ii)] We have $u^\ez\to u$ in $W^{1,t}_\loc (U)$ and $D_p H(Du^\ez)\rightarrow D_p H(Du)$ in $L^t_\loc(U)$ for all $t\ge1$ as $\ez\to0$.

 \item[(iii)] For   $ 1/2-\tau_H (\|H(Du)\|_{L^\fz( U)}) < \alpha< 1/2$,
we have $[H (Du^\epsilon )+\sz]^{\alpha}\rightarrow [H(Du)+\sz]^{\alpha}$  as $\ez\to 0$,
and
$[H (Du  )+\sz]^{\alpha}\rightarrow [H(Du)]^{\alpha}$  as $\sz\to 0$,   in $L^t_\loc (U)$ for all $t\in [1,\infty)$ and weakly in $W^{1,2}_\loc (U)$.
\end{enumerate}
 \end{thm}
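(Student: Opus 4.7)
The plan is to prove part (ii) first, since parts (i) and (iii) will then follow by combining the uniform $W^{1,2}_\loc$-estimates of Theorem \ref{THM3.5} with standard Rellich-Kondrachov compactness. Throughout, the uniform $L^\fz_\loc$-bound on $H(Du^\ez)$ from Theorem \ref{THM3.2}, together with $\frac12\lambda_H(\fz)|p|^2\le H(p)$, yields a uniform $L^\fz_\loc$-bound on $|Du^\ez|$. Moreover, since $H\in C^\fz(\rr^2)$ satisfies (H1')\&(H2), by Theorem \ref{LEM7.10x} the planar absolute minimizer $u$ belongs to $C^1(\Omega)$, so at each $x_0\in U$ we have the affine linearization $F_{x_0}(x):=u(x_0)+\langle Du(x_0),x-x_0\rangle$ satisfying $\|u-F_{x_0}\|_{L^\fz(B(x_0,r))}=o(r)$ as $r\to 0$.

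For part (ii), fix $x_0\in U$ and a ball $B=B(x_0,r)\Subset U$, and apply Lemma \ref{LEM2.8} with $F=F_{x_0}$. On the right-hand side, $\int_{\frac34 B}|D[H(Du^\ez)]|^2\,dx$ and $\int_{\frac34 B}({\rm div}[D_pH(Du^\ez)])^2\,dx$ are uniformly bounded in $\ez\in(0,1]$ by Theorem \ref{THM3.5}(i) with $\alpha=1$; meanwhile the remaining factors $\mint-_B(u^\ez-F_{x_0})^2/r^2\,dx$ and $\mint-_B(u^\ez-F_{x_0})^4/r^4\,dx$ are $o(1)$ as $\ez\to 0$ and then $r\to 0$, by the $C^0(\overline U)$-convergence $u^\ez\to u$ of Theorem \ref{THM3.1}(iv) together with $|u-F_{x_0}|=o(r)$ on $B$. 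Hence
\begin{equation*}
\lim_{r\to 0}\limsup_{\ez\to 0}\mint-_{\frac12 B}\langle D_pH(Du^\ez),Du^\ez-DF_{x_0}\rangle^2\,dx=0.
\end{equation*}
Strong convexity of $H$ yields the monotonicity bound $\langle D_pH(Du^\ez)-D_pH(DF_{x_0}),Du^\ez-DF_{x_0}\rangle\ge \lambda_H(\fz)|Du^\ez-DF_{x_0}|^2$. Averaging this inequality over $\frac12 B$, applying Jensen's inequality to the flatness bound above, and noting that $\int_{\frac12 B}(Du^\ez-DF_{x_0})\,dx=\int_{\partial(\frac12 B)}(u^\ez-F_{x_0})\nu\,d\sigma$ is $o(r^2)$ in the double limit (via $u^\ez\to u$ uniformly and $|u-F_{x_0}|=o(r)$), we deduce $\mint-_{\frac12 B}|Du^\ez-Du(x_0)|^2\,dx\to 0$. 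Continuity of $Du$ absorbs the error $|Du-Du(x_0)|=o(1)$ on $B$, and a Vitali-type covering argument then promotes this pointwise $L^2$-average convergence to $Du^\ez\to Du$ in $L^2_\loc(U)$; the uniform $L^\fz_\loc$-bound upgrades this to $L^t_\loc(U)$ for every $t\ge 1$, so $u^\ez\to u$ in $W^{1,t}_\loc(U)$. Continuity of $D_pH$ and dominated convergence then yield the corresponding convergence of $D_pH(Du^\ez)$.

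Parts (i) and (iii) then follow by compactness. For (i), Theorem \ref{THM3.5}(i) gives $\{[H(Du^\ez)]^\alpha\}_\ez$ uniformly bounded in $W^{1,2}_\loc(U)$ for $\alpha\ge 1/2$, so Rellich-Kondrachov provides $L^2_\loc$-precompactness; part (ii) forces every limit point to equal $[H(Du)]^\alpha$, giving convergence along the full family, which the uniform $L^\fz_\loc$-bound upgrades to $L^t_\loc$ for all $t\ge 1$, while the uniform $W^{1,2}_\loc$-bound and uniqueness of the $L^2_\loc$-limit yield the weak $W^{1,2}_\loc$-convergence. Part (iii) is identical in spirit: for each fixed $\sz\in(0,1]$, Theorem \ref{THM3.5}(ii) provides the required uniform $W^{1,2}_\loc$-bound on $[H(Du^\ez)+\sz]^\alpha$ for $\ez<\ez_0(\sz)$, and a.e.~convergence $Du^\ez\to Du$ along a subsequence (from part (ii)) combined with continuity of $H$ identifies the $\ez$-limit as $[H(Du)+\sz]^\alpha$. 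The subsequent passage $\sz\to 0$ is standard dominated convergence in $L^t_\loc$, while the $\sz$-uniform bound \eqref{eq3.9} secures the weak $W^{1,2}_\loc$-convergence in the limit $\sz\to 0$.

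The main obstacle is step (ii): converting Lemma \ref{LEM2.8}'s quadratic flux bound on $\langle D_pH(Du^\ez),Du^\ez-DF\rangle$ into genuine $L^2$-control on $|Du^\ez-Du|$. This is precisely where both the strong convexity of $H$ (through monotonicity of $D_pH$) and the $C^1$-regularity of the planar absolute minimizer $u$ (through the availability of affine approximations with $o(r)$-error, supplied by Theorem \ref{LEM7.10x}) enter crucially; without either hypothesis, one could not extract genuine gradient convergence from the integral flatness estimate.
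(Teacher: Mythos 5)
Your proposal is correct but takes a genuinely different route from the paper's. The paper proves part (i) first: it extracts, via the uniform $W^{1,2}_\loc$-bounds of Theorem \ref{THM3.5}, a subsequential limit $f^{(\alpha)}$ of $[H(Du^\ez)]^\alpha$, and then identifies $f^{(1)}=H(Du)$ at each point $\bar x$ where $u$ is differentiable and which is simultaneously a Lebesgue point of $f^{(1)}$, $H(Du)$ and $Du$ (such points are a.e., no $C^1$-regularity needed). At such $\bar x$, after applying Lemma \ref{LEM2.8} with $F(x)=u^\ez(\bar x)+\langle Du(\bar x),x-\bar x\rangle$ and the strong-convexity trick, the paper controls the cross term $\mint-\langle D_pH(Du(\bar x)),Du^\ez-Du(\bar x)\rangle$ by the weak $L^2_\loc$-convergence $Du^\ez\rightharpoonup Du$, and then lets $r\to 0$ at the Lebesgue point. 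Part (ii) then follows from (i) via strong convexity applied to $H(Du^\ez)-H(Du)$. You instead prove part (ii) directly: you invoke the $C^1$-regularity $u\in C^1(\Omega)$ from Theorem \ref{LEM7.10x} to get $o(r)$-affine flatness at every $x_0$, control the cross term by the divergence-theorem boundary identity $\int_{\frac12 B}(Du^\ez-DF_{x_0})\,dx=\int_{\partial(\frac12 B)}(u^\ez-F_{x_0})\nu\,d\sigma$, and then use a Vitali-type covering to pass from the pointwise average estimate $\mint-_{\frac12 B}|Du^\ez-Du(x_0)|^2\,dx\to 0$ to $Du^\ez\to Du$ in $L^2_\loc(U)$; parts (i) and (iii) are then derived by Rellich compactness and a.e.\ identification. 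Both arguments are sound and rely on the same engine (Lemma \ref{LEM2.8} plus the monotonicity bound $\lambda_H(\fz)|p-q|^2\le\langle D_pH(p)-D_pH(q),p-q\rangle$); what your route buys is a direct strong $L^2_\loc$-gradient convergence without first passing to weak limits, at the cost of importing the external $C^1$-regularity theorem and the extra covering step. One point worth making explicit in your write-up: the covering argument needs bounded overlap (e.g.\ via the Vitali covering theorem applied to the fine cover $\{B(x_0,r(x_0)/2)\}$, plus an a.e.-cover-and-remainder estimate using the uniform $L^\fz_\loc$-bound on $Du^\ez$), since a naive finite subcover does not control the total measure; as sketched, that step is slightly under-specified, although it does go through.
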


 \begin{proof}[Proof of Theorem \ref{THM4.2}. ]

\noindent {\it Proof of   (i).} By Theorem \ref{THM3.5}, $[H (Du^\epsilon )]^{\alpha}\in  W^{1,2}_\loc (U)$ uniformly in $\ez\in(0,1]$.
By the weak compactness of $W^{1,2}_{\loc}(U)$,
 we know that, up to some subsequence,
 $[H (Du^\epsilon )]^{\alpha}\rightarrow f^{(\alpha)}$ in $ L^t_\loc (U)$ with $ t\in[1,\infty)$
and weakly in $W^{1,2}_\loc (U)$ as $\ez\to0$.
Thus,  the proof of  Theorem \ref{THM4.2} (i) is reduced to proving
 $f^{(\alpha)}
=[H(Du)]^{\alpha}$  almost everywhere in $U$.
Moreover, it suffices to
show that $    H(Du)=f^{(1)}$ almost everywhere in $U$.
Indeed, if this holds, then
 $$f^{(\alpha)}=\lim_{\ez\to0} [H(Du^\ez)]^{\alpha}= [\lim_{\ez\to0} H(Du^\ez)]^{\alpha}= [f^{(1)}]^{\alpha}
=[H(Du)]^{\alpha}$$ almost everywhere in $U$ as desired.

Below we show that $f^{(1)}= H(Du)$ almost everywhere in $U$.
Assume that $u$ is
differentiable at $\bar x\in U$,  and also assume that $\bar x$ is a Lebesgue point for $f^{(1)}$ and $H(Du)$.
 The set of such points are dense in $U$.
For any $\zeta\in (0,1)$,  there exists $r_{\zeta,\bar x}\in (0,{\rm dist}(\bar x, \partial U)/8)$  such that for any $r\in (0,r_{\zeta,\bar x})$, we have
$$\sup_{B(\bar x,2r)}\frac{|u(x)-u(\bar x)-\langle Du(\bar x), x-\bar x\rangle|}{r}\le\zeta.$$
By the Theorem \ref{THM3.1} (iii), for arbitrary $r\in(0,r_{\zeta,\bar x})$, there exists ${\epsilon}_{\zeta,\bar x,r}\in(0,1)$ such that for all
${\epsilon}\in(0,{\epsilon}_{\zeta,\bar x,r})$, we have
$$\sup_{B(\bar x,2r)}\frac{|u^\epsilon (x)-u^\epsilon (\bar x)-\langle Du(\bar x), x-\bar x\rangle|}{r}\le2\zeta.$$
Letting $F(x)=u^\epsilon (\bar x)+\langle Du(\bar x),x-\bar x\rangle$ in Lemma \ref{LEM2.8} we have
\begin{align*}
  &\mint-_{B(\bar x,r) }\langle D_pH(Du^\ez),Du^\epsilon-DF\rangle ^2 \,d x\\
  &\quad\le C \Lambda_H(\fz)
  \left[\int_{ B(\bar x,2r)}|D[H(Du^\ez)]|^2  \,d x\right]^{1/2}\\
&\quad\quad\times\left[\mint-_{B(\bar x,2r) } \left((|Du^\epsilon|+|DF|)^2\frac{(u^\epsilon-F)^2}{r^2} + \frac{(u^\epsilon-F)^4}{r^4} \right)\,d x\right]^{1/2}\\
&\quad +C \frac{ [\Lambda_{H}(\fz)]^2 }{\lambda_H(\fz) }\left[\mint-_{B(\bar x,2r) }[ H (Du^\ez) ]^2 \,d x\right]^{1/2}\left[\mint-_{B(\bar x,2r)  }  \frac{(u^\epsilon-F)^4}{r^4}     \,d x\right]^{1/2}\\
&\quad+
C\left[\int_{B(\bar x,2r)  }\ez^2({\rm div}[ {D_pH}(Du^\ez)])^2 \,d x\right]^{1/2} \left[\mint-_{B(\bar x,2r)  }\frac{(u^\epsilon-F)^2}{r^2}  \,d x\right]^{1/2}\\
& \quad :=J_1+J_2+J_3.
  \end{align*}
    By Theorem \ref{THM3.2}, we have
\begin{align*}
  J_2&\le C(H,U,\bar x) \frac{ [\Lambda_{H}(\fz)]^2 }{\lambda_H(\fz) }[1+\|H(Du)\|_{L^\fz(U)}]\zeta^2:=L_1\zeta^2,
  \end{align*}
  where $L_1$ is independent of $r$ and $\zeta$.
    We have
  \begin{align*}
  J_3\le C\sup_{\ez\in(0,1)}
  \left\|\ez{\rm div}[ {D_pH}(Du^\ez)] \right\|_{L^2(B(\bar x,\frac12 \dist({\bar x},\partial U))}\zeta:=L_3\zeta,
  \end{align*}
  where $L_3<\fz$  by \eqref{eq3.6} and $L_3$ is independent of $r$ and $\zeta$.
 By  Lemma \ref{LEM7.1} and $\lambda_H\ge\lambda_H(\fz)$, and Theorem \ref{THM3.2}   we have
\begin{align}\label{eq4.2}
L_0&:=\sup_{\ez\in(0,1]}\|Du^\ez\| _{L^\fz(B(\bar x,\frac12 \dist({\bar x},\partial U)))}
+ |Du(\bar x) | \\
& \le \frac {\sqrt 2} {[\lambda_H(\fz)]^{1/2}}
\|H(Du^\ez)\|^{1/2}_{L^\fz(B(\bar x,\frac12 \dist({\bar x},\partial U)))}+ |Du(\bar x) |\nonumber\\
 & \le \frac {\sqrt 2} {[\lambda_H(\fz)]^{1/2}}C(H,U,  \bar x )[1 +
\|H(Du )\|^{1/2}_{L^\fz(U))}]+ |Du(\bar x) |  <\fz\nonumber,
\end{align}
where $L_0$ is independent of $r$ and $\zeta$.
We then have
\begin{align*}
  J_1&\le C\sup_{\ez\in(0,1)}
  \|D[H(Du^\ez)]\|_{L^2(B(\bar x,\frac12 \dist({\bar x},\partial U)))} \Lambda_{H}(\fz)  [L_0+1]\zeta  :=L_2\zeta,
  \end{align*}
  where $L_2<\fz$ by \eqref{eq3.6} and $L_2$ is independent of $r$ and $\zeta$.
Combining all estimates together, we obtain
\begin{align}\label{eq4.3}
&\mint{-}_{B(\bar x,r)}(\langle D_pH (Du^\epsilon ),Du^\epsilon -Du(\bar x)\rangle)^2\,d x\le [L_1+L_2+L_3]\zeta  .
  \end{align}

Moreover, by $\lambda_H\ge \lambda_H(\fz) $ we have
\begin{equation*}
\langle D_p H (Du^\epsilon (x))-D_p H (Du(\bar x)),Du^\epsilon (x)-Du(\bar x)\rangle\ge \lambda_H(\fz)|Du^\epsilon (x)-Du(\bar x)|^2\quad\forall x\in U.
\end{equation*}
Thus
\begin{align}\label{eq4.4}
&\lambda_H(\fz)\mint{-}_{B(\bar x,r)}|Du^\epsilon (x)-Du(\bar x)|^2\,d x\\
&\quad\le  \mint{-}_{B(\bar x,r)} \langle D_p H (Du^\epsilon ),Du^\epsilon -Du(\bar x)\rangle  \,d x-  \mint{-}_{B(\bar x,r)}\langle D_p H (Du (\bar x) ),Du^\epsilon -Du(\bar x)\rangle\,d x\nonumber\\
&\quad\le [(L_1+L_2+L_3)\zeta ]^{1/2}-   \mint{-}_{B(\bar x,r)}\langle D_p H (Du (\bar x) ),Du^\epsilon -Du(\bar x)\rangle\,d x\nonumber.
  \end{align}
Note that for any  $x\in B(\bar x,r) $ we have
$$
 |H (Du^\epsilon (x))-H (Du(\bar x))| =|D_p H(\theta Du^\epsilon (x)+  (1-\theta) Du(\bar x))||Du^\epsilon (x)-Du(\bar x)|
$$
for some $\theta\in [0,1]$.
Since
$$| \theta Du^\epsilon (x)+  (1-\theta) Du(\bar x)|\le L_0,$$
we have
$$|D_p H(\theta Du^\epsilon (x)+  (1-\theta) Du(\bar x))|\le   \Lambda_H(\fz) L_0 .$$
Hence,
\begin{equation}\label{eq4.5}
|H (Du^\epsilon (x))-H (Du(\bar x))|
\le  \Lambda_H(\fz)  L_0  |Du^\epsilon (x)-Du(\bar x)|.
\end{equation}
From \eqref{eq4.4} and \eqref{eq4.5} we have
\begin{align*}
&\frac{\lambda_H(\fz)}{[\Lambda_H(\fz) L_0]^2}\mint{-}_{B(\bar x,r)}|  H (Du^\epsilon ) - H(Du(\bar x))|^2\,d x \\
&\quad\le [(L_1+L_2+L_3)\zeta]^{1/2}-  \mint{-}_{B(\bar x,r)}\langle D_p H (Du (\bar x) ),Du^\epsilon -Du(\bar x)\rangle\,d x
\end{align*}
Since  $H (Du^\epsilon )\rightarrow f^{(1)} $ in $L^2 (B(\bar x,r))$ as $\ez\to0$,
we obtain
\begin{align*}
&\frac{\lambda_H(\fz)}{[\Lambda_H(\fz)L_0]^2}\mint{-}_{B(\bar x,r)}|f^{(1)}- H(Du(\bar x))|^2\,d x\\
&\quad\le [(L_1+L_2+L_3)\zeta]^{1/2}- \liminf_{\ez\to0} \mint{-}_{B(\bar x,r)}\langle D_p H (Du (\bar x) ),Du^\epsilon -Du(\bar x)\rangle\,d x.
\end{align*}
Noting $u^\ez \to u$ uniformly and hence $Du^\ez\rightharpoonup Du$ weakly in $L^2_\loc(U)$, we obtain
\begin{align*}
&\frac{\lambda_H(\fz)}{[\Lambda_H(\fz)L_0]^2}\mint{-}_{B(\bar x,r)}|f^{(1)}- H(Du(\bar x))|^2\,d x \\
&\quad\le  [(L_1+L_2+L_3)\zeta]^{1/2}-  \mint{-}_{B(\bar x,r)}\langle D_pH (Du (\bar x) ),Du  -Du(\bar x)\rangle\,d x.
\end{align*}
Since $L_1,L_2,L_3$ are independent of $r$ and $\zeta$, and $\bar x$ is a Lebesgue point of $Du$,
letting $r\to0$ and $\zeta\to0$ in order, we  conclude $f^{(1)}(\bar x) = H(Du(\bar x))$ as desired.

\medskip
\noindent {\it Proof of   (ii).}
Note that
$$H(Du^\ez)-H(Du)\ge \langle D_p H(Du),Du^\ez-Du\rangle+\frac{\lambda_H(\fz)}2|Du^\ez-Du|^2
$$
 almost everywhere in $U$.
By $H(Du^\ez)\rightarrow H(Du)$ in $L^1_{\loc}(U)$ and using $Du^\ez\rightharpoonup Du$
weakly in $L^2_{\loc}(U)$ as given in Theorem \ref{THM4.2} (i), for any $V\Subset U$ we obtain
\begin{align*}
&\liminf_{\ez\rightarrow 0}\frac{\lambda_H(\fz)}2\int_V|Du^\ez-Du|^2\,dx\\
& \le \liminf_{\ez\rightarrow 0}\int_V\langle D_p H(Du),Du-Du^\ez\rangle\,dx+ \liminf_{\ez\rightarrow 0}\int_V|H(Du^\ez)-H(Du)|\,dx =0,
\end{align*}
that is,  $Du^\ez\to Du$ in $L^2_\loc(U)$ and hence almost everywhere in $U$ as $\ez\to0$. Since $Du^\ez\in L^\fz_\loc(U)$ locally uniformly in $\ez\in(0,1]$, we further have $Du^\ez\to Du$ in $L^t_\loc(U)$ for all $t\ge 1$ as $\ez\to0$.

Observe that
\begin{align*}|D_p H(Du^\ez)-D_p H(Du)|&\le|[D^2_{pp} H(\theta Du^\ez+(1-\theta)Du)](Du^\ez-Du)|
 \le \Lambda_H(\fz)|Du^\ez-Du |
  \end{align*}
  for some $\theta\in[0,1]$.  We further have
   $D_p H(Du^\ez)\rightarrow D_p H(Du)$ in $L^t_{\loc}(U)$ for all
 $t\ge 1$.

 \medskip

\noindent {\it Proof of   (iii).}
For any $\sz\in(0,1]$,
by Theorem \ref{THM3.5} (ii)  we know that
 $[H (Du^\epsilon )+\sz]^{\alpha}\in W^{1,2}_\loc(U)$ uniformly in $\ez\in(0,\ez_0]$.
By the weak compactness of $W^{1,2}_\loc(U)$,  we know that, up to some subsequence,
 $[H (Du^\epsilon )+\sz]^{\alpha}\rightarrow f_\sz^{(\alpha)}$ in $ L^t_\loc (U)$ with $ t\in[1,\infty)$
and weakly in $W^{1,2}_\loc(U)$ as $\ez\to0$.
Note that by Theorem \ref{THM4.2} (i), we have
 $    H (Du^\epsilon)\to H(Du) $  in $L^t_\loc(U)$ for $t\in[1,\fz)$, and hence almsot everywhere in $U$, as $\ez\to0$. Thus
  $$f^{(\alpha)}_\sz=\lim_{\ez\to0} [H(Du^\ez)+\sz]^{\alpha}= [\lim_{\ez\to0} H(Du^\ez)+\sz]^{\alpha}
=[H(Du)+\sz]^{\alpha}$$ almost everywhere in $U$.
 Therefore $[H (Du^\epsilon )+\sz]^{\alpha}\rightarrow [H(Du )+\sz]^{\alpha}$ in $L^t _\loc(U)$
 for all $t\in [1,\infty)$ and weakly in $W^{1,2}  _\loc(U)$ as $\ez\to0$.

By Theorem \ref{THM3.5} (ii) again  we know that
 $[H (Du  )+\sz]^{\alpha}\in W^{1,2}_\loc(U)$ uniformly in $\sz\in(0,1]$.
Since $[H (Du  )+\sz]^{\alpha}\to  [H (Du  ) ]^{\alpha}$ in $L^t_\loc(U)$ for $t\in[1,\fz)$ as $\sz\to0$,
by the weak compactness of $W^{1,2}_\loc(U)$,
 we know that
 $[H (Du  )+\sz]^{\alpha}\rightarrow [H (Du  ) ]^{\alpha}$ in $ L^t_\loc (U)$ with $ t\in[1,\infty)$
and weakly in $W^{1,2}_\loc(U)$ as $\sz\to0$.

  This completes the proof of Theorem \ref{THM4.2}.
\end{proof}

\section{Proofs of Theorems \ref{THM1.1}\&\ref{THM1.2} and  a flatness of $u$  when $H \in C^\fz(\rr^2)$ }

In this section, we assume that $H\in C^\fz(\rn)$ satisfies  (H1')\&(H2).
Let $\Omega\subset \mathbb R^2$ be     any domain, and  $u\in  AM_H(\Omega)$, equivalently, $u\in C^0(\Omega)$ be a   viscosity solution to \eqref{eq1.2}.

First, we derive Theorems \ref{THM1.1}\&\ref{THM1.2} from Theorem \ref{THM4.2} and Theorem \ref{THM3.5}.

\begin{proof}[Proof of Theorem \ref{THM1.1}  when $H\in C^\fz(\rr^2)$ satisfies (H1'){\rm\&}(H2).]
Let    $U\Subset  \Omega$ be an arbitrary domain and
   $ \alpha>\frac12-\tau_H( \|H(Du) \|_{L^\fz(  U)}).$
   Note that up to considering any smooth domain $\wz U\subset U$, where
   $\tau_H( \|H(Du) \|_{L^\fz(  U)})\le \tau_H( \|H(Du) \|_{L^\fz( \wz U)})$,
   we may assume that $U$ is smooth.
   By Theorem \ref{THM4.2}, we already know that $[H(Du)]^\alpha\in W^{1,2}_\loc(U )$.
 It then suffices to prove that
 \begin{align}\label{eq4.6}
 \|D[H(Du)]^{\alpha}\|_{L^2(V)}^2&
 \le\frac{C\alpha^2(\alpha+1)}{[ \alpha+\tau_H(\|H(Du) \|_{L^\fz( U)})-\frac12]^2}
 \left[\frac{ \Lambda_H( \|H(Du) \|_{L^\fz(  U)})}{\lambda_H( \|H(Du) \|_{L^\fz(  U)})}\right]^2  \\
 &\quad \times  \frac1{[\dist(V,W^\complement)]^2}\int_{W}   [H(Du )]^{2\alpha }\,dx,\nonumber
 \end{align}
whenever $V\Subset W\Subset U$ with $\dist(V,W^{\complement})=\frac12\dist(W,U^{\complement})$.
For any $\ez\in(0,1)$ let $u^\ez\in C^\fz(U)\cap C^0(\overline U)$ be the solution to \eqref{eq3.1}.

If $\alpha\ge 1/2$, by Theorem \ref{THM4.2} (i) we  know that $[H (Du^\epsilon )]^{\alpha}\rightarrow [H(Du)]^{\alpha}$
weakly in $W^{1,2}_{\loc}(U)$.  Hence
$$\|D[H(Du)]^{\alpha}\|_{L^2(V)}^2
 \le \liminf_{{\epsilon}\rightarrow0}\|D[H (Du^\epsilon )]^\alpha\|_{L^2(V)}^2.$$
Note that
  $H(Du^\ez)\to H(Du)$  in $L^t( W)$ for $ t\ge1$ as $\ez\to0$, as given in
  Theorem \ref{THM4.2} (ii).
  Then \eqref{eq4.6} follows from  \eqref{eq3.7}.
  Moreover, by  the arbitrariness of $U$, we know that $[H(Du)]^\alpha\in W^{1,2}_\loc(\Omega )$.

 If $1/2-\tau_H(\|H(Du)\|_{L^\fz(U)})<\alpha<1/2$,
  by Theorem \ref{THM4.2} (iii)  we  know that  for each $\sz>0$,
  $[H (Du^\epsilon )+\sz]^{\alpha}\rightarrow [H(Du)+\sz]^{\alpha}$
weakly in $W^{1,2}_{\loc}(U)$ as $\ez\to0$, and that $[H (Du  )+\sz]^{\alpha}\rightarrow [H(Du) ]^{\alpha}$
weakly in $W^{1,2}_{\loc}(U)$ as $\sz\to0$. Thus
 \begin{align*}\|D[H(Du)]^{\alpha}\|_{L^2(V)}^2
 &\le \liminf_{{\epsilon}\rightarrow0}\|D[H (Du^\epsilon )+\sz]^\alpha\|_{L^2(U)}^2
   \le \liminf_{\sz\to0}\liminf_{{\epsilon}\rightarrow0}\|D[H (Du^\epsilon )+\sz]^\alpha\|_{L^2(U)}^2.
\end{align*}
Note that
  $[H(Du^\ez)+\sz]\to [H(Du)+\sz]$  in $L^t( U)$ for $ t\ge1$ as $\ez\to0$, as given in
  Theorem \ref{THM4.2} (ii).
 Then \eqref{eq4.6} follows from  \eqref{eq3.10}, as desired.
 This completes the proof of Theorem \ref{THM1.1} when $H\in C^\fz(\rr^2)$ satisfies (H1')\&(H2).
\end{proof}

\begin{proof}[Proof of Theorem 1.2    when $H\in C^\fz(\rr^2)$ satisfies (H1'){\rm\&}(H2).]

{\it Proof of (i).} Given any $U\Subset \Omega$, for $\ez\in(0,1)$ let $u^\ez\in C^\fz(U)\cap C^0(\overline U)$ be the solution to \eqref{eq3.1}. Since $Du^\ez\rightarrow Du$ in $L^t_\loc(U)$ for any $t\ge 1$ as given in Theorem \ref{THM4.2} (ii), we
have
\begin{align*}\int_U-\det D^2u \phi\,dx &=\frac 1 2\int_U [u _{x_i}u _{x_j}\phi_{x_ix_j}+|Du |^2\phi_{x_ix_i}]\,dx\\
&=\frac 1 2\lim_{\ez\rightarrow 0}\int_U [u^\ez_{x_i}u^\ez_{x_j}\phi_{x_ix_j}+
|Du^\ez|^2\phi_{x_ix_i}]\,dx\\
&=\lim_{\ez\rightarrow 0}\int_U-\det D^2u^\ez \phi\,dx\quad\forall \phi\in C_c^\fz(U)\end{align*}
Write  $V={\rm\,supp\,}\phi \Subset U$.
By Theorem \ref{LEM2.3}  we obtain
\begin{align*} -\det D^2u^\ez
 &  \ge 4\frac{\wz \tau_H(\|H(Du^\ez)\|_{L^\fz(V)})}{\det D ^2_{pp}H(Du^\ez) }  \langle  {D^2_{pp}H}(Du^\ez)  D[H(Du^\ez)]^{1/2} ,D[H(Du^\ez)]^{1/2} \rangle\\
 &\ge 4 \frac{ \tau_H(\|H(Du^\ez)\|_{L^\fz(V)}) }
 {\Lambda_H(\|H(Du^\ez)\|_{L^\fz(V)}) }|D[H(Du^\ez)]^{1/2}|^2 \quad{\rm in}\ V.\end{align*}
Note that by Theorem \ref{THM3.2} and the monotonicity and
 right-hand continuity of $\Lambda_H,\tau_H$, we have
$$ \lim_{\ez\to0}\frac{ \tau_H(\|H(Du^\ez)\|_{L^\fz(V)}) }
 {\Lambda_H(\|H(Du^\ez)\|_{L^\fz(V)}) }\ge \frac{ \tau_H(\|H(Du )\|_{L^\fz(U)}) }
 {\Lambda_H(\|H(Du) \|_{L^\fz(U)}) }.$$
Thus, applying Theorem \ref{THM4.2} we get
 \begin{align*}\int_U-\det D^2u \phi\,dx\ge
  4
 \frac{\tau_H( \|H(Du)\|_{L^\fz(U)} )}
 {\Lambda_H( \|H(Du)\|_{L^\fz(U)} )}\int_U|D[H(Du)]^{1/2}|^2\phi\,dx\quad\forall\phi\in C^\fz_c(U). \end{align*}
This implies that  $-\det D^2u \,dx$ is a nonnegative Radon measure with
  \begin{align*} -\det D^2u \,dx   \ge
  4 \frac{\tau_H( \|H(Du)\|_{L^\fz(U)} )}
 {\Lambda_H( \|H(Du)\|_{L^\fz(U)} )} |D[H(Du)]^{1/2}|^2 \,dx.
 \end{align*}

 Finally, for any $V\Subset U$, let $\phi$ be as in \eqref{eq3.4}.
We have
\begin{align*}  \int_{ V} - \det D^2u  \,d x&\le \int_{U} - \det D^2u   \phi \,d x\\
&=\frac12\int_U [u_{x_i}u_{x_j}\phi_{x_ix_j}+
|Du|^2\phi_{x_ix_i}]\,dx  \le C\frac1{[\dist (V,U^\complement)]^2}\int_{U}  | Du  |^2  \,d x
\end{align*}
 as desired.

 {\it Proof of (ii).}    It suffices to prove that
\begin{equation}\label{eq4.7}\int_{U}\langle D[H(Du)]^{\alpha},D_p H(Du)\rangle\phi\,d x =0\quad\forall\,\mbox{ $U \subset \Omega$ and $\phi\in C^\fz_c(U)$.}
\end{equation}
 Given any $U\Subset \Omega$, for $\ez\in(0,1)$ let $u^\ez\in C^\fz(U)\cap C^0(\overline U)$ be the solution to \eqref{eq3.1}.

{\it Case $\alpha\ge 1/2 $.}
By Theorem \ref{THM4.2} we have
$$\mbox{$D_p H (Du^\epsilon )\to D_p H(Du) $ in $L^2_\loc(U)$ and
$D[H (Du^\epsilon )]^{\alpha}\to D[H(Du)]^{\alpha}$  weakly in $L^2_\loc(U)$ as $\ez\to0$.}$$
Therefore, for any $\phi\in C^\infty_c(U)$, we have
 \begin{align*}
 \int_{U}\langle D[H(Du)]^{\alpha},D_p H(Du)\rangle\phi\,d x &=\lim_{{\epsilon}\rightarrow0}
 \int_{U}\langle D[H (Du^\epsilon )]^{\alpha},D_p H (Du^\epsilon )\rangle\phi\,d x .
\end{align*}
Since $D[H (Du^\epsilon )+\sigma]^{\alpha}\rightharpoonup D[H (Du^\epsilon )]^{\alpha}$ weakly in $L^2_\loc (U)$ as $\sigma\rightarrow 0$, we have
 \begin{align*}
 \int_{U}\langle D[H(Du)]^{\alpha},D_p H(Du)\rangle\phi\,d x
& =\lim_{{\epsilon}\rightarrow0}\lim_{\sigma\rightarrow0}
\int_{U}\alpha[H (Du^\epsilon )+\sigma]^{\alpha-1}\langle D[H(Du^\ez)] , {D_pH}(Du^\ez) \rangle\phi\,d x\\
& =-\lim_{{\epsilon}\rightarrow0}\lim_{\sigma\rightarrow0}\epsilon \int_{U}\alpha[H (Du^\epsilon )+\sz]^{\alpha-1} {\rm{div}}[{D_pH}(Du^\ez)] \phi\,d x,
\end{align*}
where in the last identity, we use ${\mathscr A}_{H }[u^\epsilon ]+\epsilon{\rm{div}}[{D_pH}(Du^\ez)]=0$ in $\Omega$.
 Write $V={\,\rm supp\,}\phi$.  By Theorem \ref{THM3.5} and Theorem \ref{THM3.2},
we have
 \begin{align*}
 &\ez^{1/2}\int_{U}\alpha[H (Du^\epsilon )+\sz]^{\alpha-1} {\rm{div}}[ {D_pH}(Du^\ez)] \phi \,d x\\
  &\le \alpha\ez^{1/2}\|\phi\|_{L^\fz(V)}  \|[H (Du^\epsilon )+\sz]^{\alpha-1/2}\|_{L^\fz(V)}
  \int_{V}[H (Du^\epsilon ) ]^{ -1/2} |{\rm{div}}[ {D_pH}(Du^\ez)] | \,d x\\
 &\le C\alpha\|\phi\|_{L^\fz(V)}[\| H (Du  )  \|^{\alpha-1/2}_{L^\fz(U)}+1]\ez^{1/2}\left[\int_{V}[H (Du^\epsilon ) ]^{ -1} ({\rm{div}}[ {D_pH}(Du^\ez)])^2 \,d x\right]^{1/2},
  \end{align*}
  which is uniformly bounded in $\ez\in(0,1]$.
 This gives
 $$\int_{U}\langle D[H(Du)]^{\alpha},D_p H(Du)\rangle\phi\,d x=0$$
   as desired.

{\it Case $1/2-\tau_H(\|H(Du)\|_{L^\fz(U)})<\alpha< 1/2 $. }
 By Theorem \ref{THM4.2} (iii),  we have
 $D[H (Du  )+\sz]^{\alpha} \to D[H (Du  ) ]^{\alpha}$ weakly in $L^2_\loc(U)$ as $ \sz\to0$;
 and  for each $\sz>0$, we have $D[H (Du^\ez  )+\sz]^{\alpha} \to D[H (Du  ) +\sz]^{\alpha}$ weakly in $L^2_\loc(U)$ as $ \ez\to0$. By Theorem \ref{THM4.2} (ii), $D_p H (Du^\epsilon )\to D_p H (Du  )$ in $L^2_\loc(U)$ as $ \ez\to0$.
Thus
 \begin{align*}
 \int_{U}\langle D[H(Du)]^{\alpha},D_p H(Du)\rangle\phi\,d x &=\lim_{\sz\rightarrow0}
 \int_{U}\langle D[H (Du  )+\sz]^{\alpha},D_p H (Du  )\rangle\phi\,d x \\
 &=\lim_{\sz\rightarrow0}\lim_{\ez\to0}
 \int_{U}\langle D[H (Du^\ez  )+\sz]^{\alpha},D_p H (Du^\epsilon )\rangle\phi\,d x .
\end{align*}

 Write $V={\,\rm supp\,}\phi$. Using ${\mathscr A}_{H }[u^\epsilon ]+\epsilon{\rm{div}}[{D_pH}(Du^\ez)]=0$ in $\Omega$,
 we obtain
 \begin{align*}
 &\left|\int_{U}\langle D[H(Du)]^{\alpha},D_p H(Du)\rangle\phi\,d x \right|\\
&\quad= \lim_{\sigma\rightarrow0}\lim_{{\epsilon}\rightarrow0}\epsilon  \left|\int_{U}\alpha[H (Du^\epsilon )+\sz]^{\alpha-1} {\rm{div}}[ {D_pH}(Du^\ez)] \phi\,d x\right|\\
&\quad\le \lim_{\sigma\rightarrow0}\lim_{{\epsilon}\rightarrow0} \|\phi\|_{L^\fz(V)} \ez^{1/2}\ez^{1/2}\left[\int_{V}\alpha[H (Du^\epsilon )+\sz]^{2\alpha-2} ({\rm{div}}[ {D_pH}(Du^\ez)])^2 \,d x\right]^{1/2},
\end{align*}
 by  Theorem \ref{THM3.5}, which equals to $0$.
 This gives \eqref{eq4.7} as desired.
  This completes the proof of Theorem \ref{THM1.2} when $H\in C^\fz(\rr^2)$ satisfies (H1')\&(H2).
\end{proof}

Next, we obtain an integral flatness estimate of $u$ which will be used later.

\begin{lem}\label{LEM4.3}
For  any $ B\Subset 2B\Subset \Omega$ and linear function $F$, we have
\begin{align*}
  &\mint-_{\frac12B }\langle D_pH(Du ),Du -DF\rangle ^2 \,d x\\
  &\quad\le
   C\frac{[\Lambda_H (\fz)]^2}{ \lambda_H (\fz) } \|H(Du)\|_{L^\fz(B)}
    \left[\mint-_{ B } \left((|Du |+|DF|)^2\frac{(u -F)^2}{r^2} + \frac{(u -F)^4}{r^4} \right)\,d x\right]^{1/2}.
\end{align*}
\end{lem}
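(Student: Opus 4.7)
The plan is to apply Lemma \ref{LEM2.8} to the $e^{H/\ez}$-harmonic approximations $u^\ez\in C^\fz(U)\cap C(\overline U)$ given by Theorem \ref{THM3.1}, where $U$ is any smooth domain with $2B\Subset U\Subset\Omega$, and then pass to the limit $\ez\to 0$. The key ingredients are the strong convergences $Du^\ez\to Du$ and $D_pH(Du^\ez)\to D_pH(Du)$ in $L^t_\loc(U)$ for every $t\ge 1$ from Theorem \ref{THM4.2} (ii), together with the one-sided $L^\fz$-bound $\limsup_{\ez\to 0}\|H(Du^\ez)\|_{L^\fz(V)}\le\|H(Du)\|_{L^\fz(U)}$ from Theorem \ref{THM3.2}. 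The $L^t$-convergences combined with the uniform $L^\fz$-bound on a neighborhood of $B$ let dominated convergence handle the left-hand side of Lemma \ref{LEM2.8}, while the second term on its right is immediately of the desired form via $[\mint-_{\frac34 B}[H(Du)]^2\,dx]^{1/2}\le\|H(Du)\|_{L^\fz(B)}$ and the monotonicity of $\Lambda_H,\lambda_H$.

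The key analytic step is to bound the first term on the right of Lemma \ref{LEM2.8}, which involves $[\int_{\frac34 B}|D[H(Du^\ez)]|^2\,dx]^{1/2}$. I will apply Lemma \ref{LEM2.7} (i) with $\alpha = 1$, for which $\alpha+\tau_H(\cdot)-\tfrac12\ge\tfrac12$; combined with the monotonicity of $\Lambda_H,\lambda_H$, this collapses the prefactor to $C[\Lambda_H(\fz)/\lambda_H(\fz)]^2$ uniformly in $\ez$. With a cutoff $\phi\in C_c^\fz(B)$ satisfying $\phi\equiv 1$ on $\frac34 B$ and $|D\phi|+|D^2\phi|^{1/2}\lesssim 1/r$, the lemma furnishes
$$
\int_{\frac34 B}|D[H(Du^\ez)]|^2\,dx\le C\left[\frac{\Lambda_H(\fz)}{\lambda_H(\fz)}\right]^2 r^{-2}\int_{B}[H(Du^\ez)]^2\,dx+O(\ez),
$$
where the $O(\ez)$ arises from the $(2\alpha-1)^2=1$ correction term. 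Since $|B|\sim r^2$, this gives $\|D[H(Du^\ez)]\|_{L^2(\frac34 B)}\le C[\Lambda_H(\fz)/\lambda_H(\fz)]\,\|H(Du^\ez)\|_{L^\fz(B)}+O(\sqrt\ez)$, which multiplied by the $C\Lambda_H(\fz)$ prefactor of Lemma \ref{LEM2.8} yields exactly the claimed leading constant $C[\Lambda_H(\fz)]^2/\lambda_H(\fz)$ in the limit.

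The main obstacle is showing that the third term of Lemma \ref{LEM2.8}, namely $C\ez[\int_{\frac34 B}({\rm div}[D_pH(Du^\ez)])^2\,dx]^{1/2}[\mint-_{\frac34 B}(u^\ez-F)^2/r^2\,dx]^{1/2}$, vanishes as $\ez\to 0$. This is handled by the \emph{same} application of Lemma \ref{LEM2.7} (i) with $\alpha=1$, whose left-hand side also contains the nonnegative quantity $\tfrac{\ez}{\Lambda_H(\fz)}\int({\rm div}[D_pH(Du^\ez)])^2\phi^2\,dx$, giving the sharper uniform bound $\ez\int_{\frac34 B}({\rm div}[D_pH(Du^\ez)])^2\,dx\le C$. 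Consequently the third term is $O(\sqrt\ez)\to 0$, and taking $\liminf_{\ez\to 0}$ in Lemma \ref{LEM2.8} delivers Lemma \ref{LEM4.3} with the claimed constant.
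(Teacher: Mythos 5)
Your overall strategy is the paper's: approximate by the $e^{H/\ez}$-harmonic $u^\ez$, apply Lemma \ref{LEM2.8}, kill the third term using the uniform bound on $\ez^{1/2}\,\mathrm{div}[D_pH(Du^\ez)]$ in $L^2$ coming from the $\alpha=1$ case of Lemma \ref{LEM2.7}, bound the first term with the same lemma so that the prefactor collapses to $C[\Lambda_H(\infty)/\lambda_H(\infty)]^2$, and pass to the limit via Theorem \ref{THM4.2}. The handling of the third term via $\ez[\int(\mathrm{div})^2]^{1/2}=\ez^{1/2}[\ez\int(\mathrm{div})^2]^{1/2}=O(\sqrt\ez)$ is correct and matches \eqref{eq3.6}.

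There is, however, one genuine gap: your choice of approximation domain. You fix a smooth $U$ with $2B\Subset U\Subset\Omega$ and use a cutoff $\phi\in C^\infty_c(B)$, so that when you invoke Theorem \ref{THM3.2} to control $\limsup_{\ez\to 0}\|H(Du^\ez)\|_{L^\infty(B)}$ the quantity you obtain on the right is $\|H(Du)\|_{L^\infty(U)}$, not $\|H(Du)\|_{L^\infty(B)}$. Since $U\supsetneq 2B$ this is a strictly weaker constant than the one claimed, and nothing in your argument recovers the $L^\infty(B)$-norm. The paper avoids this by taking $U=B$ and using auxiliary radii strictly inside $B$: Lemma \ref{LEM2.8} only needs $\tfrac34 B\Subset U$ (its cutoff lives in $C^\infty_c(\tfrac34 B)$), and \eqref{eq3.7} is applied with $V=\tfrac34 B$, $W=\tfrac45 B$, both compactly inside $U=B$, so all the $\Lambda_H,\lambda_H,\tau_H$ quantities and the $L^\infty$-bound come out evaluated at $\|H(Du)\|_{L^\infty(B)}$ as required. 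The fix is simply to take $U=B$ (or any $U$ shrinking to $B$) and support the cutoff in, say, $\tfrac45 B$ with $\phi\equiv 1$ on $\tfrac34 B$; the rest of your argument then goes through verbatim.
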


 \begin{proof}[Proof of Lemma \ref{LEM4.3}.]

 Let $U=B\Subset\Omega$, and for any $\ez\in(0,1]$, let $u^\ez\in C^\fz(U)\cap C^0(U)$
 be the solution to  \eqref{eq3.1}.
By Theorem \ref{THM4.2}, we have $u^\ez\to u$ and
$ D_pH(Du^\ez)\to  {D_pH}(Du^\ez)$ in $L^t_\loc(U)$ for all $t\ge1$.
 Thus
\begin{align*}
    &\mint-_{\frac12B }\langle  D_pH(Du ),Du -DF\rangle ^2 \,d x=\lim_{\ez\to0}  \mint-_{\frac12B }\langle D_pH(Du^\ez),Du^\epsilon-DF\rangle ^2 \,d x .\end{align*}
   By Lemma \ref{LEM2.8}, we have
   \begin{align*}
    &\mint-_{\frac12B }\langle D_pH(Du ),Du -DF\rangle ^2 \,d x\\
  &\quad\le C \Lambda_H (\fz)\liminf_{\ez\to0} \left[\int_{\frac34B } | D[H(Du^\ez)]|^2  \,d x\right]^{1/2}\\
&\quad\quad\quad\times\left[\mint-_{\frac34B } \left((|Du^\epsilon|+|DF|)^2\frac{(u^\epsilon-F)^2}{r^2} + \frac{(u^\epsilon-F)^4}{r^4} \right)\,d x\right]^{1/2}\\
&\quad\quad +\liminf_{\ez\to0} \frac{[\Lambda_H (\fz)]^2}{\lambda_H (\fz)}\left[\mint-_{\frac34B }   [H (Du^\ez)]^2 \,d x\right]^{1/2}\left[\mint-_{\frac34B }  \frac{(u^\epsilon-F)^4}{r^4}     \,d x\right]^{1/2}\\
&\quad\quad+\liminf_{\ez\to0}
\left[\int_{\frac34B }\ez^2({\rm div} [{D_pH}(Du^\ez)])^2 \,d x\right]^{1/2} \left[\mint-_{\frac34 B }\frac{(u^\epsilon-F)^2}{r^2}  \,d x\right]^{1/2}\\
&\quad=:J_1+J_2 +J_3.
  \end{align*}
  Applying  \eqref{eq3.6} to $V=\frac34B$, we know that
$\ez^{1/2} \|{\rm div} {D_pH}(Du^\ez)   \|_{L^2(\frac34B)}$ is bounded uniformly in $\ez\in(0,1]$,
and hence, by Theorem \ref{THM3.2},   $J_3=0$.
By Theorem \ref{THM4.2} (i) we have
 \begin{align*}
 J_2&\le  C \frac{[\Lambda_H (\fz)]^2}{\lambda_H (\fz)} \|H(Du)\| _{L^\fz(B)}
 \left[\mint-_{ B }   \frac{(u -F)^4}{r^4}  \,d x\right]^{1/2}
\end{align*}
Applying  \eqref{eq3.7} to $V=\frac34B$ and $W=\frac45B$,
 by  Theorem \ref{THM4.2} (i)    we have
 \begin{align*}\liminf_{\ez\to0}   \int_{\frac34B } | D[H(Du^\ez)]|^2  \,d x  & \le C\left[\frac{\Lambda_H (\|H(Du )\|_{L^\fz(B)})}{\lambda_H (\|H(Du )\|_{L^\fz(B)})}\right]^2
\mint-_{\frac45B }[H(Du)]^2\,dx\\
&\le  C\left[\frac{\Lambda_H (\fz)}{\lambda_H (\fz)}\right]^2
 \|H(Du)\|^2_{L^\fz(B)}.\end{align*}
Thus by  $u^\ez\to u$   in $W^{1,t}_\loc(U)$ for all $t\ge1$ as $\ez\to 0$ again, we have
 \begin{align*}
 J_1&\le  C \frac{[\Lambda_H (\fz)]^2}{\lambda_H (\fz)} \|H(Du)\| _{L^\fz(B)}
 \left[\mint-_{ B } \left((|Du |+|DF|)^2\frac{(u -F)^2}{r^2} + \frac{(u -F)^4}{r^4} \right)\,d x\right]^{1/2}.
\end{align*}
This  completes the proof of Lemma \ref{LEM4.3}.
  \end{proof}

\section{Sobolev  approximation via $u^\dz\in AM_{H^\dz}$ when $H\in C^0(\rr^2)$ (or $  C^1(\rr^2)$) }

In this section, we assume that $H\in C^0(\rr^2)$ satisfies  (H1')\&(H2). Let $\{H^\dz\}_{\dz\in (0,1]}$ be the smooth approximation of   $H$ as given in Appendix A.
Let $\Omega\subset\rr^2$ be any domain and $u\in AM_H(\Omega)$.
Let   $U\Subset \Omega$ be any domain.
For any $\dz\in(0,1]$, let $$\mbox{$u^\dz\in C^0(\overline U)\cap AM_{H^\dz}(U)$
with $u^\dz=u$ on $\partial U$.}$$
Note that  $ H^\dz\in C^\fz(\rr^2) $ satisfies   (H1')\&(H2),
as proved in Section 4, Theorems \ref{THM1.1}\&\ref{THM1.2} and Lemma \ref{LEM4.3} hold for $H^\dz$ and $u^\dz$ in $U$.

In Section 5.1, we  prove that
 $u^\dz\to u$ in $C^{0} (\overline U)$ as $\dz\to 0$  and
  $\lim_{\dz\to0}\|H^\dz(Du^\dz)\|_{L^\fz(U)}\le\|H (Du )\|_{L^\fz(U)} $; see Theorem \ref{THM5.1}.

In Section 5.2,   we show that for any $\alpha\ge1/2$ and some $\dz_0\in(0,1)$,  one has
$[H^{\dz }(Du^{\dz })]^\alpha\in W^{1,2}_\loc (U)$ uniformly in $\dz\in(0,\dz_0]$;
and that   there is a
 sequence $\{\dz_j\}_{j\in\nn}$   which converges to $0$ such that  for any $ 1/2-\tau_H(\|H(Du)\|_{L^\fz(U)})<\alpha<1/2$
 and some $j_\alpha\in\nn$, one has
$[H^{\dz_j}(Du^{\dz_j})]^\alpha\in W^{1,2}_\loc (U)$ uniformly in $j\ge j_\alpha$; see Lemma 5.2 and Theorem \ref{THM5.2}.

In Section 5.3, by   the flatness estimate of $u^\dz$ in Lemma \ref{LEM4.3},
when $\alpha\ge1/2$ we obtain $[H^{\dz }(Du^{\dz })]^\alpha\to [H(Du)]^\alpha$ in $L^t_\loc(U)$ and weakly $W^{1,2}_\loc(U)$ for any $t\ge1$  as $\dz\to0$;
when $ 1/2-\tau_H(\|H(Du)\|_{L^\fz(U)})<\alpha<1/2$ we have
$[H^{\dz_j }(Du^{\dz_j })]^\alpha\to [H(Du)]^\alpha$ in $L^t_\loc(U)$ and weakly $W^{1,2}_\loc(U)$ for any $t\ge1$ as $j\to\fz$;
If $H\in C^1(\rr^2)$ additionally, we also have
$u^{\dz }\to u$ in $W^{1,t}_\loc(U)$ and $D_pH^\dz(Du^\dz)\to D_pH(Du)$ in $L^t_\loc(U)$ for
any $t\ge1$ as $\dz\to0$; see Theorem \ref{THM5.3}.

\subsection{Uniform $L^\fz $-estimates of $H^\dz(Du^\dz)$  }

We prove   the following result.

\begin{thm}\label{THM5.1}
We have
$$\| H^\dz(D u^\dz) \|_{L^\fz ( U )} \le   \frac 1{2}\Lambda_H(\fz) \|u\|^2_{C^{0,1}(\partial U)} $$
and
\begin{equation}\label{eq5.x1}   \limsup_{\dz\to0}\|H^\dz (Du^\dz)\|_{L^\fz(U)}\le \|H (Du )\|_{L^\fz(U)}.
\end{equation}
Moreover, $ u^\dz\to u $ in $C^0(\overline U)$.
\end{thm}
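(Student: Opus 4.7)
The plan is three steps. First I would establish the pointwise $L^\fz$ bound by using the McShane extension of $u|_{\partial U}$ as a competitor: since $u^\dz\in AM_{H^\dz}(U)$ and $H^\dz$ satisfies (H1')\&(H2) uniformly in $\dz$ (so that $\tfrac12\lambda_H(\fz)|p|^2\le H^\dz(p)\le \tfrac12\Lambda_H(\fz)|p|^2$), one has
$$\|H^\dz(Du^\dz)\|_{L^\fz(U)}\le \|H^\dz(Dw)\|_{L^\fz(U)}\le \tfrac12\Lambda_H(\fz)\|u\|_{C^{0,1}(\partial U)}^2,$$
where $w$ is the McShane extension of $u|_{\partial U}$ with Lipschitz constant $\|u\|_{C^{0,1}(\partial U)}$. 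The convexity bound then yields the uniform Lipschitz estimate $\|Du^\dz\|_{L^\fz(U)}\le (\Lambda_H(\fz)/\lambda_H(\fz))^{1/2}\|u\|_{C^{0,1}(\partial U)}$, which will be used throughout.

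For the $\limsup$ bound I would use $u$ itself as a competitor (valid since $U\Subset\Omega$ forces $u\in W^{1,\fz}(U)$). The AM-inequality for $u^\dz$ together with $u^\dz|_{\partial U}=u|_{\partial U}$ gives
$$\|H^\dz(Du^\dz)\|_{L^\fz(U)}\le \|H^\dz(Du)\|_{L^\fz(U)},$$
and since $Du$ takes values in a fixed compact set and $H^\dz\to H$ uniformly on compacts (see Appendix~A), the right-hand side tends to $\|H(Du)\|_{L^\fz(U)}$ as $\dz\to 0$.

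For the $C^0$-convergence, the uniform Lipschitz estimate combined with $u^\dz|_{\partial U}=u|_{\partial U}$ gives equicontinuity on $\overline U$, so Arzel\`a--Ascoli extracts a subsequence $u^{\dz_k}\to\hat u$ uniformly on $\overline U$ with $\hat u=u$ on $\partial U$. The key step is to verify $\hat u\in AM_H(U)$: then uniqueness of absolute minimizers with prescribed boundary data (valid under (H1') since $H^{-1}(\min H)=\{0\}$ has empty interior, cf.\ \cite{acjs}) forces $\hat u=u$, and as the subsequential limit is unique the whole family converges. To show $\hat u\in AM_H(U)$, given $V\Subset U$ and a competitor $v\in W^{1,\fz}(V)\cap C(\overline V)$ with $v=\hat u$ on $\partial V$, I would form $v^k:=v+(u^{\dz_k}-\hat u)|_V$ (which equals $u^{\dz_k}$ on $\partial V$) and pass to the limit in $\mathcal F_{H^{\dz_k}}(u^{\dz_k},V)\le \mathcal F_{H^{\dz_k}}(v^k,V)$ using uniform convergence of $H^{\dz_k}\to H$ on compacts and lower semicontinuity of the $L^\fz$-functional under weak-$\ast$ convergence of gradients.

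The main obstacle lies in this last limiting argument: weak-$\ast$ convergence of $Du^{\dz_k}$ to $D\hat u$ does not immediately bound $\|H^{\dz_k}(Dv^k)\|_{L^\fz(V)}$ from above by $\|H(Dv)\|_{L^\fz(V)}$, since $Dv^k=Dv+D(u^{\dz_k}-\hat u)$ involves a gradient difference that need not vanish in $L^\fz$. A clean way around this is to appeal to the stability of absolute minimizers under uniform approximation of the Hamiltonian developed in the companion paper \cite{fwz}; alternatively one may rephrase the AM-property through comparison with $H^\dz$-cones (cf.\ \cite{bjw,acjs}) and pass to the limit in the cone inequalities, which only requires pointwise convergence $H^\dz\to H$ and uniform convergence of $u^{\dz_k}$ to $\hat u$.
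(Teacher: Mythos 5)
Your proposal tracks the paper's proof closely and identifies the correct technical tools. The McShane-extension bound and the Arzel\`a--Ascoli extraction are exactly the paper's Step 1, and the cone-comparison route that you present only as an ``alternative'' for showing $\hat u\in AM_H(U)$ is in fact precisely what the paper does: it verifies $\hat u\in CC_H(U)$ (equivalently $\hat u\in AM_H(U)$ by Lemma~\ref{LEM7.13}) by comparing $H$-cones with $H^\dz$-cones, then invokes uniqueness (Lemma~\ref{LEM7.9}). The naive competitor argument with $v^k=v+(u^{\dz_k}-\hat u)$ does fail for the reason you give, so it is good that you flagged it; but be clear that the cone route is the real argument, not a fallback. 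Also, passing to the limit in the cone inequalities requires more than ``pointwise convergence $H^\dz\to H$'': what is used is the quantitative two-sided comparison $C^{H}_{(1+\ez)^{-1}\wz a}\le C^{H^\dz}_{\wz a}\le C^{H}_{(1+\ez)\wz a}$ for $\dz$ small, uniformly in $\wz a\in[\tfrac12 a, 2a]$ (Lemma~\ref{cone1}, which rests on Corollary~\ref{COR7.7}).

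In the $\limsup$ step there is a small but genuine gap: the inequality $\|H^\dz(Du^\dz)\|_{L^\fz(U)}\le\|H^\dz(Du)\|_{L^\fz(U)}$ does not follow directly from ``the AM-inequality together with $u^\dz|_{\partial U}=u|_{\partial U}$'', because the absolute-minimizer property is only formulated for subdomains $V\Subset U$, not for $V=U$. The standard way to promote it to a comparison on all of $U$ is the exhaustion with $V^{\pm,\ez}:=\{z\in U: u^\dz(z)\gtrless u(z)\pm\ez\}\Subset U$, on whose boundaries $u^\dz=u\pm\ez$, followed by letting $\ez\to0$; this is what the paper carries out. For the remaining step $\limsup_{\dz\to0}\|H^\dz(Du)\|_{L^\fz(U)}\le\|H(Du)\|_{L^\fz(U)}$, your argument via locally uniform convergence of $H^\dz$ on the compact range of $Du$ is valid and actually cleaner than the paper's, which again passes through the cone comparison of Lemmas~\ref{LEM7.11} and~\ref{cone1}.
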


\begin{proof}
{\it Step 1.} 
For any $\dz\in (0,1]$,   applying Lemma \ref{LEM7.14} to $H^\dz$ and $u^\dz$, we have
 $u^\dz\in C^{0,1} (\overline U)$
  and
  \begin{equation}\label{eq5.1} \|H^\dz(D u^\dz)\|_{L^\fz(  U )} \le \sup_{|p|\le L}H^\dz(p)
 \end{equation}
 where    $L=\|u\|_{C^{0,1}(\partial U)}$.
 By Lemma \ref{LEM7.3} (i) and Lemma \ref{LEM7.1} (iii), we have
 $$\sup_{|p|\le L}H^\dz(p) \le  \frac12\Lambda_H(\fz) \sup_{|p|\le L} |p|^2\le \frac12\Lambda_H(\fz)L^2
 \quad \mbox{ and $H^\dz(D u^\dz)\ge \frac12 \lambda_H(\fz) |D u^\dz|^2$}.$$
  Thus,
 $$\| H^\dz(D u^\dz) \|_{L^\fz ( U )} \le   \frac 1{2}\Lambda_H(\fz) \|u\|^2_{C^{0,1}(\partial U)} \quad {\rm and }\quad \| D u^\dz \|_{L^\fz ( U )} \le  \left[\frac {\Lambda_H(\fz)}{\lambda_H(\fz)}\right]^{1/2}L.
$$
By Arzela-Ascolli's Theorem, we know that, up to some subsequence,
$u^\dz\to \hat u$ in $C^{0 }(\overline U)$ as $\dz\to0$ for some $\hat u \in C^{0,1}(\overline U)$.

{\it Step 2.} We show that  $\hat u =u$ in $\overline U$.
Note that $\hat u=u$ in $\partial U$.
To
 get $\hat u =u$ in $U$, thanks to the uniqueness given in Lemma \ref{LEM7.1},
  it suffices to prove that
 $\hat u\in AM_{H}(U)$.
By Lemma \ref{LEM7.11}, we only need to show that $\hat u \in CC_H(U)$.

The proof of $\hat u \in CCB_H(U)$ is similar to that of $\hat u \in CCA_H(U)$, and hence is omitted.
To see $\hat u \in CCA_H(U)$, let $V\Subset U$ be any domain and $x_0\in U\setminus V$ be any point,
 and assume that
$$
\hat u (x) \le C^H_a(x-x_0)+b \quad\forall x\in\partial V
$$
for some  $a\ge 0$. We only need to show that this inequality also holds in $V$.

If $a=0$, then $C^H_a\equiv 0$ by Lemma \ref{LEM7.9} we know that $$
\hat u (x) \le b = C^H_a(x-x_0)+b \quad\forall x\in  V.
$$
Assume that $a>0$.
By Corollary \ref{cone1}, for  any  $\ez>0$  there exists $\dz_{a,\ez}>0$ such that
$$ C^{H}_{(1+\ez)^{-1}\wz a}(x)\le C^{H^{\dz}}_{\wz a}(x)\le  C^{H}_{(1+\ez)\wz a}(x)\quad \forall x\in \rn, \wz a\in[\frac12a,2a].$$
By Lemma \ref{LEM7.13},
    for every $\ez > 0$ and  for all
     $0<\dz<\dz_{a,\ez}$, we have
$$
\hat u (x) \le C^{H^\dz}_{(1+\ez)a}(x-x_0)+b \quad\forall x\in\partial V.
$$
Thus, by $u^\dz\to \hat u$ in $C^0(\overline U)$,  there exists $\wz \dz_{a,\ez}\in(0,\dz_{a,\ez}]$ such that for all
     $0<\dz<\wz \dz_{a,\ez}$,  one has
$$
  u^\dz (x) \le C^{H^\dz}_{(1+\ez)a}(x-x_0)+b+\ez \quad\forall x\in\partial V \mbox{ and hence,
  by $u^\dz\in AM_{H^\dz}(U)$ and Lemma \ref{LEM7.11}, $\forall x\in V$}.
$$
%
Thus
$$
  \hat u (x) \le C^{H}_{ a}(x-x_0)+b+2\ez   \quad\forall x\in  V.
$$
Letting $\ez\to0$, we  obtain
    $$
  \hat u (x) \le C^{H}_{ a}(x-x_0)+b    \quad\forall x\in  V
$$
as desired.

{\it Step 3.} We prove \eqref{eq5.x1}.
We claim that
 \begin{equation}\label{eq5.2} \|H^\dz(Du^\dz)\|_{L^\infty(U )}\le \|H^\dz(Du)\|_{L^\infty(U )}.
  \end{equation}
Indeed, for almost all $x\in \{z\in U, u^\dz(z)=u(z)\}$,
we have $Du^\dz(x)=Du(x)$. So it suffices to  prove  \begin{equation}\label{eq5.x2}\|H^\dz(Du^\dz)\|_{L^\infty( \{z\in U: u^\dz(z)\neq u(z)\} )}\le\|H^\dz(Du)\|_{ L^\infty( \{z\in U: u^\dz(z)\neq u(z)\} )}  . \end{equation}
 For any $\ez>0$, set $V^{+,\ez}:=  \{z\in U, u^\dz(z)>u(z)+\ez\}$ and $V^{-,\ez}:=  \{z\in U, u^\dz(z)<u(z)-\ez\}$.  Then  $V^{\pm,\ez}\Subset U$ and  $u^\dz=u\pm \ez$ on $\partial V^{\pm,\ez}$.
  Since $u^\dz$ is an absolute minimizer for $H^\dz$ in $U$, we have
 $$\|H^\dz(Du^\dz)\|_{L^\infty(V^{\pm,\ez} )}\le\|H^\dz(Du)\|_{L^\infty(V^{\pm,\ez} )}.$$
Sending $\ez \rightarrow 0$, we arrive at the desired \eqref{eq5.x2}.

 Note that \eqref{eq5.x1} follows from \eqref{eq5.x2} and
  \begin{align}\label{eq5.x3}
\limsup_{\dz\to0}\|H^\dz(Du)\|_{L^\infty(U )} \le \|H (Du)\|_{L^\infty(U )} .
\end{align}
To see \eqref{eq5.x3},
note that for any $a> \|H(Du) \|_{  L^\fz(U)}$, by Lemma \ref{LEM7.11} we already have
$$
u (x)-u (y)\le C_{a }^{H  }(x-y) \ \mbox{whenever $x,y$ in some line segment in $U$}. $$
By Corollary \ref{cone1}, there exists $\dz_{a,\ez}\in(0,1]$ such that for all $0<\dz<\dz_{a,\ez}$
one has $ C_{a }^{H } \le C_{a(1+\ez)}^{H^\dz}  $,  and
hence,
$$
u (x)-u (y)\le C_{a(1+\ez)}^{H^\dz}(x-y) \ \mbox{whenever $x,y$ in some line segment in $U$}.$$
  From this and  Lemma \ref{LEM7.11},
we have $\|H^\dz(Du)\|_{L^\infty(U )}\le a(1+\ez)$.
Letting $\dz\to0$ and $\ez\to 0$ in order we conclude \eqref{eq5.1} as desired.
 This completes the proof of Theorem \ref{THM5.1}.
\end{proof}

\subsection{Uniform $W^{1,2}_\loc$-estimates of
 $[H^{\dz_j}(Du^{\dz_j})]^\alpha$   }

 The following lemma comes from the definition of $\tau_H$ and Theorem \ref{THM5.1}.

\begin{lem} There exists a positive sequence $\{\dz_k\}_{k\in\nn}$ and $\{\ez_k\}_{k\in\nn}$, both of which converge  to $0$ as $j\to\fz$, such that for all $k\in\nn$,
 $$\|H^{ \dz_{k} }(Du^{  \dz_{k} })\|_{L^\fz(U)}\le \|H(Du)\|_{L^\fz(U)}+\ez_k $$
and
  \begin{align*}  
    \tau_{H^{   \dz_{k }}}(\|H(Du)\|_{L^\fz(U)}+\ez_k)
    &\ge
  \tau_{H }(\|H(Du)\|_{L^\fz(U)} )-2^{-k-1} \tau_{H}(\|H(Du)\|_{L^\fz(U)}).
  \end{align*}
\end{lem}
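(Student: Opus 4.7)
The plan is to extract both sequences by unwinding the definition \eqref{eq1.8} of $\tau_H$ together with the convergence in Theorem \ref{THM5.1}; no new analytic input is needed. Set $R:=\|H(Du)\|_{L^\fz(U)}$ and fix the target tolerance $\eta_k:=2^{-k-1}\tau_H(R)$ at level $k\in\nn$.

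First, I would use that by \eqref{eq1.8},
$$\tau_H(R)=\sup_{\ez>0}\limsup_{\dz\to0}\tau_{H^\dz}(R+\ez).$$
Since each $\tau_{H^\dz}$ is decreasing in its argument (Lemma \ref{LEM7.5}), the inner quantity is itself monotone in $\ez$, so this supremum is in fact a limit as $\ez\to0^+$. Consequently, for every $k$ I can select $\ez_k\in(0,1/k)$ with
$$\limsup_{\dz\to0}\tau_{H^\dz}(R+\ez_k)\ge\tau_H(R)-\tfrac12\eta_k,$$
and manifestly $\ez_k\to0$.

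Second, with $\ez_k$ fixed, the very definition of $\limsup$ guarantees, in any prescribed neighborhood of $0$, an element $\dz$ for which $\tau_{H^\dz}(R+\ez_k)$ lies within $\tfrac12\eta_k$ of the $\limsup$. Simultaneously, Theorem \ref{THM5.1} supplies $\dz_k^\ast>0$ such that $\|H^\dz(Du^\dz)\|_{L^\fz(U)}\le R+\ez_k$ for all $\dz\in(0,\dz_k^\ast)$. I would then pick $\dz_k\in(0,\dz_k^\ast\wedge 1/k)$ satisfying
$$\tau_{H^{\dz_k}}(R+\ez_k)\ge\limsup_{\dz\to0}\tau_{H^\dz}(R+\ez_k)-\tfrac12\eta_k.$$
Combining the two inequalities yields
$$\tau_{H^{\dz_k}}(R+\ez_k)\ge\tau_H(R)-\eta_k,$$
which, together with the $L^\fz$-bound inherited from Theorem \ref{THM5.1}, is precisely the claim at level $k$; iterating in $k$ produces the required sequences $\{\dz_k\}, \{\ez_k\}\to 0$.

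No genuine obstacle arises: the lemma is essentially a bookkeeping consequence of the definition \eqref{eq1.8} and Theorem \ref{THM5.1}. Its role in the sequel will be to furnish a diagonal sequence $\{\dz_k\}$ along which both $\|H^{\dz_k}(Du^{\dz_k})\|_{L^\fz(U)}$ stays essentially below $R$ and the convexity quantifier $\tau_{H^{\dz_k}}$ evaluated at $R+\ez_k$ approaches $\tau_H(R)$ from below, so that the quantitative estimates for $u^{\dz_k}$ established in Sections 2--4 can be passed to the limit without deterioration in the constants appearing in Theorems \ref{THM1.1} and \ref{THM1.2}.
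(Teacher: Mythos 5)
Your proof is correct and follows essentially the same route as the paper: extract $\ez_k$ from the definition \eqref{eq1.8} of $\tau_H$, then pick $\dz_k$ small enough to simultaneously realize the $\limsup$ up to $\tfrac12\eta_k$ and satisfy the bound from Theorem \ref{THM5.1}. The monotonicity observation (that the sup over $\ez$ is a one-sided limit) is a pleasant shortcut but not strictly needed; otherwise the argument is the same error-splitting bookkeeping the paper uses.
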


\begin{proof}
Note that $\tau_{H}(\|H(Du)\|_{L^\fz(U)})>0$. By \eqref{eq1.8}, for any $k\in\nn$,
there exist $0<\ez_k<2^{-k}$ such that
 $$\tau_{H}(\|H(Du)\|_{L^\fz(U)})\le \limsup_{\dz\to0}\tau_{H^\dz}(\|H(Du)\|_{L^\fz(U)}+\ez_k)
 +2^{-k-2} \tau_{H}(\|H(Du)\|_{L^\fz(U)}).
 $$
Since  the function $r\to \tau_{H^\dz}(r)$ is decreasing in $[0,\fz)$ for any $\dz>0$,
 up to considering $\min_{1\le j\le k}\ez_j$,
 we may assume that $\ez_k$ is decreasing in $k$.
  Let $\{\hat\dz_{k,j}\}_{j\in\nn}$ with $\hat \dz_{k,j}\to0$ as $j\to\fz$ such that for all $j\ge1$ we have
  $$   \lim_{j\to\fz}\tau_{H^{\hat \dz_{k,j}}}(\|H(Du)\|_{L^\fz(U)}+\ez_k)=
  \limsup_{\dz\to0}\tau_{H^\dz}(\|H(Du)\|_{L^\fz(U)}+\ez_k) .$$
There exists $  j_k^\ast$ such that for all $j\ge  j^\ast_k$ we have
    $$    \tau_{H^{\hat \dz_{k,j}}}(\|H(Du)\|_{L^\fz(U)}+\ez_k)\ge
  \limsup_{\dz\to0}\tau_{H^\dz}(\|H(Du)\|_{L^\fz(U)}+\ez_k)-2^{-k-2} \tau_{H}(\|H(Du)\|_{L^\fz(U)}),$$
  and hence
   $$    \tau_{H^{\hat \dz_{k,j}}}(\|H(Du)\|_{L^\fz(U)}+\ez_k)\ge
 \tau_{H }(\|H(Du)\|_{L^\fz(U)} )-2^{-k-1} \tau_{H}(\|H(Du)\|_{L^\fz(U)}) .$$
By \eqref{eq5.x1}, there exists $j_k\ge   j_k^\ast$ such that  for all $j\ge j_k$, one has
 $$\|H^{\hat \dz_{k,j} }(Du^{\hat \dz_{k,j} })\|_{L^\fz(U)}\le \|H(Du)\|_{L^\fz(U)}+\ez_k.$$
Since
the function $r\mapsto\tau_{H^\dz}(r)$ is  decreasing in $[0,\fz)$ for any $\dz>0$,
 we have,  for $j\ge j_k$,
  $$    \tau_{H^{\hat \dz_{k,j}}}(\|H^{\hat \dz_{k,j} }(Du^{\hat \dz_{k,j} })\|_{L^\fz(U)} )\ge
   \tau_{H^{ \hat \dz_{k,j }}}(\|H(Du)\|_{L^\fz(U)}+\ez_k).$$
Set $\dz_k=\hat \dz_{k,j_k}$ for all $k\in\nn $.  The proof of Lemma 5.2 is complete.
\end{proof}

We have the following  uniform $W^{1,2}_\loc$-estimates of $[H^{\dz_j} (Du^{\dz_j})]^{\alpha}$.
 \begin{thm} \label{THM5.2}
 (i) There exists $\dz_0\in(0,1]$ such that for any $\alpha\ge1/2$,  we have
  \begin{equation}\label{eq5.yy1} \sup_{ \dz\in(0,\dz_0]}\|D[H^{\dz } (Du^{\dz })]^{\alpha}\|_{L^{2}(V)}<\fz. \end{equation}
  and \begin{align}\label{eq5.yy2}
 \liminf_{\dz\to0}\|D[H^{\dz }(Du^{\dz })]^{\alpha}\|_{L^2(V)}^2&
 \le\frac{C\alpha^2(\alpha+1)}{[\alpha+\tau_{H }(\|H (Du ) \|_{L^\fz(U)})-\frac12]^2}\left[\frac{ \Lambda_{H }( \|H (Du ) \|_{L^\fz(  U)})}{\lambda_{H }( \|H (Du ) \|_{L^\fz(  U)})}\right]^2\nonumber
 \\ &\quad\times
 \frac1{[\dist(V,\partial W)]^2} \liminf_{\dz\to 0}\int_{W}   [H^{\dz }(Du^{\dz } )]^{2\alpha }\,dx.
 \end{align}
 (ii)
  Let  $\{\dz_j\}_{j\in\nn}$  be as in Lemma 5.2.  For any $1/2-\tau_H(\|H(Du)\|_{L^\fz(U)})<\alpha<1/2$,
 there exists $j_\alpha$ such that for all $V\Subset W\Subset U$, we have
   \begin{equation}\label{eq5.y1} \sup_{ j\ge j_\alpha}\|D[H^{\dz_j} (Du^{\dz_j})]^{\alpha}\|_{L^{2}(V)}<\fz,  \end{equation}
and moreover
\begin{align}\label{eq5.y2}
 \liminf_{j\to\fz}\|D[H^{\dz_j}(Du^{\dz_j})]^{\alpha}\|_{L^2(V)}^2&
 \le\frac{C\alpha^2(\alpha+1)}{[\alpha+\tau_{H }(\|H (Du ) \|_{L^\fz(U)})-\frac12]^2}\left[\frac{ \Lambda_{H }( \|H (Du ) \|_{L^\fz(  U)})}{\lambda_{H }( \|H (Du ) \|_{L^\fz(  U)})}\right]^2
 \\ &\quad\times
 \frac1{[\dist(V,\partial W)]^2} \liminf_{j\to\fz}\int_{W}   [H^{\dz_j}(Du^{\dz_j} )]^{2\alpha }\,dx.\nonumber
 \end{align}
 \end{thm}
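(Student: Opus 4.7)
The strategy is to apply Theorem \ref{THM1.1}, already established in Sections 2--4 for $C^\infty$ Hamiltonians satisfying (H1')\&(H2), to the smooth pair $(H^\delta, u^\delta)$, and then pass to the limit $\delta \to 0$ with careful bookkeeping of the constants. Writing $R_\delta := \|H^\delta(Du^\delta)\|_{L^\infty(W)}$ and applying \eqref{e1.x1} to $(H^\delta, u^\delta)$ on an intermediate open set $W$ with $V \Subset W \Subset U$ yields, whenever $\alpha > 1/2 - \tau_{H^\delta}(R_\delta)$,
\begin{align*}
\int_V |D[H^\delta(Du^\delta)]^\alpha|^2 \, dx \le \frac{C\alpha^2(\alpha+1)}{[\alpha + \tau_{H^\delta}(R_\delta) - 1/2]^2} \left[\frac{\Lambda_{H^\delta}(R_\delta)}{\lambda_{H^\delta}(R_\delta)}\right]^2 \frac{1}{[\dist(V, \partial W)]^2} \int_W [H^\delta(Du^\delta)]^{2\alpha} \, dx.
\end{align*}
All the work is to convert this into \eqref{eq5.yy1}--\eqref{eq5.y2} using Theorem \ref{THM5.1}, Lemma \ref{LEM7.3}, and Lemma 5.2.

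\textbf{Part (i).} Since $\alpha \ge 1/2$, the admissibility condition $\alpha > 1/2 - \tau_{H^\delta}(R_\delta)$ is automatic, and moreover $\alpha + \tau_{H^\delta}(R_\delta) - 1/2 \ge \alpha$. By Lemma \ref{LEM7.3} the quantities $\lambda_{H^\delta}(\infty)$ and $\Lambda_{H^\delta}(\infty)$ are controlled uniformly by $\lambda_H(\infty)$ and $\Lambda_H(\infty)$; by Theorem \ref{THM5.1} $R_\delta$ is uniformly bounded for all sufficiently small $\delta$. This directly yields the uniform bound \eqref{eq5.yy1}. For the sharper \eqref{eq5.yy2} one applies the inequality above along the particular subsequence $\{\delta_j\}$ produced by Lemma 5.2, so that $\tau_{H^{\delta_j}}(R_{\delta_j}), \lambda_{H^{\delta_j}}(R_{\delta_j}), \Lambda_{H^{\delta_j}}(R_{\delta_j})$ can be replaced asymptotically by $\tau_H, \lambda_H, \Lambda_H$ at $\|H(Du)\|_{L^\infty(U)}$; the inequality $\liminf_{\delta \to 0} \le \liminf_{j \to \infty}$ along this subsequence then closes the argument.

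\textbf{Part (ii).} The key difficulty is that the admissibility condition $\alpha > 1/2 - \tau_{H^\delta}(R_\delta)$ is not preserved along arbitrary $\delta \to 0$, since the smooth approximation has no monotone relation with $\tau_H$ in general. This is exactly what Lemma 5.2 was designed to fix: along its sequence $\{\delta_j\}$, using the monotonicity of $r \mapsto \tau_{H^{\delta_j}}(r)$ together with $R_{\delta_j} \le \|H(Du)\|_{L^\infty(U)} + \epsilon_j$ from Theorem \ref{THM5.1}, one gets
\[
\tau_{H^{\delta_j}}(R_{\delta_j}) \ge \tau_{H^{\delta_j}}(\|H(Du)\|_{L^\infty(U)} + \epsilon_j) \ge \tau_H(\|H(Du)\|_{L^\infty(U)})\bigl(1 - 2^{-j-1}\bigr).
\]
Since $\alpha + \tau_H(\|H(Du)\|_{L^\infty(U)}) - 1/2 > 0$, choosing $j_\alpha$ with $2^{-j_\alpha - 1}\tau_H(\|H(Du)\|_{L^\infty(U)}) \le \tfrac12[\alpha + \tau_H(\|H(Du)\|_{L^\infty(U)}) - 1/2]$ forces, for all $j \ge j_\alpha$,
\[
\alpha + \tau_{H^{\delta_j}}(R_{\delta_j}) - \tfrac12 \ \ge \ \tfrac12 \bigl[\alpha + \tau_H(\|H(Du)\|_{L^\infty(U)}) - \tfrac12\bigr] > 0,
\]
so Theorem \ref{THM1.1} applies to $(H^{\delta_j}, u^{\delta_j})$ and yields \eqref{eq5.y1}. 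Taking $\liminf_{j \to \infty}$ along the same sequence, and using the corresponding asymptotic control of $\lambda_{H^{\delta_j}}, \Lambda_{H^{\delta_j}}$ at $\|H(Du)\|_{L^\infty(U)}$, produces \eqref{eq5.y2}.

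\textbf{Main obstacle.} The delicate point is the subcritical range $\alpha < 1/2$ in part (ii), where the factor $[\alpha + \tau_H - 1/2]^{-2}$ degenerates as $\alpha \downarrow 1/2 - \tau_H(\|H(Du)\|_{L^\infty(U)})$. Without the specific sequence from Lemma 5.2---whose ``loss'' in $\tau$ shrinks like $2^{-j}$, fast enough to stay below any prescribed gap $\alpha + \tau_H(\|H(Du)\|_{L^\infty(U)}) - 1/2 > 0$---one could only hope for Sobolev control with $\alpha$ restricted by some weaker limsup-type condition, strictly worse than what is required in the last step of Section 6 to recover Theorem \ref{THM1.1} for $H \in C^0(\mathbb R^2)$. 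Everything else in Theorem 5.2 is a direct consequence of the smooth-case Theorem \ref{THM1.1} and the approximation lemmas.
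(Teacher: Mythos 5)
Your proposal follows essentially the same route as the paper: apply the smooth-case estimate from Theorem \ref{THM1.1} to $(H^{\delta},u^{\delta})$, control $\|H^{\delta}(Du^{\delta})\|_{L^{\infty}}$ via Theorem \ref{THM5.1}, control $\lambda_{H^{\delta}},\Lambda_{H^{\delta}}$ via Lemma \ref{LEM7.3}, and use the Lemma~5.2 sequence to keep $\tau_{H^{\delta_j}}$ from degenerating in the subcritical range. One small slip in Part~(i): the claim $\alpha+\tau_{H^{\delta}}(R_{\delta})-\tfrac12\ge\alpha$ is false, since $\tau_{H^{\delta}}\le\tfrac12$ everywhere with equality only at $0$; the usable lower bound, and the one the paper in effect relies on, is $\alpha+\tau_{H^{\delta}}(R_{\delta})-\tfrac12\ge\tau_{H^{\delta}}(R_{\delta})\ge\tfrac12\bigl[\lambda_H(\infty)/\Lambda_H(\infty)\bigr]^2$, which still bounds the denominator away from zero uniformly in $\delta$ and yields \eqref{eq5.yy1}.
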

 \begin{proof}
{\it Case $1/2-\tau_H(\|H(Du)\|_{L^\fz(U)})<\alpha<1/2$.}
Let $j_\alpha$ as the unique $k\in\nn $ such that
$$2^{-k } \tau_{H}(\|H(Du)\|_{L^\fz(U)})\le \alpha+ \tau_{H}(\|H(Du)\|_{L^\fz(U)})- \frac12 < 2^{-k+1} \tau_{H}(\|H(Du)\|_{L^\fz(U)}).$$
By Lemma 5.2,   the decrease of $\tau_{H^\dz}$ and  $\tau_{H^\dz}\le1/2$,  we have
\begin{align}\label{eq5.x5}
&\alpha +\tau_{H^{\dz_k}}(\|H^{\dz_k}(Du^{\dz_k})\|_{L^\fz(U)} )-\frac12 \\
&\quad \ge \alpha + \tau_{H^{\dz_k}}(\|H(Du)\|_{L^\fz(U)}+\ez_k)-\frac12\nonumber \\
&\quad \ge   \alpha  +\tau_{H}(\|H(Du)\|_{L^\fz(U)})-\frac12 - 2^{-k-1} \tau_{H}(\|H(Du)\|_{L^\fz(U)})\nonumber\\
&\quad \ge \frac12 [\alpha  +\tau_{H}(\|H(Du)\|_{L^\fz(U)})-\frac12].\nonumber
  \end{align}

For $j\ge j_\alpha$, applying  Theorem \ref{THM1.1} for $H^{\dz_j}$ we have
\begin{align*}
 \|D[H^{\dz_j}(Du^{\dz_j})]^{\alpha}\|_{L^2(V)}^2&
 \le\frac{C\alpha^2(\alpha+1)}{[\alpha+\tau_{H^{\dz_j}}(\|H^{\dz_j}(Du^{\dz_j}) \|_{L^\fz( W)})-\frac12]^2}\left[\frac{ \Lambda_{H^{\dz_j}}( \|H^{\dz_j}(Du^{\dz_j}) \|_{L^\fz(  W)})}{\lambda_{H^{\dz_j}}( \|H^{\dz_j}(Du^{\dz_j}) \|_{L^\fz(  W)})}\right]^2
 \\ &\quad\times
 \frac1{[\dist(V,\partial W)]^2}\int_{W}   [H^{\dz_j}(Du^{\dz_j} )]^{2\alpha }\,dx \quad\mbox{whenever $V\Subset W\Subset U$.}
 \end{align*}

By Lemma \ref{LEM7.1} (i) and Lemma \ref{LEM7.3} (i), we obtain
\begin{equation}\label{eq5.4}
\frac{ \Lambda_{H^{\dz_j}}( \|H^{\dz_j}(Du^{\dz_j}) \|_{L^\fz(  W)})}{\lambda_{H^{\dz_j}}( \|H^{\dz_j}(Du^{\dz_j}) \|_{L^\fz(  W)})} \le \frac{ \Lambda_{H }(\fz)}{\lambda_{H }(\fz)}\quad \forall j\in\nn.
\end{equation}
From this, Theorem \ref{THM5.1} and \eqref{eq5.x5}  it follows that
\begin{align*}
 \|D[H ^{\dz_j}(Du^{\dz_j})]^{\alpha}\|_{L^2(V)}^2
 &\le  \frac{C\alpha^2(\alpha+1)}{[\alpha+\tau_{H }(\|H (Du ) \|_{L^\fz( U)})-\frac12]^2} \left[\frac{ \Lambda_{H }(\fz)  }{\lambda_ H ( \fz)}\right]^2 \frac{|W|}{[\dist(V,\partial W)]^2} \\
 &\quad\times
[\Lambda_H(\fz) \|u\|^2_{C^{0,1}(\partial U)} ]^{2\alpha }\quad\forall j\ge j_\alpha,
 \end{align*}
which gives \eqref{eq5.y1}.

Moreover,  by Lemma \ref{LEM7.3} (iii) and Theorem \ref{THM5.1}, one gets
\begin{align}\label{eq5.xx5}
\limsup_{j\to\fz}\frac{ \Lambda_{H^{\dz_j}}( \|H^{\dz_j}(Du^{\dz_j}) \|_{L^\fz(  W)})}{\lambda_{H^{\dz_j}}( \|H^{\dz_j}(Du^{\dz_j}) \|_{L^\fz(  W)})}
 &\le \frac{ \limsup_{j\to\fz}\Lambda_{H^{\dz_j}}( \|H^{\dz_j}(Du^{\dz_j}) \|_{L^\fz(  W)})}{\liminf_{j\to\fz}\lambda_{H^{\dz_j}}( \|H^{\dz_j}(Du^{\dz_j}) \|_{L^\fz(  W)})} \\
&\le \frac{\Lambda_{H }( \limsup_{j\to\fz}\|H^{\dz_j}(Du^{\dz_j}) \|_{L^\fz(  W)})}{\lambda_{H }( \liminf_{j\to\fz}\|H^{\dz_j}(Du^{\dz_j}) \|_{L^\fz(  W)})}\nonumber\\
&\le  \frac{ \Lambda_{H }(\|H (Du ) \|_{L^\fz(  U)}) }{\lambda_{H }(\|H (Du ) \|_{L^\fz(  U)} )}. \nonumber
\end{align}
Thanks to \eqref{eq5.x5} we obtain
\begin{align*}
 \liminf_{j\to\fz}\|D[H^{\dz_j}(Du^{\dz_j})]^{\alpha}\|_{L^2(V)}^2&
 \le\frac{C\alpha^2(\alpha+1)}{[\alpha+\tau_{H }(\|H (Du ) \|_{L^\fz(U)})-\frac12]^2}\left[\frac{ \Lambda_{H }( \|H (Du ) \|_{L^\fz(  U)})}{\lambda_{H }( \|H (Du ) \|_{L^\fz(  U)})}\right]^2
 \\ &\quad\times
 \frac1{[\dist(V,\partial W)]^2} \liminf_{j\to\fz}\int_{W}   [H^{\dz_j}(Du^{\dz_j} )]^{2\alpha }\,dx,
 \end{align*}
as desired.

{\it Case $\alpha\ge1/2$.}
The proof in this case is similar to but easier than that in the case $\alpha<1/2$.
Indeed, instead of \eqref{eq5.x5}, we easily have
$$\alpha-\tau_{H^{\dz }}(\|H^{\dz }(Du^\dz )\|_{L^\fz(U)})-\frac12\ge
\tau_{H^{\dz }}(\|H^{\dz }(Du^\dz )\|_{L^\fz(U)})
 \ge \frac12\left[\frac{\lambda_H(\fz)}{\Lambda_H(\fz)}\right]^2,$$
moreover, we also have
 $$\limsup_{\dz\to0}[\alpha-\tau_{H^{\dz }}(\|H^{\dz }(Du^\dz )\|_{L^\fz(U)})-\frac12]
  \ge\limsup_{\dz\to0}[\alpha-\tau_{H}(\|H(Du )\|_{L^\fz(U)})-\frac12] .$$
With the aid of this and Theorem \ref{THM5.1}, by some argument  similar  to the  above case $\alpha<1/2$, we are able to get the desired estimates \eqref{eq5.yy1}\&\eqref{eq5.yy2}.
Here we omit  the details.
 The proof of Theorem \ref{THM5.2} is complete.
 \end{proof}

\subsection{$L^t_\loc$-convergence of $[H^{\dz_j}(Du^{\dz_j})]^\alpha$ and
$W^{1,t}_\loc$-convergence of $u^{\dz_j}$ }

In this subsection, we prove the following  Sobolev convergence.
 \begin{thm} \label{THM5.3}
 \begin{enumerate}
\item[(i)]
For any $\alpha\ge  1/2 $, we have
 $[H^{\dz } (Du^{\dz } )]^{\alpha}\rightarrow [H(Du )]^{\alpha}$ in $L^t_{\loc}(U)$ for all $t\in [1,\infty)$ and weakly in $W^{1,2}_{\loc}(U)$ as $\dz\to0$.

\item[(ii)] Let  $\{\dz_j\}_{j\in\nn}$  be   as in Lemma 5.2.   For any $ 1/2-\tau_{H}(\|H(Du)\|_{L^\fz(U)})<\alpha<1/2$, we have
 $[H^{\dz_j} (Du^{\dz_j} )]^{\alpha}\rightarrow [H(Du )]^{\alpha}$ in $L^t_{\loc}(U)$ for all $t\in [1,\infty)$ and weakly in $W^{1,2}_{\loc}(U)$ as $j\to\fz$.

 \item[(iii)] If $H\in C^1(\rr^2)$ additionally,  $ u^{\dz }\to u$ in $W^{1,t}_{\loc}(U)$ and
 $D_pH^{\dz }(Du^{\dz })\to D_pH(Du)$ in $L^t_\loc (U)$ for all $t\in [1,\infty)$ as $\dz\to 0$.

 \end{enumerate}
 \end{thm}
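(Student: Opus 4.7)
The plan is to follow the scheme used for Theorem \ref{THM4.2}, with the family $\{u^\ez\}$ of $e^{\frac1\ez H}$-harmonic approximants replaced by $\{u^\dz\}\subset AM_{H^\dz}(U)$, and with Lemma \ref{LEM4.3} (already available for each smooth $H^\dz$ by Step 1) playing the role of Lemma \ref{LEM2.8}. The uniform ingredients at disposal are: $C^0$-convergence $u^\dz\to u$ and the uniform $L^\fz$-bound for $H^\dz(Du^\dz)$ from Theorem \ref{THM5.1}; uniform $W^{1,2}_\loc$-bounds from Theorem \ref{THM5.2}; the uniform convexity/concavity constants $\lambda_H(\fz)\le \lambda_{H^\dz}\le \Lambda_{H^\dz}\le \Lambda_H(\fz)$ from Lemma \ref{LEM7.3}; and local uniform convergence $H^\dz\to H$ on $\rr^2$.

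For (i), Theorems \ref{THM5.1} and \ref{THM5.2}(i) yield, for every $\alpha\ge 1/2$, a uniform bound on $\{[H^\dz(Du^\dz)]^\alpha\}_{\dz\in(0,\dz_0]}$ in $W^{1,2}_\loc(U)\cap L^\fz(U)$. Weak compactness and Rellich produce a subsequence along which $[H^\dz(Du^\dz)]^\alpha\to f^{(\alpha)}$ strongly in $L^t_\loc(U)$ for every $t\ge 1$ and weakly in $W^{1,2}_\loc(U)$. Since $s\mapsto s^\alpha$ is continuous on $[0,\fz)$, it suffices to show $f^{(1)}=H(Du)$ a.e.; uniqueness of the limit then promotes convergence from a subsequence to the full family. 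The identification proceeds pointwise as in the proof of Theorem \ref{THM4.2}(i): at a point $\bar x$ where $u$ is differentiable and which is a Lebesgue point of both $f^{(1)}$ and $H(Du)$, take $F(x)=u^\dz(\bar x)+\langle Du(\bar x),x-\bar x\rangle$, apply Lemma \ref{LEM4.3} to $(H^\dz,u^\dz)$ on $B(\bar x,r)$, and use Theorem \ref{THM5.1} to bound the right-hand side by $C\zeta$. The uniform convexity inequality $\langle D_pH^\dz(p)-D_pH^\dz(q),p-q\rangle \ge \lambda_H(\fz)|p-q|^2$ combined with the Lipschitz-type bound $|H^\dz(p)-H^\dz(q)|\le \Lambda_H(\fz)(|p|+|q|)|p-q|$ converts this into an estimate for $\mint-_{B(\bar x,r)}|H^\dz(Du^\dz)-H^\dz(Du(\bar x))|^2\,dx$. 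Sending $\dz\to 0$ (using $H^\dz(Du^\dz)\to f^{(1)}$ strongly in $L^2_\loc$, $H^\dz\to H$ locally uniformly, $|D_pH^\dz(Du(\bar x))|\le \Lambda_H(\fz)|Du(\bar x)|$ uniformly, and $Du^\dz\rightharpoonup Du$ weakly in $L^2_\loc$), then $r\to 0$ (Lebesgue-point property of $Du$ at $\bar x$), then $\zeta\to 0$, forces $f^{(1)}(\bar x)=H(Du(\bar x))$ on a dense set, hence a.e.

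For (ii), along the sequence $\{\dz_j\}$ from Lemma 5.2 in the paper, Theorem \ref{THM5.2}(ii) supplies uniform $W^{1,2}_\loc(U)$-bounds on $[H^{\dz_j}(Du^{\dz_j})]^\alpha$ whenever $j\ge j_\alpha$ and $\frac12-\tau_H(\|H(Du)\|_{L^\fz(U)})<\alpha<\frac12$. Weak compactness extracts a further subsequence converging weakly in $W^{1,2}_\loc(U)$ and strongly in $L^t_\loc(U)$ to some $g^{(\alpha)}$. The identification $g^{(\alpha)}=[H(Du)]^\alpha$ is immediate from (i): applying (i) with $\alpha=1$ yields $H^{\dz_j}(Du^{\dz_j})\to H(Du)$ in $L^1_\loc(U)$; a further subsequence converges a.e., and continuity of $s\mapsto s^\alpha$ gives $[H^{\dz_j}(Du^{\dz_j})]^\alpha\to [H(Du)]^\alpha$ a.e., which must coincide with the weak $L^2$-limit $g^{(\alpha)}$.

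For (iii), when $H\in C^1(\rr^2)$ the uniform strong convexity of $H^\dz$ (Lemma \ref{LEM7.3}) yields
\begin{equation*}
H^\dz(Du^\dz)-H^\dz(Du)\ge \langle D_pH^\dz(Du),Du^\dz-Du\rangle+\frac{\lambda_H(\fz)}{2}|Du^\dz-Du|^2\quad\mbox{a.e. in $U$.}
\end{equation*}
Integrating over $V\Subset U$, the left-hand side tends to $0$ as $\dz\to 0$, since $H^\dz(Du^\dz)\to H(Du)$ in $L^1_\loc(U)$ by (i) and $H^\dz(Du)\to H(Du)$ in $L^1_\loc(U)$ by local uniform convergence $H^\dz\to H$ together with $Du\in L^\fz_\loc(U)$. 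Because $H\in C^1(\rr^2)$ and $H^\dz\to H$ locally uniformly, one also has $D_pH^\dz\to D_pH$ locally uniformly, hence $D_pH^\dz(Du)\to D_pH(Du)$ in $L^t_\loc(U)$ for every $t$; combined with $Du^\dz\rightharpoonup Du$ weakly in $L^2_\loc$, the linear term vanishes. Therefore $Du^\dz\to Du$ in $L^2_\loc(U)$, upgraded to $L^t_\loc(U)$ for every $t\ge 1$ by the uniform $L^\fz$-bound of Theorem \ref{THM5.1}. The convergence $D_pH^\dz(Du^\dz)\to D_pH(Du)$ in $L^t_\loc(U)$ then follows from $D_pH^\dz\to D_pH$ locally uniformly and $Du^\dz\to Du$ in $L^t_\loc(U)$. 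The main obstacle is the identification step in (i): without a priori gradient convergence, one must work with the $L^2$-norm of $H^\dz(Du^\dz)-H^\dz(Du(\bar x))$ rather than $|Du^\dz-Du(\bar x)|$, and the three successive limits $\dz\to 0$, $r\to 0$, $\zeta\to 0$ must be taken in that precise order.
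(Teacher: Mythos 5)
Your proposal matches the paper's proof in all essential respects: the same reduction to the identification $f^{(1)}=H(Du)$ at Lebesgue points via Lemma \ref{LEM4.3} applied to $(H^\dz,u^\dz)$, the same use of uniform convexity and the Lipschitz-type bound for $H^\dz$, the same order of limits $\dz\to0$, $r\to0$, $\zeta\to0$, the same deduction of (ii) from (i) and Theorem \ref{THM5.2}(ii), and the same strong-convexity argument with the $C^1$ hypothesis for (iii). The only point worth making explicit is that in the $\dz\to0$ step the bounded family $\{D_pH^\dz(Du(\bar x))\}$ must be passed along a convergent subsequence (to some $\bar p$) before applying the weak convergence $Du^\dz\rightharpoonup Du$, which is exactly what the paper does.
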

 \begin{proof} We borrow some ideas used in the proof of  Theorem \ref{THM4.2},
but due to some technical differences caused by  $H\in C^0(\rr^2)$ or $H\in C^1(\rr^2)$, we give the details.

  {\it Proof of   (i).}
By Theorem \ref{THM5.2} (i) and the weak compactness of $W^{1,2}_{\loc}(U)$, we know that, up to some subsequence,
 $[H^{\dz } (Du^{\dz } )]^{\alpha}\rightarrow f^{(\alpha)}$ in $ L^t_{\loc}(U)$ with $ t\in[1,\infty)$
and weakly in $W^{1,2}_{\loc}(U)$ as $\dz\to0$.
To prove (i), it suffices to show that $    H(Du)=f^{(1)}$ almost everywhere.
Indeed,   this implies that
  $$f^{(\alpha)}=\lim_{\dz\to0} [H^{\dz }(Du^{\dz })]^{\alpha}=[\lim_{\dz\to0}  H^{\dz }(Du^{\dz })]^{\alpha} = [f^{(1)}]^{\alpha}
=[H(Du)]^{\alpha}$$ almost everywhere as desired.

Below we  prove $    H(Du)=f^{(1)}$ almost everywhere. Assume that $u$ is
differentiable at $\bar x$,  and also assume that $\bar x$ is Lebesgue point of $f^{(1)}$ and $H(Du)$. For any $\zeta\in (0,1)$,  there exists $r_{\zeta,\bar x}\in (0,\frac18{\rm dist}(\bar x,\partial U) )$  such that for any $r\in (0,r_{\zeta,\bar x})$, we have
$$\sup_{B(\bar x,2r)}\frac{|u(x)-u(\bar x)-\langle Du(\bar x), x-\bar x\rangle|}{r}\le\zeta.$$
By  Theorem \ref{THM5.1}, for arbitrary $r\in(0,r_{\zeta,\bar x})$, there exists
$\dz_{\zeta,\bar x,r}\in(0,1)$ such that for all
$0<\dz<\\dz_{\zeta,\bar x,r}$, we have
$$\sup_{B(\bar x,2r)}\frac{|u^{\dz } (x)-u^{\dz } (\bar x)-\langle Du(\bar x), x-\bar x\rangle|}{r}\le2\zeta.$$
Applying  Lemma \ref{LEM4.3} with $ H^{\dz }$, $u^{\dz }$ and
 $F^{\dz } =u^{\dz } (\bar x)+\langle Du(\bar x),\cdot-\bar x\rangle$, we have
\begin{align*}
  &\mint-_{B(\bar x,r) }\langle D_pH^{\dz }(Du^{\dz } ),Du^{\dz } -DF\rangle ^2 \,d x\\
  &\quad\le C\frac{[\Lambda_{H^{\dz }} (\fz)]^2}{ \lambda_{H^{\dz }} (\fz) }
   \|H^{\dz }(Du^{\dz })\|_{L^\fz({B(\bar x,2r)})}    \left[\mint-_{ B(\bar x,2r) } \left((|Du^{\dz } |+|DF|)^2\frac{(u^{\dz } -F)^2}{r^2} + \frac{(u^{\dz } -F)^4}{r^4} \right)\,d x\right]^{1/2}.
 \end{align*}
  By  Lemma \ref{LEM7.1}, $\lambda_{H^{\dz }}\ge\lambda_H(\fz)$
 and Theorem \ref{THM5.1},
\begin{align}\label{eq5.6}
 \|Du^{\dz }\| _{L^\fz(U)}
   \le \frac {\sqrt2} {[\lambda_H (\fz)]^{1/2}}
\|H^{\dz}(Du^{\dz} )\|^{1/2}_{L^\fz(U)}
 & \le \left[\frac {\Lambda_H(\fz)}{\lambda_H(\fz)}\right]^{1/2} \|u\|_{C^{0,1}(\partial U)}
\end{align}
By Theorem \ref{THM5.1} again and  by $\lambda_H^{\dz }\ge \lambda_H(\fz) $  and
$\Lambda_H^{\dz }\le \Lambda_H(\fz)$, for  $\dz\in(0,1]$ we obtain
\begin{align*}
&\mint-_{B(\bar x,r) }\langle D_pH^{\dz }(Du^{\dz } ),Du^{\dz } -DF\rangle ^2 \,d x\\
  &\quad\le  C\frac{[\Lambda_{H } (\fz)]^3}{ \lambda_{H } (\fz) } \|u\|^2_{C^{0,1}(\partial U)}\left(\left[\frac {\Lambda_H(\fz)}{\lambda_H(\fz)}\right]^{1/2} \|u\|_{C^{0,1}(\partial U)} + |Du(\bar x)|+1\right) \zeta  =:L_1\zeta.
 \end{align*}

On the other hand, by $\lambda_H^{\dz }\ge \lambda_H(\fz) $ and the strong convexity of
 $H^{\dz }$, we have
\begin{equation}\label{eq5.x8}
\langle D_p H^{\dz } (Du^{\dz }(x))-D_p H^{\dz } (Du(\bar x)),Du^{\dz } (x)-Du(\bar x)\rangle\ge \lambda_H(\fz)|Du^{\dz }(x)-Du(\bar x)|^2.
\end{equation}
Moreover, for any  $x\in  B(\bar x, 2r)$ we have
$$
 |H^{\dz } (Du^{\dz } (x))-H (Du(\bar x))| =|D_p H^{\dz }(\theta Du^{\dz } (x)+  (1-\theta) Du(\bar x))||Du^{\dz } (x)-Du(\bar x)|
$$
for some $\theta\in [0,1]$.
Since
$$| \theta Du^{\dz } (x)+  (1-\theta) Du(\bar x)|\le \sup \|Du^\dz\|_{L^\fz(U)} +|Du(\bar x)|=:L_0<\fz,$$
by Lemma \ref{LEM7.1} (iv) we have
$$|D_p H^{\dz } (\theta Du^\dz (x)+  (1-\theta) Du(\bar x))|\le
 {  \Lambda_H(\fz) }  L_0,$$
which implies that
\begin{equation}\label{eq5.8}
|H^{\dz } (Du^\dz(x))-H (Du(\bar x))|
\le  \Lambda_H(\fz)   L_0 |Du^{\dz } (x)-Du(\bar x)|.
\end{equation}
By this and \eqref{eq5.x8} one has
\begin{align}
&\frac{\lambda_H(\fz)}{[\Lambda_H(\fz) L_0]^2}\mint{-}_{B(\bar x,r)}|  H^{\dz } (Du^{\dz } ) - H^\dz(Du(\bar x))|^2\,d x \\
&\quad\le \lambda_H(\fz)\mint{-}_{B(\bar x,r)}|Du^{\dz } (x)-Du(\bar x)|^2\,d x\nonumber\\
&\quad\le  \mint{-}_{B(\bar x,r)} \langle D_p H^{\dz } (Du^{\dz_j} ),Du^{\dz } -Du(\bar x)\rangle  \,d x-  \mint{-}_{B(\bar x,r)}\langle D_p H^{\dz } (Du (\bar x) ),Du^{\dz }-Du(\bar x)\rangle\,d x\nonumber\\
&\quad\le [L_1\zeta ]^{1/2}-   \mint{-}_{B(\bar x,r)}\langle D_p H^{\dz } (Du (\bar x) ),Du^{\dz } -Du(\bar x)\rangle\,d x.\nonumber
  \end{align}
Since  $H^{\dz } (Du^{\dz })\rightarrow f^{(1)} $ in
$L^2 (B(\bar x,r))$ and $H^ {\dz }(Du (\bar x))\to H(Du(\bar x))$ as $\dz\to0$,
this yields
\begin{align*}
&\frac{\lambda_H(\fz)}{[\Lambda_H(\fz) L_0]^2}\mint{-}_{B(\bar x,r)}|f^{(1)}- H(Du(\bar x))|^2\,d x\\
&\quad\le [L_1\zeta]^{1/2}- \liminf_{\dz\to0} \mint{-}_{B(\bar x,r)}\langle D_p H^{\dz } (Du (\bar x) ),Du^{\dz } -Du(\bar x)\rangle\,d x.
\end{align*}
Observe that $H\in C^{0,1} (\rr^2)$ implies that
 $\{D_pH^{\dz }(Du(\bar x))\}_{\dz\in(0,1]}$ is bounded,
  and hence, up to some subsequence,
  $D_pH^{\dz }(Du(\bar x))\to \bar p$ for some $\bar p\in\rr^2$ as $\dz\to0$.
By $Du^\dz\rightharpoonup Du$ weakly in $L^2_\loc(U)$ as $\dz\to0$, we obtain
\begin{align*}
&\frac{\lambda_H(\fz)}{[\Lambda_H(\fz) L_0]^2}\mint{-}_{B(\bar x,r)}|f^{(1)}- H(Du(\bar x))|^2\,d x \le  [L_1\zeta]^{1/2}-  \mint{-}_{B(\bar x,r)}\langle \bar p,Du  -Du(\bar x)\rangle\,d x.
\end{align*}
Letting $r\to0$ and $\zeta\to0$ in order, we  conclude that $f^{(1)}(\bar x) = H(Du(\bar x))$ as desired.

\medskip
 \noindent {\it Proof of   (ii).}  Let
$\{\dz_j\}_{j\in\nn}$ be as in Lemma 5.2. By Theorem \ref{THM5.3} (i) we have
$[H^{\dz_j}(Du^{\dz_j})]^{\alpha}\to [H(Du)]^{\alpha}$ almost everywhere.
From this, Theorem \ref{THM5.2} (ii) and the weak compactness of $W^{1,2}_{\loc}(U)$, we know that,
 $[H^{\dz_j} (Du^{\dz_j} )]^{\alpha}\rightarrow [H(Du)]^{\alpha}$ in $ L^t_{\loc}(U)$ with $ t\in[1,\infty)$
and weakly in $W^{1,2}_{\loc}(U)$ as $j\to\fz$.

\medskip
 \noindent {\it Proof of   (iii).}
Since $H^{\dz }\in C^\fz(\rr^2)$ is strongly convex and $\lambda_{H^{\dz }}(\fz)\ge \lambda_H(\fz) $, we have
$$H^{\dz }(Du^{\dz })-H^{\dz}(Du)\ge \langle D_p H^{\dz }(Du),Du^{\dz }-Du\rangle+\frac{\lambda_H(\fz)}{2}|Du^{\dz }-Du|^2$$
 almost everywhere in $U$.
Thus, for any $V\Subset U$,
\begin{align*}
 \frac{\lambda_H(\fz)}{2}\int_V|Du^{\dz}-Du|^2\,dx
 & \le \int_V\langle D_pH^{\dz}(Du),Du-Du^{\dz }\rangle\,dx +\int_V|H^{\dz}(Du^{\dz })-H^{\dz}(Du)|\,dx\\
& \le \int_V\langle D_pH^{\dz}(Du),Du-Du^{\dz }\rangle\,dx + \int_V|H^{\dz }(Du^{\dz })-H (Du)|\,dx\\
&\quad\quad+\int_V|H^{\dz }(Du )-H (Du)|\,dx.
\end{align*}
By Theorem \ref{THM5.3} (i) we have
 $H^{\dz }(Du^{\dz })\to H(Du)$ in $L^t_\loc (U)$ for all $t\ge1$ as $\dz\to0$.
By $H^\dz\to H$ locally uniformly as $\dz\to0$ we have
and $H^\dz(Du)\to H(Du)$ in $L^t_\loc (U)$ for all $t\ge1$ as $\dz\to0$.
Moreover, note that $H\in C^1(\rr^2)$ implies that $D_pH^\dz\to D_pH$ locally uniformly in $\rr^2$ as $\dz\to0$.
Thus $D_pH^\dz(Du)\to D_pH(Du)$  in $L^t_\loc(U)$ for all $t\ge1$ as $\dz\to0$.
By $Du^\dz\to Du$ weakly in $L^2_\loc (U)$, one has
$$\int_V\langle D_pH^\dz(Du),Du-Du^\dz\rangle\,dx\to 0 \quad {\rm as}\ \dz\to0.$$
Therefore we conclude
  $Du^{\dz }\to Du$ in $L^2_\loc (U)$, and hence, by
 \eqref{eq5.6},    in $L^{ t}_\loc (U)$ for any $t\ge1$ as $\dz\to0$.

Finally, write
\begin{align*}|D_p H^\dz(Du^\dz)-D_p H(Du)|&\le|D_p H^\dz(Du^\dz)-D_p H^\dz(Du)|+ |D_p H^\dz (Du)-D_p H(Du)|.
  \end{align*}
Note that    $D_p H^\dz (Du)\to D_p H(Du) $ almost everywhere  as $\dz\to0$.
By Lemma \ref{LEM7.1} (iv) and Theorem \ref{THM5.1}, $D_p H^\dz (Du)\in L^\fz(U)$ uniformly in $\dz\in(0,1]$.
Therefore, by the Lebesgue theorem, we have $D_p H^\dz (Du)\to D_p H(Du) $
 in $L^t_\loc(U)$ for all $t\ge1$ as $\dz\to0$.
Moreover,
by Lemma \ref{LEM7.3}(i), we have
\begin{align*} |D_p H^\dz(Du^\dz)-D_p H^\dz(Du)|
&\le|[D^2_{pp} H^\dz(\theta Du^\dz+(1-\theta)Du)](Du^\dz-Du)|
 \le \Lambda_{H }(\fz)|Du^\dz-Du |
  \end{align*}
  for some $\theta\in[0,1]$.  This, together with
   $Du^{\dz }\to Du$ in $L^t_\loc (U)$  for all $t\ge1$ as $\dz\to0$, yields
   $D_p H^{\dz }(Du^{\dz })\rightarrow D_p H(Du)$ in $L^t_{\loc}(U)$ for all
 $t\ge 1$ as $\dz\to0$. This completes the proof of Theorem \ref{THM5.3}.
\end{proof}

\begin{rem}
\rm When $H\in C^0(\rr^2)$ satisfies (H1')\&(H2), we do not know if Theorem \ref{THM5.3} (ii) holds or not.
Note that in the proof of Theorem \ref{THM5.3} (ii) we do use  $H\in C^1(\rr^2)$   to guarantee
$D_pH^\dz(Du)\to D_pH(Du)$  in $L^t_\loc(U)$ for all $t\ge1$ as $\dz\to0$. Note that $u\in C^{0,1}(U)$.
Under   $H\in C^0(\rr^2)$, we do not know if $\{D_pH^\dz(Du)\}_{\dz\in(0,1]}$ contains a Cauchy sequence in $L^t_\loc(U)$ for all $t\ge1$.
\end{rem}

\section{Proofs of Theorems \ref{THM1.1}\&\ref{THM1.2} and Corollary \ref{COR1.3}}
Considering Remark \ref{REM1.8},
We  only need to prove Theorem  \ref{THM1.1} (resp. \ref{THM1.2}) and Corollary \ref{COR1.3} when
   $H\in C^0(\rr^2)$ (resp. $H\in C^1(\rr^2)$ ) satisfies  (H1')\&(H2).
    We always let $\{H^\dz\}_{\dz\in(0,1]}$ be the smooth approximation to $H$ as in Appendix A.

 Theorem \ref{THM1.1} then follows from Theorem  \ref{THM5.3}\& \ref{THM5.2}.
\begin{proof}
[Proof of Theorem \ref{THM1.1}.]  Suppose that $H\in C^0(\rr^2)$ satisfies (H1')\&(H2).
Let $ \{\dz_j\}_j$ be the sequence given in Lemma 5.2.
By Theorem \ref{THM5.3} (i)\&(ii),
for $\alpha>1/2-\tau_{H}(\|H(Du)\|_{L^\fz(U)})$, one has
$[H^{\dz_j}(Du^{\dz_j})]^{\alpha}\to [H(Du)]^\alpha$ weakly in $W^{1,2}_\loc (U)$ and
in $L^t_\loc (U)$ for all $t\ge 1$ as $j\to\fz$.
This implies that $[  H(Du)]^\alpha\in W^{1,2}_\loc(U)$.
Moreover, applying \eqref{eq5.yy2} and \eqref{eq5.y2} in Theorem \ref{THM5.2}, for all $V\Subset W\Subset U$  with
$\dist(V,\partial W) =\dist(W,\partial U)=\frac12 \dist(V,\partial U)  $,
 we have
\begin{align*}
\|D[H(Du)]^\alpha\|^2_{L^2(V) }&\le
 \liminf_{j\to\fz}\|D[H^\dz(Du^{\dz_j})]^{\alpha}\|_{L^2(V)}^2\\
 &
 \le\frac{C\alpha^2(\alpha+1)}{[\alpha+\tau_{H }(\|H (Du ) \|_{L^\fz(U)})-\frac12]^2}\left[\frac{ \Lambda_{H }( \|H (Du ) \|_{L^\fz(  U)})}{\lambda_{H }( \|H (Du ) \|_{L^\fz(  U)})}\right]^2
 \\ &\quad\times
 \frac1{[\dist(V,\partial W)]^2} \liminf_{j\to\fz}\int_{W}   [H^{\dz_j}(Du^{\dz_j} )]^{2\alpha }\,dx\\
  &
 \le \frac{C\alpha^2(\alpha+1)}{[\alpha+\tau_{H }(\|H (Du ) \|_{L^\fz(U)})-\frac12]^2}\left[\frac{ \Lambda_{H }( \|H (Du ) \|_{L^\fz(  U)})}{\lambda_{H }( \|H (Du ) \|_{L^\fz(  U)})}\right]^2
 \\ &\quad\times
 \frac1{[\dist(V,\partial U)]^2}  \int_{U}   [H (Du  )]^{2\alpha }\,dx.
\end{align*}
Thus \eqref{e1.x1} follows. This completes the proof of Theorem \ref{THM1.1}.
\end{proof}

Next, we conclude   Corollary \ref{COR1.3} from
  Theorem  \ref{THM1.1}, Lemma \ref{LEM7.10x}
 and Lemma \ref{LEM7.5}.

\begin{proof} [Proof of Corollary \ref{COR1.3}.]  Suppose that
$H\in C^0(\rr^2)$   satisfies  (H1')\&(H2).
 We only need to consider
the case $1/2-\tau_H(0)<\alpha<1/2$.

Firstly, we show that
for any $ x\in \Omega$ with $Du(  x)=0$,  there exists
  $ 0<r_{  x,\alpha}<\dist(x,\partial \Omega)/8 $ so that
$|D[H(Du)]^\alpha|\in   L^2 (B( x,r_{  x,\alpha}))$.
Indeed, by the right continuity and monotonicity of $\tau_H$  (see Lemma \ref{LEM7.5}),
we can find $R_\alpha>0$ such that $\alpha>1/2-\tau_H $ in $[0,R_\alpha]$.
Choose $ 0<r_{  x,\alpha}<\dist(x,\partial \Omega)/8 $ so that
$\|H(Du)\|_{L^\fz(B(x,2r_{x,\alpha}))}<R_\alpha$. By Theorem \ref{THM1.1}, we know that
$[H(Du)]^\alpha\in W^{1,2}_\loc(B(x,2r_{x,\alpha}))$ and hence
 $\mbox{$[H(Du)]^\alpha\in W^{1,2} (B(x, r_{x,\alpha}))$}.$

Next, for any $U\Subset\Omega$, note that the set $ U_0=\{z\in \overline  U: Du(z)=0\}$ is compact and covered by
the union of $\{B(x,r_{x,\alpha})\}_{x\in U_0}$.
So we can find  $\{x_j\}_{j=1}^N\subset U_0$   for some $N<\fz$
  so that
 $U_0\subset  \cup_{j=1}^NB(  x_j,r_{  x_j,\alpha}) $. Thus
 $$\mbox{$[H(Du)]^\alpha\in W^{1,2} (\bigcup_{j=1}^NB(  x_j,r_{  x_j,\alpha}))$}.$$
By $u\in C^1(\Omega)$, there is $r_0>0$ such that  the closure of the open set
$U\setminus (\cup_{j=1}^N\overline {B(  x_j, r_{  x_j,\alpha}/2))}$ is contained in the open set $$
\Omega_{>r_0}:= \{x\in \Omega: H(Du(x))>r_0\} .$$
Applying Theorem \ref{THM1.1} to each component of $\Omega_{>r_0}$, we have $|D[H(Du)] |\in L^2_\loc(\Omega_{>r_0}).$
This allows us to get
$$|\alpha [H(Du)]^{\alpha-1} |D[H(Du)]|\le \alpha r_0 ^{\alpha-1}|D[H(Du)] |\in L^2_\loc(\Omega_{>r_0}).$$
Thus, by
\begin{equation}\label{exx} D[H(Du)]^\alpha = \alpha [H(Du)]^{\alpha-1}  D[H(Du)]\quad{\rm in}\ \Omega_{>r_0}\end{equation}
in distributional sense, we know that
  $$\mbox{
       $|D[H(Du)]^\alpha| \le \alpha r_0 ^{\alpha-1}|D[H(Du)] |$ almost everywhere   in $\Omega_{>r_0}$.} $$
Therefore, $$ [H(Du)]^\alpha \in   W^{1,2} (U\setminus (\cup_{j=1}^N\overline {B(  x_j, r_{  x_j,\alpha}/2))}).$$
  We then conclude
$ [H(Du)]^\alpha \in   W^{1,2} (U )$ as desired.

If $H\in C^2(\rr^2)$ and $H^{1/2}$ is convex additionally, by Lemma \ref{LEM7.2} we have
$\tau_H\equiv 1/2$, and hence, by Theorem \ref{THM1.1}, one gets \eqref{e1.x1}  for all $\alpha>0$.
This completes the proof of Corollary \ref{COR1.3}.
\end{proof}

Finally we prove Theorem  \ref{THM1.2} by using Theorem \ref{THM1.1}, Theorem \ref{THM5.2} and Corollary \ref{COR1.3}.
\begin{proof}[Proof of Theorem  \ref{THM1.2}.]
Suppose that $H\in C^1(\rr^2)$ satisfies (H1)\&(H2).
Let $\Omega\subset\rr^2$ be any domain and $u\in AM_H(\Omega)$.
Similarly as in the proof of Theorem \ref{THM1.1},
fix arbitrary $U\Subset  \Omega$.
 Let $U\Subset \wz U\Subset \Omega$ and note
 $\|H(Du)\|_{L^\fz(\wz U)}<\fz$.
Letting $R:= \|H(Du)\|_{L^\fz(\wz U)}+1 $, by Lemma \ref{LEM7.8}
there exists $\wz H\in C^1(\rr^2)$   satisfying    (H1') and
 (H2) such that $\wz H=H$ in $H^{-1}([0, R+1])$,
 $\tau_{H}=\tau_{\wz H} $, $\lambda_{H}=\lambda_{\wz H}$ and
 $\Lambda_{H}=\Lambda_{\wz H}$ in $[0, R+1)$.
Note that $u\in AM_{ \wz H} (\wz U)$.
By abuse of notation, we write $\wz H$ as $H$ below.

Moreover, let $\{H^\dz\}_{\dz\in(0,1]}$ be a smooth approximation to $H$ as in the Appendix A.
For $\dz\in(0,1]$, let $u^\dz\in AM_{H^\dz}(U)$ with $u^\dz=u$ on $\partial U$.
Let $\{\dz_j\}_j$ be the sequence given in Lemma 5.2.

\noindent {\it Proof of   (i).}
By Theorem \ref{THM5.2} (iii), we know that
$Du^{\dz_k}\to Du$ in $L^t_\loc (U)$ for any $t\ge 1$ as $k\to\fz$, we obtain
\begin{align*}
\int_U-\det D^2u \phi\,dx&=\frac 1 2\int_U[u_{x_i}u_{x_j}\phi_{x_ix_j}+|Du|^2\phi_{x_ix_i}]\,dx\\
&=\frac 1 2\lim_{k\rightarrow \fz}\int_U [u^{\dz_k}_{x_i}u^{\dz_k}_{x_j}\phi_{x_ix_j}+
|Du^{\dz_k}|^2\phi_{x_ix_i}]\,dx\\
&=\lim_{k\rightarrow\fz}\int_U-\det D^2u^{\dz_k} \phi\,dx\quad\forall \phi\in C_c^\fz(U).
\end{align*}
Applying Theorem  \ref{THM1.2} (i) to $H^\dz\in C^\fz(\rr^2)$,  and noting
$[H^{\dz_k}(Du^{\dz_k})]^{1/2}\to [H(Du)]^{1/2}$ weakly in $W^{1,2}_\loc (U)$ as given in Theorem \ref{THM5.3} (i)\&(ii),
we have
\begin{align*}\lim_{k\to\fz}\int_U-\det D^2u^{\dz_k} \phi\,dx&\ge
  4\limsup_{k\to\fz}
 \frac{\tau_{H^{\dz_k}}( \|H^\dz(Du^{\dz_k})\|_{L^\fz(U)} )}
 {\Lambda_{H^{\dz_k}}( \|H^{\dz_k}(Du^{\dz_k})\|_{L^\fz(U)} )}\int_U|D[H^{\dz_k}(Du^{\dz_k})]^{1/2}|^2\phi\,dx\\
 &\ge
  4\limsup_{k\to\fz}
 \frac{\tau_{H^{\dz_k}}( \|H^\dz(Du^{\dz_k})\|_{L^\fz(U)} )}
 {\Lambda_{H^{\dz_k}}( \|H^{\dz_k}(Du^{\dz_k})\|_{L^\fz(U)} )}\int_U|D[H (Du )]^{1/2}|^2\phi\,dx
    \end{align*}
for all $0\le \phi\in C^\fz_c (U)$.
 By Lemma 5.2, we have
$$
\lim_{j\to \fz} \tau_{H^{\dz_j}}(\|H^{\dz_j}(Du^{\dz_j})\|_{L^\fz(U)})\ge \tau_{H}( \|H(Du)\|_{L^\fz(U)} ).$$
By  Theorem \ref{THM5.1} again and Lemma \ref{LEM7.3}, using the decrease and right-continuity of $\Lambda_H$, further  we have
 \begin{align*}
\limsup_{k\to\fz}
 \frac{\tau_{H^{\dz_k}}( \|H^{\dz_k}(Du^{\dz_k})\|_{L^\fz(U)} )}
 {\Lambda_{H^{\dz_k}}( \|H^{\dz_k}(Du^{\dz_k})\|_{L^\fz(U)} )}&\ge
 \frac{ \limsup_{k\to \fz}\tau_{H^{\dz_k}}( \|H^{\dz_k}(Du^{\dz_k})\|_{L^\fz(U)} )}
 { \limsup_{k\to \fz}\Lambda_{H^{\dz_k}}( \|H^{\dz_k}(Du^{\dz_k})\|_{L^\fz(U)} )}\ge \frac{\tau_{H}( \|H(Du)\|_{L^\fz(U)} )}
 {\Lambda_{H}( \|H(Du)\|_{L^\fz(U)} )}.
   \end{align*}
   Thus
 \begin{align*}\int_U-\det D^2u \phi\,dx &\ge 4
 \frac{\tau_{H}( \|H(Du)\|_{L^\fz(U)} )}
 {\Lambda_{H}( \|H(Du)\|_{L^\fz(U)} )}\int_U|D[H(Du)]^{1/2}|^2\phi\,dx\ge0 \quad \forall 0\le \phi\in C^\fz_c (U),
  \end{align*}
 which  implies that   $-\det D^2u\, dx$ is a nonnegative Radon measure with
\begin{align*}-\det D^2u\, dx&\ge4
 \frac{\tau_{H}( \|H(Du)\|_{L^\fz(U)} )}
 {\Lambda_{H}( \|H(Du)\|_{L^\fz(U)} )}|D[H(Du)]^{1/2}|^2\,dx.
  \end{align*}
On the other hand, for any ball $V\Subset U\Subset \Omega $, we choose a cut-off function $\phi\in
C^\fz_c(U)$ as in \eqref{eq3.4} to obtain
\begin{align*}
\int_{V}-\det D^2u\phi\,dx&\le \int_U-\det D^2u\phi\,dx\\
&= \frac 1 2\int_U[u_{x_i}u_{x_j}\phi_{x_ix_j}+|Du|^2\phi_{x_ix_i}]\,dx\le C\frac1{[\dist(V,\partial U)]^2}\int_U|Du|^2\,dx.
  \end{align*}
\noindent {\it Proof of   (ii).}
 We divide the proof into 2 steps.

{\it Step 1.}
If $\alpha\ge 1/2$ and $U=\Omega$, or  $\alpha>1/2-\tau_H(\|H(Du)\|_{L^\fz (U)})$ with $U\Subset\Omega$, by Theorem \ref{THM5.2} (iii),   one has that $D_pH^{\dz_k}(Du^{\dz_k})\to D_pH(Du)$ in $L^2_\loc (U)$  and also,
$D[H^{\dz_k}(Du^{\dz_k})]^\alpha\to [DH(Du)]^\alpha$ weakly in $L^2_\loc (U)$ as $k\to\fz$.  Thus
$$
 \int_{ U}\langle D[H(Du)]^{\alpha},D_p H(Du)\rangle\phi\,d x  =\lim_{k\to\fz}
 \int_{U}\langle D[H^{\dz_k} (Du^{\dz_k} )]^{\alpha},D_p H^{\dz_k} (Du^{\dz_k} )\rangle\phi\,d x \quad
$$
for any $ \phi \in C^\fz_c(U)$.
Applying Theorem \ref{THM1.2} (ii) to $H^{\dz_k}$, we have $$\mbox{$\langle D[H^{\dz_k} (Du^{\dz_k} )]^{\alpha},D_p H^{\dz_k} (Du^{\dz_k} )\rangle=0$  almost everywhere in $U$}.$$ Thus,
  $$
 \int_{ U}\langle D[H(Du)]^{\alpha},D_p H(Du)\rangle\phi\,d x  =0
$$
for any $ \phi \in C^\fz_c(U)$, that is,
 $\langle D[H(Du)]^{\alpha},D_p H(Du)\rangle=0$
 almost everywhere in $U$.

 {\it Step 2.}
If $1/2-\tau_H(0)<\alpha< 1/2 $, for any $x\in \Omega$ with $H(Du(x))=0$,
let $R_\alpha>0$ and $r_{x,\alpha}$ be as in the proof of Corollary \ref{COR1.3}.
Since $\alpha>1/2-\tau_H(\|H(Du)\|_{L^\fz(B(x,r_{x,\alpha}))})$, by Step 1  with $U=B(x,r_{x,\alpha})$  we know that
 $$\mbox{$\langle D[H(Du)]^{\alpha},D_p H(Du)\rangle=0$
 almost everywhere in $B(x,r_{x,\alpha})$.  }$$

 For any $U\Subset\Omega$, let $\{x_j\}_{j=1}^N$ be as in the proof of Corollary \ref{COR1.3}.
 One then has
 $$\mbox{$\langle D[H(Du)]^{\alpha},D_p H(Du)\rangle=0$
 almost everywhere in $\bigcup_{i=1}^NB(x_j,r_{x_j,\alpha}) $.}$$
 Let $r_0$ be as in the proof of Corollary \ref{COR1.3}.
By \eqref{exx} and applying Step 1 to $U\cap \Omega_{r_0}$, we know that
$$\mbox{$\langle D[H(Du)]^{\alpha},D_p H(Du)\rangle=0$
 almost everywhere in $U\cap \Omega_{r_0}$.}$$
Since $U=[\cup_{x\in U_0}B(x,r_{x,\alpha}) ]\cup [U\cap \Omega_{r_0}]$, one gets $\langle D[H(Du)]^{\alpha},D_p H(Du)\rangle=0$ almost everywhere in $U$. By the arbitrariness of $U$, we have
 $$\mbox{$\langle D[H(Du)]^{\alpha},D_p H(Du)\rangle=0$ almost everywhere in $\Omega$.}$$
 The proof of Theorem \ref{THM1.2} is complete.
\end{proof}

\section{Proofs of Lemmas  \ref{LEM2.7},   \ref{LEM2.8},    \ref{LEM2.6},   \ref{LEM3.3} \&  \ref{LEM3.4}}

In Section 7.1, we prove  Lemmas \ref{LEM2.7}\&\ref{LEM2.8};
in Section 7.2, we prove Lemmas \ref{LEM2.6},   \ref{LEM3.3}\&\ref{LEM3.4}.
\subsection{Proofs of Lemmas \ref{LEM2.7}\&\ref{LEM2.8}}

\begin{proof}[Proof of Lemma \ref{LEM2.7}]
 Let $\varphi=  [H (Du^\epsilon)+\sigma]^{2\alpha-1} \phi^2$,
 where $\phi\in C^\fz_c(W)$, and $\sigma=0$ if $\alpha\ge 3/2$
 and $\sigma>0$ if $1/2 -\tau_H(\|H(Du^\ez)\|_{L^\fz(W)})<\alpha<3/2$.
 By \eqref{FUN2.3} and
  $$[H (Du^\epsilon)+\sigma]^{2\alpha-1}\ge [H (Du^\epsilon)+\sigma]^{2\alpha-2} H(Du^\ez),$$     we have
  \begin{align*} I(u^\ez,\vz)
 &=8\int_U \wz \tau_H(Du^\ez)  \langle  {D^2_{pp}H}(Du^\ez)  D[H(Du^\ez)]^{1/2} ,D[H(Du^\ez)]^{1/2} \rangle [H (Du^\epsilon)+\sigma]^{2\alpha-1} \phi^2\,d x\nonumber\\
 &\quad\quad+2\epsilon\int_U \wz \tau_H(Du^\ez)( {\rm div}[D_pH (Du^\ez) ])^2  [ H(Du^\ez)]^{-1} [H (Du^\epsilon)+\sigma]^{2\alpha-1} \phi^2,d x\\
 &\ge   \frac2{\alpha^2}\int_U \wz \tau_H(Du^\ez)  \langle  {D^2_{pp}H}(Du^\ez)  D[H(Du^\ez)+\sz]^{\alpha} ,D[H(Du^\ez)+\sz]^{\alpha} \rangle   \phi^2\,d x\nonumber\\
 &\quad\quad+2\epsilon\int_U \wz \tau_H(Du^\ez)( {\rm div}[D_pH (Du^\ez)])^2    [H (Du^\epsilon)+\sigma]^{2\alpha-2} \phi^2\,d x\\
 &\ge \frac2{\alpha^2}\tau_H(\|H(Du^\ez)\|_{L^\fz(W)})K_1+ 2\epsilon\tau_H(\|H(Du^\ez)\|_{L^\fz(W)})K_2,
\end{align*}
where and below, we write
\begin{align}\label{eq6.y1}
K_1&:= \int_U    \langle  {D^2_{pp}H}(Du^\ez)  D[H(Du^\ez)+\sz]^{\alpha} ,D[H(Du^\ez)+\sz]^{\alpha} \rangle   \phi^2\,d x\\
&\ge \lambda_H(\|H(Du^\ez)\|_{L^\fz(W)})\int_U    |D[H(Du^\ez)+\sz]^{\alpha}|^2   \phi^2\,d x\nonumber
\end{align}
and
$$
K_2:= \int_U ( {\rm div}[D_pH (Du^\ez) ])^2    [H (Du^\epsilon)+\sigma]^{2\alpha-2} \phi^2\,d x.$$

 On the other hand, we have $\varphi\in W^{1,2}_c(U)$ and
$$\varphi_{x_i}=(2\alpha-1) \phi^2u^\ez_{x_kx_i}H_{p_k}(Du^\ez)[H(Du^\ez) +\sigma]^{2\alpha-2}+ 2\phi \phi_{x_i}[ H(Du^\ez) +\sigma]^{2\alpha-1}\quad\mbox{in $U$} $$
 for $i=1,2$.
By   \eqref{FUN2.4} and integration by parts, we   write
\begin{align*}
I(u^\ez,\varphi)&=-\int_U \langle  {D^2_{pp}H}(Du^\ez)   D[H(Du^\ez)]  -{\rm div}[ {D_pH}(Du^\ez)  ]    {D_pH}(Du^\ez)  ,  D\varphi\rangle \,d x\\
&=(2\alpha-1)\int_U{\rm div}[ {D_pH}(Du^\ez) ] H_{p_i}(Du^\ez) u^\ez_{x_kx_i}H_{p_k}(Du^\ez)[H(Du^\ez) +\sigma]^{2\alpha-2}\phi^2\,dx\\
  &\quad+2\int_U {\rm div}[ {D_pH}(Du^\ez) ] H_{p_i}(Du^\ez) \phi_{x_i}[ H(Du^\ez) +\sigma]^{2\alpha-1 }\phi\,dx\\
    &\quad-(2\alpha-1)\int_U   H_{p_j}(Du^\ez)H_{p_ip_l}(Du^\ez)u^\epsilon_{x_lx_j} u_{x_kx_i}H_{p_k}(Du^\ez)[H(Du^\ez) +\sigma]^{2\alpha-2}\phi^2\,d x\\
  &\quad-2\int_U   H_{p_j}(Du^\ez)H_{p_ip_l}(Du^\ez)u^\epsilon_{x_lx_j} \phi_{x_i}[ H(Du^\ez) +\sigma]^{2\alpha-1}\phi \,d x\\
&=: J_1+\cdots+J_4.
\end{align*}

Note that
\begin{align*}J_3&=- (2\alpha-1)\int_U \langle D^2_{pp}H(Du^\ez)D[H(Du^\ez)], D[H(Du^\ez)] \rangle
  [H(Du^\ez) +\sigma]^{2\alpha-2}\phi^2 \,dx\\
  &=- (2\alpha-1)\frac1{\alpha^2}\int_U \langle D^2_{pp}H(Du^\ez)D[H(Du^\ez)+\sz]^\alpha, D[H(Du^\ez)+\sz]^\alpha\rangle
  \phi^2   \,dx\\
  &=  - (2\alpha-1)\frac1{\alpha^2}K_1.
  \end{align*}
Since  $$H_{p_i}(Du^\ez)u^\ez_{x_kx_i}H_{p_k}(Du^\ez)=\mathscr A_H[u^\ez]=-\ez\,{\rm div\,}[{D_pH}(Du^\ez)],$$  we have
$$J_1=-\ez(2\alpha-1)\int_U ({\rm div}[ {D_pH}(Du^\ez) ])^2  [H(Du^\ez) +\sigma]^{2\alpha-2} \phi^2 \,dx=-\ez(2\alpha-1)K_2.$$
By Young's inequality we obtain
\begin{align*}
J_4&=
-2  \frac1\alpha \int_U \langle D^2_{pp}H(Du^\ez)D[H(Du^\ez)+\sz]^\alpha, D\phi\rangle    [ H(Du^\ez) +\sigma]^{ \alpha }\phi \,d x\\
&\le \eta\frac1{\alpha^2} K_1    + C \eta^{-1}\Lambda_H(\|H(Du^\ez)\|_{L^\fz(W)})\int_U         [ H(Du^\ez) +\sigma]^{2\alpha } |D\phi|^2\,d x.
  \end{align*}
   Regards of $J_2$, by integration by parts, we write
\begin{align*}J_2=&  -2\int_U   H_{p_j}(Du^\ez) [ H_{p_i}(Du^\ez)\phi \phi_{x_i}[ H(Du^\ez) +\sigma]^{2\alpha-1 }]_{x_j}\,dx\\
 = & -2\int_U   H_{p_j}(Du^\ez)   H _{p_ip_s}(Du^\ez)u^\ez_{x_sx_j} \phi_{x_i}[ H(Du^\ez) +\sigma]^{2\alpha-1 }\phi \,dx\\
 & -2 \int_U   H_{p_j}(Du^\ez)  H_{p_i}(Du^\ez)\phi_{x_j} \phi_{x_i}[ H(Du^\ez) +\sigma]^{2\alpha-1 } \,dx\\
 & -2\int_U   H_{p_j}(Du^\ez)   H_{p_i}(Du^\ez)\phi \phi_{x_ix_j}[ H(Du^\ez) +\sigma]^{2\alpha-1 } \,dx\\
 & -2(2\alpha-1)\int_U   H_{p_j}(Du^\ez)   H_{p_i}(Du^\ez)\phi \phi_{x_i}[ H(Du^\ez) +\sigma]^{2\alpha-2 }H_{p_s}(Du^\ez)u^\ez_{x_sx_j}\,dx\\
 =:&J_{21}+\cdots+J_{24}.
  \end{align*}
Observe that   $J_{21}=J_4 $, $J_{22}\le 0$ and by Lemma \ref{LEM7.1} (iv), we have
  \begin{align*}
  J_{23}&\le 2\int_U   | {D_pH}(Du^\ez) |^2|D^2\phi| |\phi| [ H(Du^\ez) +\sigma]^{2\alpha-1 } \,dx\\
  &\le 4\frac{[\Lambda_H(\|H(Du^\ez)\|_{L^\fz(W)})]^2}{\lambda_H(\|H(Du^\ez)\|_{L^\fz(W)})}
  \int_U   |D^2\phi| |\phi| [ H(Du^\ez) +\sigma]^{2\alpha  } \,dx.
  \end{align*}
Regards of $J_{24}$, by H\"older's inequality and Lemma \ref{LEM7.1} (iv), using
  $\lambda_H(\fz)\le\lambda_H\le \Lambda_H\le \Lambda_H(\fz)$, we get
  \begin{align*}J_{24}&=2\ez (2\alpha-1)\int_U   {\rm div} [ {D_pH}(Du^\ez)] \langle  {D_pH}(Du^\ez),D\phi\rangle  \phi [ H(Du^\ez) +\sigma]^{2\alpha-2 } \,dx\\
    &\le 2\ez\eta K_2+  \ez  C\eta^{-1}|2\alpha-1|^2 \int_U   \langle  {D_pH}(Du^\ez),D\phi\rangle ^2    [ H(Du^\ez) +\sigma]^{2\alpha-2 } \,dx\\
  &=2\ez\eta K_2+  \ez  C\eta^{-1}|2\alpha-1|^2\frac{[\Lambda_H(\fz)]^2}{\lambda_H(\fz)}\int_U   |D\phi| ^2    [ H(Du^\ez) +\sigma]^{2\alpha-1 } \,dx.
  \end{align*}

Combining all estimates together,  we obtain
  \begin{align*}
   &\left[2\tau_H(\|H(Du^\ez)\|_{L^\fz( W)})+(2\alpha-1) -2\eta \right] [\frac1{\alpha^2} K_1+\ez K_2]  \nonumber\\
   &\quad\le C \frac{[\Lambda_H(\|H(Du^\ez)\|_{L^\fz(W)})]^2}{\lambda_H(\|H(Du^\ez)\|_{L^\fz(W)})}
  \int_U     [ H(Du^\ez) +\sigma]^{2\alpha  } |D^2\phi|\phi\,dx\nonumber\\
  &\quad\quad+ C \eta^{-1}\ez \frac{[\Lambda_H(\fz)]^2}{\lambda_H(\fz)}
  \int_U     [ H(Du^\ez) +\sigma]^{2\alpha-1 }    |D\phi| ^2 \,dx\nonumber\\
  &\quad\quad +C \eta^{-1}\Lambda_H(\|H(Du^\ez)\|_{L^\fz(W)})\int_U         [ H(Du^\ez) +\sigma]^{2\alpha }|D\phi|^2\,dx. \end{align*}

Since $\alpha>1/2-\tau_H(\|H(Du^\ez)\|_{L^\fz(W)})$,  choosing $$\eta=\frac14[2\tau_H(\|H(Du^\ez)\|_{L^\fz(W)})+(2\alpha-1)],$$
by \eqref{eq6.y1} and $\lambda_H(\fz)\le \lambda_H\le \Lambda_H(\fz)$ we get
      \begin{align}\label{eq6.x1}
     &  \int_{U}|D[H(Du^\ez)+\sz ]^\alpha|^2\phi^2\,dx+
  \ez\alpha^2 \frac1{\Lambda_H(\fz)}\int_{U} ({\rm div}[ {D_pH}(Du^\ez)  ])^2     [H(Du^\ez)+\sz]^{2\alpha-2} \phi^2 \,dx \\
    &\quad\le \frac{ C\alpha^2(\alpha+1)}{[\alpha+\tau_H(\|H(Du^\ez)\|_{L^\fz( W)})-\frac12]^2} \left[\frac{ \Lambda_H(\|H(Du^\ez)\|_{L^\fz(W)})}{\lambda_H(\|H(Du^\ez)\|_{L^\fz(W)})}\right]^2\nonumber \\
    &\quad\quad\quad\times
    \int_U
  [ H(Du^\ez) +\sigma]^{2\alpha  }[|D\phi| ^2+|D^2\phi||\phi|] \,dx\nonumber\\
  &\quad\quad +\frac{C\ez\alpha^2|2\alpha-1|^2}{ [\alpha+\tau_H(\|H(Du^\ez)\|_{L^\fz( W)}) -\frac12]^2}\left[\frac{\Lambda_H(\fz) }{\lambda_H(\fz)}\right]^2
  \int_U
  [ H(Du^\ez) +\sigma]^{2\alpha-1 } |D\phi| ^2 \,dx.\nonumber
  \end{align}

 In the case $ 1/2-\tau_H(\|H(Du^\ez)\|_{L^\fz(W)})<\alpha< 1/2$,
    \eqref{eq6.x1} gives Lemma \ref{LEM2.7} (ii).

   When $\alpha\ge3/2$, taking $\sz=0$, from \eqref{eq6.x1}
   and  $\tau_H\ge\tau_H(\fz)$,  we conclude  Lemma \ref{LEM2.7} (i).

Assume that $1/2\le \alpha<3/2$. Note that
$ [H(Du^\ez)+\sz]^{2\alpha-1} \in L^\fz_\loc(U)$
and $[H(Du^\ez)+\sz]^{2\alpha } \in L^\fz_\loc(U)$
uniformly in $\sz\in(0,1].$
 By choosing some suitable cut-off function $\phi$, using
 $\lambda_H(\fz)\le \lambda_H\le \Lambda_H\le \Lambda_H(\fz)$ and $\tau_H\ge\tau_H(\fz)$, we know that
 $[H(Du^\ez)+\sz]^\alpha\in W^{1,2}_\loc(U )$ uniformly in $\sz\in(0,1].$
By the weak compactness of $W^{1,2}_\loc (U)$, we further have
 $[H(Du^\ez)+\sz]^\alpha\to [H(Du^\ez)]^\alpha$ weakly in $W^{1,2}_\loc(U)$ as $\sz\to0$ (up to some subsequence), and hence $[H(Du^\ez)]^\alpha \in W^{1,2}(U)$.
 Letting $\sz\to0$ in \eqref{eq6.x1} and
 using    $\tau_H\ge\tau_H(\fz)$, we   obtain  Lemma \ref{LEM2.7} (i).
The proof of Lemma \ref{LEM2.7} is complete.
\end{proof}

\begin{proof}[Proof of Lemma \ref{LEM2.8}]
Without loss of generality, we may assume that $F(x)=cx_2$.
Let $$\varphi=\frac12 (u^\epsilon-cx_2)^2\phi^4  ,\quad \forall\phi\in C^\fz_c (\frac34B),\quad \forall B\Subset U.$$
Then $\varphi\in W^{1,2}_c(U)$ and
$$\varphi_{x_i}=2(u^\epsilon-cx_2)^2\phi^3\phi_{x_i}+(u^\epsilon_{x_i}-c\delta_{2i}
)(u^\epsilon-cx_2)\phi^4.$$
By Theorem \ref{LEM2.3}, \eqref{FUN2.2} and \eqref{FUN2.4} we obtain
\begin{align*}
0\le I(u^\ez,\vz)&= -\int_U H_{p_j}(Du^\ez)H_{p_ip_l}(Du^\ez)u^\epsilon_{x_lx_j}(u^\epsilon_{x_i}-c\delta_{2i})(u^\epsilon-cx_2)\phi^4 \,d x\\
&\quad -2\int_U  H_{p_j}(Du^\ez)H_{p_ip_l}(Du^\ez)u^\epsilon_{x_lx_j}
\phi_{x_i} (u^\epsilon-cx_2)^2\phi^3 \,d x\\
&\quad+\int_U({\rm div}[ {D_pH}(Du^\ez)  ])H_{p_i}(Du^\ez)(u^\epsilon_{x_i}-c\delta_{2i})(u^\epsilon-cx_2)\phi^4\,d x\\
&\quad+2\int_U ({\rm div}[ {D_pH}(Du^\ez)  ])H_{p_i}(Du^\ez)\phi_{x_i}
(u^\epsilon-cx_2)^2\phi^3\,d x\\
& =:J_1+J_2+J_3+J_4.
\end{align*}

  By the H\"{o}lder inequality and Lemma \ref{LEM2.7}, using
  $\Lambda_H\le \Lambda_H(\fz)$,   we have
\begin{align*}
J_1+J_2&\le\left[\int_U\langle  {D^2_{pp}H}(Du^\ez)  D[H(Du^\ez)] ,D[H(Du^\ez)] \rangle\phi^4\,d x\right]^{1/2}\\
&\quad\quad\times\left[\int_U\langle  {D^2_{pp}H}(Du^\ez) (Du^\ez-DF) ,(Du^\ez-DF) \rangle(u^\epsilon-F)^2\phi^4\,d x\right]^{1/2}\\
&\quad+2\left[\int_U\langle  {D^2_{pp}H}(Du^\ez)  D[H(Du^\ez)] ,D[H(Du^\ez)] \rangle\phi^4\,d x\right]^{1/2}\\
&\quad\quad\times \left[\int_U\langle  {D^2_{pp}H}(Du^\ez)  D\phi ,D\phi  \rangle(u^\epsilon-F)^4\phi^2\,d x\right]^{1/2}\\
&\le C \Lambda_H(\fz) \left[\int_U|D[H(Du^\ez)]|^2\phi^4\,d x\right]^{1/2}\\
&\quad\times\left[\int_U \left[(|Du^\epsilon|+|DF|)^2(u^\epsilon-F)^2\phi^4+ |D\phi|^2(u^\epsilon-F)^4\phi^2\right]\,d x\right]^{\frac{1}{2}} .
\end{align*}

By integration by parts, we have
\begin{align*}
J_3&=-\int_UH_{p_m}(Du^\ez)[H_{p_i}(Du^\ez)(u^\epsilon_{x_i}-c\delta_{2i})(u^\epsilon-cx_2)\phi^4]_m\,d x\\
&=-\int_UH_{p_m}(Du^\ez)H_{p_ip_j}(Du^\ez)u^\epsilon_{x_jx_m}(u^\epsilon_{x_i}-c\delta_{2i})(u^\epsilon-cx_2)\phi^4\,d x\\
&\quad-\int_UH_{p_m}(Du^\ez)H_{p_i}(Du^\ez)u^\epsilon_{x_ix_m}(u^\epsilon-cx_2)\phi^4\,d x\\
&\quad-\int_U\langle D_pH(Du^\ez),Du^\epsilon-DF\rangle ^2\phi^4\,d x\\
&\quad-4\int_UH_{p_m}(Du^\ez)H_{p_i}(Du^\ez)(u^\epsilon-cx_2)\phi_{x_m}(u^\epsilon_{x_i}-c\delta_{2i})\phi^3\,d x\\
&=:J_{31}+\cdots+J_{34}
\end{align*}
Note that $J_{31}=J_1$.
Since
$$ H_{p_m}(Du^\ez)H_{p_i}(Du^\ez)u^\epsilon_{x_ix_m} =\mathscr A_H[u^\ez]=-\ez{\,\rm div\,}[ {D_pH}(Du^\ez)],$$
by the H\"{o}lder inequality, we obtain
\begin{align*}
 J_{32}&=\ez\int_U({\,\rm div\,} [{D_pH}(Du^\ez)])(u^\epsilon-cx_2)\phi^4 \,d x\\
&\le \ez
\left[\int_U({{\,\rm div\,} [{D_pH}(Du^\ez)]})^2\phi^4\,d x\right]^{1/2} \left[\int_U(u^\epsilon-F)^2 \phi^4\,d x\right]^{1/2}.
\end{align*}
By  Young's inequality, H\"{o}lder's inequality and Lemma \ref{LEM7.1} (iv), using
  $\Lambda_H\le \Lambda_H(\fz)$ and $\lambda_H\ge \lambda_H(\fz)$,  we have
\begin{align*}
J_{34}
&\le\frac{1}{4}\int \langle {D_pH}(Du^\ez) ,Du^\epsilon-DF\rangle ^2\phi^4\,d x+C\int|D_pH(Du^\ez)|^2|D\phi|^2(u^\epsilon-F)^2\phi^2\,d x\\
&\le-\frac{1}{4}J_{33}+C\left[\int_U| {D_pH}(Du^\ez) |^4|D\phi|^2\phi^2\,d x\right]^{1/2} \left[\int_U(u^\epsilon-F)^4|D\phi|^2\phi^2\,d x\right]^{1/2}\\
&\le-\frac{1}{4}J_{33}+C\frac{[\Lambda _H(\fz)]^2}{\lambda_H(\fz)}
\left[\int_U[H(Du^\ez) ]^2|D\phi|^2\phi^2\,d x\right]^{1/2} \left[\int_U(u^\epsilon-F)^4|D\phi|^2\phi^2\,d x\right]^{1/2}
\end{align*}

By integration by parts, then
\begin{align*}
J_4&=-2\int_U H_{p_m}(Du^\ez)[H_{p_i}(Du^\ez)(u^\epsilon-cx_2)^2\phi_{x_i}\phi^3]_m\,d x\\
&= -2\int_U H_{p_m}(Du^\ez)H_{p_ip_j}(Du^\ez)u^\epsilon_{x_jx_m}
 (u^\epsilon-cx_2)^2 \phi_{x_i}\phi^3\,d x \\
&\quad-4\int_UH _{p_i}(Du^\ez)H_{p_m}(Du^\ez)(u^\epsilon_{x_m}-c\sigma_{2m})(u^\epsilon-cx_2)\phi_{x_i}\phi^3\,d x\\
&\quad-2\int_UH _{p_i}(Du^\ez)H_{p_m}(Du^\ez)(u^\epsilon-cx_2)^2\phi_{x_ix_m}\phi^3\,d x\\
&\quad-6\int_U \langle {D_pH}(Du^\ez) ,D\phi\rangle ^2(u^\epsilon-cx_2)^2\phi^2\,d x\\
&=:J_{41}+\cdots+J_{44}
\end{align*}
Note that $J_{44}\le0$, $J_{41}=J_{2}$  and $J_{34}=J_{42}$.
By H\"older's  inequality and Lemma \ref{LEM7.1}(iv), using
  $\Lambda_H\le \Lambda_H(\fz)$ and $\lambda_H\ge \lambda_H(\fz)$ we have
\begin{align*}
 J_{43}
&\le 2\left[\int_U| {D_pH}(Du^\ez) |^4| D^2\phi| \phi^3\,d x\right]^{1/2}\left[\int_U (u^\epsilon-F)^4|D^2\phi|\phi^3   \,d x\right]^{1/2}\\
&\le 4\frac{[\Lambda_H(\fz)]^2}{\lambda_H(\fz)}\left[\int_U[ H(Du^\ez)]^2| D^2\phi| \phi^3\,d x\right]^{1/2}\left[\int_U (u^\epsilon-F)^4|D^2\phi|\phi^3   \,d x\right]^{1/2}.
\end{align*}

  Combining all estimates together,  we obtain
\begin{align*}
  &\int_U\langle D_pH(Du^\ez),Du^\epsilon-DF\rangle ^2\phi^4\,d x\\
  &\quad\le C \Lambda_H (\fz) \left[\int_U|D[H(Du^\ez)]|^2\phi^4\,d x\right]^{1/2}\\
&\quad\quad\quad\times\left[\int_U \left[(|Du^\epsilon|+|DF|)^2(u^\epsilon-F)^2\phi^4+ |D\phi|^2(u^\epsilon-F)^4\phi^2\right]\,d x\right]^{\frac{1}{2}}\\
&\quad\quad +C\frac{[\Lambda_H(\fz)]^2}{\lambda_H(\fz)}\left[\int_U [H(Du^\ez)]^2[| D \phi|^2\phi^2  + | D^2\phi| \phi^3]\,d x\right]^{1/2}\left[\int_U (u^\epsilon-F)^4|D^2\phi|\phi^3   \,d x\right]^{1/2}\\
&\quad\quad+
\left[\int_U\ez^2({\rm div}[D_pH(Du^\ez)])^2\phi^4\,d x\right]^{1/2} \left[\int_U(u^\epsilon-F)^2 \phi^4\,d x\right]^{1/2}
  \end{align*}
  If  letting
  $\phi\subset C^\fz_c(\frac34B)$ with $\phi= 1$ in $\frac12B$, and
$|D\phi|^2+|D^2\phi|\le \frac C{|B|}$ in $B$, we obtain the desired estimates.
 This completes the proof of Lemma \ref{LEM2.8}.
\end{proof}

\subsection{Proofs of Lemmas \ref{LEM2.6},   \ref{LEM3.3}\&\ref{LEM3.4}}

\begin{proof}[Proof of Lemma \ref{LEM2.6}]
Let $\varphi  = e^{\frac t\ez  H (Du^\epsilon )} \phi^{2 mt }$ for any $ 0\le \phi\in C^\infty_c(U)$.
We have
$$\varphi_{x_i}=\frac t\ez e^{\frac t\ez  H (Du^\epsilon )} H_{p_m}(Du^\ez)
u^\ez_{x_mx_i} \phi^{2 mt } + 2 mt \phi^{2 mt -1} \phi_{x_i} e^{\frac t\ez  H (Du^\epsilon )} .$$
Note that $\varphi\in W^{1,2}_c(U)$.  By \eqref{FUN2.4} and   a
direct calculation, we obtain
\begin{align*}
  I(u^\ez, \varphi)&=\int_U {\rm div}[ {D_pH} (Du^\ez)] H_{p_i}(Du^\ez)\varphi_{x_i}\,dx-\int_U H_{p_j}(Du^\ez) u^\epsilon_{x_lx_j}H_{p_ip_l}(Du^\ez)\varphi_{x_i} \,d x\\
 &=\frac t\ez\int_U {\rm div}[ {D_pH} (Du^\ez)] H_{p_i}(Du^\ez) e^{\frac t\ez  H (Du^\epsilon )} H_{p_m}(Du^\ez)
u^\ez_{x_mx_i} \phi^{2 mt }\,dx\\
&\quad+2 mt\int_U {\rm div}[ {D_pH} (Du^\ez)] H_{p_i}(Du^\ez) \phi^{2 mt -1} \phi_{x_i} e^{\frac t\ez  H (Du^\epsilon )} \,dx\\
&\quad-\frac t\ez\int_U H_{p_j}(Du^\ez) u^\epsilon_{x_lx_j}H_{p_ip_l}(Du^\ez) e^{\frac t\ez  H (Du^\epsilon )} H_{p_m}(Du^\ez)
u^\ez_{x_mx_i} \phi^{2 mt } \,d x\\
&\quad-2 mt\int_U H_{p_j}(Du^\ez) u^\epsilon_{x_lx_j}H_{p_ip_l}(Du^\ez) \phi^{2 mt -1} \phi_{x_i} e^{\frac t\ez  H (Du^\epsilon )} \,d x\\
&=:J_1+\cdots+J_4
\end{align*}
Since $$H_{p_i}(Du^\ez)   H_{p_m}(Du^\ez)
u^\ez_{x_mx_i} =\mathscr A_H[u^\ez]=-\ez {\rm div}[ {D_pH} (Du^\ez) ],$$  we have
$$J_1=- t\int_U ({\rm div}[ {D_pH} (Du^\ez) ])^2    e^{\frac t\ez  H (Du^\epsilon )}
  \phi^{2 mt }\,dx.$$

   By Young's inequality and Lemma \ref{LEM7.1} (iv),   we have
 \begin{align*}
 J_2&=2 mt \int_U {\rm div}[ {D_pH} (Du^\ez)]  \langle  {D_pH}  (Du^\ez),D\phi\rangle  \phi^{2 mt -1}   e^{\frac t\ez  H (Du^\epsilon )} \,dx\\
 &\le -\frac14  J_1 +  (2m)^2t\int_U  \langle  {D_pH}  (Du^\ez),D\phi\rangle ^2  \phi^{2 mt -2}   e^{\frac t\ez  H (Du^\epsilon )} \,dx\\
 &\le -\frac14  J_1 +  (2m)^2t \frac{[\Lambda_H(\fz)]^2}{2\lambda_H(\fz)}\int_U    H(Du^\ez)|D\phi|^2  \phi^{2 mt -2}   e^{\frac t\ez  H (Du^\epsilon )} \,dx
   \end{align*}
  Noting $D[H(Du^\ez)]=  D^2u^\ez {D_pH}(Du^\ez) $ we have
\begin{align*} J_3&=  - \frac t\ez \int_U \langle  {D^2_{pp}H}(Du^\ez) D[H(Du^\ez)] , D[H(Du^\ez)]\rangle e^{\frac t\ez  H (Du^\epsilon )}
 \phi^{2 mt } \,d x\\
 &\le  - \frac t{\ez }\lambda_H(\fz)  \int_U  |D[H(Du^\ez)]|^2   e^{\frac t\ez  H (Du^\epsilon )}
 \phi^{2 mt } \,d x.
  \end{align*}
 By Young's inequality, we have
 \begin{align*}
 J_4&= 2 mt \int_U \langle  {D^2_{pp}H}(Du^\ez) D[H(Du^\ez)] , D\phi \rangle  \phi^{2 mt -1}   e^{\frac t\ez  H (Du^\epsilon )} \,d x\\
 &\le -\frac14 J_3 + (2m)^2t\ez \int_U \langle  {D^2_{pp}H}(Du^\ez) D\phi , D\phi \rangle  \phi^{2 mt -2}   e^{\frac t\ez  H (Du^\epsilon )} \,d x\\
  &\le -\frac14 J_3 + (2m)^2t\ez \Lambda_H(\fz)\int_U  |D\phi|^2  \phi^{2 mt -2}   e^{\frac t\ez  H (Du^\epsilon )} \,d x.
 \end{align*}
Since  $I(u^\ez,\vz)\ge0$ by Theorem \ref{LEM2.3} and \eqref{FUN2.2}, combining above estimates,
we have

\begin{align*}
 & \frac34\frac 1{\ez }\lambda_H(\fz)  \int_U  |D[H(Du^\ez)]|^2   e^{\frac t\ez  H (Du^\epsilon )}
 \phi^{2 mt } \,d x+  \frac34\int_U ({\rm div}[ {D_pH} (Du^\ez) ])^2    e^{\frac t\ez  H (Du^\epsilon )}\\
 &\quad=-\frac34\frac1{t} [J_1+J_3]\le  2(2m)^2  \frac{[\Lambda_H(\fz)]^2}{ \lambda_H(\fz)}\int_U  [1 + H(Du^\ez)]|D\phi|^2  \phi^{2 mt -2}   e^{\frac t\ez  H (Du^\epsilon )} \,dx,
  \end{align*}
which gives
 \begin{align*}
&\int_U\left\{\frac1\ez  | D[H(Du^\ez)]|^2  +\frac1{\Lambda_H(\fz)}[{\rm div} ({D_pH}(Du^\ez)) ]^2\right\}\phi^{2 mt } e^{\frac t\ez H(Du^\ez)} \,d x\\
&\quad\le 8m^2 \left[\frac{ \Lambda_{H}(\fz)}{\lambda_H(\fz) }\right]^2\int_U  [1+H(Du^\ez)] |D\phi|^2\phi^{2 mt -2} e^{\frac t\ez H(Du^\ez)}\,d x
\end{align*}
as desired.  This completes the proof of Lemma \ref{LEM2.6}.
\end{proof}

We prove  Lemmas \ref{LEM3.4}\&\ref{LEM3.3} by using Lemma \ref{LEM2.6} and Sobolev's imbedding, and  borrowing some ideas from \cite[Theorem 5.1]{e03}.

\begin{proof}[Proof of Lemma \ref{LEM3.3}]
For any $\beta>1$, by Sobolev's embedding
 $W^{1,2}_0(U)\hookrightarrow L^{2\beta}(U),$
we have
$$
\|\sigma ^{\epsilon}\phi^4\|_{L^\beta(U)} =\left(\int_U[(\sigma ^{\epsilon})^{1/2}\phi^2]^ { {2\beta}} \,d x\right)^{1/\beta}\le
C(U,\bz ) \int_U|D[(\sigma ^{\epsilon})^{\frac{1}{2}}\phi^2]|^2 \,d x,
$$
where recall that $\phi$ is as in \eqref{eq3.4}.
Noting
\begin{equation}\label{eq6.1}
  |D[(\sigma ^{\epsilon})^{\frac{1}{2}}\phi^2]|^2\le \frac1{ \epsilon ^2} \sigma ^{\epsilon}\phi^4|D[H (Du^\epsilon )]|^2 +4\sigma ^{\epsilon}|D\phi|^2\phi^2,
\end{equation}
by \eqref{eq3.4} and $\int_U\sz^\ez\,dx=1$, we have
\begin{equation}\label{eq6.2}
\|\sigma ^{\epsilon}\phi^4\|_{L^\beta(U)}  \le
C(U,V,\bz) +C(U,\beta)\int_U\frac1{ \epsilon ^2} \sigma ^{\epsilon}\phi^4|D[H(Du^\ez)]|^2 \,d x .
\end{equation}

Applying Lemma \ref{LEM2.6} with $m=2$ and $t=1$, by  \eqref{eq3.4},
we obtain
\begin{equation*}
 \int_U \frac1\ez |D[H(Du^\ez)] |^2\phi^4e^{\frac1\ez H(Du^\ez)}\,d x\le  C(H,U,V)
\int_U   [1+ H  (Du^\epsilon )]    e^{\frac1\ez H(Du^\ez)}\phi^2\,d x,
\end{equation*}
   by $\int_U \sz^\ez\,dz=1$, which implies that
$$
\int_U\frac1{\ez }  |D[H(Du^\ez)] |^2\phi^4\sz^\ez\,d x
 \le  C(H,U,V)+ C(H,U,V)\int_U  H  (Du^\epsilon ) \phi^2\sz^\ez\,d x.
$$
Note that by \eqref{eq3.xx1}, one has
$$\ez \ln \int_Ue^{\frac1\ez H(Du^\ez)}\,dz\le   \ez \ln |U|+    \|H(Du )\|_{L^\fz(U)}\le C(U)[1+ \|H(Du )\|_{L^\fz(U)}].$$
Since $y\le \frac \ez \gz e^{\frac\gz\ez y}$ for all $y\in\rr$ and  by $\int_U \sz^\ez\,dz=1$
we obtain
\begin{align*}
&\int_UH (Du^\epsilon ) \phi^2\sz^\ez\,d x\\
&\quad= \int_U\left[H (Du^\epsilon )-\ez\ln \int_Ue^{\frac1\ez H(Du^\ez)}\,dz\right] \phi^2\sz^\ez\,d x+ \left[\ez\ln \int_Ue^{\frac1\ez H(Du^\ez)}\,dz\right]\int_U \phi^2\sz^\ez\,d x \\
&\quad\le \int_U \frac \ez \gz e^{\frac\gz\ez  \left[H (Du^\epsilon )-\ez\ln \int_Ue^{\frac1\ez H(Du^\ez)}\,dz\right] } \phi^2\sz^\ez\,d x+ C( U,V)[1+\|H(Du)\|_{L^\fz(U)}] \\
&\quad=  \frac \ez \gz \int_U( \sz^\ez)^{1+\gz} \phi^2\,dx+C( U,V)[1+\|H(Du)\|_{L^\fz(U)}].
\end{align*}

Plugging this  in \eqref{eq6.2}    we obtain
\begin{equation}\label{eq6.3}
\|\sigma ^{\epsilon}\phi^4\|_{L^\beta(U)}
\le C(H, U,V,\bz)\left[\frac1 {\epsilon} [1+\|H(Du)\|_{L^\fz(U)}] +  \frac1   \gz \int_U( \sz^\ez)^{1+\gz} \phi^2\,dx\right].
 \end{equation}

Letting $\gamma=(\beta-1)/{2\beta}$ and
noting  $1+\gamma= 1/2+({2\beta-1})/{2\beta}$   we have
\begin{equation*}
\|\sigma ^{\epsilon}\phi^4\|_{L^\beta(U)}
\le C(H,U,V,\beta) \left\{\frac1 {\epsilon}[1+\|H(Du)\|_{L^\fz(U)}]   +\int_U(\sigma ^{\epsilon})^{(3\bz-1)/2\bz}\phi^2\,d x\right\}
\end{equation*}
 By H\"older's inequality,  $\int_U\sz^\ez\,dx=1$ and Young's inequality, we have
\begin{align*}
\int_U(\sigma ^{\epsilon})^{(3\beta-1)/{2\beta} }\phi^2\,d x&\le  \left(\int_U\sigma ^{\epsilon}\,d x\right)^{ ( 2\beta-1)/2\beta } \left(\int_U[(\sigma ^{\epsilon})^{1/2}\phi^2]^{2\beta} \,d x\right)^{1/2\beta}\\
& =\|\sigma ^{\epsilon}\phi^4\|_{L^\beta(U)}^{1/2}   \le \eta \|\sigma ^{\epsilon}\phi^4\|_{L^\beta(U)}+ 4\eta^{-1}
\end{align*}
for any $\eta>0$.
Letting $\eta= [2C(H,U,V,\beta)]^{-1}$, we have
\begin{equation*}
\|\sigma ^{\epsilon}\phi^4\|_{L^\beta(U)}
 \le C(H,U,V,\beta) \frac1 {\epsilon}[1+\|H(Du)\|_{L^\fz(U)}]  +
 \frac{1}{2}\|\sigma ^{\epsilon}\phi^4\|_{L^\beta(U)}
\end{equation*}
from which we conclude the desired estiamtes. This completes the proof of Lemma \ref{LEM3.3}.
\end{proof}

\begin{proof}[Proof of Lemma \ref{LEM3.4}]
For $\beta=\theta^2>1$, by Sobolev's imbedding  we have
\begin{equation}
\begin{split}
\|[\sigma ^{\epsilon}\phi^{2m}]^{t }\|_{L^\beta(U)} =\left(\int_U[(\sigma ^{\epsilon})^{t/2}\phi^{ mt }]^{2\beta}\,d x\right)^{1/\beta}&\le
C(U,\beta) \int_U |D[(\sigma ^{\epsilon})^{t/2}\phi^{ mt }]|^2\,d x.
\end{split}
\end{equation}
By \eqref{eq3.4} we have
\begin{equation*}
\begin{split}
|D[(\sigma ^{\epsilon})^{t/2}\phi^{ mt }]|^2&\le
Ct^2(\sigma^\epsilon)^{t-2}|D\sigma^\epsilon|^2\phi^{2 mt }+C( U,V)m^2t^2\phi^{2 mt -2} (\sigma^\epsilon)^t,
\end{split}
\end{equation*}
and hence
\begin{align*}
\|[\sigma ^{\epsilon}\phi^{2m}]^{t }\|_{L^\beta(U)}
&\le C(U,\beta) t^2\int_U(\sigma^\epsilon)^{t-2}|D\sigma^\epsilon|^2\phi^{2 mt }\,dx+
C(U,V,\beta) m^2t^2\int_U (\sigma^\epsilon)^t\phi^{2 mt -2} \,d x
\end{align*}
Noting $|D\sigma^\epsilon|^2=\frac1{\epsilon^2} (\sigma^\epsilon)^2|D[H(Du^\ez)]|^2$, by Lemma \ref{LEM2.6} and \eqref{eq3.4}    we have
\begin{align*}
 \int_U (\sigma ^{\epsilon})^{t-2}|D\sigma^\epsilon|^2\phi^{2 mt }\,d x
&=\frac1{\epsilon^2 }\int_U (\sigma ^{\epsilon})^t|D[H(Du^\ez)]|^2\phi^{2 mt }\,d x\\
&\le C(H)\frac1{\epsilon  }m^2\int_U(\sigma ^{\epsilon})^t\phi^{2 mt -2}[1+ H(Du^\ez)][|D\phi|^2+|D^2\phi|\phi ]\,d x\\
&\le C(H,U,V)\frac1{\epsilon  }m^2\int_U(\sigma ^{\epsilon})^t\phi^{2 mt -2}[1+ H(Du^\ez)]\,d x.
\end{align*}
Since  \eqref{eq3.xx1} implies
$$\ez \ln \int_Ue^{\frac1\ez H(Du^\ez)}\,dz\le     C(U)[1+ \|H(Du )\|_{L^\fz(U)}],$$ we have
\begin{align*}&\int_U (\sigma ^{\epsilon})^{t-2}|D\sigma^\epsilon|^2\phi^{2 mt }\,d x\\
&\quad\le C(H,U,V )\frac1{\epsilon  }m^2\int_U \phi^{2 mt -2}(\sigma ^{\epsilon})^t \\
&\quad\quad\quad\quad\times
\left\{C(U)[1+ \|H(Du)\|_{L^\fz(U)}]  + \left[H(Du^\epsilon)-\ez\ln\int_Ue^{\frac1\ez H(Du^\ez)}\,dz\right ]\right\} \,d x.
\end{align*}
By $y\le \frac{\ez\theta}{\theta-1}
e^{\frac{\theta-1}{\theta\ez} y}$, this gives
\begin{align*}
&\int_U (\sigma ^{\epsilon})^{t-2}|D\sigma^\epsilon|^2\phi^{2 mt }\,d x\\
&\quad\le C(H,U,V)\frac1{\epsilon  }m^2\int_U\phi^{2 mt -2}(\sigma ^{\epsilon})^t\\
&\quad\quad\quad\quad\quad\times
\left\{[1+ \|H(Du)\|_{L^\fz(U)}]  + \frac{\ez\theta}{\theta-1}
e^{\frac{\theta-1}{\theta\ez}\left [H(Du^\ez) -\ez\ln\int_Ue^{\frac1\ez H(Du^\ez)}\,dz \right]} \right\} \,d x\\
&\quad\le   C(H,U,V,\beta)m^2\int_U\phi^{2 mt -2}
\left\{(\sigma ^{\epsilon})^{t+(\theta-1)/\theta} +\frac1{\epsilon  }(\sigma ^{\epsilon})^t
[1+  \|H(Du)\|_{L^\fz(U)}]
  \right\} \,d x.
  \end{align*}
That is,
\begin{align*}
\|[\sigma ^{\epsilon}\phi^{2m}]^{t }\|_{L^\beta(U)} &\le
C(H,U,V,\beta)m^2t^2\\
&\quad\times\left\{\int_U \phi^{2 mt -2}    (\sigma ^{\epsilon})^{t+\frac{\theta-1}{\theta}}  \,dx+\frac1{\epsilon}[1+  \|H(Du)\|_{L^\fz(U)}] \int_U(\sigma^\epsilon)^t\phi^{2 mt -2}  \,d x\right\}.
\end{align*}
 Via H\"older's inequality, $\int_U \sigma ^{\epsilon}\,d x=1$ and $0\le\phi\le1$, we obtain
\begin{equation*}
\begin{split}
\int_U(\sigma ^{\epsilon})^{t+\frac{\theta-1}{\theta}} \phi^{2 mt
-2} \,d x&\le\left[\int_U[\sigma^\epsilon\phi^m]^{t\theta}\,d x\right]^{1/\theta}
 \left[\int_U\sigma^\epsilon\phi^{\frac{( mt -2)\theta}{\theta-1}}\,d x\right]^{1-1/{\theta}}\le    \|[\sigma^\epsilon\phi^m]^t \|_{L^\theta(U)}
\end{split}
\end{equation*}
and
\begin{equation*}
\begin{split}
\int_U(\sigma^\epsilon)^t \phi^{2 mt -2}\,d x&\le\left[\int_U[\sigma^\epsilon\phi^m]^{t\theta}\,d x\right]^{1/\theta}
 \left[\int_U\phi^{\frac{( mt -2)\theta}{\theta-1}}\,d x\right]^{1-1/{\theta}} \le C(U,\beta) \|[\sigma^\epsilon\phi^m]^t \|_{L^\theta(U)}.
\end{split}
\end{equation*}
We therefore conclude
\begin{align*}
 \|[\sigma^\epsilon\phi^{2m}]^t \|_{L^\beta(U)}\le
C(H,U,V,\beta) \frac1{\epsilon} m^2t^2[1+  \|H(Du)\|_{L^\fz(U)}] \|[\sigma^\epsilon\phi^m]^t \|_{L^\theta(U)},
\end{align*}
which completes the proof of Lemma \ref{LEM3.4}.
\end{proof}

\renewcommand{\thesection}{Appendix A}
 \renewcommand{\thesubsection}{ A }
\newtheorem{lemapp}{Lemma \hspace{-0.15cm}}
\newtheorem{corapp}[lemapp] {Corollary \hspace{-0.15cm}}
\newtheorem{remapp}[lemapp]  {Remark  \hspace{-0.15cm}}
\newtheorem{defnapp}[lemapp]  {Definition  \hspace{-0.15cm}}
\renewcommand{\theequation}{A.\arabic{equation}}

\renewcommand{\thelemapp}{A.\arabic{lemapp}}

\section{Some properties of $H$ and auxiliary functions} 

We  recall several properties of
$\lambda_H$ and $\Lambda_H$ in Lemma \ref{LEM7.1},
and give the continuity and lower bound  of $\wz\tau_H$ when $H\in C^2(\rr^2)$ in  Lemma \ref{LEM7.2}.
In Lemma \ref{LEM7.3} we recall a standard smooth approximation $\{H^\dz\}_{\dz\in(0,1]}$ to $H$,
and prove some useful  properties of $\lambda_{H^\dz}$ and $\Lambda_{H^\dz}$.
In Lemma \ref{LEM7.5} we give some useful properties of $\tau_H$.
Finally, for $H\in C^k(\rr^2)$ satisfying (H1)\&(H2) and given $R>0$,   in Lemma \ref{LEM7.8} we find  $\wz H\in C^k(\rr^2)$ satisfying
 (H1')\&(H2)  so that $\wz H=H$ in $\{p,|p|<R\}$.

The following basic properties of $\lambda_H$ are $\Lambda_H$ are known.
We omit the proofs.
\begin{lemapp}\label{LEM7.1} Suppose that  $H\in C^0(\rn)$ satisfies  (H1){\rm\&}(H2). The following hold.
\begin{enumerate}
\item[(i)] The function $\lambda_H$ is  decreasing   and
$\Lambda_H$ is   increasing. Both of
 $\lambda_H$ and $\Lambda_H$ are right-continuous in $[0,\fz)$, that is,
$$\mbox{$\Lambda_H(R)=\lim_{\ez\to0}\Lambda_H(R+\ez)$ and $\lambda_H(R)=\lim_{\ez\to0}\lambda_H(R+\ez)$
for all $R\ge0$}.$$
If $H\in C^2(\rn)$ additionally, then $\lambda_H,\Lambda_H  \in C^0([0,\fz))$.

\item[(ii)] For all $R>0$,  both of
$  H(p)-\frac12\lambda_H (R)|p |^2 $ and  $  \frac12\Lambda_H (R)|p |^2-H(p )$   are convex  in $H^{-1}([0,R ])$.

If $H\in C^1(\rn)$ additionally, then
$$\lambda_{H}(R)|p-q|^2\le \langle D_pH (p)-D_p H(q),p-q\rangle \le \Lambda_{H}(R)|p-q|^2\quad\forall p,q\in H^{-1}([0,R])$$
and
$$\frac{\lambda_{H}( R )}2|p-q|^2\le H(q)-H(p)-\langle D_p H(p),p-q\rangle\le \frac{\Lambda_{H}( R )}2|p-q|^2\quad\forall p,q\in H^{-1}([0,R]).$$
If $H\in C^2(\rn)$ additionally,
then
$$\frac{\lambda_H(H(p))}{2} |\xi|^2\le\langle D^2_{pp}H(p)\xi,\xi\rangle \le \frac{\Lambda_H(H(p))}{2} |\xi|^2\quad\forall p,\xi\in\rn.$$

\item[(iii)] For all $p\in\rn$, we have $$\frac{\lambda_{H}(H(p))} 2|p|^2\le H(p)\le \frac{\Lambda_{H}(H(p))}2|p|^2.$$

\item[(iv)] If $H\in C^1(\rn)$ additionally, then $ H^{1/2}\in C^{0,1}(\rr)$, $D_p H(0)=0$ and
$$|D_p H(p)|^2\le[\Lambda_{H}(H(p))]^2|p|^2\le \frac{2[\Lambda_{H}(H(p))]^2}{\lambda_{H}(H(p))} H(p)\quad\forall p\in \rr^2.$$
\end{enumerate}
\end{lemapp}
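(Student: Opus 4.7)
The plan is to handle the four parts in order, extracting each claim from the definitions of $\lambda_H$ and $\Lambda_H$ and standard facts on convex functions; throughout I will denote by $K_R := H^{-1}([0,R])$ and by $\mu^+(K)$, $\mu^-(K)$ the ``convexity'' and ``concavity'' moduli of $H$ on a convex set $K$, so that $\lambda_H(R)$ and $\Lambda_H(R)$ capture the one-sided limits of $\mu^+$, $\mu^-$ along the family $\{K_{R+\epsilon}\}_{\epsilon>0}$.

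For (i), the monotonicity is immediate from set inclusion: if $R_1<R_2$, then for every $\epsilon>0$ one has $K_{R_1+\epsilon}\subset K_{R_2+\epsilon}$, so any $\lambda$ making $H-\lambda|p|^2/2$ convex on the larger set also makes it convex on the smaller one, and similarly for $\Lambda$. Right-continuity is built into the definitions: the $\sup_\epsilon$ in $\lambda_H$ and the $\inf_\epsilon$ in $\Lambda_H$ allow the parameter $R$ to be shifted inward without loss, so a direct $\epsilon$-chasing argument (take $\delta<\epsilon/2$ and compare $K_{R+\delta+\epsilon/2}$ to $K_{R+\epsilon}$) gives $\lim_{\delta\to 0^+}\lambda_H(R+\delta)=\lambda_H(R)$ and $\lim_{\delta\to 0^+}\Lambda_H(R+\delta)=\Lambda_H(R)$. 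Under $H\in C^2(\rn)$, the map $p\mapsto\lambda_{\min}(D^2_{pp}H(p))$ is continuous, and by coercivity $K_R$ is compact, so the extremal eigenvalues of $D^2_{pp}H$ are attained and depend continuously on $R$, which upgrades right-continuity to continuity.

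For (ii), the first statement is obtained by passing to the limit in the defining family: write $H-\tfrac12\lambda_H(R)|p|^2$ as an increasing (in $\epsilon$) limit of convex functions on $K_R\subset K_{R+\epsilon}$, and use that the pointwise limit of convex functions is convex; the concavity claim is symmetric. The $C^1$ gradient/function inequalities then follow from the standard equivalence: $H-\tfrac{\lambda}{2}|p|^2$ convex in a convex set iff $D_pH(p)-\lambda p$ is monotone there iff $H(q)-H(p)-\langle D_pH(p),q-p\rangle\ge\tfrac{\lambda}{2}|q-p|^2$; applying this with $\lambda=\lambda_H(R)$ and the concave analogue with $\Lambda_H(R)$ finishes the first two displays. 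The $C^2$ estimate is the pointwise consequence: convexity of $H-\tfrac12\lambda_H(H(p))|p|^2$ on a neighborhood of $p$ gives $D^2_{pp}H(p)\ge \tfrac12\lambda_H(H(p))I$ (the factor $1/2$ in the claim matches the normalization used throughout the paper), and symmetrically for $\Lambda_H$.

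For (iii) and (iv), the key observation is that by (H2) the point $p=0$ is the unique minimizer of $H$, so when $H\in C^1$ the first-order condition gives $D_pH(0)=0$. Plugging $q=0$ into the $C^1$ function inequalities of (ii) (with $R=H(p)$) yields both bounds $\tfrac12\lambda_H(H(p))|p|^2\le H(p)\le\tfrac12\Lambda_H(H(p))|p|^2$, which is (iii); this requires no $C^1$ regularity for the lower bound (by approximation), and the upper bound follows from concavity of $\tfrac12\Lambda_H|p|^2-H$ applied at $p$ and $0$. For (iv), the $C^1$ gradient inequality at $q=0$ gives $|D_pH(p)|\le \Lambda_H(H(p))|p|$, squaring and inserting (iii) yields the chain of bounds on $|D_pH(p)|^2$. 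The Lipschitz property of $H^{1/2}$ then follows because $D(H^{1/2})=\tfrac{D_pH}{2H^{1/2}}$ almost everywhere, and the already proved bound gives $|D(H^{1/2})(p)|^2\le\tfrac{\Lambda_H^2}{2\lambda_H}$ with all quantities evaluated at $H(p)$, which is locally (and, under (H1'), globally) bounded.

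The arguments above are essentially bookkeeping once the convention for $\lambda_H,\Lambda_H$ is fixed; the only mildly delicate point is the right-continuity in (i), where one must not confuse ``$\lim_{\epsilon\to 0^+}$'' with ``evaluation at $\epsilon=0$'' (the latter would fail in general without the $\sup_\epsilon$/$\inf_\epsilon$ regularization in the definitions). Everything else is a direct translation of the pointwise definition into the quantitative first- and second-order inequalities claimed.
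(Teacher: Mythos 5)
The paper omits the proof of this lemma ("we omit the proofs"), so there is no argument to compare against; your overall plan — reduce everything to the standard convex-analysis dictionary between $H-\tfrac\lambda2|p|^2$ being convex, monotonicity of $D_pH-\lambda p$, the quadratic sandwich for $H(q)-H(p)-\langle D_pH(p),q-p\rangle$, and the Hessian bound — is the right one. However, there are genuine errors in the write-up. The most serious is the Hessian step: you assert that convexity of $H-\tfrac12\lambda_H(H(p))|p|^2$ gives $D^2_{pp}H(p)\ge\tfrac12\lambda_H(H(p))I$, "matching the normalization used throughout the paper." That derivation is wrong: $D^2_{pp}\bigl(\tfrac{\lambda}{2}|p|^2\bigr)=\lambda I$, so convexity yields $D^2_{pp}H\ge\lambda_H I$ \emph{without} the extra factor $\tfrac12$. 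In fact the displayed $C^2$ inequality in the lemma as printed is a typo: with the $\tfrac12$'s it fails even for $H(p)=\tfrac12|p|^2$ (the upper bound would read $|\xi|^2\le\tfrac12|\xi|^2$), and the rest of the paper consistently uses $\lambda_H I\le D^2_{pp}H\le\Lambda_H I$ (e.g.\ the proof of Lemma A.2(i) invokes $(D^2_{pp}H)^{-1}\le\lambda_H^{-1}I$, and Lemmas 2.5--2.7 use the same bounds). You should have flagged the inconsistency rather than inventing an incorrect computation to fit it.

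Two further gaps. In (iv), you claim $|D_pH(p)|\le\Lambda_H(H(p))|p|$ follows from "the $C^1$ gradient inequality at $q=0$"; but that inequality with $q=0$ and $D_pH(0)=0$ only gives $\langle D_pH(p),p\rangle\le\Lambda_H(H(p))|p|^2$, which controls the component of $D_pH(p)$ along $p$, not its norm. To get the norm bound one needs the Lipschitz property $|D_pH(p)-D_pH(q)|\le\Lambda_H(R)|p-q|$ on $H^{-1}([0,R])$; this follows from $0\le D^2_{pp}H\le\Lambda_H I$ by integrating along the segment $[p,q]$ (or, for $H\in C^1$ only, by the smooth approximation $H^\dz$ of Lemma A.3), and then $|D_pH(p)|=|D_pH(p)-D_pH(0)|\le\Lambda_H(H(p))|p|$. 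Finally, two slips in (iii): you should substitute $p=0$ (not $q=0$) in the functional inequality of (ii) — the version you wrote produces $\langle D_pH(p),p\rangle-H(p)$ rather than $H(p)$ — and $\tfrac12\Lambda_H|p|^2-H$ is \emph{convex}, not concave (equivalently $H-\tfrac12\Lambda_H|p|^2$ is concave); after that correction the argument works, using that $0$ is the minimizer of $H$ and hence a critical point of the concave function $H-\tfrac12\Lambda_H|p|^2$.
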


  We have the following properties for $\wz\tau_H$ as defined in \eqref{eq1.6}.
\begin{lemapp}\label{LEM7.2}
Suppose that  $H\in C^2(\rn)$ satisfies  (H1){\rm\&}(H2).
\begin{enumerate}
\item[(i)] The function
    $\wz \tau_H\in C^0(\rn)$ satisfies
 $$ \wz \tau_H(0)=\frac12 \quad{\rm and}\quad\wz\tau_H(p)\ge \frac12\left[\frac{\lambda_H(H(p))   }{  \Lambda_H(H(p))  }\right]^2    \quad {\rm in }\ \rn\setminus\{0\}.$$

\item[(ii)]  If $H^{\gz}$ is convex in $H^{-1}([0,R])$ for some $\gz\in[1/2,1)$ and $R>0$ ,
 then $\wz \tau_{  H}\ge 1-\gz$ in $H^{-1}([0,R])$.
\end{enumerate}
\end{lemapp}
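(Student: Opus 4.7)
The plan is to treat parts (i) and (ii) separately by direct computation, handling $p=0$ as a special case throughout because the defining formula for $\wz\tau_H$ reads $0/0$ there.

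For (i), continuity away from the origin will be essentially free: by local strong convexity $D^2_{pp}H(p)$ is invertible, and (H1)\&(H2) force $D_pH(p)\neq 0$ whenever $p\neq 0$ (the unique minimizer of $H$ being $0$), so the denominator $\langle[D^2_{pp}H(p)]^{-1}D_pH(p),D_pH(p)\rangle$ is continuous and strictly positive on $\rr^2\setminus\{0\}$, and $\wz\tau_H$ inherits $C^0$-regularity from the $C^2$-regularity of $H$. The substantive step is continuity at the origin, where numerator and denominator both vanish quadratically. I plan to Taylor expand: with $A:=D^2_{pp}H(0)$ (positive definite by (H1)), $H(p)=\tfrac12\langle Ap,p\rangle+o(|p|^2)$ and $D_pH(p)=Ap+o(|p|)$, so $A^{-1}D_pH(p)=p+o(|p|)$ and the denominator equals $\langle Ap,p\rangle+o(|p|^2)$. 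Since $\langle Ap,p\rangle=\Theta(|p|^2)$, the ratio tends to $1/2$ as $p\to 0$, matching the prescribed value $\wz\tau_H(0)=1/2$.

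The quantitative lower bound in (i) will then be a two-line estimate from ingredients already in Lemma \ref{LEM7.1}. Strong convexity yields $D^2_{pp}H(p)\ge\lambda_H(H(p))I$, hence $\langle[D^2_{pp}H]^{-1}D_pH,D_pH\rangle\le |D_pH|^2/\lambda_H$. Combining with $|D_pH(p)|\le\Lambda_H(H(p))\,|p|$ and $|p|^2\le 2H(p)/\lambda_H(H(p))$ (both from Lemma \ref{LEM7.1}) gives the upper bound $2[\Lambda_H(H(p))]^2 H(p)/[\lambda_H(H(p))]^2$ on the denominator, which rearranges to $\wz\tau_H(p)\ge\tfrac12[\lambda_H(H(p))/\Lambda_H(H(p))]^2$.

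Part (ii) will follow from a short algebraic identity. Differentiating twice where $H>0$ gives
\[
D^2_{pp}(H^\gamma)=\gamma H^{\gamma-1}\bigl[D^2_{pp}H-(1-\gamma)H^{-1}D_pH\otimes D_pH\bigr],
\]
so convexity of $H^\gamma$ (together with $\gamma H^{\gamma-1}>0$) forces the bracket to be positive semidefinite. Testing the resulting inequality $\langle D^2_{pp}H\,\xi,\xi\rangle\ge(1-\gamma)H^{-1}\langle D_pH,\xi\rangle^2$ with $\xi=[D^2_{pp}H(p)]^{-1}D_pH(p)$ and cancelling one factor of the strictly positive quantity $\langle[D^2_{pp}H]^{-1}D_pH,D_pH\rangle$ yields $H\ge(1-\gamma)\langle[D^2_{pp}H]^{-1}D_pH,D_pH\rangle$, i.e.\ $\wz\tau_H\ge 1-\gamma$ at every $p\neq 0$ in $H^{-1}([0,R])$. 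At $p=0$ the inequality is immediate since $\wz\tau_H(0)=1/2\ge 1-\gamma$ (as $\gamma\ge 1/2$). The main (and only) technical hurdle will be the Taylor analysis at the origin; once that is carried out, the lower bounds in (i) and (ii) are each essentially one computation.
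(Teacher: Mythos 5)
Your proposal is correct and follows essentially the same route as the paper: (i) is handled by a second-order Taylor expansion of $H$ and $D_pH$ at the origin together with the bounds from Lemma~\ref{LEM7.1}, and (ii) rests on the Hessian identity for $H^\gamma$ combined with testing against $[D^2_{pp}H]^{-1}D_pH$. Your derivation of (ii) is a cleaner presentation of what the paper's longer algebraic chain computes (the paper expands $D^2_{pp}H$ in terms of $D^2_{pp}H^\gamma$ and $D_pH^\gamma\otimes D_pH$ and then drops the nonnegative Hessian term), but the underlying observation and conclusion are identical.
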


\begin{proof}

(i) Since $\wz\tau_H \in C^0((0,\fz))  $, to see  $\wz\tau_H \in C^0([0,\fz))  $,
 it suffices to prove that $\wz\tau_H$ is continuous at $0$.
By $D_p H(0)=0$, we have
$$D_p H (p)=  D^2_{pp}H(0)p +o(|p| )\quad{\rm and }\quad H (p)= \frac12\langle D^2_{pp}H(0)p, p\rangle+o(|p|^2)\quad {\rm as}\ p\to0.$$
By this, the continuity of $(D^2_{pp} H )^{-1}$ at $0$ and Lemma \ref{LEM7.1}(iv), we have
\begin{align*}
\langle [D^2_{pp} H (p)]^{-1} D_p H (p),D_p H (p)\rangle&=
\langle [D^2_{pp} H (0)]^{-1} D_p H (p),D_p H (p)\rangle+o( |p|^2)\\
&= \langle [D^2_{pp} H (0)]^{-1}D^2_{pp}H(0)p,D^2_{pp}H(0)p\rangle+o(|p|^2)\\
&= \langle  D^2_{pp}H(0)p, p\rangle+o(|p|^2)\\
&=2H(p)+o(|p|^2)
\quad {\rm as}\ p\to0.
\end{align*}
Thus $\wz \tau_{H}(p)=  1/2+o(1)$ when $p\to0$, as desired.

Moreover, by Lemma \ref{LEM7.1} (iii) and (iv), we have
\begin{align*}
&\langle (D_{pp}^2 H )^{-1}(p) D_p H (p),D_p H (p)\rangle\le \frac1{\lambda_H(H(p))} |D_p H(p)|^2
 \le 2\left[\frac{\Lambda_H(H(p)) }{\lambda_H(H(p))}\right]^2 H(p) \quad\forall p\in\rn
\end{align*}
that is, $$\wz \tau_H(p)\ge \frac12\left[\frac{\lambda_H(H(p))   }{  \Lambda_H(H(p))  }\right]^2\quad\forall p\in\rn\setminus\{0\} $$ as desired.

(ii)  It suffices to prove that $\wz \tau_H\ge 1-\gz$ almost everywhere in $H^{-1}([0,R])$.

 Note that the convexity of $H^{\gz}$ implies that $H^{\gz}$ is second order differentiable
  almost everywhere in $H^{-1}([0,R])$, and moreover, at such a point $p$, we have
$\langle D^2_{pp} H^{\gz}(p)\xi,\xi\rangle\ge 0$ for all $\xi\in\rr^2$.
Thus, at such a point $0\ne p\in H^{-1}([0,R])$,  by $\wz \tau_H>0$ we have
\begin{align*}
\frac1{\wz\tau_H }=&\frac1{H }\langle (D^2_{pp} H )^{-1}  D_p H  ,D_p H  \rangle \\
&=\frac1{\gz^2} H^{2-2\gz }\langle (D^2_{pp} H )^{-1}  D_p H^{\gz}  ,D_p H^{\gz}  \rangle \\
&=\frac1{\gz^2} H^{2-2\gz}\langle (D^2_{pp} H )    (D^2_{pp} H )^{-1} D_p H^{\gz} ,(D^2_{pp} H )^{-1} D_p H^{\gz} \rangle \\
&= \frac1{\gz^3} H^{3-3\gz } \langle (D^2_{pp} H^{\gz} )  (D^2_{pp} H  )^{-1} D_p H^{\gz} , (D^2_{pp} H )^{-1}D_p H^{\gz} \rangle\\
&\quad
+   \frac1{\gz^3} ( 1-\gz ) H^{2-3\gz}\langle (D_p H^{\gz} \otimes D_p  H  )  (D^2_{pp}  H  )^{-1} D_pH^{\gz} ,(D^2_{pp} H  )^{-1}D_p H^{\gz} \rangle\\
&\ge
     ( 1-\gz ) H^{-1}  \langle ( D_p H  )  ,(D^2_{pp}  H  )^{-1}D_p H  \rangle^2\\
 &\ge    ( 1-\gz )  \frac1{[\wz\tau_H]^2 },
\end{align*}
that is,  $ \wz\tau_H  \ge 1-\gz $  as desired.
This completes the proof of Lemma \ref{LEM7.2}.
\end{proof}

If $H\in C^0(\rr^2)$ satisfies  (H1) (resp. (H1')) and (H2),  there is  a  standard smooth approximation satisfying  (H1) (resp. (H1')) and (H2).
 For each  $\delta\in(0,1]$, let $\wz H^\dz=\eta_\dz\ast H$, where $\eta_\dz$ is standard smooth mollifier. It is easy to see that $H^\dz$ is strictly convex. Thus, for each $\dz\in (0,1]$, there exists a unique point $p^\dz\in \rr^2$ such that
$$\wz H^\dz(p^\dz)= \min_{p\in\rr^2} \wz H^\dz(p)\le \wz H^\dz(0)$$
 Moreover, for $\dz\in (0,1]$, set
\begin{equation} \label{eq7.1} H^\dz(p)=\wz  H^\dz(p+p^\dz)-\wz H^\dz(p^\dz)\quad\forall p\in\rn.\end{equation}
Then we have the following result.
\begin{lemapp}\label{LEM7.3} Suppose that  $H\in C^0(\rn)$ satisfies  (H1) (resp. (H1')) and (H2).

\begin{enumerate}
\item[(i)]  For each $\dz\in(0,1]$,  $H^\dz\in C^\fz(\rn)$ satisfies  (H1) (resp. (H1')) and (H2).
In the case that $H\in C^0(\rn)$ satisfies    (H1')  and (H2), we have
\begin{equation}\label{eq7.3x}\mbox{$ \lambda_{H^\dz} \ge \lambda_H(\fz )$ and
 $  \Lambda_{H^\dz} \le \Lambda_H(\fz )$     \ {\rm in} \ $[0,\fz]$ \quad $\forall\,\dz\in(0,1]$}.\end{equation}

\item[(ii)] There exists a $\dz_H(0)\in (0,1]$  such that  for all $\dz\in(0,\dz_H(0)]$, we have
$|p^\dz|\le  \dz/\dz_H(0)$.
Hence,  $  H^\dz\to H$ locally uniformly in $\rn$.
  If $H\in C^2(\rn)$ additionally, then
  $  H^\dz\to H$  in  $C^{2}(\rn)$  as $\dz\to0$.

\item[(iii)] For
any $R>0$, there exist $\dz_H(R)\in (0,1]$  such that for all $\dz\in(0,\dz_H(R)]$, we have
\begin{equation}\label{eq7.2} \mbox{$\lambda_{H^\dz}(r)\ge \lambda_H(r+   \dz/\dz_H(R) )$ and
 $\Lambda_{H^\dz}(r)\le \Lambda_H(r+  \dz/\dz_H(R) )$   $\forall\,  r\in[0,R]$ }.\end{equation}
Moreover, for all $r>0$ and $r^\dz\to r$ as $\dz\to0$, we have
\begin{equation}\label{eq7.3}\mbox{$\liminf_{\dz\to0}\lambda_{H^\dz}(r^\dz)\ge \lambda_H(r  )$ and
 $\limsup_{\dz\to0} \Lambda_{H^\dz}(r^\dz)\le \Lambda_H(r  )$   ${\rm as}\   \dz\to   0$}.\end{equation}
    \end{enumerate}
\end{lemapp}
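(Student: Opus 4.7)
The plan is to verify the three parts in sequence, exploiting two complementary features of the standard mollification $\wz H^\dz = \eta_\dz \ast H$. First, Jensen's inequality applied to the convex functions $H - \tfrac{\lambda}{2}|p|^2$ and $\tfrac{\Lambda}{2}|p|^2 - H$ furnished by (H1), combined with the identity $(\eta_\dz \ast |\cdot|^2)(q) = |q|^2 + c_\dz$ where $c_\dz := \int \eta_\dz(y)|y|^2\,dy = O(\dz^2)$, will yield the two-sided comparison
\begin{equation*}
0 \le \wz H^\dz(p) - H(p) \le \tfrac{\Lambda}{2} c_\dz
\end{equation*}
on any convex region where $H$ is both locally strongly convex and strongly concave modulo quadratic. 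Second, since the distributional Hessian of a convex function is a matrix-valued Radon measure and $\eta_\dz$ is smooth, $D^2 \wz H^\dz(q) = (\eta_\dz \ast D^2 H)(q)$ as a smooth matrix-valued function, so pointwise matrix bounds on $D^2 H$ on a convex set $V$ transfer to pointwise bounds on $D^2 \wz H^\dz$ over $V_\dz := \{q : B(q,\dz) \subset V\}$.

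Part (i) follows quickly. The mollification $\eta_\dz \ast [H - \tfrac{\lambda}{2}|p|^2]$ remains convex (globally under (H1'), locally on $V_\dz$ under (H1)), and since subtracting $\tfrac{\lambda}{2}|p|^2$ commutes with convolution up to the constant $\tfrac{\lambda}{2} c_\dz$, this transfers the same convexity/concavity moduli to $\wz H^\dz$, and hence to $H^\dz$ after the affine shift $H^\dz(p) = \wz H^\dz(p+p^\dz) - \wz H^\dz(p^\dz)$. The shift places the minimum at $p=0$ with value $0$, so (H1) (or (H1')) and (H2) hold for $H^\dz$; in the (H1') case the moduli are precisely the global ones on $\rr^2$, giving \eqref{eq7.3x}.

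For part (ii), the bound $|p^\dz| \le \dz/\dz_H(0)$ comes from a pair of matching estimates. By (H1)\&(H2), $H(p) = O(|p|^2)$ as $p \to 0$, so $\wz H^\dz(0) = \int \eta_\dz(y) H(-y)\,dy = O(\dz^2)$. Since $\wz H^\dz$ is (locally) strongly convex with minimizer $p^\dz$,
\begin{equation*}
\tfrac{\lambda}{2}|p^\dz|^2 \le \wz H^\dz(0) - \wz H^\dz(p^\dz) \le \wz H^\dz(0) = O(\dz^2),
\end{equation*}
giving $|p^\dz| \le C\dz$, from which $\dz_H(0)$ is read off. The locally uniform convergence $H^\dz \to H$ then follows from $\wz H^\dz \to H$ locally uniformly combined with $p^\dz \to 0$ and $\wz H^\dz(p^\dz) \to H(0) = 0$. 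Under $H \in C^2(\rr^2)$, standard mollification estimates give $D^k(\eta_\dz \ast H) \to D^k H$ locally uniformly for $k = 0, 1, 2$; combining with the affine shift yields $H^\dz \to H$ in $C^2_\loc(\rr^2)$.

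Part (iii) is the main obstacle. Given $p \in \{H^\dz \le r\}$, set $q := p + p^\dz$, so $\wz H^\dz(q) \le r + \wz H^\dz(p^\dz) \le r + O(\dz^2)$, and since $H \le \wz H^\dz$, also $H(q) \le r + O(\dz^2)$. The crucial step is to upgrade this to the inclusion $B(q,\dz) \subset \{H \le r + \dz/\dz_H(R)\}$ uniformly for $r \in [0,R]$. For this I will invoke a local Lipschitz estimate for $H$ on the bounded convex set $\{H \le R+1\}$ (boundedness follows from Lemma \ref{LEM7.1}(iii)): by Lemma \ref{LEM7.1}(iv) when $H \in C^1$, or in general from the fact that a convex function bounded above on a convex open set is locally Lipschitz there, $H$ is Lipschitz on $\{H \le R+1\}$ with a constant $L_R$ depending only on $R$ and $H$. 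Hence $H(q+z) \le H(q) + L_R|z| \le r + O(\dz)$ for $|z| \le \dz$, and choosing $\dz_H(R)$ so that this error equals $\dz/\dz_H(R)$ puts $B(q,\dz)$ inside the convex set $V := \{H \le r + \dz/\dz_H(R)\}$. By the definition of $\lambda_H$, $H - \tfrac{1}{2}\lambda_H(r + \dz/\dz_H(R))|\cdot|^2$ is convex on $V$, and the convolution representation gives $D^2 \wz H^\dz(q) \ge \lambda_H(r + \dz/\dz_H(R)) I$; after the shift this is the first half of \eqref{eq7.2}, and the analogous argument applied to $\tfrac{1}{2}\Lambda_H(r + \dz/\dz_H(R))|p|^2 - H$ gives the second. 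Finally, \eqref{eq7.3} follows immediately from \eqref{eq7.2} together with the right-continuity of $\lambda_H$ and $\Lambda_H$ from Lemma \ref{LEM7.1}(i).
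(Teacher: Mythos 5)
Your proposal is broadly sound and follows the same high-level strategy as the paper: use Jensen's inequality and the identity $(\eta_\dz\ast|\cdot|^2)(q) = |q|^2 + c_\dz$ to transfer convexity moduli from $H$ to $\wz H^\dz$, bound $|p^\dz|$, and prove the sublevel-set inclusion via a local Lipschitz estimate to obtain (iii). Your treatment of (i) via the constant-shift observation is essentially the paper's polarization computation in a cleaner package, and for (iii) the paper also reduces to showing $U^\dz_r:=\bigcup_{p\in(H^\dz)^{-1}([0,r])}B(p+p^\dz,\dz)\subset H^{-1}([0,r+\dz/\dz_H(R)])$ before reusing the convexity transfer from (i)—exactly your ``crucial step.'' The difference is mostly cosmetic: the paper transfers convexity by the direct three-point argument, while you phrase it through the distributional Hessian $D^2\wz H^\dz=\eta_\dz\ast D^2H$ being bounded below as a matrix-valued measure. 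Both are correct; the paper's version avoids invoking Alexandrov-type machinery, but yours could equally well be rephrased using the same ``mollification of a convex function stays convex on the shrunken set'' observation you already used in (i).

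Two points deserve more care. First, in part (ii) the chain $\frac{\lambda}{2}|p^\dz|^2\le \wz H^\dz(0)-\wz H^\dz(p^\dz)$ requires strong convexity with a uniform constant $\lambda$ on a convex set containing both $0$ and $p^\dz$; under (H1) alone (not (H1')) this modulus is local, so the argument is circular unless you first establish that $p^\dz$ lies in a fixed bounded set for all small $\dz$. This is easily patched—$\wz H^\dz(p^\dz)\le \wz H^\dz(0)=O(\dz^2)<1$ and $H\le\wz H^\dz$ by Jensen, so $p^\dz\in H^{-1}([0,1))$, a bounded set—but it should be stated. The paper sidesteps this by directly lower-bounding $\wz H^\dz(p)$ for $|p|>2\dz$ and comparing with $\wz H^\dz(0)$, which does not presuppose anything about $p^\dz$. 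Second, the assertion ``$H(p)=O(|p|^2)$ as $p\to0$'' is correct but not completely immediate from (H1)\&(H2); it relies on the fact that local strong convexity together with local strong concavity forces $H\in C^{1,1}_{\loc}$ with $DH(0)=0$ (since $0$ is the minimizer). This is the same fact used by the paper in the form $H(p)\le\tfrac12\wz\Lambda_H(|p|)|p|^2$, so it is not a gap, but worth flagging because the needed regularity of $H$ is more than bare continuity. With these two clarifications your argument is correct and essentially parallel to the paper's.
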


\begin{proof}
(i)  Given $R>1$,  assume that $H(p)-\frac\lambda2|p|^2$ is convex in $B(0,R+1)$.
  For any $p,q\in B(-p^\dz,R)$ and   any $\theta\in(0,1)$, write
 \begin{align*}
  H^\dz(\theta p+(1-\theta )q)-\frac\lambda2|\theta p+(1-\theta )q|^2
& =(H-\frac\lambda2|\cdot|^2) \ast\eta_\dz(\theta (p+p^\dz)+(1-\theta )(q+p^\dz))- \wz H^\dz(p^\dz) \\
& \quad+ (\frac\lambda2|\cdot|^2) \ast\eta_\dz(\theta p+(1-\theta )q+p^\dz)-\frac\lambda2|\theta p+(1-\theta )q|^2.
\end{align*}
Since  $p+\xi+p^\dz, q+\xi+p^\dz\in B(0,R+1)$ for all $|\xi|\le 1$, we have

\begin{align*}
& (H-\frac\lambda2|\cdot|^2) \ast\eta_\dz(\theta (p+p^\dz)+(1-\theta )(q+p^\dz))\\
&\quad\le \theta(H-\frac\lambda2|\cdot|^2) \ast\eta_\dz(  p+p^\dz  ) +(1-\theta)
(H-\frac\lambda2|\cdot|^2) \ast\eta_\dz( q+p^\dz )\\
&\quad=  \theta [H^\dz(p)-\frac\lambda2|p|^2]+(1-\theta)  [H^\dz(q)-\frac\lambda2|q|^2]\\
&\quad\quad-\frac\lambda2[\theta|p-\cdot|^2+(1-\theta) |q-\cdot|^2]\ast\eta_\dz(p^\dz)+\frac\lambda2[\theta|p|^2+(1-\theta)|q|^2].
\end{align*}
Thus
\begin{align*}
&  H^\dz(\theta p+(1-\theta )q)-\frac\lambda2|\theta p+(1-\theta )q|^2\\
&\quad= \theta [H^\dz(p)-\frac\lambda2|p|^2]+(1-\theta)  [H^\dz(q)-\frac\lambda2|q|^2]\\
&\quad\quad-\frac\lambda2[\theta|p-\cdot|^2+(1-\theta) |q-\cdot|^2 - |\theta p+(1-\theta )q-\cdot|^2]\ast\eta_\dz(p^\dz)
\\
&\quad\quad+\frac\lambda2[\theta|p|^2+(1-\theta)|q|^2 -|\theta p+(1-\theta )q|^2]
\end{align*}
Since  $$\theta |p-\xi|^2+(1-\theta) |q-\xi|^2 - |\theta p+(1-\theta )q-\xi|^2= \theta|p|^2+(1-\theta)|q|^2 -|\theta p+(1-\theta )q|^2  $$
for all $\xi\in\rr^2$, we get
\begin{align*}
&  H^\dz(\theta p+(1-\theta )q)-\frac\lambda2|\theta p+(1-\theta )q|^2 \le \theta [H^\dz(p)-\frac\lambda2|p|^2]+(1-\theta)  [H^\dz(q)-\frac\lambda2|q|^2],
\end{align*}
that is, $H^\dz(p)-\frac\lambda2|p|^2$ is convex in $ B(-p^\dz,R)$.

Similarly, if  $ \frac\Lambda2|p|^2 -H(p)$ is convex in $B(0,R+1)$, then
$\frac\Lambda2|p|^2-H^\dz(p)$ is convex in $ B(-p^\dz,R)$.
This implies that $H^\dz$ satisfies (H1)\&(H2) whenever  $H $ satisfies (H1)\&(H2).

If $H$ satisfies (H1'), then letting $\lambda=\lambda_H(\fz)$ above   we know that
 $H^\dz(p)-\frac{\lambda_H(\fz)}2|p|^2$ is convex in $ B(-p^\dz,R)$ for all $R\ge 1$ and hence in $\rn$.
 Similarly,  $\frac{\Lambda_H(\fz)}2|p|^2-H^\dz(p)$ is convex   in $\rr^2$.
 That is, $H^\dz$ satisfies (H1')\&(H2), in particular,  \eqref{eq7.3x} holds.

(ii)
Let \begin{equation}\label{eq7.6y1}
\wz \lambda_H(R)=\sup\{\lambda>0,H(p)-\frac\lambda2|p|^2 \mbox{ is convex in $B(0,R)$}\}
\end{equation}
and
\begin{equation}\label{eq7.6y2}\wz \Lambda_H(R)=\inf\{\Lambda>0,  \frac\Lambda2|p|^2- H(p) \mbox{ is convex in $B(0,R)$}\}.\end{equation}
Note that for any $R>0$, we have
$$\frac{\wz\lambda_H(R)}2|p|^2\le
H(p)\le \frac{\wz\Lambda_H(R)}2|p|^2\quad\forall |p|<R.$$
 Then
$$\wz H^\dz(0)\le \sup_{|q |\le \dz} H(q) \le \frac{\wz \Lambda_H(\dz)}2 \dz^2\le
 \frac{\wz \Lambda_H(1)}2 \dz^2 .$$
If $4\ge |p|>2\dz$, then
$$\wz H^\dz(p)>\inf_{|q-p|\le \dz} H(q)\ge
 \frac{\wz \lambda_H(5 )}2\inf_{|q-p|\le \dz}|q|^2\ge    \frac{\wz \lambda_H(5)}8|p |^2.
$$
If $|p| \ge 4 $, then
$$\wz H^\dz(p)> \frac{|p|}4 H( 4 p/|p|) \ge   \frac{\wz \lambda_H(5)}2 |p|\ge 2\wz \lambda_H(5) .
$$
For  $0<\dz< [ {\wz \lambda_H(5 )}/{4\wz \Lambda_H(1)}]^{1/2}$,
 we have $\wz H^\dz(p)>\wz H^\dz(0)$ whenever $|p|> [ {4\wz\Lambda_H(1)}/{\wz \lambda_H(5 )}]^{1/2}\dz $,
 that is, $|p^\dz|\le  [ {4\wz\Lambda_H(1)}/{\wz \lambda_H(5 )}]^{1/2}\dz$.

(iii) Note that \eqref{eq7.3} follows from \eqref{eq7.2}. Indeed,   for any $r>0$, let $R=r+1$.
If \eqref{eq7.2} holds,   by the decrease and the right-continuity of $ \lambda_H$ and $ \Lambda_H$ we get
$$\liminf_{\dz\to0}\lambda_H^\dz(r^\delta)\ge
\liminf_{\dz\to0}\lambda_H (r^\delta+\dz/\delta_H(R))\ge  \lambda_H(r), $$
and   $$\limsup_{\dz\to0}\Lambda_H^\dz(r^\delta)\le
\limsup_{\dz\to0}\Lambda_H (r^\delta+\dz/\delta_H(R))\le \Lambda_H(r),$$
as desired.

To prove \eqref{eq7.2}, with loss of generality  we may assume that $R\ge1$.
For $0<r<R $ and $\dz\in(0,1]$, let $$U^\dz_r=\bigcup\{B(p+p^\dz,\dz),\ p\in(H^\dz)^{-1}([0,r])\}.$$
We claim that
there exists $\dz_H(R)\in(0,1]$ such that
 $$\mbox{$U^\dz_r\subset H^{-1}([0,r+\dz/\dz_H(R)])  $ whenever $\dz\in(0,\dz_H(R)]$ and $0<r<R$.}$$
 Assume this holds for the moment. If $H(p)-\frac\lambda2|p|^2$ is convex in $H^{-1}([0,r+\dz/\dz_H(R)])$,
by the above claim, we know that
$(H-\frac\lambda2|\cdot|^2)\ast
\eta_\dz(\cdot+p^\dz)$ is convex in $(H^\dz)^{-1}([0,r])$  for some $\lambda>0$  whenever $\dz\in(0,\dz_H(R)]$ and $0<r<R$.
Thus, for any $p,q\in (H^\dz)^{-1}([0,r])$ and   any $\theta\in(0,1)$, by an argument similar to (i) we have
\begin{align*}
&H^\dz(\theta p+(1-\theta )q)-\frac\lambda2|\theta p+(1-\theta )q|^2
\le\theta [H^\dz(p)-\frac\lambda2|p|^2]+(1-\theta)  [H^\dz(q)-\frac\lambda2|q|^2].
\end{align*}
That is, $H^\dz(p)-\frac\lambda2|p|^2$ is convex in $(H^\dz)^{-1}([0,r])$, and hence
$\lambda_{H^\dz}(r)\ge \lambda_{H }(r+\dz/\dz_H(R))$.
Similar argument leads to that $\Lambda_{H^\dz}(r)\le \Lambda_{H }(r+\dz/\dz_H(R))$.

Finally, we prove the above claim.
Let  $s=H(q) >0$. If $H(q)\le 4R$, then  $ |q|^2\le  {2s/\lambda_H(s )}\le {8R/\lambda_H(4R)}$.
For $\dz\in(0,1]$, if $|p+p^\dz-q|<\dz$, we have 
\begin{align*}H^\dz(p)&=\wz H^\dz(p+p^\dz)-\wz H^\dz(p^\dz) \ge
 H\ast\eta_\dz(p+p^\dz)-\wz H^\dz(0)\\
 &\ge \min_{|p+p^\dz-\xi|\le \dz} H(\xi)- {\wz\Lambda_H(1)} \dz^2  \ge \min_{|\xi -q|\le2\dz} H(\xi)- {\wz\Lambda_H(1)} \dz^2 \\
    &\ge H(\xi)-\max_{|\xi-q|\le 2\dz} |H(q)-H(\xi)|-\wz\Lambda_H(1)\dz^2 \\
      &\ge s-2\|H\|_{C^{0,1}(B(0, 2[ {8R}/{\lambda_H(4R )}]^{1/2} ))}\dz- \wz\Lambda_H(1)\dz^2.
 \end{align*}
 Let
 $$\mbox{$\kz_H(R)=1+2\|H\|_{C^{0,1}(B(0, 2[ {8R}/{\Lambda_H(4R )}]^{1/2} ))}
 +  {\wz\lambda_H(1)}$ and  $\dz_H(R)=1/\kz_H(R)\in(0,1]$}. $$
 For any  $0<\dz<\dz_H(R)$,
 we have $$H^\dz(p)\ge s-  \kz_H(R)\dz\quad{\rm whenever}\ |p+p^\dz-q|<\dz.$$

If $H(p)=s>4R$ and $|p+p^\dz-q|<\dz$, letting $\theta\in(0,1] $ such that $H(\theta q)=4R$
we have $$H^\dz( p)>H^\dz(\theta p)>4R-\kz_H(R)\dz\ge 3R.$$
  Therefore, for any  $0<\dz\le \dz_H(R)$ and $0\le r\le R$,
  if $H(p)=s > r+\kz_H(R)\dz$, then  we have
   $H^\dz(p)\ge \min\{3R, s-\kz_H(R)\dz\} >r $  whenever $|p+p^\dz-q|<\dz$ as desired.
      This completes the proof of Lemma \ref{LEM7.3}.
\end{proof}

\begin{remapp}\label{REM7.6}\rm
Similarly to Lemma \ref{LEM7.3} (iii),  for any $R>0$, there exist $\wz\dz_H(R)\in (0,1]$  such that for all $\dz\in(0,\wz \dz_H(R)]$, we have
\begin{equation}\label{eq7.6} \mbox{$\wz \lambda_{H^\dz}(r)\ge \wz \lambda_H(r+   \dz/\bar\dz_H(R) )$ and
 $\wz \Lambda_{H^\dz}(r)\le \wz \Lambda_H(r+  \dz/\bar\dz_H(R) )$   $\forall\,  r\in[0,R]$ }
 \end{equation}
 where $\wz \lambda_{H^\dz}$ and $\wz \Lambda_{H^\dz}$ are defined as in \eqref{eq7.6y1} and \eqref{eq7.6y2}.
\end{remapp}
As a by-product of the proof of Lemma \ref{LEM7.3} (iii), we also have the following result.
\begin{corapp}\label{COR7.7} For any $R>0$, there exists $\kz_H(R)\ge1$ such that for all $r<R$ and $0<\dz<1/\kz_H(R)$,
we have
 \begin{equation}\label{eq7.7}
H  ^{-1}([0,r])\subset  (H^\dz)^{-1}([0,r+  \kz_H(R)\dz])\quad{\rm and }\quad
 (H^\dz)^{-1}([0,r])\subset H  ^{-1}([0,r+   \kz_H(R)\dz]).
 \end{equation}
\end{corapp}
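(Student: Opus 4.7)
The plan is to extract the corollary from the two pointwise comparison estimates already computed inside the proof of Lemma \ref{LEM7.3} (iii). Recall $H^\dz(p)=\wz H^\dz(p+p^\dz)-\wz H^\dz(p^\dz)$ with $\wz H^\dz=\eta_\dz\ast H$, and by Lemma \ref{LEM7.3} (ii), for all $\dz\le \dz_H(0)$ one has $|p^\dz|\le \dz/\dz_H(0)$.

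First I would prove the forward inclusion $H^{-1}([0,r])\subset (H^\dz)^{-1}([0,r+\kz_H(R)\dz])$. Fix $q$ with $H(q)\le r<R$. By Lemma \ref{LEM7.1} (iii) (or \eqref{eq7.6y1}) one has $|q|^2\le 2r/\wz\lambda_H(r+1)\le 2R/\wz\lambda_H(R+1)$, so that $q+p^\dz-\xi$ stays in a fixed compact set $K_R=B(0,2[8R/\wz\lambda_H(R+1)]^{1/2})$ for all $|\xi|\le\dz$ and all small $\dz$. Using the local Lipschitz continuity of $H$ on $K_R$ together with $\wz H^\dz(p^\dz)\le \wz H^\dz(0)\le \sup_{|\xi|\le\dz}H(\xi)\le \frac12\wz\Lambda_H(1)\dz^2$, one obtains
\[
H^\dz(q)-H(q)\le \|H\|_{C^{0,1}(K_R)}(|p^\dz|+\dz)+\tfrac12\wz\Lambda_H(1)\dz^2\le C_1(H,R)\dz,
\]
which is exactly the forward inclusion with any $\kz_H(R)\ge C_1(H,R)$.

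For the reverse inclusion, fix $p$ with $H^\dz(p)\le r<R$. The lower bound computation performed in the proof of Lemma \ref{LEM7.3} (iii) — with the point $q=p+p^\dz$ and $s=H(q)$ — gives exactly $H(p+p^\dz)\le H^\dz(p)+\kz_H(R)\dz$ once we verify that $H(p+p^\dz)\le 4R$. This verification is the one technical point: using the dichotomy in that proof (the case $|p|\ge 4$ forces $\wz H^\dz(p)>3R$ and is incompatible with $H^\dz(p)\le r<R$), we conclude $p+p^\dz$ lies in a fixed compact set, hence $H(p+p^\dz)\le 4R$ for $\dz$ small. Then by local Lipschitz continuity of $H$ on that compact set and $|p^\dz|\le\dz/\dz_H(0)$, we obtain $H(p)\le H(p+p^\dz)+C_2(H,R)\dz\le H^\dz(p)+[\kz_H(R)+C_2(H,R)]\dz$.

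Finally, absorbing both $C_1(H,R)$ and $\kz_H(R)+C_2(H,R)$ into a single (possibly larger) constant — still called $\kz_H(R)$ — and shrinking the threshold $1/\kz_H(R)$ on $\dz$ accordingly to remain below $\min\{\dz_H(0),\dz_H(R)\}$, both inclusions hold simultaneously. The main (mild) obstacle is just the bookkeeping for the reverse inclusion, namely ensuring that the intermediate point $p+p^\dz$ lies in the compact set on which $H$ is Lipschitz; this follows from the $\wz H^\dz(p)>3R$ for $|p|$ large argument already carried out in Lemma \ref{LEM7.3} (iii), so no new estimate is needed.
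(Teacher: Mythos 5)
Your proposal is correct and follows the same route the paper takes, namely extracting the corollary by re-running the two one-sided estimates of the proof of Lemma \ref{LEM7.3}~(iii); the paper simply omits the details, and you have supplied them, including the one genuine subtlety (the shift by $p^\dz$ in passing from $H(p+p^\dz)$ to $H(p)$, handled by $|p^\dz|\le \dz/\dz_H(0)$ and local Lipschitz continuity of $H$), which the paper's one-line ``has been proved in Lemma A.3~(iii)'' silently glosses over. One minor bookkeeping slip: for the forward inclusion $H^\dz(q)=\wz H^\dz(q+p^\dz)-\wz H^\dz(p^\dz)$, the relevant fact is the \emph{lower} bound $\wz H^\dz(p^\dz)\ge 0$ (since $\wz H^\dz=\eta_\dz\ast H\ge 0$), not the upper bound $\wz H^\dz(p^\dz)\le \tfrac12\wz\Lambda_H(1)\dz^2$ that you cite --- a term being subtracted needs a lower bound, not an upper one. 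The inequality you write is still valid because you merely over-estimate by a nonnegative quantity, but the cited bound is not the one doing the work. Similarly, the dichotomy you invoke for the reverse inclusion is the $H(q)\le 4R$ versus $H(q)>4R$ split from part~(iii), not the $|p|\ge 4$ split from part~(ii); again harmless, since the only fact used is that $H^\dz(p)\le r<R$ forces $p+p^\dz$ into a compact set $K_R$ depending only on $R$, which follows from the uniform coercivity of $H^\dz$.
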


\begin{proof}
 The second $\subset$ has been proved in the proof of Lemma \ref{LEM7.3} (iii) with $\kz_H(R)=1/\dz_H(R)$.
 To see the first $\subset$, it suffices to show that
 $H(p)<s$ implies $H(p)<s+\kz_H(R)\dz$ for $s<R$ for some $ \kz_H(R)\ge1$.
 This can be done similarly as in the proof of Lemma \ref{LEM7.3} (iii). Here we omit the details.
\end{proof}

The following remark says that
 the auxiliary function  $\tau_H $  given by
  \eqref{eq1.8} is reduced to \eqref{eq1.7} when  $H\in C^2(\rn)$  additionally.

\begin{remapp}\label{REM7.4}
\rm
If $H\in C^2(\rn)$ satisfies (H1)\&(H2), by $H^\dz\to H$ in $C^2(\rn)$ as $\dz\to0$,
 we know that $\wz \tau_{H^\dz}\in C^0(\rn)$ uniformly in $\dz\in(0,1]$, and hence
\begin{equation}\label{eq7.4}\sup_{\ez>0}\liminf_{\dz\to0}\inf_{ H(p)\le R+\ez}\wz \tau_{H^\dz }(p)= \inf_{\ez>0}\inf_{ H(p)\le R+\ez}\wz \tau_H (p)= \inf_{ H(p)\le R }\wz \tau_H (p),
\end{equation}
which implies 
 the coincidence of the definition \eqref{eq1.7} and
 \eqref{eq1.8}.
  \end{remapp}

By Lemma \ref{LEM7.2} we have  the following properties of $\tau_H$.
  \begin{lemapp}\label{LEM7.5} Suppose that $H\in C^0(\rn)$ satisfies (H1){\rm\&}(H2).
  We have the following.
  \begin{enumerate}
\item[(i)] If $H\in C^2(\rn)$, then    $\tau_H\in C^0([0,\fz))$  and
   $ \tau_H(0)=1/2$.  If  $H\in C^2(\rn)$ and $H^{\gz}$ is convex in $H^{-1}([0,R])$ for some $\gz\in[1/2,1)$ and $R>0$,
  then $ \tau_{  H}\ge 1-\gz$ in $ [0,R] $,  and $\tau_H = 1/2$ in $[0,R]$ when $\gz=1/2$.

   \item[(ii)]
 $\tau_H$ is a decreasing and right-continuous function in $[0,\fz)$, and
 $$ \frac12\left[\frac{\lambda_H   }{  \Lambda_H  }\right]^2\le \tau_H \le\frac12\quad {\rm in }\ [0,\fz).$$
\end{enumerate}
\end{lemapp}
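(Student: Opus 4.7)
The plan is to establish part (i) first, using the continuity $\wz\tau_H\in C^0(\rr^2)$ and the identities from Lemma \ref{LEM7.2}, and then bootstrap part (ii) from part (i) via the smooth approximation $\{H^\dz\}$ of Lemma \ref{LEM7.3}. For part (i), the equality $\tau_H(0)=1/2$ will follow immediately from the definition \eqref{eq1.7}, since (H1)\&(H2) force $H^{-1}(\{0\})=\{0\}$, giving $\tau_H(0)=\wz\tau_H(0)=1/2$. For continuity of $\tau_H$ on $[0,\fz)$ when $H\in C^2$, I would use that (H1)\&(H2) make $H$ coercive, hence $H^{-1}([0,R])$ is compact for every $R\ge 0$; combined with $\wz\tau_H\in C^0(\rr^2)$ this makes the infimum in \eqref{eq1.7} attained, and then a standard extraction of a convergent minimizer along $R_n\to R$ yields both one-sided continuities. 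The convexity bound $\tau_H\ge 1-\gz$ on $[0,R]$ for $H^\gz$ convex is immediate from Lemma \ref{LEM7.2}(ii) after taking infima (using continuity of $\wz\tau_H$ to upgrade the a.e. bound to an everywhere bound); the equality $\tau_H=1/2$ when $\gz=1/2$ then combines this lower bound with the upper bound $\tau_H\le 1/2$ from part (ii).

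For part (ii), the upper bound $\tau_H\le 1/2$ will come from noting that $0\in (H^\dz)^{-1}([0,R+\ez])$ (since $H^\dz$ satisfies (H2) by Lemma \ref{LEM7.3}(i)) and $\wz\tau_{H^\dz}(0)=1/2$ by Lemma \ref{LEM7.2}(i), so each $\tau_{H^\dz}(R+\ez)\le 1/2$, hence $\tau_H(R)\le 1/2$. For monotonicity, since each $\tau_{H^\dz}$ is decreasing in $r$ by part (i), the function $\ez\mapsto\limsup_\dz\tau_{H^\dz}(R+\ez)$ is decreasing, and then $R\mapsto\tau_H(R)$ is decreasing as a supremum of such decreasing functions.

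The lower bound in (ii) will go through Lemma \ref{LEM7.2}(i) applied to each smooth $H^\dz$: at any $p$ with $H^\dz(p)\le R+\ez$ one has $\wz\tau_{H^\dz}(p)\ge\frac12[\lambda_{H^\dz}(R+\ez)/\Lambda_{H^\dz}(R+\ez)]^2$, and Lemma \ref{LEM7.3}(iii) gives $\lambda_{H^\dz}(R+\ez)\ge\lambda_H(R+\ez+\dz/\dz_H(R+\ez+1))$ together with the reverse inequality for $\Lambda$, so right-continuity of $\lambda_H,\Lambda_H$ lets me pass to the limit as $\dz\to 0$. Taking $\sup_\ez$ and using the right-continuity of $\lambda_H$ and $\Lambda_H$ once more produces $\tau_H(R)\ge\frac12[\lambda_H(R)/\Lambda_H(R)]^2$.

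The most delicate step, and the main obstacle, will be right-continuity of $\tau_H$ in the merely $C^0$ case, because the definition \eqref{eq1.8} is a double limit $\sup_{\ez}\limsup_{\dz\to 0}$ and the outer supremum is not a priori continuous in the parameter $R$. The plan there is a diagonal argument: for any fixed $\ez>0$ and any $0<\eta<\ez/2$, the monotonicity of $\tau_{H^\dz}$ (from part (i)) gives $\tau_{H^\dz}(R+\eta+\ez/2)\ge\tau_{H^\dz}(R+\ez)$, so
\[
\tau_H(R+\eta)\ge\limsup_{\dz\to 0}\tau_{H^\dz}(R+\eta+\ez/2)\ge\limsup_{\dz\to 0}\tau_{H^\dz}(R+\ez);
\]
letting $\eta\to 0^+$ and then taking $\sup_{\ez>0}$ yields $\liminf_{\eta\to 0^+}\tau_H(R+\eta)\ge\tau_H(R)$, which together with the trivial $\limsup\le\tau_H(R)$ from monotonicity gives right-continuity.
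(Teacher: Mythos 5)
Your proposal is correct and follows the same approach as the paper, which simply asserts that (i) follows directly from Lemma~\ref{LEM7.2} and that (ii) follows by applying Lemma~\ref{LEM7.2} to $\wz\tau_{H^\dz}$ and unwinding the definition \eqref{eq1.8}. You fill in the details the paper leaves implicit — notably the diagonal argument for right-continuity of $\tau_H$, which is the one step a reader might not reconstruct instantly — and your reasoning at each step (compactness of $H^{-1}([0,R])$, uniqueness of the minimum of $H$ giving $\tau_H(0)=\wz\tau_H(0)$, passing the $\lambda_{H^\dz}/\Lambda_{H^\dz}$ bounds through Lemma~\ref{LEM7.3}(iii) via right-continuity of $\lambda_H,\Lambda_H$) is sound.
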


\begin{proof}
 Note that
 (i)  follows  from Lemma \ref{LEM7.2} directly.
 Applying Lemma \ref{LEM7.2} to  $\wz \tau_{H^\dz}$, we  deduce (ii) from the definition  \eqref{eq1.8} of $\tau_H$.
\end{proof}

Finally, we have the following result.

\begin{lemapp}\label{LEM7.8}
Suppose that $H\in C^k(\rn)$ for some $k=0,1,2$
 satisfies (H1){\rm\&}(H2).
 For any $R >0$, there exists  a $\wz H\in C^k(\rn)$ satisfying  (H1'){\rm\&}(H2)  such that   $\wz H=H$ in $H^{-1}([0, R+1])$, and hence
 $\tau_{H}=\tau_{\wz H} $, $\lambda_{H}=\lambda_{\wz H}$ and
$\Lambda_{H}=\Lambda_{\wz H}$ in $[0, R+1)$.
\end{lemapp}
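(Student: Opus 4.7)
The plan is to construct $\wz H$ by modifying $H$ only outside a large ball that contains $H^{-1}([0,R+1])$, in a way that replaces $H$ near infinity by a quadratic function while preserving the local structure. Once this is done, the three equalities of auxiliary functions are essentially tautological: by the very definitions in Section 1.2, the quantities $\lambda_H(r)$, $\Lambda_H(r)$ and $\tau_H(r)$ for $r\in[0,R+1)$ depend only on the restriction of $H$ to the sublevel sets $H^{-1}([0,r+\ez])$ for small $\ez>0$, and these sublevel sets lie inside $H^{-1}([0,R+1])$ where $\wz H$ and $H$ agree; provided the construction additionally guarantees $\wz H^{-1}([0,r])=H^{-1}([0,r])$ for $r<R+1$, the same holds for $\wz H$, so the equalities on $[0,R+1)$ follow. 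The hard part is therefore building $\wz H$ with the correct global convexity/concavity.

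The construction proceeds as follows. First I would pick $R_0>0$ large enough that $H^{-1}([0,R+2])\Subset B(0,R_0)$, and choose constants $\lambda_\ast\in(0,\lambda_H(R+2)]$ and $\Lambda_\ast\ge \Lambda_H(R+2)$ such that $H-\tfrac{\lambda_\ast}{2}|\cdot|^2$ and $\tfrac{\Lambda_\ast}{2}|\cdot|^2-H$ are convex on some slightly larger ball $B(0,R_0+1)$ (such constants exist by (H1) applied to that convex set). Set $\mu=(\lambda_\ast+\Lambda_\ast)/2$. I would then define
$$\wz H(p) = \chi(|p|)\, H(p) + (1-\chi(|p|))\left(\tfrac{\mu}{2}|p|^2 + a\cdot p + c\right),$$
where $\chi\in C^\fz([0,\fz))$ with $\chi=1$ on $[0,R_0]$, $\chi=0$ on $[2R_0,\fz)$, and where the affine part $a\cdot p+c$ together with $R_0$ are chosen so that outside $B(0,2R_0)$ the quadratic strictly exceeds $R+1$ and has a unique minimum equal to zero at the origin (after a harmless translation, which only changes where the minimum sits; since we can always perform a translation as in Remark 1.5 and still demand (H2), this costs nothing). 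For $k\in\{0,1\}$ the same formula makes sense, but the verification of global convexity/concavity is then replaced by its distributional analogue using Lemma A.1(ii).

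The main obstacle is the interpolation annulus $R_0\le|p|\le 2R_0$, where the naive convex combination does not automatically preserve the bounds $\lambda_\ast|\xi|^2 \le\langle D^2\wz H\,\xi,\xi\rangle\le\Lambda_\ast|\xi|^2$ because of the cross terms involving $\chi'$ and $\chi''$. To handle this, I would first modify $H$ inside $B(0,2R_0)$ by adding a non-negative, radial, smooth convex correction $\psi(|p|)$ supported in $\{|p|>R_0\}$ (so that $\wz H$ is unchanged where $H(p)\le R+1$); with $\psi$ taken flat of sufficiently large slope in the annulus, one can absorb any bad contribution of the cross terms into $\psi''$ so that the lower Hessian bound $\lambda_\ast I$ holds, while choosing $\mu<\Lambda_\ast$ with enough room ensures the upper Hessian bound. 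The key computation is that on $\{R_0\le|p|\le 2R_0\}$ one has
$$D^2\wz H = \chi\,D^2 H + (1-\chi)\,\mu I + \psi''(|p|)\,\tfrac{p\otimes p}{|p|^2} + \tfrac{\psi'(|p|)}{|p|}\Big(I-\tfrac{p\otimes p}{|p|^2}\Big) + E(\chi',\chi'',H,Q),$$
and the error term $E$ is uniformly bounded in terms of the already-fixed data, so a single choice of $\psi$ with $\psi',\psi''$ large enough but both controlled from above gives the desired two-sided Hessian bound. For the non-smooth cases $k=0,1$, I would either interpret the above in the convexity-of-$\wz H-\tfrac{\lambda_\ast}{2}|\cdot|^2$ and $\tfrac{\Lambda_\ast}{2}|\cdot|^2-\wz H$ sense directly, or first apply the smoothing from Lemma A.3, perform the construction on $H^\dz$, and then pass to the limit using the uniform bounds on $\lambda_{H^\dz}$, $\Lambda_{H^\dz}$ from Lemma A.3(iii). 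Finally, I would verify $\wz H^{-1}([0,R+1])=H^{-1}([0,R+1])$: this is immediate from the construction because on $\{|p|>R_0\}$ we have $\wz H(p)\ge \tfrac{\mu}{2}|p|^2 + a\cdot p + c > R+1$ by our choice of the quadratic, while on $\{|p|\le R_0\}$ the two functions coincide.
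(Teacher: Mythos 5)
Your proposal takes the same overall strategy as the paper: keep $H$ untouched on a large ball containing $H^{-1}([0,R+1])$ and replace it far out by a globally strongly convex/concave quadratic, handling the transition annulus so that (H1') holds. The paper's execution is slightly different and, in my view, cleaner: rather than interpolating $\chi H+(1-\chi)Q$ and then adding a radial corrector $\psi$, the paper writes $\wz H=H\eta+k\,(\wz\phi\ast\wz\eta)$, where $\eta$ cuts $H$ off far out and $\wz\phi\ast\wz\eta$ is a mollified truncated quadratic, and simply takes the constant $k$ large enough (namely $k_H=1+8\sup_{4R\le|p|\le 8R}|D^2_{pp}[H\eta]|$ for $C^2$ Hamiltonians) so that the uniformly elliptic quadratic part dominates whatever the cutoff does to $H$ in the annulus. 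This avoids the separate bookkeeping for the error $E$ and for $\psi$ entirely, and both upper and lower Hessian bounds come out in one line. For $k=0,1$ you and the paper use the same route: construct $\wz{H^\dz}$ for the standard mollification $H^\dz$, note that the relevant constants ($k_{H^\dz}$, $\wz\lambda_{H^\dz}$, $\wz\Lambda_{H^\dz}$) are bounded uniformly in $\dz$ via Lemma A.3 and Remark A.4, and pass to the limit.

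One small inaccuracy in your write-up: ``$\psi$ taken flat of sufficiently large slope'' would give $\psi''\equiv0$ in the annulus, so the radial eigenvalue of $D^2\psi$ would vanish there and could not absorb the radial part of the cross-term error $E$. What you actually need is a $\psi$ with \emph{both} $\psi''$ and $\psi'/|p|$ bounded below by $\|E\|_{L^\fz}$ on the support of $\chi',\chi''$ (and then brought back down at a rate not exceeding the slack $\mu-\lambda_\ast$), which is achievable but does need all three inequalities stated. Also, with your choice $a=0$, $c=0$ (so $Q=\tfrac{\mu}{2}|p|^2$) the translation is unnecessary: $\wz H(0)=H(0)=0$ and $\wz H\ge0$ hold directly, so you can drop that remark. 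Modulo these wording issues, your argument is correct and essentially parallel to the paper's.
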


\begin{proof}
Without loss of generality, let $R\ge1$. First we assume that $H\in C^2(\rn)$.
 For  $k>0$, set
$$ H^{(k)} (p)=H(p)\eta (p)+k  \wz \phi\ast\wz\eta (p) \quad\mbox{ $\forall p\in\rr^2$},$$
 where $ \wz\phi=0$ on $B(0,2R )$ and $= |p|^2-R ^2 $ if $|p|>2R $; $\wz\eta \in C^\fz_c(B(0,R ))$  with   $\wz \eta =1$ on $B(0, \frac12R  )$,    $0\le \wz \eta\le 1$ and $\int_{\rr^2}\wz \eta=1$;
 and $\eta\in C^\fz( B(0,8R ))$ with
  $\eta =1$ on $B(0,4R )$    and $0\le \eta\le 1$.

Note that $H^{(k)}\in C^2(\rn)$, and $H=H^{(k)}$ in $B(0,R+1)$ for any $k>0$.
It suffices to show that  there exists $k_H>0$ large enough so that $\wz H=H^{(k_H)}$  satisfies  (H1')\&(H2).
To see this
let   $$k_H= 1+8\sup_{4R\le |p|\le 8R}|D^2_{pp}[H\eta](p)|.$$
By a direct calculation we know that
  $$\mbox{$\wz\Lambda_{\wz H }(r)\le
  \wz\Lambda_{ H } (8R )  +  k_{H }   <\fz$ and
  $
  \wz\lambda_{\wz H }(r)\ge
  \min\{  \wz\lambda_{H } (8R ) ,  k_{H }/2 \} >0$ for all $r>0$}$$
Since,
$\lambda_{\wz H } (\fz)=\lim_{r\to\fz}\wz \lambda_{\wz H }(r)$
and $\Lambda_{\wz H } (\fz)=\lim_{r\to\fz}\wz \Lambda_{\wz H }(r)$.
   We know that
 $\wz H $ satisfies  (H1')\&(H2).

 For   $H\in C^k(\rn)$ with $k=0,1$, let $\{H^\dz\}_{\dz\in(0,1]}$ be the smooth approximation of $H$ given in \eqref{eq7.1}.
 Let $ \wz{ H^\dz} $ be as above. It suffices to show that
\begin{equation}\label{eq7.8}
0<\liminf_{\dz\to0}\lambda_{\wz{ H^\dz} }(\fz)
<\limsup_{\dz\to0}\Lambda_{\wz{ H^\dz}}(\fz)<\fz.
\end{equation}
Indeed, this implies that $ \wz{ H^\dz} $ converges to some $\wz H\in C^k$ which satisfies  (H1')\&(H2).
Note that $\wz{ H^\dz}  =H^\dz $ in $B(0,R+1)$ implies that $\wz H=H$ in $B(0,R+1)$. Such $\wz H$ is as desired.

 To see \eqref{eq7.8},
 noting that $$|D^2_{pp}H^{\dz}(p)|\le 4   \wz \Lambda_{H^{\dz}}(p) \le  4\wz\Lambda_H (8R+1)  \quad\forall |p|<8R.$$
 We know that $k_{H^\dz}$ is bounded uniformly in $\dz\in(0,1]$.
 Moreover, for all $ r>0$ and $\dz\in(0,\dz_H(8R+1)]$ by \eqref{eq7.6} we have
 $$\wz\Lambda_{\wz{ H^\dz}}(r)\le
  \wz\Lambda_{H^\dz} (8R+1)  +  k_{H^\dz} <  \wz\Lambda_{H } (8R+2)  + \sup_{\dz\in(0,1]} k_{H^\dz} <\fz$$
 and   $$\wz\lambda_{\wz{ H^\dz}}\ge
  \min\{  \wz\lambda_{H^{\dz}} (8R+1) ,  k_{H^\dz}/2 \}>\min\{  \wz\lambda_H (8R+2) , 1/2\} >0.$$
 Noting that
 $$\Lambda_{\wz H^\dz}(\fz)= \liminf_{r\to\fz}\wz\Lambda_{\bar H^\dz}(r)\quad{\rm and }\quad \lambda_{\bar H^\dz}(\fz)= \liminf_{r\to\fz}\wz\lambda_{\bar H^\dz}(r),$$
 we obtain   \eqref{eq7.8} as desired.
 This completes the proof of Lemma \ref{LEM7.8}.
\end{proof}

\renewcommand{\thesection}{Appendix B}
\newtheorem{lembpp}{Lemma \hspace{-0.15cm}}
\newtheorem{thmbpp}[lembpp] {Theorem \hspace{-0.15cm}}
\newtheorem{corbpp}[lembpp] {Corollary \hspace{-0.15cm}}
\newtheorem{rembpp}[lembpp]  {Remark  \hspace{-0.15cm}}
\newtheorem{defnbpp}[lembpp]  {Definition  \hspace{-0.15cm}}
\renewcommand{\theequation}{B.\arabic{equation}}

\renewcommand{\thelembpp}{B.\arabic{lembpp}}

\section{Some known results of absolute minimizers}

%

Suppose that $H\in C^0(\rn)$ satisfies (H1){\rm\&}(H2).
We recall the existence and uniqueness in Lemma \ref{LEM7.9}  of absolute minimizers;
 identification with comparison properties with cones   in Lemma \ref{LEM7.13}.
Moreover we obtain their global absolute minimizing property in Lemma \ref{LEM7.14}.
 We also recall their identification with viscosity solutions in Lemma \ref{LEM7.10} when $H\in C^1(\rn)$ additionally,
 and their  $C^1$-regularity in Lemma \ref{LEM7.10x} when $n=2$.

 The following existence follows from \cite{bjw}, and uniqueness from   \cite{acjs}.

   \begin{lembpp}\label{LEM7.9} Suppose that $H\in C^0(\rn)$ satisfies (H1){\rm\&}(H2).
For any bounded domain $U\subset\rr^2$  and any $g\in C(\partial U)$, there exists a unique  $u\in C(\overline U)\cap AM_H(U)$ with $u|_{\partial U}=g$ on $\partial U$.
    \end{lembpp}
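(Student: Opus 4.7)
The plan is to obtain existence and uniqueness separately from the two works cited just above the statement, namely \cite{bjw} for existence and \cite{acjs} for uniqueness, after verifying that (H1)\&(H2) imply the hypotheses needed there. The first step is to check that $H$ is convex, coercive, strictly convex, and has a unique minimizer at $0$. Convexity is a local property, so local strong convexity on every compact convex set implies $H$ is convex on $\rr^2$ and in fact strictly convex (on any compact convex set $U$, $H-\tfrac12 \lambda |p|^2$ is convex for some $\lambda>0$). Combined with (H2), $0$ is the unique minimizer: if $H(p_0)=0$ for some $p_0\neq 0$, then convexity would force $H\equiv 0$ on the segment $[0,p_0]$, contradicting strict convexity there. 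Coercivity follows by a similar argument: if $H(r_n\omega_n)\leq C$ along some sequence $r_n\to\infty$ with unit $\omega_n\to\omega$, then convexity on each segment $[0,r_n\omega_n]$ together with $H(0)=0$ forces $H(r\omega)\leq C r/r_n\to 0$ for every fixed $r\geq 0$, so $H(r\omega)\equiv 0$ for $r\ge 0$, contradicting uniqueness of the minimum. In particular, $H^{-1}(\min H)=\{0\}$ has empty interior.

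\textbf{Existence.} With $H$ convex and coercive, existence of $u\in C(\overline U)\cap AM_H(U)$ with $u|_{\partial U}=g$ is obtained by the Perron-type scheme of Barron--Jensen--Wang \cite{bjw}. Concretely, using the characterization of $AM_H$ via comparison with cones $\mathrm{CICA}$ and $\mathrm{CICB}$ (Lemma \ref{LEM7.11}), one sets
\[
u(x):=\sup\{\,v(x):v\in C(\overline U)\cap\mathrm{CICA}_H(U),\ v\le g\text{ on }\partial U\,\},
\]
notes that $\mathrm{CICA}_H$ and $\mathrm{CICB}_H$ are stable under suprema/infima of admissible families, and verifies boundary continuity by using the cone functions $C^H_a(\cdot-x_0)+g(x_0)\pm\varepsilon$ as explicit barriers at each $x_0\in\partial U$; coercivity of $H$ is what guarantees that such cones are well-defined for every $a>0$ large enough to dominate the oscillation of $g$.

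\textbf{Uniqueness.} The verification above that $H^{-1}(\min H)=\{0\}$ has empty interior is precisely the extra hypothesis required by the comparison principle of Armstrong--Crandall--Julin--Smart \cite{acjs}. Their theorem then yields: any two members of $C(\overline U)\cap AM_H(U)$ agreeing on $\partial U$ must coincide in $U$. Applying this to the Perron solution produced above and to any other candidate gives uniqueness.

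\textbf{Main obstacle.} The substantive difficulty is the boundary continuity in the existence step (and, within \cite{acjs}, the ``tight'' comparison principle used for uniqueness), both of which rely on a careful use of the cone families $\{C^H_a\}_{a>0}$ together with coercivity and the empty-interior condition on $H^{-1}(\min H)$. Since the two cited papers develop these tools in full detail, our proof will consist of invoking \cite{bjw,acjs} once the hypotheses have been verified as above.
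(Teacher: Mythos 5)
Your proposal is correct and takes essentially the same route as the paper, which simply attributes existence to \cite{bjw} and uniqueness to \cite{acjs}; you additionally supply the routine (and correct) verification that (H1)\&(H2) force $H$ to be convex, coercive, strictly convex, and that $H^{-1}(\min H)=\{0\}$ has empty interior, so the hypotheses of the cited results are met. The paper leaves these checks implicit, so your write-up is a slightly more detailed version of the same argument.
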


Next we recall the comparison property with cones.
 Associated to $H$  set
$$C^H_a(x)=\sup_{H(p)\le a}\{p\cdot x\}\quad \forall a\ge0,\ x\in\rr^2.$$
It is evident that $C^H_a\in C^{0,1}(\rr^2)$ is convex, positively homogeneous, subadditive
and $C^H_a(x) > 0$ for every $a > 0$ and $x\ne  0$.
See \cite[Lemma 2.18]{acjs} for more details and also the following lemma.
\begin{lembpp} \label{LEM7.11} Suppose that $H\in C^0(\rn)$ satisfies (H1){\rm\&}(H2).
 Let $U\subset\rn$ be any domain,  $u\in C^{0,1}(U)$  and $a\ge0$. The following are equivalent:
\begin{enumerate}
\item[(i)] $H(Du)\le a$ almost everywhere in $U$;
\item[(ii)] $u(x)-u(y) \le C^H_a(x-y)$ provided the line segment $[x,y]\subset U$.
\end{enumerate}
\end{lembpp}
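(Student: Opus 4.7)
The plan is a duality argument between the pointwise gradient bound in (i) and the secant inequality in (ii), executed via mollification in one direction and via directional differentiation in the other.

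First I record what $C^H_a$ encodes. The sublevel set $K_a := \{p \in \rn : H(p) \le a\}$ is closed (by continuity of $H$), convex (by (H1), which implies convexity of $H$ along every line segment), bounded (by coercivity of $H$), and contains the origin (since (H2) gives $H(0)=0\le a$). By definition, $C^H_a$ is precisely the support function of $K_a$, so the Hahn--Banach separation theorem yields the key duality
\[
q \in K_a \iff \langle q, \xi\rangle \le C^H_a(\xi) \text{ for every } \xi \in \rn,
\]
which will be invoked in both implications.

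For (i) $\Rightarrow$ (ii), the plan is to mollify: set $u_\ez := u \ast \eta_\ez \in C^\infty(U_\ez)$ on $U_\ez := \{x \in U : \dist(x, \partial U) > \ez\}$ with a standard mollifier $\eta_\ez$. Then $Du_\ez(x) = (Du) \ast \eta_\ez(x)$ is an average of values $Du(y)$ with $H(Du(y)) \le a$ for a.e. $y$, so convexity of $K_a$ forces $Du_\ez(x) \in K_a$ pointwise on $U_\ez$. For any line segment $[x,y] \subset U_\ez$, the fundamental theorem of calculus together with the support-function inequality yields
\[
u_\ez(x) - u_\ez(y) = \int_0^1 \langle Du_\ez(y + t(x-y)), x-y\rangle \, dt \le C^H_a(x-y).
\]
Passing to the limit $\ez \to 0$, using uniform convergence $u_\ez \to u$ on compact subsets of $U$, recovers the inequality in (ii) for every line segment $[x,y] \subset U$.

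For the converse (ii) $\Rightarrow$ (i), since $u$ is Lipschitz it is differentiable almost everywhere by Rademacher's theorem. At any differentiability point $x$ and each direction $\xi \in \rn$, pick $t > 0$ small enough that $[x, x+t\xi] \subset U$. Applying (ii) with $y = x + t\xi$ and using positive homogeneity of $C^H_a$ gives $t^{-1}(u(x+t\xi) - u(x)) \le C^H_a(\xi)$; letting $t \to 0^+$ produces $\langle Du(x), \xi\rangle \le C^H_a(\xi)$ for every $\xi \in \rn$. By the duality displayed above, $Du(x) \in K_a$, i.e., $H(Du(x)) \le a$.

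The only nontrivial step is the mollification in (i) $\Rightarrow$ (ii): the substantive content is the observation that convexity of the sublevel set $K_a$ (coming from (H1)) lets one push the pointwise constraint $H(Du) \le a$ through the convolution. Once the support-function duality is in place, both directions reduce to short manipulations.
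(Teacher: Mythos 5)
Your argument is correct. Note that the paper does not actually prove this lemma itself---it is quoted from \cite[Lemma 2.18]{acjs}---so there is no in-house proof to compare against, but your self-contained derivation is the standard one. You correctly identify $C^H_a$ as the support function of the sublevel set $K_a=\{H\le a\}$, verify that $K_a$ is a compact convex set containing the origin (convexity from (H1), boundedness from the coercivity that (H1)\&(H2) force, $0\in K_a$ from (H2)), and then run both directions off the separation duality $q\in K_a\iff \langle q,\xi\rangle\le C^H_a(\xi)\ \forall\xi$. The mollification step in (i)$\Rightarrow$(ii) is the right device: the fact that a closed convex set is stable under averaging is exactly what pushes the a.e.\ constraint $Du\in K_a$ to the pointwise constraint $Du_\ez\in K_a$, after which the fundamental theorem of calculus plus the support-function inequality along $[x,y]$ does the rest, and uniform convergence on compacts removes the mollifier. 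The converse via Rademacher and directional difference quotients is likewise correct.

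One cosmetic slip in the converse: applying (ii) literally with $y=x+t\xi$ produces $u(x)-u(x+t\xi)\le C^H_a(-t\xi)=t\,C^H_a(-\xi)$, not the displayed $t^{-1}\bigl(u(x+t\xi)-u(x)\bigr)\le C^H_a(\xi)$; the latter comes from applying (ii) to the pair $(x+t\xi,\,x)$. Either way the conclusion $\langle Du(x),\xi\rangle\le C^H_a(\xi)$ for all $\xi$ follows after letting $\xi$ range over $\rn$, so the proof stands; it is only the bookkeeping of which point plays the role of $x$ and which of $y$ that should be fixed.
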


\begin{defnbpp}\rm
Let $U\subset\rn$ be any domain.
\begin{enumerate}
\item[(i)] A function $u\in \usc(U)$ satisfies the comparison property with cones  for $H$ from above in $U$ if
$$\max_V\{u- C^H_a(x-x_0)\}=\max_{\partial V}\{u- C^H_a(x-x_0)\}$$
whenever $V\Subset\Omega$, $a\ge0$ and $x_0\in\rn\setminus V$; for short, write $u\in CCA_H(U) $.
\item[(ii)] A function $u\in \usc(U)$ satisfies the comparison property with cones  for $H$ from below in $U$ if
and $$\min_V\{u+ C^{ H}_a(x_0-x)\}=\min_{\partial V}\{ u+ C^{  H}_a(x_0-x)\}$$
whenever $V\Subset\Omega$, $a\ge0$ and $x_0\in\rn\setminus V$; for short, write $u\in CCB_H(U) $.
\item[(iii)] We say $u \in C^0(U)$ satisfies the comparison property with cones  for $H$   in $U$ (for short,  $u\in CC_H(U)$ )
 if  $u\in CCB_A(U)\cap CCB_H(U) $.
 \end{enumerate}
\end{defnbpp}

%

 The following characterization of absolute minimizers follows from \cite[Theorem 4.8]{acjs}.
  \begin{lembpp}\label{LEM7.13}
Suppose that $H\in C^0(\rn)$ satisfies (H1){\rm\&}(H2).
 For any domain $U\subset\rn$, $u\in  AM_H(U)$ if and only if $u\in  CC _H(U)$.

 %
%
  \end{lembpp}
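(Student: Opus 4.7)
The plan is to adapt the argument of \cite[Theorem 4.8]{acjs} to the present setting: under (H1){\rm\&}(H2), $H$ is continuous, strictly convex and coercive (from Lemma \ref{LEM7.1}(iii) one has $H(p)\ge \frac{\lambda_H(H(p))}{2}|p|^2\to\fz$), so the hypotheses of that theorem are met. The two implications are established independently and both hinge on the geometric interpretation of the cone functions $C^H_a$ together with Lemma \ref{LEM7.11}.

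For $AM_H(U)\Rightarrow CC_H(U)$ I would treat CCA (CCB being symmetric). Given $V\Subset U$, $x_0\notin V$, $a\ge 0$, set $b:=\max_{\partial V}(u-C^H_a(\cdot-x_0))$; one needs $u\le C^H_a(\cdot-x_0)+b$ throughout $V$. Let $w(x):=C^H_a(x-x_0)+b$ and $W:=\{u>w\}\cap V$. Then $\overline W\Subset V\setminus\{x_0\}$ and $u=w$ on $\partial W$, so the absolute minimizing property of $u$ on each component of $W$ (comparing with $w$) yields
\begin{equation*}
\|H(Du)\|_{L^\fz(W)}\le \|H(Dw)\|_{L^\fz(W)}\le a,
\end{equation*}
since $H(Dw)\le a$ almost everywhere away from $x_0$. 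Lemma \ref{LEM7.11} then produces the cone inequality $u(x)-u(y)\le C^H_a(x-y)$ along every segment $[x,y]\subset W$. For any $x\in W$, trace the ray from $x_0$ through $x$ to its first intersection $y$ with $\partial W$: positive homogeneity of $C^H_a$ collapses the triangle inequality to the equality $C^H_a(x-y)+C^H_a(y-x_0)=C^H_a(x-x_0)$, and combined with $u(y)=w(y)$ this forces $u(x)\le w(x)$, contradicting $x\in W$. Hence $W=\emptyset$.

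For the reverse implication $CC_H(U)\Rightarrow AM_H(U)$, fix $V\Subset U$ and a competitor $v\in C(\overline V)\cap W^{1,\fz}_\loc(V)$ with $u=v$ on $\partial V$, and set $a:=\|H(Dv)\|_{L^\fz(V)}$. By Lemma \ref{LEM7.11} it suffices to show $u(x)-u(y)\le C^H_a(x-y)$ on every segment $[x,y]\subset V$. For a basepoint $x_0$ lying outside $\overline V$ on the line through $x,y$ near $y$, Lemma \ref{LEM7.11} applied to $v$ gives $v(z)\le v(x_\ast)+C^H_a(z-x_0)$ for $z\in\partial V$ (where $x_\ast\in\partial V$ is chosen along the ray from $x_0$); since $u=v$ on $\partial V$, the CCA property propagates the cone bound $u\le u(x_\ast)+C^H_a(\cdot-x_0)$ into $V$, and a symmetric application of CCB yields the matching lower bound. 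Letting $x_0$ tend to $y$ along the line through $[x,y]$, continuity of $C^H_a$ and of $u$ delivers the desired segment inequality.

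The principal obstacle is precisely this limiting step in the second direction: for an interior segment $[x,y]\subset V$ one cannot directly take $x_0=y$ in CCA (since $y\in V$), so one must compare against a family of exterior basepoints $x_0$ and pass to a limit, using the strict convexity of the sublevel sets $\{H\le a\}$ guaranteed by (H1) to ensure that $C^H_a$ retains its extremal geometry through the limit. This limiting argument, combined with uniqueness of absolute minimizers (Lemma \ref{LEM7.9}) and a careful covering by "finite cone" perturbations, is the technical core of \cite[Theorem 4.8]{acjs}, and I would simply invoke that theorem after verifying the hypotheses.
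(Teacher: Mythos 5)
The paper does not give its own proof of this lemma; it simply asserts that the statement "follows from \cite[Theorem 4.8]{acjs}," and you end at the same place after checking that (H1)\&(H2) give convexity and coercivity, so your approach is essentially the paper's. A small caveat on your sketch of the reverse implication $CC_H(U)\Rightarrow AM_H(U)$: the step where you claim Lemma \ref{LEM7.11} applied to the competitor $v$ yields $v(z)\le v(x_\ast)+C^H_a(z-x_0)$ for $z\in\partial V$ does not follow as stated --- Lemma \ref{LEM7.11} only controls differences of $v$ along segments lying \emph{inside} $V$, and $v$ is not assumed to satisfy comparison with cones, so one cannot propagate such a bound along $\partial V$ directly. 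The actual ACJS argument for that direction uses a rather more careful patching/covering construction together with the uniqueness of absolute minimizers. Since you acknowledge the gap and ultimately invoke the cited theorem rather than the sketch, this does not invalidate the proof, but the heuristic for the reverse direction as written would not go through if taken literally.
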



We give the following result which will be used in Section 5.
 \begin{lembpp}\label{LEM7.14} Suppose that $H\in C^0(\rn)$ satisfies (H1){\rm\&}(H2).
For any bounded domain $U\in \rn$,
if  $u\in C(\overline U)\cap AM_H(U)\cap  C^{0,1}(\partial U)$,
then $u\in C^{0,1}(\overline U)$ with
   $$\|H(Du)\|_{L^\fz(U)}\le \sup_{|p|\le \|u\|_{C^{0,1}(\partial U)}}H(p) .  $$
  \end{lembpp}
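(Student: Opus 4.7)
The plan is to establish the sharper cone inequality
\begin{equation*}
u(x_1) - u(x_2) \le C^H_a(x_1 - x_2) \qquad \forall\,x_1, x_2 \in \overline U,
\end{equation*}
with $L := \|u\|_{C^{0,1}(\partial U)}$ and $a := \sup_{|p|\le L} H(p) < \fz$. Once this is in hand, positive homogeneity and continuity of $C^H_a$ force $C^H_a(x) \le C(H,a)|x|$, yielding $u \in C^{0,1}(\overline U)$; then Lemma \ref{LEM7.11} applied inside $U$ gives $H(Du) \le a$ almost everywhere, which is the desired $L^\fz$-bound.

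The first ingredient is the baseline estimate $C^H_a(x) \ge L|x|$, immediate from taking $p = L\,x/|x|$ in the definition of $C^H_a$ (which satisfies $H(p)\le a$ by the choice of $a$). Combined with $|u(y_1)-u(y_2)|\le L|y_1-y_2|$ on $\partial U$, this yields the target cone inequality whenever both $x_1, x_2$ lie on $\partial U$.

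Next I propagate the inequality by exploiting $u \in CC_H(U)$ (Lemma \ref{LEM7.13}). Fix $y_0 \in \partial U$ and set $\varphi(x) := u(x) - u(y_0) - C^H_a(x - y_0)$, continuous on $\overline U$ with $\varphi \le 0$ on $\partial U$ by the previous step. On the exhaustion $V_n := \{x \in U : \dist(x, \partial U) > 1/n\}$ (applied to each connected component of $V_n$ separately), $y_0 \notin V_n$, so $u \in CCA_H(U)$ delivers $\max_{\overline{V_n}} \varphi = \max_{\partial V_n} \varphi$. Since $\dist(\partial V_n, \partial U)\le 1/n$, uniform continuity of $\varphi$ on $\overline U$ gives $\max_{\partial V_n}\varphi \le \max_{\partial U}\varphi + \omega_\varphi(1/n) \to 0$, while $\max_{\overline{V_n}} \varphi \uparrow \max_{\overline U}\varphi$ as $V_n\uparrow U$. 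Hence $\varphi \le 0$ on $\overline U$, i.e.\ $u(x)-u(y_0)\le C^H_a(x-y_0)$, and the symmetric $CCB_H$-argument yields $u(y_0)-u(x)\le C^H_a(y_0-x)$.

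For the remaining interior-to-interior case, fix $x_2 \in U$ and set $\psi(x) := u(x)-u(x_2)-C^H_a(x - x_2)$. Apply CCA to the exhaustion $V_n := \{x \in U : \dist(x,\partial U)>1/n,\ |x-x_2|>1/n\}$ (componentwise if necessary; note $x_2 \notin V_n$). On the part of $\partial V_n$ near $\partial U$, $\psi$ is controlled by $\omega_\psi(1/n)$ using the boundary-to-interior step just established, while on $\partial B(x_2, 1/n)$ continuity of $u$ and $C^H_a(0)=0$ force $\psi \to 0$ uniformly. Passing to the limit gives $\psi\le 0$ on $\overline U$, which completes the cone inequality and hence the lemma. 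The main technical obstacle is this final limit manipulation—ensuring uniform continuity of $\varphi,\psi$ on the compact set $\overline U$, handling possibly disconnected $V_n$ by applying CCA to each component, and verifying $\max_{\overline{V_n}}\psi \to \max_{\overline U}\psi$ as $V_n$ exhausts $U\setminus\{x_2\}$, which is straightforward once one observes that $\psi(x_2)=0$.
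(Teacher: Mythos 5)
Your proof is correct and follows the paper's strategy: first establish the cone inequality $u(x)-u(y)\le C^H_a(x-y)$ for $x,y\in\partial U$, then propagate it into $U$ via $u\in CC_H(U)$ (Lemma~\ref{LEM7.13}) and an exhaustion argument, finally invoking Lemma~\ref{LEM7.11} to conclude $H(Du)\le a$ a.e. The only cosmetic differences are that the paper obtains the boundary-to-boundary step by taking a McShane Lipschitz extension of $u|_{\partial U}$ and applying Lemma~\ref{LEM7.11} on a convex neighborhood of $\overline U$, whereas you use the direct bound $C^H_a(x)\ge L|x|$; and for the interior-to-interior step the paper applies CCB on $W_{x,\ez}=\{y\in U\setminus\{x\}:\dist(y,\partial U)>\ez\}$ (excising only the vertex) where you apply CCA on a set that also removes a small ball around $x_2$—both variants work.
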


 \begin{proof}
 Let $v \in C^{0,1}(\rn)$ be the Mcshane extension of $u|_{\partial U}$ in $\rn$
 so that $\|v\|_{C^{0,1}(\rn)}\le L:=\|u\|_{C^{0,1}(\partial U)}$.
 Let $a\ge \sup_{|p|\le L}H(p)  $. We have $a\ge \|H(Dv)\|_{L^\fz(\wz U)}$, where $\wz U$ is a convex set containing $U$.
 By Lemma \ref{LEM7.1}0, we have
\begin{equation}
u(x)-u(y)= v(x)-v(y)\le C^H_a(x-y)\quad \forall x,y\in \partial U.
\end{equation}
Let $V_\ez=\{x\in U, \dist(x,\partial U)>\ez\}$.
Given any $\dz>0$, by the continuity of $C^H_a$ and $u$,
for sufficiently small  $\ez>0$  for all
$z\in \partial U$ we have  $$\mbox{ $ u(x)\le u(z)+C^H_a(x-z)+ \dz$ for all $x\in \partial V_\ez$ and hence, by
$u\in CC_H(\Omega)$, for all $x\in  V_\ez$ .}$$
Letting $\ez\to0$ and $\dz\to 0$ in order,  we arrive at
 $$\mbox{ $ u(x)\le u(z)+C^H_a(x-z)$ for all $x\in  U$ and $z\in \partial U$.}$$

Given any $x\in U$, let $W_{x,\ez}:=\{y\in U\setminus \{x\}, \dist(y,\partial U)>\ez\}$.
For any $\delta>0$, if $\ez>0$ is sufficiently small, we have
 $$\mbox{ $ u(z)\ge u(x) -C^H_a(x-z)-\delta $ for all   $z\in \partial W_{x,\ez}$
 and hence, by $u\in CC_H(\Omega)$, for all  $z\in   W_{x,\ez}$.}$$
  Letting $\ez\to0$ and $\dz\to 0$ in order,  we arrive at
 $$\mbox{ $ u(z)\ge u(x)-C^H_a(x-z)$, equivelently, $ u(x)-u(z)\le  C^H_a(x-z)$, for all $x\in  U$ and $z\in   U$.}$$
So by Lemma \ref{LEM7.1}1 we get $u\in  C^{0,1}(\overline U)$ and $H(Du)\le a$ almost everywhere in $U$.
This completes the proof of Lemma \ref{LEM7.14}.
\end{proof}

As a consequence of Lemma \ref{COR7.7} we have the following result, which will be used also in Section 5.
\begin{lembpp}\label{cone1} Suppose that $H\in C^0(\rr^2)$ satisfies (H1'){\rm\&}(H2).
 For $\delta>0$, let $H^\dz\in C^\fz(\rn)$ as in \eqref{eq7.1}.
For  $a>0$ and $\ez>0$, there exists $\dz(\ez,a )>0$ such that
$$ C^{H}_{(1+\ez)^{-1}\wz a}(x)\le C^{H^{\dz}}_{\wz a}(x)\le  C^{H}_{(1+\ez)\wz a}(x)\quad \forall x\in \rn, \wz a\in[\frac12a,2a].$$
\end{lembpp}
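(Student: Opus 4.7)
The plan is to reduce Lemma \ref{cone1} to the sublevel-set comparison for $H$ and $H^\delta$ furnished by Corollary \ref{COR7.7}. Recall that $C^H_a(x)=\sup_{H(p)\le a}\langle p,x\rangle$ is monotone nondecreasing in $a$ (indeed, the sublevel set $H^{-1}([0,a])$ is monotone in $a$), and similarly for $H^\delta$. Hence any inclusion of sublevel sets translates directly into an inequality between the corresponding support functions. The strong convexity of $H$ also guarantees that all the sublevel sets appearing below are bounded, so these suprema are finite.

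Concretely, set $R=4a$ and let $\kappa_H(R)\ge 1$ be as in Corollary \ref{COR7.7}. Define
\[
\delta(\ez,a):=\min\Bigl\{\tfrac{1}{2\kappa_H(R)},\ \tfrac{\ez\,a}{2(1+\ez)\kappa_H(R)}\Bigr\}.
\]
Fix $\dz\in(0,\dz(\ez,a)]$ and $\wz a\in[\tfrac12 a,2a]$. Then $\wz a+\kappa_H(R)\dz\le 2a+\tfrac12 a<R$ and $(1+\ez)^{-1}\wz a+\kappa_H(R)\dz\le \wz a<R$, so both inclusions of Corollary \ref{COR7.7} apply at the relevant levels.

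For the upper bound, Corollary \ref{COR7.7} gives $(H^\dz)^{-1}([0,\wz a])\subset H^{-1}([0,\wz a+\kappa_H(R)\dz])$, hence
\[
C^{H^\dz}_{\wz a}(x)\le C^H_{\wz a+\kappa_H(R)\dz}(x)\le C^H_{(1+\ez)\wz a}(x)\quad\forall x\in\rr^2,
\]
where the second inequality uses $\kappa_H(R)\dz\le \ez\wz a$ (since $\wz a\ge a/2$) together with monotonicity of $C^H_\cdot$. For the lower bound, the other inclusion in Corollary \ref{COR7.7} yields $H^{-1}([0,(1+\ez)^{-1}\wz a])\subset (H^\dz)^{-1}([0,(1+\ez)^{-1}\wz a+\kappa_H(R)\dz])$, and the choice of $\dz$ ensures $(1+\ez)^{-1}\wz a+\kappa_H(R)\dz\le\wz a$, so monotonicity gives
\[
C^H_{(1+\ez)^{-1}\wz a}(x)\le C^{H^\dz}_{(1+\ez)^{-1}\wz a+\kappa_H(R)\dz}(x)\le C^{H^\dz}_{\wz a}(x).
\]
There is no real obstacle here; the whole content is Corollary \ref{COR7.7}, and the only thing to track carefully is choosing $R$ and $\dz(\ez,a)$ uniformly over the compact interval $\wz a\in[\tfrac12 a,2a]$, which the choice above handles.
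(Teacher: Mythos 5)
Your proof is correct and follows exactly the route the paper intends: the paper gives no written proof of Lemma \ref{cone1}, stating it merely ``as a consequence of Corollary \ref{COR7.7},'' and your argument spells out that consequence by turning the sublevel-set inclusions of Corollary \ref{COR7.7} into inequalities between the support functions $C^{H}_\cdot$, $C^{H^\delta}_\cdot$ via their monotonicity in the level. The choice $R=4a$ and $\delta(\ez,a)=\min\{\tfrac{1}{2\kappa_H(R)},\tfrac{\ez a}{2(1+\ez)\kappa_H(R)}\}$ correctly handles the uniformity over $\wz a\in[\tfrac12 a,2a]$ and both the upper and lower bounds.
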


The following  identification follows from   \cite{y06,cwy}.
    \begin{lembpp}\label{LEM7.10}
 Suppose that $H\in C^1(\rn)$ satisfies (H1){\rm\&}(H2).
  For any domain  $\Omega\subset\rr^2$, $u\in AM_H(\Omega)$  if and only if $u\in C^0(\Omega)$ is
a viscosity solution to \eqref{eq1.2}.
 \end{lembpp}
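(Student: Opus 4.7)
The plan is to funnel both implications through the comparison-with-cones characterization already in hand. By Lemma \ref{LEM7.13} we know $u\in AM_H(\Omega)\Longleftrightarrow u\in CC_H(\Omega)$, so it suffices to prove that a continuous function $u$ on $\Omega$ lies in $CC_H(\Omega)$ if and only if it is a viscosity solution of $\mathscr A_H[u]=0$. By Lemma \ref{LEM7.14} (and the fact that viscosity solutions of $\mathscr A_H[u]=0$ are locally dominated above and below by cone functions $\pm C^H_a(\cdot-x_0)+b$ on small balls, hence locally Lipschitz), both classes consist of $W^{1,\infty}_\loc(\Omega)$ functions, so Rademacher differentiability and pointwise values of $H(Du)$ are available throughout.

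\textbf{From $CC_H$ to viscosity solution.} Let $u\in CC_H(\Omega)$ and let $\phi\in C^2$ touch $u$ from above at $x_0\in\Omega$ with $p_0=D\phi(x_0)$. For small $\eta>0$ set $a_\eta=H(p_0)+\eta$. Because $H$ is strictly convex (by (H1)) the support function $C^H_{a_\eta}$ is $C^1$ and strictly convex on the open cone of directions realized as gradients of $H$ on $\{H=a_\eta\}$, with $H(DC^H_{a_\eta})=a_\eta$ on its image. Choosing the vertex $y_\lambda=x_0-\lambda D_pH(p_0)$ for large $\lambda>0$ makes $C^H_{a_\eta}(\cdot-y_\lambda)$ smooth near $x_0$ with gradient $p_0$ there. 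Apply the CCA inequality on a small ball $B(x_0,r)\Subset\Omega$ to $u$ against the cone-plus-constant $C^H_{a_\eta}(\cdot-y_\lambda)+b$ with $b$ chosen so that equality holds at $x_0$; using that $\phi-u\ge 0$ locally with equality at $x_0$ and expanding both $\phi$ and $C^H_{a_\eta}(\cdot-y_\lambda)$ to second order, passing $r\to 0$ and $\eta\to 0$ yields
\[
\langle D[H(D\phi)](x_0),D_pH(p_0)\rangle=\mathscr A_H[\phi](x_0)\le 0 .
\]
The supersolution inequality follows by the symmetric argument from CCB, with supporting cones from below having vertex $y_\lambda=x_0+\lambda D_pH(p_0)$. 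The degenerate case $D_pH(p_0)=0$ forces $p_0=0$ (since $H$ is strongly convex with minimum at $0$), in which case $\mathscr A_H[\phi](x_0)=0$ automatically.

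\textbf{From viscosity solution to $CC_H$.} Let $u$ be a viscosity solution of $\mathscr A_H[u]=0$ in $\Omega$, fix $V\Subset\Omega$, $x_0\in\rr^2\setminus V$ and $a\ge0$. The cone $w(x):=C^H_a(x-x_0)$ is Lipschitz, convex, positively homogeneous, and is a \emph{classical} (hence viscosity) solution of the first-order equation $H(Dw)=a$ on $\rr^2\setminus\{x_0\}$ away from the exceptional set of directions where $\partial\{H\le a\}$ has nonunique outward normal; on $V$, which is separated from the vertex $x_0$, $w$ is moreover $C^1$ at the regular points and still a viscosity supersolution of $\mathscr A_H[v]=0$ everywhere. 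To conclude the CCA inequality $\max_V(u-w)=\max_{\partial V}(u-w)$, I would invoke the comparison principle for the Aronsson equation proved in \cite{cwy,y06}: if $u$ is a viscosity subsolution and $v$ is a viscosity supersolution of $\mathscr A_H[\cdot]=0$ on $V$ with $v$ enjoying additional regularity (here, $v=w+b$ with $H(Dv)=a$ a.e.), then $u\le v$ on $\partial V$ implies $u\le v$ on $V$. The CCB half is handled symmetrically with $-C^H_a(x_0-\cdot)$.

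\textbf{Main obstacle.} The substantive difficulty is the comparison step in the second direction. The Aronsson operator $\mathscr A_H$ is highly degenerate and neither strictly monotone in $D^2u$ nor proper, so a direct application of Crandall--Ishii--Lions is not available. The resolution, due to Crandall--Wang--Yu and Yu, exploits that the competitor $w=C^H_a(\cdot-x_0)+b$ solves the \emph{first-order} equation $H(Dw)=a$ classically on $V$; this reduces the inequality to showing that a viscosity subsolution of $\mathscr A_H[u]=0$ cannot cross from below a classical level-set solution of $H(Dw)=a$, which is established by a sup-convolution/doubling-of-variables argument exploiting strict convexity of $H$ in (H1). I would quote this comparison lemma directly from \cite{cwy,y06} rather than reprove it, and then assemble the pieces as above.
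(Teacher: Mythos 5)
The paper does not prove this lemma at all: it is stated as a quotation, ``The following identification follows from \cite{y06,cwy}.'' So there is no in-paper argument to compare against, and your decision to route both implications through the comparison-with-cones characterization (Lemma \ref{LEM7.13}, itself quoted from \cite{acjs}) and to delegate the comparison principle to \cite{cwy,y06} is the standard reconstruction and is consistent with what the paper itself relies on. Your outline is acceptable at the same level of rigor as the paper's citation.

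Two concrete slips in your sketch would need repair in a full write-up. First, the sign in your displayed inequality is backwards: with the Aronsson equation written in proper (degenerate elliptic) form as $-\mathscr A_H[u]=0$, comparison with cones from above (CCA) corresponds to the \emph{subsolution} property, so a test function $\phi$ touching $u$ from above at $x_0$ must satisfy $\mathscr A_H[\phi](x_0)\ge 0$, not $\le 0$; CCB gives $\mathscr A_H[\phi](x_0)\le 0$ for $\phi$ touching from below. Second, the claim that placing the vertex at $y_\lambda=x_0-\lambda D_pH(p_0)$ makes $DC^H_{a_\eta}(\cdot-y_\lambda)(x_0)=p_0$ is false for $\eta>0$: the gradient of the support function at $x_0-y_\lambda=\lambda D_pH(p_0)$ is the unique point $p_\eta\in\{H=a_\eta\}$ whose outward normal is parallel to $D_pH(p_0)$, and $p_\eta\ne p_0$ in general; one only has $p_\eta\to p_0$ as $\eta\to 0$ (using the strict convexity from (H1)), so the limit $\eta\to0$ must be taken before the second-order expansion closes. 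Finally, be aware that the entire weight of the direction ``viscosity solution $\Rightarrow CC_H$'' rests on the quoted comparison of a viscosity subsolution against the classical eikonal solution $C^H_a(\cdot-x_0)+b$; that is exactly the content of \cite{y06,cwy}, so nothing is gained over the paper's bare citation, but nothing is lost either.
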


The following $C^1$-regularity  was proved in \cite{fwz} (see \cite{wy} for the case $H\in C^2(\rn)$).
The proof is based on the linear approximation property obtained in \cite{fwz} and also some necessary modifications of approach in \cite{wy,s05}.

 \begin{thmbpp}\label{LEM7.10x} Suppose that $H\in C^0(\rr^2)$ satisfies (H1){\rm\&}(H2).
  For any domain   $\Omega\subset\rr^2$, if $u\in AM_H(\Omega)$, then $u\in C^1(\Omega)$.
 \end{thmbpp}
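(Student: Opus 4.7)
My plan is to reduce Theorem B.7 to two essentially independent ingredients: (a) the linear approximation property, and (b) a planar topological/geometric continuity argument for the approximating gradient, adapted to the cones associated with $H$ rather than the Euclidean cones.

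For (a), the strong local convexity in (H1) guarantees that $H$ is not constant on any line segment. Hence the criterion from \cite{fwz} applies and every $u\in AM_H(\Omega)$ admits, at each $x_0\in\Omega$, a (unique) vector $p(x_0)\in\rr^2$ such that
\begin{equation*}
\lim_{r\to 0}\sup_{x\in B(x_0,r)}\frac{|u(x)-u(x_0)-\langle p(x_0),x-x_0\rangle|}{r}=0.
\end{equation*}
This already gives everywhere differentiability of $u$ with $Du(x_0)=p(x_0)$, so the only remaining task is to show $p:\Omega\to\rr^2$ is continuous. First I would record some easy compatibility: if $x_k\to x_0$ and $p(x_k)\to q$, then by combining the linear approximation at $x_k$ with local Lipschitz bounds for $u$ and the comparison with $H$-cones from Lemma~B.2 (through Lemma~B.3), we have $H(q)\le \esssup_{B(x_0,r)} H(Du)$ for every $r>0$; so candidate limit gradients are uniformly bounded.

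For (b), the strategy, following Savin \cite{s05} and Wang--Yu \cite{wy}, is the planar level-set argument. Suppose toward contradiction that $p$ is discontinuous at some $x_0$. Using the linear approximation and compactness, one obtains two distinct vectors $q_1\neq q_2$ that are both limits of $p(x_k)$ along two sequences $x_k^{(1)},x_k^{(2)}\to x_0$. Pick a linear function $L(x)=\langle\bar q,x\rangle$ with $\bar q$ chosen (on the segment from $q_1$ to $q_2$ or nearby) so that both $q_1-\bar q$ and $q_2-\bar q$ are nonzero and point in roughly opposite half-planes. The level set $\{u-L=u(x_0)-L(x_0)\}$ near $x_0$ must, by linear approximation on each side, contain arcs approaching $x_0$ in incompatible directions. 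In the plane, this produces a Jordan arc enclosing $x_0$, and one can then use comparison with $H$-cones centered at $x_0$ on the enclosed region to derive a contradiction with the linear approximation at $x_0$ itself. The implementation is where genuine planar topology enters: one uses that $\{u<L+c\}$ and $\{u>L+c\}$ have a definite relative geometry near $x_0$ because, by Lemma~B.3, $u$ satisfies comparison with the $H$-cones $C^H_a$.

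The hard part will be running the Savin/Wang--Yu dichotomy with the non-Hilbert cones $C^H_a$ for merely continuous $H$: the cones are no longer circularly symmetric, and the key step where one compares $u-L$ with a suitably scaled and translated $H$-cone to force the two one-sided slopes to coincide must be done quantitatively in terms of the moduli $\lambda_H,\Lambda_H$ of Lemma~A.1 and their right-continuity. To bypass any requirement of $C^2$ regularity for $H$, I would first carry out the contradiction argument for $H^\dz\in C^\fz(\rr^2)$ produced by the standard approximation in Appendix~A, applying Wang--Yu's planar $C^1$-result to $u^\dz\in AM_{H^\dz}(U)$ (Theorem~5.1 gives $u^\dz\to u$ uniformly and a uniform Lipschitz bound), and then extract a uniform (in $\dz$) modulus of continuity for $Du^\dz$ using the $H^\dz$-cone comparison together with the convergence $C^{H^\dz}_a\to C^H_a$ from Lemma~B.5. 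Passing this uniform modulus to the limit, together with the a.e.\ convergence $Du^\dz\to Du$ available when $H\in C^1$ (and by an additional $H^\dz$-cone argument in the general $C^0$ case), yields $u\in C^1(\Omega)$.
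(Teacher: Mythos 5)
The paper does not prove Theorem B.7 here; it merely cites \cite{fwz} and says the proof is based on the linear approximation property (from \cite{fwz}) together with ``some necessary modifications of approach in \cite{wy,s05}.'' Your plan correctly reconstructs both advertised ingredients: (a) linear approximation at every point via the criterion of \cite{fwz} (using that (H1) forbids $H$ from being constant on segments), and (b) a Savin/Wang--Yu planar dichotomy run with the $H$-cones $C^H_a$ rather than Euclidean cones. The first half of your part~(b) --- running the level-set/topological argument directly with $C^H_a$ and the moduli $\lambda_H,\Lambda_H$ --- is the route the paper points to; the execution details you leave open (the quantitative version of the cone-touching argument, the planar Jordan-arc contradiction) are exactly where the genuine work of \cite{fwz} lies, so it is fair to say your sketch captures the strategy but not the proof.

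Your fallback route, however, has a real gap. You propose applying Wang--Yu's planar $C^1$ theorem to the smooth approximants $u^\dz\in AM_{H^\dz}(U)$ and then extracting a modulus of continuity for $Du^\dz$ that is uniform in $\dz$, using $C^{H^\dz}_a\to C^H_a$ from Lemma~B.5. But \cite{wy} delivers a \emph{qualitative} $C^1$ statement: $Du^\dz$ is continuous, with no quantitative bound on its modulus. Nothing in the cone convergence by itself upgrades this to a $\dz$-uniform modulus; to get one you would have to rerun the Savin/Wang--Yu comparison argument quantitatively (tracking constants in terms of $\lambda_{H^\dz},\Lambda_{H^\dz}$, which are at least uniformly controlled by Lemma~A.3), which is precisely the direct argument you were trying to avoid. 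In other words, the approximation shortcut does not bypass the hard part --- it silently reintroduces it. If you intend the approximation route to be the vehicle, you must make the modulus estimate explicit and $\dz$-independent before passing to the limit; otherwise, you should commit to the direct adaptation of the planar dichotomy to $C^H_a$-cones as the main line.
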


\bigskip
 \noindent {\bf Acknowledgment.} The authors
 would like to thank Professor Yi Zhang and Professor Yifeng Yu for several valuable discussions in this paper.
The authors were supported  by
 National Natural Science of Foundation of China (No. 11522102\&11871088).

%
%
%
%
%
%
%
%
%
%
\end{document}